\def\l@subsection{\@tocline{2}{0pt}{2.5pc}{5pc}{}}
\renewcommand\tocchapter[3]{
  \indentlabel{\@ifnotempty{#2}{\ignorespaces#2.\quad}}#3
}
\newcommand\@dotsep{4.5}
\def\@tocline#1#2#3#4#5#6#7{\relax
  \ifnum #1>\c@tocdepth
  \else
    \par \addpenalty\@secpenalty\addvspace{#2}
    \begingroup \hyphenpenalty\@M
    \@ifempty{#4}{
      \@tempdima\csname r@tocindent\number#1\endcsname\relax
    }{
      \@tempdima#4\relax
    }
    \parindent\z@ \leftskip#3\relax \advance\leftskip\@tempdima\relax
    \rightskip\@pnumwidth plus1em \parfillskip-\@pnumwidth
    #5\leavevmode\hskip-\@tempdima{#6}\nobreak
    \leaders\hbox{$\m@th\mkern \@dotsep mu\hbox{.}\mkern \@dotsep mu$}\hfill
    \nobreak
    \hbox to\@pnumwidth{\@tocpagenum{#7}}\par
    \nobreak
    \endgroup
  \fi}
\renewcommand\csname r@tocindent0\endcsname{0pt}
\def\l@subsection{\@tocline{2}{0pt}{2.5pc}{5pc}{}}
\definecolor{red}{RGB}{255, 0, 0}
\newtheorem{thm}{Theorem}[section]
\newtheorem*{thm*}{Theorem}
\newtheorem{lem}[thm]{Lemma}
\newtheorem*{lem*}{Lemma}
\newtheorem*{cor*}{Corollary}
\newtheorem*{claim}{Claim}
\theoremstyle{definition}
\newtheorem{definition}[thm]{Definition}
\theoremstyle{remark}
\begin{document}

\pagenumbering{gobble}


\singlespacing

\vspace*{1.0in}

\begin{center}
RESTRICTIONS ON POTENTIAL AUTOMATIC STRUCTURES ON THOMPSON'S GROUP F
\end{center}

\doublespacing

\vspace*{\fill}

\begin{center}
BY

JEREMY DAVID HAUZE

\singlespacing

BA, King's College, 2009\\
MA, Binghamton University, 2011
\end{center}

\vspace*{\fill}

\begin{center}
DISSERTATION

\singlespacing

Submitted in partial fulfillment of the requirements for\\
the degree of Doctor of Philosophy in Mathematical Sciences\\
in the Graduate School of\\
Binghamton University\\
State University of New York\\
2017
\end{center}

\newpage


\doublespacing

\vspace*{\fill}

\begin{center}
\textcopyright Copyright by Jeremy David Hauze 2017\\
All Rights Reserved
\end{center}

\newpage

\setcounter{page}{3}
\renewcommand{\thepage}{\roman{page}}


\singlespacing

\vspace*{\fill}

\begin{center}
Accepted in partial fulfillment of the requirements for\\
the degree of Doctor of Philosophy in Mathematical Sciences\\
in the Graduate School of\\
Binghamton University\\
State University of New York\\
2017\\

\doublespacing

December 1, 2017\\

\singlespacing

Matthew Brin, Chair\\
Department of Mathematical Sciences, Binghamton University\\

\doublespacing
\singlespacing

Ross Geoghegan, Member\\
Department of Mathematical Sciences, Binghamton University\\

\doublespacing
\singlespacing

Fernando Guzman, Member\\
Department of Mathematical Sciences, Binghamton University\\

\doublespacing
\singlespacing

Leslie Lander, Outside Examiner\\ 
Department of Computer Sciences, Binghamton University
\end{center}

\newpage


\begin{center}
\textbf{Abstract}
\end{center}

\doublespacing

\indent
We show that a large class of languages in the standard finite generating set $X = \{x_0, x_1, x_0^{-1}, x_1^{-1}\}$ cannot be part of an automatic structure for Thompson's Group $F$. These languages are ones that accept at least one representative of each element of $F$ of word length that is within a fixed constant of a geodesic representative of the element. To accomplish this, we look at a specific element of $F$ and trace two different paths through the Cayley graph to that element. We show that staying within the length restrictions along these two paths would force that element to have contradictory properties.

\newpage


\begin{center}
\textbf{Acknowledgments}
\end{center}

\indent
My deepest appreciation goes to my adviser Matt Brin for his guidance, support, and above all else, patience. Without his aid and encouragement, I would not be in the position I am today.

I would like to thank the faculty and staff of the Binghamton University Department of Mathematical Sciences. Special thanks go to the members of my committee: Ross Geoghegan, Fernando Guzman, and Leslie Lander.

I would also like to thank my wife, Kylynn Hauze. She has been a constant source of understanding and support throughout my work.

Finally, I would like to thank my parents, David and Mary Hauze, for their encouragement; my professors at King’s College, especially Joseph Evan, Ryo Ohashi, and David Kyle Johnson, for putting me on this path; and my family and friends who each played their part in bringing me to this day.

\newpage


\begin{center}
\textbf{Table of Contents}
\end{center}

\vspace*{12pt}

\tableofcontents

\newpage


\addcontentsline{toc}{section}{List of Figures}

\begin{center}
\textbf{List of Figures}
\end{center}

{
\let\oldnumberline\numberline
\renewcommand{\numberline}{\figurename~\oldnumberline}
\listoffigures
}

\newpage

\setcounter{page}{1}
\renewcommand{\thepage}{\arabic{page}}



\vspace*{40pt}

\setcounter{section}{1}

\addcontentsline{toc}{section}{Chapter 1: Introduction}

\noindent
\begin{large}
\textbf{Chapter 1: Introduction}
\end{large}

Groups that admit an automatic structure enjoy a number of beneficial properties. The machines in an automatic structure provide a way to algorithmically understand the algebra and geometry of their related group. Thompson's group $F$ is a particularly interesting group of piecewise-linear homeomorphisms of the unit interval. Guba and Sapir posed the question of whether $F$ is automatic. While we do not answer their question, we show a restriction on any potential automatic structure that could $F$ could possess.

An \textit{automatic structure} on a group $G$ consists of language and a finite set of finite state automata or FSAs. A \textit{language} is a subset of all possible finite words over an alphabet. For an automatic structure on $G$, the alphabet must be a finite, symmetric (closed under inverse) generating set $S$ for $G$. An FSA can be thought of as a computer with finite memory. See Chapter 2 for further details on FSAs. The language of an automatic structure on a group $G$ must be such that each element of $G$ is represented by at least one word in the language and the associated FSAs must (a) recognize which words are in the language and (b) tell when two words $u$ and $v$ in the language satisfy $u = vs$ in $G$ for some $s \in S$. These machines are referred to as the \textit{word acceptor} and \textit{multiplier automata} respectively. For a word $w$, we will use $|w|$ to refer to the $\textit{word length}$ of $w$. Given an element $g$ in group $G$ with generating set $S$, $|g|_S$ is the $\textit{length of a geodesic representative for g in S}$. When there is no confusion on the generating set, we simply write $|g|$.

Our main result, Theorem $1.1$ below, shows that a large class of languages cannot be part of an automatic structure for Thompson's group $F$.

Unsurprisingly, the FSAs in an automatic structure are valuable tools for understanding the associated group. Primarily, they provide efficient algorithms for solving many problems associated with their group, including efficiently building the Cayley graph. The word acceptor allows for efficient computation of the growth function of a group \cite{EFZ}. If the word acceptor accepts unique geodesics, it allows one to quickly enumerate unique representatives of the group. The multiplier automata can be used to reduce arbitrary words, providing an efficient solution to the word problem of the group.

Demonstrating that a group is automatic involves picking a language and building the necessary FSAs. There are computer techniques that can potentially algorithmically demonstrate the structure of the necessary machines, if they existed. The alternatives usually involve a good deal of geometry with the group or class of groups involved. Early sources, including  \cite{ECHLPT}, showed that finite groups, finitely generated free groups, finitely generated abelian groups, hyperbolic groups, and Braid groups are all automatic. Euclidean groups, Artin groups of finite type, and many Coxeter groups have also been shown to be automatic. Additionally, the class of automatic groups is closed under direct products, free products, and free products with amalgamation over a finite subgroup, producing many more examples of automatic groups.

Proving that a group is not automatic is generally a difficult task. This involves showing that it is impossible for any language and set of FSAs to meet the requirements of an automatic structure. The general techniques to do so center around known properties of automatic groups, namely, all automatic groups must be finitely presented and must satisfy a quadratic isoperimetric function. While an automatic structure for $F$ is not readily apparent, it does not violate any of the known properties of automatic groups. Some examples of groups known to not be automatic include infinite torsion groups, nilpotent groups, and Baumslag-Solitar groups.

A reasonable starting point in the search for an automatic structure on $F$ is with geodesic representatives. The study of automatic groups began when Cannon essentially demonstrated in \cite{Cannon} that hyperbolic groups must be automatic. For any hyperbolic group, there is an automatic structure whose language consists of all geodesic representatives. Groups that possess this property are called \textit{strongly geodesically automatic}.  It has since been demonstrated in Theorem 2 of \cite{Papa} that the class of strongly geodesically automatic groups are exactly the hyperbolic groups. Because $F$ is not hyperbolic, it follows from \cite{Papa} that $F$ is not strongly geodesically automatic.

There is also the more general concept of \textit{weakly geodesically automatic} groups. These are precisely those automatic groups whose language consists of at least one geodesic representative of each element. There are numerous advantages to a group that is weakly geodesically automatic. 

Cleary and Taback \cite{ClTa} proved that $F$ is not almost convex. Chapter $6$ of \cite{Belk} independently demonstrates this result and goes on to show that this means $F$ cannot be weakly geodesically automatic. With this in mind, we give our main result.

\begin{thm}
\leavevmode
\newline
Let L be a language over the alphabet $X = \{x_0, x_1, x_0^{-1}, x_1^{-1}\}$ and assume there is a non-negative integer $c$ so that for every element $g \in F$ there is a single word $w \in L$ such that $w$ represents $g$ and $|w| \leq |g|_X + c$. Then $L$ cannot be a subset of the language associated with an automatic structure for $F$.
\end{thm}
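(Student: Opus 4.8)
The plan is to argue by contradiction, combining the synchronous fellow traveler property of automatic structures with the failure of almost convexity of $F$ recorded above (Cleary--Taback \cite{ClTa}, Chapter 6 of \cite{Belk}). Suppose $L$ is contained in the language $\mathcal{L}$ of an automatic structure for $F$ over $X$. Then $\mathcal{L}$ has the $k$-fellow traveler property for some constant $k$: whenever $u,v \in \mathcal{L}$ represent elements $\bar u,\bar v$ with $d_X(\bar u,\bar v)\le 1$, the edge-paths $\hat u,\hat v$ in the Cayley graph satisfy $d_X(\hat u(t),\hat v(t))\le k$ for all $t$ (reading the shorter word as eventually constant), and in particular $\bigl||u|-|v|\bigr|\le k$. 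Since $L\subseteq\mathcal{L}$ the same holds for words of $L$. By hypothesis each $g\in F$ has a \emph{unique} word $w_g\in L$ representing $g$ with $|w_g|\le |g|_X+c$; set $\ell(g):=|w_g|$.

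Next I would fix the witness and the two paths. Following the non-almost-convexity constructions of \cite{ClTa} and \cite{Belk}, choose a large parameter $n$ and an element $g=g_n$ lying at a ``bottleneck'' of the sphere of radius $|g_n|_X$: there are two natural edge-paths $P=(e=p_0,\dots,p_m=g_n)$ and $Q=(e=q_0,\dots,q_{m'}=g_n)$ from the identity to $g_n$ whose interiors lie in very different regions of the Cayley graph, and along which the geodesic lengths $|p_i|_X$ and $|q_j|_X$ track a prescribed profile up to a bounded additive error. Non-almost-convexity of $F$ is precisely the assertion that $P$ and $Q$ cannot be joined by a short path through the region they are forced to traverse, even after a bounded additive fattening of the ball.

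Now I would propagate the length restriction along the two threads of $L$-representatives, $(w_{p_0},\dots,w_{p_m})$ and $(w_{q_0},\dots,w_{q_{m'}})$. Consecutive members of each thread represent elements at distance $1$, hence $k$-fellow travel, and $\ell(p_i)\le |p_i|_X+c$, $\ell(q_j)\le |q_j|_X+c$, so along each thread the representatives wander only $O(k)$ per step while staying within $c$ of geodesic length. The crucial point is that, because $L$ assigns $g_n$ the \emph{unique} short word $w_{g_n}$, both threads terminate at the \emph{same} word: $w_{p_m}=w_{g_n}=w_{q_{m'}}$. Reading the fellow traveler relations backward along thread $P$ then forces $\hat w_{g_n}$ — and hence $g_n$ together with a bounded neighborhood of it — to be ``approached from the $P$-side'' (every sufficiently long prefix of $w_{g_n}$ represents an element within $O(k)$ of the corresponding point of $P$, using the near-geodesic bound to limit how far prefixes of $w_{g_n}$ can stray); symmetrically, thread $Q$ forces it to be approached from the $Q$-side. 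Splicing the two descriptions of $w_{g_n}$ yields a path between a point of $P$ and a point of $Q$ whose length is bounded in terms of $k$ and $c$ alone and which stays in the bounded-error region the two paths were confined to — contradicting the bottleneck estimate of \cite{ClTa}, \cite{Belk} for $n$ large.

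I expect the main obstacle to be this last step: one must specify $g_n$ and the two paths concretely enough — via tree-pair or forest diagrams for $F$ — and then carry out the ball-geometry showing that the additive slack $c$ together with the fellow-traveler constant $k$ genuinely cannot bridge the bottleneck. In effect the heart of the argument is a quantitative strengthening of ``$F$ is not almost convex'' robust to an additive $c$-fattening of the sphere and to $k$-fellow-traveler smoothing, and it is precisely here that the uniqueness clause in the hypothesis is indispensable, since it is what forces the two threads to end at a single word.
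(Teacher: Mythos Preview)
Your overall architecture matches the paper's: take the non-almost-convexity witnesses of \cite{ClTa}/\cite{Belk} --- the paper uses exactly the elements $\overline{f_k}$ with two geodesic representatives $f_k = x_0^{-(k-1)}x_1^{-1}x_0^{2k}x_1^{-1}x_0^{-k}$ and $g_k = x_0^{k+1}x_1^{-1}x_0^{-(2k-1)}x_1^{-1}x_0^{k-1}$ --- then walk toward the target along each and use uniqueness of the $L$-representative at the endpoint to force a contradiction via fellow traveling.

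The gap is in your ``reading backward'' step. You claim that chaining the fellow-traveler relations along the thread $(w_{p_0},\dots,w_{p_m})$ forces every long prefix of $w_{g_n}$ to lie within $O(k)$ of the corresponding point of $P$. But fellow traveling only relates $w_{p_i}$ to $w_{p_{i+1}}$; after $j$ links the error is $O(jk)$, not $O(k)$, and $m$ grows with $n$. So the splicing bound you need does not follow, and invoking ``the near-geodesic bound to limit how far prefixes can stray'' does not repair this --- a $(c+)$-quasigeodesic in a non-hyperbolic group can wander arbitrarily far from any fixed geodesic. You correctly identify this as the obstacle; it is a real one.

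The paper's solution is not to control the \emph{geometry} of $w_{g_n}$ but to isolate a single \emph{discrete} invariant of the word and propagate that. Concretely: the word $w_{g_n}$ must at some point make the vertex $v_a = [0,2^{-k}]$ of $\mathcal{T}$ internal and at some point make $v_b = [1-2^{-k},1]$ internal; the invariant is which happens last. The paper shows, by a direct length count, that at a carefully chosen prefix $w_{k-l}$ of $f_k$ (with $l = C/2+1$ bounded independently of $k$) the $L$-representative already satisfies ``$v_b$ last-internal before $v_a$ first-internal'', and then proves an inductive lemma showing this survives one step of fellow traveling. Crucially the induction runs for only $l = O(C)$ steps, so no error accumulates. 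The symmetric argument along $g_k$ yields the opposite order, contradicting uniqueness. The real work --- the six lemmas of Chapter~4 --- is the rotation-counting that establishes the base case and the one-step propagation; this is where the tree-pair combinatorics you allude to enter, and it is substantially more intricate than a bottleneck distance estimate.
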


In the case that $c = 0$, we are showing that the language on an automatic structure for $F$ cannot contain a geodesic representative for each element. This recovers Belk's result within our own.

\newpage

\vspace*{40pt}

\setcounter{section}{2}

\addcontentsline{toc}{section}{Chapter 2: Automatic Groups}

\noindent
\begin{large}
\textbf{Chapter 2: Automatic Groups}
\end{large}

In this chapter, we note several key results relevant to our discussion on potential automatic structures for $F$. These results are given without proof. For these details, see \cite{ECHLPT} for a full treatment on finite state automata and their use in understanding the structure of groups.


\vspace{10pt}
\subsection*{2.1 Geometric Characterization of Automatic Structures}
\leavevmode

We have already noted that an \textit{automatic structure} on a group $G$ consists of a language and a finite set of FSAs including a \textit{word acceptor} and \textit{multiplier automata}. We will not use this approach to automaticity or define an FSA. We point interested readers to \cite{ECHLPT} for these details. Automaticity has a second, geometric characterization which is more useful for our purposes.

Let $G$ be a group and $S \subseteq G$ which does not contain the identity element. The \textit{Cayley graph} associated with $G$ and $S$, $\Gamma_S(G)$, is the directed graph with one vertex associated with each group element and directed edges $(g, h)$ whenever $gh^{-1} \in S$. When $S$ is a symmetric generating set for $G$, the edges can be identified with exactly the generators in $S$. The distance between two elements of $g_1, g_2 \in G$ in $\Gamma_S(G)$ is the minimum number of edges in a path from $g_1$ and $g_2$ in the Cayley graph, denoted $d_{\Gamma_S(G)}(g_1, g_2)$.

We will let $(G, L(X))$ stand for an automatic structure on group $G$ with language $L$ over symmetric generating set $X$. The words in $L(X)$ are representative choices for elements of $G$. It is natural to think of these words as paths in $\Gamma_X(G)$ from the identity vertex to the element they represent. Our second characterization focuses on these paths.

Let $w$ be a word in the symmetric generating set $X$ of group $G$. We define $|w|$ as  the word length of $w$. We define $w(t)$ as the prefix of $w$ of length $t$ if $t < |w|$ or $w$ if $t \geq |w|$. Word $w$ is associated with a unique path through $\Gamma_X(G)$ given by the mapping $\widehat{w}$:[0, $\infty$) $\rightarrow$ $G$ where $\widehat{w}(t)$ is the element of $G$ that $w(t)$ represents. By convention, we use $\overline{w}$ to refer to the element represented by $w$ itself. Note that $\widehat{w}(t) = \overline{w(t)}$.

\begin{definition}
Let $w$ and $v$ be words in symmetric generating set $X$ of group $G$. We say that $w$ and $v$ satisfy the \textit{$M$-fellow traveler property} if $d_{\Gamma_X(G)}(\widehat{w}(t), \widehat{v}(t)) \leq M$ for all $t \geq 0$.
\end{definition}

The fellow traveler property can be used to build a statement equivalent to the existence of an automatic structure for a group. This result, given below, is Theorem $2.3.5$ of \cite{ECHLPT}.

\begin{thm}
A group G has an \textit{automatic structure} if and only if both of the following hold:

\begin{enumerate}
\item G has a word acceptor $W$ under finite symmetric generating set $X$,
\item There is a constant $M$ such that for every pair $w$, $v$ $\in$ $L(X)$, the language of words accepted by $W$ with respect to $X$, with $\overline{w}$, $\overline{v}$ distance zero or one apart in the Cayley graph, $w$ and $v$ satisfy the M-fellow traveler property. 
\end{enumerate}
\end{thm}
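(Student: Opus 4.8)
The plan is to prove the two implications separately, with the geometry of \emph{word differences} as the common thread. Throughout, write $\Delta_{w,v}(t) = \widehat{w}(t)^{-1}\,\widehat{v}(t) \in G$. Since $X$ is symmetric, $d_{\Gamma_X(G)}(\widehat{w}(t),\widehat{v}(t)) = |\Delta_{w,v}(t)|$, so the $M$-fellow traveler property for $(w,v)$ is exactly the statement that $\Delta_{w,v}(t)$ lies in the ball $B_M = \{g \in G : |g| \le M\}$ for every $t$. Because $X$ is finite, $B_M$ is finite, and conversely any finite subset of $G$ lies in some $B_M$; this dictionary is what lets us trade the fellow traveler constant for a finiteness statement about word differences.

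($\Rightarrow$) Suppose $G$ has an automatic structure with language $L$ (whose word acceptor we call $W$) and multiplier automata $M_s$ for $s \in X \cup \{\epsilon\}$, where $M_s$ reads the padded pair $(w,v)$ and accepts exactly when $w,v \in L$ and $\overline{w}s = \overline{v}$ ($M_\epsilon$ being the equality recognizer). I would first show, by a Myhill--Nerode style pumping argument, that for a fixed $s$ the value $\Delta_{w,v}(t)$ among accepted pairs $(w,v)$ depends only on the state of $M_s$ after reading $t$ letters, together with the bounded information of which components have already entered the padding region: given two accepted pairs passing through the same state at times $t$ and $t'$, cross-concatenating the prefix of one with the suffix of the other produces another accepted pair, and comparing the defining equations $\overline{w}s=\overline{v}$ forces the two word differences to coincide. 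Since $M_s$ has finitely many states, only finitely many word differences occur among accepted pairs; choosing $M$ so large that all of them — over all finitely many $s$ — lie in $B_M$ yields condition (2), and condition (1) is part of the given structure.

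($\Leftarrow$) Conversely, given a word acceptor $W$ for $G$ over $X$ and a fellow traveler constant $M$, I would construct, for each $s \in X \cup \{\epsilon\}$, a multiplier automaton $M_s$ as follows. On a padded input $(w,v)$, run two copies of $W$ synchronously — one on each component, freezing a copy once its component enters the padding region — and, in parallel, maintain the current word difference as a state ranging over the finite set $B_M$: reading a letter pair $(x,y)$ replaces the stored difference $g$ by $x^{-1} g y$, with $x$ or $y$ read as the identity when it is a padding symbol, and the automaton enters a dead reject state if this ever leaves $B_M$. Accept iff both copies of $W$ accept and the final stored difference equals $s$. This is a genuine finite state automaton. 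If $w,v \in L$ and $\overline{w}s = \overline{v}$, then $\overline{w}$ and $\overline{v}$ are at distance zero or one in $\Gamma_X(G)$, so hypothesis (2) keeps the difference in $B_M$ throughout and the pair is accepted; conversely, anything accepted has $w,v \in L$ and $\overline{w}^{-1}\overline{v} = s$. Hence $W$ together with $\{M_s\}$ is an automatic structure, surjectivity of $L$ onto $G$ being part of $W$ being a word acceptor.

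I expect the main obstacle to be the ($\Rightarrow$) direction: making the claim ``the word difference is determined by the automaton's state'' fully precise, in particular the bookkeeping for the padding tails and the verification that the cross-concatenated pairs are themselves legally padded inputs to $M_s$. The ($\Leftarrow$) direction is largely mechanical, the one delicate point being that the reject-on-leaving-$B_M$ rule never kills a pair that ought to be accepted — which is exactly the place where hypothesis (2) is used.
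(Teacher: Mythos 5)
Your proof is correct, and it follows the standard argument from \cite{ECHLPT}; the paper itself does not prove this statement but simply cites it as Theorem~2.3.5 of that reference. Both directions are as in the classical treatment: for ($\Rightarrow$), the cross-concatenation (pumping) argument showing that the word difference $\widehat{w}(t)^{-1}\widehat{v}(t)$ among pairs accepted by a fixed multiplier automaton $M_s$ is determined by the automaton state (together with the padding status), hence ranges over a finite set; for ($\Leftarrow$), the explicit construction of each $M_s$ by running two copies of $W$ in parallel while tracking the word difference as a state in the finite ball $B_M$, with the fellow-traveler hypothesis guaranteeing that no legitimate pair is rejected by leaving $B_M$. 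The caveats you flag — legal padding of the cross-concatenated pairs, and including the padding status in the ``state'' used to classify word differences — are exactly the routine bookkeeping that needs to be spelled out, and neither creates a real obstacle.
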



\vspace*{10pt}
\subsection*{2.2 Properties of Automaticity}
\leavevmode

In this section, we note important results relating to the automaticity of a group. We begin with Theorem $2.4.1$ of \cite{ECHLPT} that says automaticity is an algebraic property of the group itself, rather than a geometric property derived from the choice of a particular generating set.

\begin{thm}
Let $(G,  L(A_1))$ be an automatic structure for a group $G$. If $A_2$ is any other finite symmetric generating set for $G$, then there is a language $L(A_2)$ such that there is an automatic structure $(G, L(A_2))$ for $G$ as well.
\end{thm}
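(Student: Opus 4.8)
The plan is to prove that automaticity does not depend on the choice of finite symmetric generating set by passing through the fellow‑traveler characterization of Theorem~2.2 rather than working directly with finite state automata. Given the automatic structure $(G, L(A_1))$, I would fix a second finite symmetric generating set $A_2$ and build a candidate language $L(A_2)$ by replacing each letter of $A_1$ with a fixed word over $A_2$ representing the same element. Concretely, for each $a \in A_1$ choose once and for all a word $\sigma(a)$ over $A_2$ with $\overline{\sigma(a)} = a$; extend $\sigma$ to a map on words over $A_1$ by concatenation, so $\overline{\sigma(w)} = \overline{w}$ for every $w$. Then set $L(A_2) = \{ \sigma(w) : w \in L(A_1) \}$. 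Since every element of $G$ is represented by some $w \in L(A_1)$, it is represented by $\sigma(w) \in L(A_2)$, so $L(A_2)$ still surjects onto $G$.

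Next I would verify the two conditions of Theorem~2.2 for $(G, L(A_2))$. For condition (1), I need a word acceptor over $A_2$ for the language $L(A_2)$; this is the step where one genuinely uses the finite‑automaton side of the theory, so I would either cite the standard fact (again from \cite{ECHLPT}) that the image of a regular language under a letter‑to‑word substitution is regular, or, to stay within the geometric framework, invoke the part of Theorem~2.2 that guarantees a word acceptor exists from an automatic structure and note that substitution preserves regularity. For condition (2), suppose $\sigma(w), \sigma(v) \in L(A_2)$ with $\overline{\sigma(w)}$ and $\overline{\sigma(v)}$ distance zero or one apart in $\Gamma_{A_2}(G)$. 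Because $A_1$ and $A_2$ are both finite generating sets for the same group, the two word metrics are bi‑Lipschitz equivalent, so $\overline{w}$ and $\overline{v}$ are a bounded distance $D$ apart in $\Gamma_{A_1}(G)$; inserting at most $D$ intermediate elements and applying the $M$‑fellow‑traveler hypothesis of the original structure (condition (2) of Theorem~2.2 for $(G,L(A_1))$) along each short hop, one gets that $w$ and $v$ are $M'$‑fellow travelers in $\Gamma_{A_1}(G)$ for some $M'$ depending only on $M$ and $D$.

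The remaining work is to transfer this fellow‑traveler bound from $\Gamma_{A_1}(G)$ to $\Gamma_{A_2}(G)$ while accounting for the time‑reparametrization caused by $\sigma$: a prefix of $\sigma(w)$ of length $t$ over $A_2$ corresponds to a prefix of $w$ over $A_1$ whose length $s$ differs from $t$ by at most a bounded factor, and moreover the "partial" letter $\sigma(a)$ currently being traversed contributes only a bounded detour since each $\sigma(a)$ has bounded length. Combining the bi‑Lipschitz comparison of the two metrics with this bounded reparametrization and detour, the $M'$‑fellow‑traveler property in $\Gamma_{A_1}(G)$ yields an $M''$‑fellow‑traveler property in $\Gamma_{A_2}(G)$ for a uniform constant $M''$. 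With both conditions of Theorem~2.2 established, $(G, L(A_2))$ is an automatic structure and we are done.

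I expect the main obstacle to be the bookkeeping in this last transfer step — specifically, synchronizing the reading‑time parameter $t$ of the substituted words with the reading‑time parameter $s$ of the originals, since the substitution stretches words non‑uniformly and a prefix of $\sigma(w)$ need not be $\sigma$ of a prefix of $w$. Handling the "middle of a syllable" case cleanly, and checking that the detour and metric‑distortion constants can all be chosen uniformly in $w$ and $v$, is where the care is needed; the surjectivity and the regularity of $L(A_2)$ are comparatively routine.
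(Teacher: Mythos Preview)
The paper does not give its own proof of this theorem: Chapter~2 explicitly says ``these results are given without proof'' and attributes this statement to Theorem~2.4.1 of \cite{ECHLPT}. So there is no in-paper argument to compare against; your outline is in fact the standard ECHLPT approach (substitute each $A_1$-letter by a fixed $A_2$-word, push the regular language forward, and transfer the fellow-traveler constant).

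That said, the step you flag as ``just bookkeeping'' hides a genuine difficulty, and your sketch as written does not close it. You note that the $A_1$-time $s$ corresponding to an $A_2$-time $t$ satisfies $t/N \le s \le t$, a bounded \emph{multiplicative} distortion. But to transfer synchronous fellow-traveling you need a bounded \emph{additive} gap between $s_w$ and $s_v$ at the same $t$, and in general $|s_w - s_v|$ can grow linearly in $t$ (take $w$ built from letters whose $\sigma$-images are short and $v$ from letters whose images are long). With $|s_w - s_v|$ unbounded, the chain ``$\widehat{\sigma(w)}(t)$ is close to $\widehat w(s_w)$, which is close to $\widehat v(s_w)$, which is close to $\widehat v(s_v)$, which is close to $\widehat{\sigma(v)}(t)$'' breaks at the third link. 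The standard repair in \cite{ECHLPT} is to pad each $\sigma(a)$ to a common length $N$ with a new symbol $\$$ mapping to the identity; then $s_w = s_v = \lfloor t/N \rfloor$ exactly and your argument goes through over $A_2 \cup \{\$\}$, after which a separate (and itself nontrivial) lemma removes the padding symbol to land in $A_2$. So your plan is correct in spirit, but the reparametrization step needs this extra idea rather than only uniform-constant bookkeeping.
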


The language for an automatic structure has the \textit{uniqueness property} if it has exactly one representative word for each element of the associated group. Not every automatic structure has a language with the uniqueness property. However, every group that admits an automatic structure also admits an automatic structure whose language has the uniqueness property. To describe this, let $A$ be a totally ordered alphabet. We use this to define an ordering on the all possible words using $A$.

\begin{definition}
Given two words in $A$ of the same length, $a = a_1a_2...a_k$ and $b = b_1b_2...b_k$, we say $a < b$ \textit{lexicographically} if $a_i < b_i$ in $A$ and $a_j = b_j$ for all $j < i$.
\end{definition}

\begin{definition}
The \textit{ShortLex} order is a total order on the set of all words in the alphabet $A$ where for $a, b \in A$, $a < b$ if and only if $|a| < |b|$ or $|a| = |b|$ and $a < b$ lexicographically. 
\end{definition}

Given language $L(A)$, we will say that a word $w \in L(A)$ is \textit{ShortLex minimal in $L$} if it is the minimum in the ShortLex ordering of all possible words representing $\overline{w}$ in $L(A)$. Theorem $2.5.1$ of \cite{ECHLPT} uses this ordering to create a special automatic structure for an automatic group $G$.

\begin{thm}
Let $G$ be an automatic group with an automatic structure $(G, L(A))$. For each $g \in G$, let $l_g \in L(A)$ be the ShortLex minimal word of all representative words for $g$ in $L(A)$. Define $L'(A) \subseteq L$ to be $\{l_g$ $\vert$ $g \in G\}$. Then, $(G, L'(A))$ is also an automatic structure for $G$, and $L'(A)$ contains exactly one representative word for each element of $G$.
\end{thm}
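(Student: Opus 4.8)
The plan is to reduce everything to the fellow-traveler characterization of automaticity, Theorem 2.2: it suffices to produce a finite state automaton (a word acceptor) for $L'(A)$ together with a constant $M$ such that any two words of $L'(A)$ whose endpoints are distance zero or one apart in $\Gamma_A(G)$ satisfy the $M$-fellow traveler property. The fellow-traveler condition comes for free: since $(G, L(A))$ is automatic, Theorem 2.2 supplies such an $M$ for pairs of words in $L(A)$, and $L'(A) \subseteq L(A)$, so the same $M$ works verbatim. Uniqueness is essentially built into the construction: for each $g \in G$ the set of words of $L(A)$ representing $g$ is nonempty (every element is represented in an automatic structure), ShortLex is a total order, and below any fixed word there are only finitely many words, so every nonempty set of words has a ShortLex-least element; hence $l_g$ is well defined and is the unique ShortLex-minimal $L(A)$-representative of $g$, and $L'(A) = \{l_g : g \in G\}$ meets each element of $G$ exactly once.

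The substance of the argument is therefore the single claim that $L'(A)$ is a regular language, which I would obtain from $L(A)$ by closure operations, working over the padded product alphabet $(A \cup \{\$\})^2$ used to encode pairs of words. First, the equality relation $E = \{(w,v) : w, v \in L(A),\ \overline{w} = \overline{v}\}$ is regular: membership of each coordinate in $L(A)$ is verified by running the word acceptor for $L(A)$ on each track, while the fellow-traveler property forces the group element $\widehat{w}(t)^{-1}\widehat{v}(t)$ to remain inside the ball of radius $M$ about the identity, a finite set since $A$ is finite, so a finite automaton can track that element and accept exactly when both tracks have terminated and it equals the identity. Second, $\{(w,v) : v <_{\mathrm{ShortLex}} w\}$ is regular, since comparing lengths and then locating the first differing letter is a routine finite-state computation. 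Intersecting, $P := \{(w,v) : w, v \in L(A),\ \overline{w} = \overline{v},\ v <_{\mathrm{ShortLex}} w\}$ is regular. Projecting $P$ onto its first coordinate — relabel each transition $(a,b)$ of an automaton for $P$ by $a$ and each transition $(\$, b)$ by the empty word, then determinize — yields a regular language that is exactly the set of $w \in L(A)$ which are \emph{not} ShortLex-minimal among the $L(A)$-representatives of $\overline{w}$. Since this set is contained in $L(A)$, removing it from the regular language $L(A)$ produces $L'(A)$, which is thus regular and so has a word acceptor. Combined with the inherited fellow-traveler bound, Theorem 2.2 then gives the automatic structure $(G, L'(A))$.

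The step I expect to be the real obstacle is precisely the regularity of $L'(A)$, and within it the two automata-theoretic facts it rests on: that the equality relation $E$ is regular on the padded product alphabet (using the fellow-traveler property together with local finiteness of $\Gamma_A(G)$), and that regular relations on a padded product alphabet are closed under projection onto a coordinate. Everything else in the proof is either inherited directly from $(G, L(A))$ or an immediate consequence of ShortLex being a total order in which every bounded-length initial segment is finite.
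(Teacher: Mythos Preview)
Your argument is correct and is essentially the standard proof (as in \cite{ECHLPT}, Theorem~2.5.1). Note, however, that the paper itself does not give a proof of this statement: Chapter~2 explicitly records these results ``without proof'' and refers the reader to \cite{ECHLPT}. So there is no proof in the paper to compare against; your outline simply supplies what the paper omits, via the expected route of inheriting the fellow-traveler constant from $L(A)$ and establishing regularity of $L'(A)$ by building the equality-and-ShortLex-smaller relation on the padded alphabet, projecting, and complementing inside $L(A)$.

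One small wording point in your projection step: since $v <_{\mathrm{ShortLex}} w$ implies $|v| \le |w|$, the padding symbol $\$$ only appears on the second coordinate in accepting runs, so the transitions you need to handle are $(a,\$)$ (relabel as $a$), not $(\$,b)$. This does not affect the correctness of the argument.
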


\newpage

\vspace*{40pt}

\setcounter{section}{3}

\addcontentsline{toc}{section}{Chapter 3: Thompson's Group F}

\noindent
\begin{large}
\textbf{Chapter 3: Thompson's Group $F$}
\end{large}

Thompson's group $F$ is a particularly interesting group of piecewise-linear homeomorphisms of the unit interval. It was first discovered by Richard J. Thompson in 1965 through his study of logic. $F$ has many different manifestations and has been studied extensively in many areas of mathematics.

We begin by defining $F$ analytically and defining the standard finite presentation for $F$. Then, we briefly introduce the concept of tree pair diagrams. Of particular interest is how these two methods of defining $F$ allow us to understand the action of $F$ on the vertices of the infinite binary tree and the action of individual generators of $F$. For further detail, \cite{CFP} provides an excellent introduction to $F$ and related groups defined by Thompson.


\vspace*{10pt}
\subsection*{3.1 Defining $F$}
\leavevmode

\begin{definition}
\textit{F} is the group of all piecewise-linear homeomorphisms of $[0, 1]$ to itself, differentiable except at a finite number of dyadic rationals (numbers of the form $\frac{a}{2^b}$ where $a$ is an integer and $b$ is a natural number), and the derivatives on all intervals of differentiability are integral powers of 2.
\end{definition}

We will look at two specific elements of $F$, denoted $x_0$ and $x_1$. Their graphs are in Figure 1.
\[ 
x_0(x) = 
	\begin{cases} 
     		2x 			& 	0 \leq x \leq \frac{1}{4} \\
      		x+\frac{1}{4} 	& 	\frac{1}{4} \leq x \leq \frac{1}{2} \\
      		\frac{x+1}{2} 	& 	\frac{1}{2} \leq x \leq 1 
   	\end{cases}
\hspace{1cm}
x_1(x) = 
	\begin{cases} 
     		x 			& 	0 \leq x \leq \frac{1}{2} \\
      		2x-\frac{1}{2} 	& 	\frac{1}{2} \leq x \leq \frac{5}{8} \\
      		x+\frac{1}{8} 	& 	\frac{5}{8} \leq x \leq \frac{3}{4} \\
      		\frac{x+1}{2} 	& 	\frac{3}{4} \leq x \leq 1 
   	\end{cases}
\]

\begin{figure}[ht]
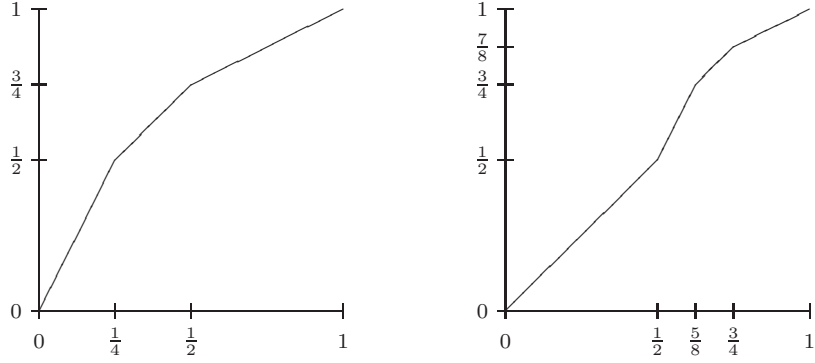

\[
\xy
(-40,40);(-40,0)**@{-}; (0,0)**@{-};
(-40,0);(-30,20)**@{-};(-20,30)**@{-};(0,40)**@{-};
(-41,40);(-39,40)**@{-}; (-43,40)*{\scriptstyle 1};
(-41,30);(-39,30)**@{-}; (-43,30)*{\scriptstyle \frac{3}{4}};
(-41,20);(-39,20)**@{-}; (-43,20)*{\scriptstyle \frac{1}{2}};
(-41,0);(-39,0)**@{-}; (-43,0)*{\scriptstyle 0};
(-40,-1);(-40,1)**@{-}; (-40,-4)*{\scriptstyle 0};
(-30,-1);(-30,1)**@{-}; (-30,-4)*{\scriptstyle \frac{1}{4}};
(-20,-1);(-20,1)**@{-}; (-20,-4)*{\scriptstyle \frac{1}{2}};
(0,-1);(0,1)**@{-}; (0,-4)*{\scriptstyle 1};
\endxy
\qquad\qquad
\xy
(-40,40);(-40,0)**@{-}; (0,0)**@{-};
(-40,0);(-20,20)**@{-};(-15,30)**@{-};(-10,35)**@{-};(0,40)**@{-};
(-41,40);(-39,40)**@{-}; (-43,40)*{\scriptstyle 1};
(-41,35);(-39,35)**@{-}; (-43,35)*{\scriptstyle \frac{7}{8}};
(-41,30);(-39,30)**@{-}; (-43,30)*{\scriptstyle \frac{3}{4}};
(-41,20);(-39,20)**@{-}; (-43,20)*{\scriptstyle \frac{1}{2}};
(-41,0);(-39,0)**@{-}; (-43,0)*{\scriptstyle 0};
(-40,-1);(-40,1)**@{-}; (-40,-4)*{\scriptstyle 0};
(-20,-1);(-20,1)**@{-}; (-20,-4)*{\scriptstyle \frac{1}{2}};
(-15,-1);(-15,1)**@{-}; (-15,-4)*{\scriptstyle \frac{5}{8}};
(-10,-1);(-10,1)**@{-}; (-10,-4)*{\scriptstyle \frac{3}{4}};
(0,-1);(0,1)**@{-}; (0,-4)*{\scriptstyle 1};
\endxy
\]
\caption{$x_0$ and $x_1$}
\end{figure}

A \textit{standard dyadic interval in} $[0, 1]$ is an interval of the form $[\frac{a}{2^n},\frac{a+1}{2^n}]$ where $a$ and $n$ are non-negative integers with $a \leq 2^n -1$. A \textit{standard dyadic partition of} $[0, 1]$ is a partition $0 = a_0 \leq a_1 \leq ... \leq a_n = 1$ where each sub-interval $[a_i, a_{i+1}]$ is a standard dyadic interval. We use standard dyadic partitions as a way to understand elements of $F$. The following result is Lemma $2.2$ of \cite{CFP} which defines the direct connection between $F$ and these partitions.

\begin{lem}
Let $f \in F$. Then there is a standard dyadic partition $0 = a_0 \leq a_1 \leq ... \leq a_n = 1$ such that $f$ is linear on each interval of the partition and $0 = f(a_0) \leq f(a_1) \leq ... \leq f(a_n) = 1$ is  standard dyadic partition.
\end{lem}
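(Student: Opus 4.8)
The plan is to produce an explicit standard dyadic partition of the domain --- a sufficiently fine \emph{uniform} one --- and to check that $f$ carries each of its subintervals affinely onto a standard dyadic interval. Once that is done, those image intervals tile $[0,1]$ and therefore automatically assemble into a standard dyadic partition, which is exactly what is claimed.

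First I would extract the elementary consequences of the definition of $F$: the map $f$ has only finitely many breakpoints, all dyadic rationals, and between consecutive breakpoints $f$ is affine with slope an integer power of $2$. List the breakpoints together with the endpoints of $[0,1]$ as $0=b_0<b_1<\dots<b_m=1$, and let $2^{c_i}$ be the slope of $f$ on $[b_{i-1},b_i]$. Each $f(b_i)$ is again a dyadic rational: this follows inductively from $f(b_0)=0$ via $f(b_i)=f(b_{i-1})+2^{c_i}(b_i-b_{i-1})$, a sum of dyadic rationals. Write $q_i$ for the exponent such that $f(b_{i-1})$ has denominator $2^{q_i}$.

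Next I would choose an integer $n$ satisfying three conditions: (a) every $b_i$ is an integer multiple of $2^{-n}$; (b) $c_i\le n$ for all $i$; and (c) $n\ge q_i+c_i$ for all $i$. These are finitely many requirements, so such an $n$ exists. Put $a_j=j\,2^{-n}$ for $j=0,\dots,2^n$; this is a standard dyadic partition of $[0,1]$. By (a) no breakpoint lies in the interior of $[a_{j-1},a_j]$, so $f$ is affine there with some slope $2^{c_i}$, and $f([a_{j-1},a_j])$ is an interval of length $2^{c_i-n}$, which is $\le 1$ by (b). The one point that needs care is that its left endpoint $f(a_{j-1})$ is an integer multiple of $2^{c_i-n}$: writing $f(a_{j-1})=f(b_{i-1})+2^{c_i}(a_{j-1}-b_{i-1})$, the second summand is an integer multiple of $2^{c_i-n}$ by (a), and the first is one by (c) (its denominator $2^{q_i}$ divides $2^{n-c_i}$). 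Hence $f([a_{j-1},a_j])$ has the form $[k\,2^{-(n-c_i)},(k+1)\,2^{-(n-c_i)}]$ with $0\le k\le 2^{n-c_i}-1$, a standard dyadic interval.

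To conclude, $f$ is affine on each piece of the standard dyadic partition $\{[a_{j-1},a_j]\}$, and the images $f([a_{j-1},a_j])$ are standard dyadic intervals with pairwise disjoint interiors (because $f$ is a homeomorphism) whose union is $[0,1]$; ordering them by left endpoint makes them consecutive, so $0=f(a_0)<f(a_1)<\dots<f(a_{2^n})=1$ is a standard dyadic partition. I expect condition (c) to be the only subtle step: merely taking $n$ large enough to separate the breakpoints does not suffice, since on a piece with large slope the relevant image lattice $2^{c_i-n}\mathbf{Z}$ is \emph{coarser} than $2^{-n}\mathbf{Z}$, so one must enlarge $n$ until the breakpoint values $f(b_i)$ are already sufficiently ``round'' relative to the slopes. (An alternative would be to induct on $m$, removing one breakpoint at a time by a local dyadic subdivision of domain and range, but tracking the compatibility of the two subdivisions is more cumbersome than the uniform choice above.)
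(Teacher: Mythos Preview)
Your proof is correct. Note, however, that the paper does not actually prove this lemma: it is quoted as Lemma~2.2 of \cite{CFP} and stated without proof. Your argument is essentially the standard one found in that reference --- choose a fine enough uniform dyadic partition of the domain so that no breakpoint lies in the interior of any cell, and then verify that each cell maps onto a standard dyadic interval. Your condition~(c), ensuring that the breakpoint \emph{values} $f(b_{i-1})$ lie in the lattice $2^{c_i-n}\mathbf{Z}$ appropriate to the slope on that piece, is exactly the point one must be careful about, and you have handled it cleanly.
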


Finally, before moving on, we note that our composition for elements of $F$ is done left-to-right. Additionally, we will note that the special elements $x_0$ and $x_1$ given above, along with their inverses, form the standard finite generating set for $F$. In what follows, if we discuss a finite generating set for $F$, we will be specifically referring to $X = \{x_0, x_1, x_0^{-1}, x_1^{-1}\}$.


\vspace*{10pt}
\subsection*{3.2 Tree Pair Diagrams}
\leavevmode

\begin{definition}
An $\textit{ordered, rooted binary tree}$ is a tree that:
\begin{enumerate}
\item Has a root vertex,
\item If it has more vertices than the root, then the root has valence $2$,
\item If $v$ is a vertex with valence greater than $1$, then there are exactly two edges, $e_{v, L}$ and $e_{v, R}$, which contain $v$ and are not contained in the geodesic path from the root to $v$. We refer to $e_{v, L}$ as the \textit{left edge of $v$} and $e_{v, R}$ as the \textit{right edge of $v$}. These edges terminate in vertices, the \textit{left child of $v$} and the \textit{right child of $v$} respectively.
\end{enumerate}

\vspace*{5pt}

\noindent The single vertex of valence $0$ or $2$ is the \textit{root} of the tree. Any vertex of valence $1$ is a \textit{leaf} of the tree. If an ordered, rooted binary tree has at least two vertices and no vertices of valence $1$, we will refer to it as the \textit{infinite binary tree} $\mathcal{T}$. If an ordered, rooted binary tree has a finite number of vertices, call it a \textit{finite binary tree}.

\noindent Given two vertices $v_1$ and $v_2$ on an ordered, rooted binary tree, we will use $d(v_1, v_2)$ to denote the number of edges on a geodesic path between these vertices.
\end{definition}

We define a way to label the vertices of $\mathcal{T}$, or a finite binary tree, using standard dyadic intervals in $[0,1]$. The root vertex is labeled $[0, 1]$. The children of each vertex divide the label interval of their parent vertex in half. The left child is the left half of its parent interval while the right child is the right half of its parent interval. See Figure 2 for a labeling of $\mathcal{T}$ with these standard dyadic intervals. We will assume that $\mathcal{T}$ is always given with this labeling.

\begin{definition}
Vertices with labels of the form $[0, k]$ will be said to be on the \textit{left spine}, $L$, of the tree. Vertices with labels of the form $[j, 1]$, where $j$ is not $0$ will be said to be on the \textit{right spine}, $R$, of the tree.

\noindent A vertex $v$ in $\mathcal{T}$ is called an \textit{exterior vertex} if it is in $L \cup R$. Any vertex that is not exterior is called an \textit{interior node}. The collection of exterior vertices $L \cup R$ is called the \textit{exterior set}. The collection of interior vertices is called the \textit{interior set}.
\end{definition}

Note that it is a deliberate choice put the root in $L$ and not in $R$. This will become useful when we discuss the generators of $F$, specifically $x_1$ and $x_1^{-1}$.

\begin{figure}[ht]
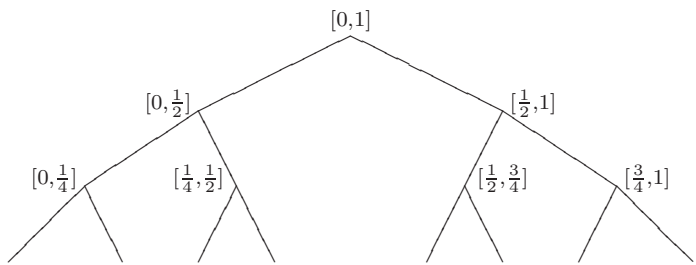

\[
\xy
(0,0); (20,-10)**@{-};
(0,0); (-20, -10)**@{-};
(20,-10);(35, -20)**@{-};
(20,-10);(15, -20)**@{-};
(-20, -10);(-35, -20)**@{-};
(-20, -10);(-15, -20)**@{-};
(35,-20);(45, -30)**@{-};
(35,-20);(30, -30)**@{-};
(-35,-20);(-45, -30)**@{-};
(-35,-20);(-30, -30)**@{-};
(15, -20);(20, -30)**@{-};
(15, -20);(10, -30)**@{-};
(-15, -20);(-20, -30)**@{-};
(-15, -20);(-10, -30)**@{-};
(0,2)*{\scriptstyle [0,1]};
(24,-9)*{\scriptstyle [\frac{1}{2},1]};
(20,-19)*{\scriptstyle [\frac{1}{2},\frac{3}{4}]};
(39,-19)*{\scriptstyle [\frac{3}{4},1]};
(-24,-9)*{\scriptstyle [0,\frac{1}{2}]};
(-20,-19)*{\scriptstyle [\frac{1}{4},\frac{1}{2}]};
(-39,-19)*{\scriptstyle [0,\frac{1}{4}]};
\endxy
\]
\caption{$\mathcal{T}$ with Standard Dyadic Intervals Labels}
\end{figure}

Consider a finite binary tree labeled as a subtree of $\mathcal{T}$. The leaves of such a tree form a standard dyadic partition of $[0, 1]$. In fact, any standard dyadic partition of $[0, 1]$ can be given as the leaves of a unique finite binary tree.

We will use tree pair diagrams to understand and work with $F$ from a geometric perspective. A \textit{tree pair diagram} consists of an ordered pair of rooted finite binary trees $D$ and $R$ with the same number of leaves. These are usually given as $(D, R)$ where $D$ is called the \textit{domain tree} and $R$ is called the \textit{range tree}. When both trees are labeled using standard dyadic intervals, the leaves of the trees give standard dyadic partitions that define an element of $F$. The partition given by $D$ is the domain partition of the element and the partitions given by $R$ is the range partition of the element as we saw in Lemma $3.9$. There are multiple tree pair diagrams that can be obtained that represent the same element of $F$. Additionally, the tree pair diagram for the inverse of an element of $F$ can be obtained by interchanging the domain and range trees.

We will take note of tree pair diagrams for $x_0$ and $x_1$. First, $x_0$ sends the dyadic partition $0 \leq \frac{1}{4} \leq \frac{1}{2} \leq 1$ to the partition $0 \leq \frac{1}{2} \leq \frac{3}{4} \leq 1$. This corresponds with the tree pair diagram shown in Figure 3.

\begin{figure}[ht]
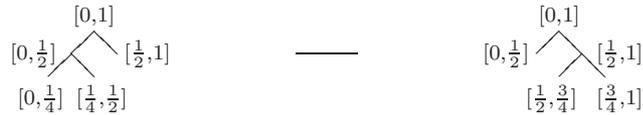

\[
\xy
(-3,3);(-6,6)**@{-}; (-9,3)**@{-}; (-12,0)**@{-};
(-9,3); (-6,0)**@{-};
(-14,3)*{\scriptstyle [0, \frac{1}{2}]};
(-6,8)*{\scriptstyle [0, 1]};
(1,3)*{\scriptstyle [\frac{1}{2}, 1]};
(-13,-3)*{\scriptstyle [0, \frac{1}{4}]};
(-5, -3)*{\scriptstyle [\frac{1}{4}, \frac{1}{2}]};
\endxy
\qquad\qquad
\xy
(0,3);(8,3)**@{-}; (7,4)**@{-};
(8,3); (7,2)**@{-};
(0,0); (0,0)**@{-};
\endxy
\qquad\qquad
\xy
(3,3);(6,6)**@{-}; (9,3)**@{-}; (12,0)**@{-};
(9,3); (6,0)**@{-};
(6,8)*{\scriptstyle [0, 1]};
(-1,3)*{\scriptstyle [0, \frac{1}{2}]};
(14,3)*{\scriptstyle [\frac{1}{2}, 1]};
(5,-3)*{\scriptstyle [\frac{1}{2}, \frac{3}{4}]};
(14, -3)*{\scriptstyle [\frac{3}{4}, 1]};
\endxy
\]
\caption{Reduced Tree Pair for $x_0$ with Dyadic Interval Labels}
\end{figure}

Next, $x_1$ sends the dyadic partition $0 \leq \frac{1}{2} \leq \frac{5}{8} \leq \frac{3}{4} \leq 1$ to the partition $0 \leq \frac{1}{2} \leq \frac{3}{4} \leq \frac{7}{8} \leq 1$. This corresponds with the tree pair diagram shown in Figure 4.

\begin{figure}[ht]
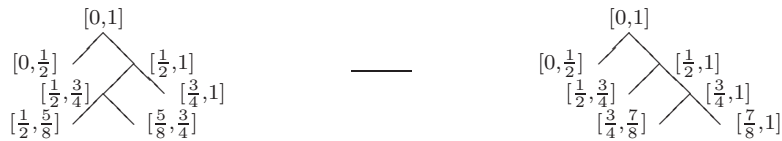

\[
\xy
(-4,-4);(0,0)**@{-}; (4,-4)**@{-}; (8,-8)**@{-};
(4,-4); (0,-8)**@{-}; (-4,-12)**@{-};
(0,-8); (4,-12)**@{-};
(-9,-4)*{\scriptstyle [0, \frac{1}{2}]};
(0,2)*{\scriptstyle [0, 1]};
(-5,-8)*{\scriptstyle [\frac{1}{2}, \frac{3}{4}]};
(9,-4)*{\scriptstyle [\frac{1}{2}, 1]};
(9,-12)*{\scriptstyle [\frac{5}{8}, \frac{3}{4}]};
(13,-8)*{\scriptstyle [\frac{3}{4}, 1]};
(-9,-12)*{\scriptstyle [\frac{1}{2}, \frac{5}{8}]};
\endxy
\qquad\qquad
\xy
(0,-5);(8,-5)**@{-}; (7,-4)**@{-};
(8,-5); (7,-6)**@{-};
(0,0); (0,0)**@{-};
\endxy
\qquad\qquad
\xy
(-4,-4);(0,0)**@{-}; (4,-4)**@{-}; (8,-8)**@{-}; (12,-12)**@{-};
(4,-4); (0,-8)**@{-};
(8,-8); (4,-12)**@{-};
(-9,-4)*{\scriptstyle [0, \frac{1}{2}]};
(0,2)*{\scriptstyle [0, 1]};
(-5,-8)*{\scriptstyle [\frac{1}{2}, \frac{3}{4}]};
(9,-4)*{\scriptstyle [\frac{1}{2}, 1]};
(-1,-12)*{\scriptstyle [\frac{3}{4}, \frac{7}{8}]};
(13,-8)*{\scriptstyle [\frac{3}{4}, 1]};
(17,-12)*{\scriptstyle [\frac{7}{8}, 1]};
\endxy
\]
\caption{Reduced Tree Pair for $x_1$ with Dyadic Interval Labels}
\end{figure}


\vspace*{10pt}
\subsection*{3.3 The Action of $F$ on $\mathcal{T}$ and the Infix Ordering}
\leavevmode

We begin by observing that each dyadic rational in $(0, 1)$ is the midpoint of exactly one standard dyadic interval in $[0, 1]$. Each dyadic number in $(0,1)$ can be written in the form $[\frac{2n + 1}{2^k}]$ where $n$ and $k$ are positive integers. It will be the midpoint of standard dyadic interval $[\frac{n}{2^{k-1}}, \frac{n+1}{2^{k-1}}]$ and no other standard dyadic interval. Given any vertex $v \in \mathcal{T}$, this gives a secondary label $r(v)$ where $r(v)$ is the midpoint of the interval label for $v$. This labeling $r$ is a bijection between the vertices of $\mathcal{T}$ and the dyadics in $(0, 1)$.

The vertices of an ordered rooted binary tree, including $\mathcal{T}$, have a natural ordering, the \textit{infix order}. It is a left to right ordering defined as follows. If the tree has a single vertex, the ordering is trivial. Otherwise, given any vertex $v$ with children $v_L$ and $v_R$, $v_L$ and its descendants are to the left of $v$ in the infix ordering, while $v_R$ and its descendants are to the right of $v$ in the infix ordering.

Observe that a vertex $v \in \mathcal{T}$, labeled with interval label $I$ and dyadic label $r(v)$, will have left and right descendants. All left descendants of $v$ have interval labels which are subintervals of the left half of $I$. Thus, their dyadic labels, which are the center of these subintervals, are less than $r(v)$. All right descendants of $V$ have interval labels which are subintervals of the right half of $I$, so their dyadic labels are greater then $r(v)$. Therefore, $r$ carries the infix ordering of vertices in $\mathcal{T}$ to the usual ordering of the dyadics in $(0,1)$.

There is a standard action for $F$ on the vertices of $\mathcal{T}$, which we will note is a right action. Let $f \in F$ and note that $f$ has a tree pair diagram $(D,R)$. The action of $f$ maps the leaves of the domain tree to the leaves of the range tree in left-to-right order, viewed as a subtree of $\mathcal{T}$. This corresponds with the fact that $f$ carries the interval labels of the domain tree's leaves in order to the interval labels of the range tree's leaves linearly. If $f$ carries interval $I$ to interval $J$ affinely, it carries the left half of $I$ to the left half of $J$ affinely and similarly for the right halves.  Thus if the action of $f$ maps a vertex $v$ to $v'$, we define the action of $f$ to map the left child of $v$ to the left child of $v'$ and the right child of $v$ to the right child of $v'$, consistent with the action on the intervals. This inductively defines the action on all the descendants of the leaves of the domain tree. There are finitely many vertices above the leaves where we must still define the action.

Notice that the above means that if $f$ takes a leaf $v$ to leaf $v'$, the full subtree of $\mathcal{T}$ rooted at $v$ will be rigidly taken to the subtree rooted at $v'$. Also, whenever $f$ takes an interval $I$ to interval $J$, it certainly takes the midpoint of $I$ to the midpoint of $J$. Thus, the action of $f$ on the vertices defined so far maps dyadic labels to dyadic labels exactly as the element $f$ does as a homeomorphism of $[0,1]$.

This allows us to define the action of $f$ on the finitely remaining vertices of $\mathcal{T}$. For each vertex $v$ above the leaves of the domain tree, define $f(v)$ to be the unique vertex so that $r(f(v)) = f(r(v))$. This fully defines the action of an element $f$ of $F$ on the vertices of $\mathcal{T}$.

The action of $f$ on the vertices of $\mathcal{T}$ as defined above fully corresponds to the mapping of $f$ as a homeomorphism on the dyadic labeling of these vertices. Because all elements of $F$ are order-preserving on dyadics as homeomorphisms of $[0, 1]$ and $r$ is order-preserving for the infix order, the action of $F$ preserves the infix ordering of the vertices of $\mathcal{T}$.

Before moving on, we will look at the finite generating set for $F$ and consider the action of the generators on $\mathcal{T}$. We begin with $x_0$. Generator $x_0$ preserves the exterior and interior sets, simply mapping vertices to new positions in their respective sets. Vertices in $R$ are mapped to $R$, one position further from the root. The root vertex is mapped to $[\frac{1}{2}, 1]$. The remaining vertices in $L$ are mapped to $L$, one position closer to the root. All of the interior vertices are carried along as each interior vertex is the descendant of a unique vertex on the spine, except for the descendants of $[\frac{1}{4}, \frac{1}{2}]$. These vertices are mapped to the descendants of $[\frac{1}{2}, \frac{3}{4}]$. In general, this can be thought of as a clockwise rotation of the vertices of $\mathcal{T}$ by a single position.

We can examine the tree pair diagram for $x_0^{-1}$ by interchanging the domain and range trees in the tree pair diagram for $x_0$. This gives a general counter-clockwise rotation of $\mathcal{T}$ by a single position. As with $x_0$, $x_0^{-1}$ does not change the interior and exterior sets.

Likewise, we can see the action of $x_1$ on $\mathcal{T}$. Generator $x_1$ preserves the vertices of $L$ and their descendants entirely, mapping them to their original positions. On the subtree of $\mathcal{T}$ rooted at $[\frac{1}{2}, 1]$, $x_1$ acts exactly as $x_0$ does on the full tree. Define the vertex $[\frac{1}{2}, 1]$ as the \textit{pivot vertex}. Notice that exactly one vertex is carried from the interior to the exterior; namely $[\frac{1}{2}, \frac{3}{4}]$ is mapped to the pivot. However, $x_1$ does not move any vertices between the left and right spines of the tree.

Generator $x_1^{-1}$, similarly obtained by interchanging the domain and range trees of $x_1$, gives a counter-clockwise rotation about the pivot. It takes a single exterior vertex and makes it interior and does not move any vertices between the two spines of the tree.

This gives us a solid understanding of the standard finite generating set for $F$. Of particular note, we can only change the interior/exterior sets by using an $x_1^{-1}$ or $x_1$ rotation. The only way to move vertices between the left and right sides of the tree is by using an $x_0$ or $x_0^{-1}$ rotation. Further, any use of a generator may only change the interior and exterior sets by at most one vertex.


\vspace*{10pt}
\subsection*{3.4 Other Notes on $F$}
\leavevmode

For some time, geodesic elements of $F$ posed a problem. However, in his thesis, Blake Fordham laid out a method to use reduced tree pair diagrams to calculate minimal lengths of representative words in the generating set $\{ x_0, x_1, x_0^{-1}, x_1^{-1} \}$. Since then, several similar techniques have been developed to calculate minimal lengths, but Fordham's method was highly innovative at the time. It involves labeling carets in the domain and range trees according to the infix ordering and classifying them into one of seven types. Weights are assigned to vertex pairs, determined by the infix ordering, and summing these weights gives us word lengths. We will use Fordham's method and direct interested readers to \cite{Ford} for complete details.

Any word in the standard finite generating set for $F$, read left to right, can be thought of as a sequence of instructions for moving the vertices of $\mathcal{T}$. This will allow vertices that are initially quite far from positions $[\frac{1}{2}, 1]$ and $[\frac{1}{2}, \frac{3}{4}]$ to be mapped to these positions and then mapped between the interior and exterior sets. Recall from our discussion of automaticity that we use $\overline{w}$ to refer to the element represented by a word $w$. 

\begin{definition}
Let $w$ be a word in $X$ and let $v$ be a vertex in $\mathcal{T}$. We say that $v$ is \textit{made interior at time t in w} if $\overline{w(t)}(v) = [\frac{1}{2}, \frac{3}{4}]$, a vertex in the interior set, but $\overline{w(t-1)}(v) = [\frac{1}{2}, 1]$, a vertex in the exterior set. Similarly, we say that $v$ is \textit{made exterior at time t in w} if $\overline{w(t)}(v) = [\frac{1}{2}, 1]$ but $\overline{w(t-1)}(v) = [\frac{1}{2}, \frac{3}{4}]$. 

\vspace*{5pt}

\noindent In general, we can say that vertex $v$ is \textit{interior in w} if $\overline{w}(v)$ is in the interior set. Alternatively, $v$ is \textit{exterior in w} if $\overline{w}(v)$ is in the exterior set.
\end{definition}

Note, a vertex may be made interior or exterior multiple times in a given word. Further, a vertex that is made interior/exterior during the course of a word need not be interior/exterior in the completed word itself.

\newpage

\vspace*{40pt}

\setcounter{section}{4}

\addcontentsline{toc}{section}{Chapter 4: Minimal Length Plus a Constant}

\noindent
\begin{large}
\textbf{Chapter 4: Minimal Length Plus a Constant}
\end{large}

Consider the family of elements of $F$, \{$\overline{f_k}$\}, given by representative words $f_k$ = $x_0^{-(k-1)}x_1^{-1}x_0^{(2k)}x_1^{-1}x_0^{-k}$ for values of $k$ greater than or equal to $2$. These elements correspond to the tree pairs shown in the figure below, where each tree has $k+1$ carets on both the left and right side of the tree, excluding the caret at the apex of the tree.

\begin{figure}[ht]
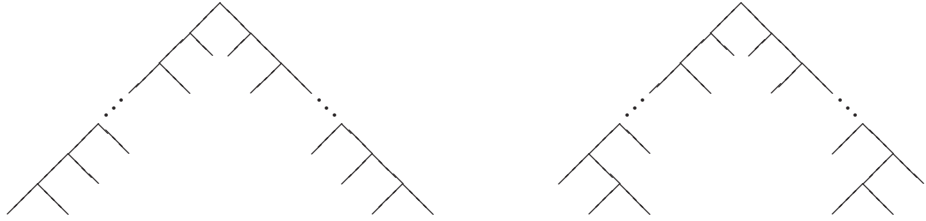

\[
\xy
(0,0);(4,-4)**@{-}; (8,-8)**@{-}; (12,-12)**@{-};
(13,-13)*{\cdot};
(14,-14)*{\cdot};
(15,-15)*{\cdot};
 (16,-16); (20,-20)**@{-}; (24,-24)**@{-}; (28,-28)**@{-};
(0,0);(-4,-4)**@{-};(-8,-8)**@{-}; (-12,-12)**@{-}; 
(-13,-13)*{\cdot};
(-14,-14)*{\cdot};
(-15,-15)*{\cdot};
(-16,-16); (-20,-20)**@{-}; (-24,-24)**@{-}; (-28,-28)**@{-};
(4,-4); (1,-7)**@{-};
(8,-8); (4,-12)**@{-};
(16,-16); (12,-20)**@{-};
(20,-20); (16,-24)**@{-};
(24,-24); (20,-28)**@{-};
(-4,-4); (-1,-7)**@{-};
(-8,-8); (-4,-12)**@{-};
(-16,-16); (-12,-20)**@{-};
(-20,-20); (-16,-24)**@{-};
(-24,-24); (-20,-28)**@{-};
\endxy
\qquad\qquad
\xy
(0,0);(4,-4)**@{-}; (8,-8)**@{-}; (12,-12)**@{-};
(13,-13)*{\cdot};
(14,-14)*{\cdot};
(15,-15)*{\cdot};
 (16,-16); (20,-20)**@{-}; (24,-24)**@{-};
(0,0);(-4,-4)**@{-};(-8,-8)**@{-}; (-12,-12)**@{-}; 
(-13,-13)*{\cdot};
(-14,-14)*{\cdot};
(-15,-15)*{\cdot};
(-16,-16); (-20,-20)**@{-}; (-24,-24)**@{-};
(4,-4); (1,-7)**@{-};
(8,-8); (4,-12)**@{-};
(16,-16); (12,-20)**@{-};
(20,-20); (16,-24)**@{-}; (20,-28)**@{-};
 (16,-24); (12,-28)**@{-};
(-4,-4); (-1,-7)**@{-};
(-8,-8); (-4,-12)**@{-};
(-16,-16); (-12,-20)**@{-};
(-20,-20); (-16,-24)**@{-}; (-20,-28)**@{-};
 (-16,-24); (-12,-28)**@{-};
\endxy
\]
\caption{Tree Pair for $\overline{f_k}$}
\end{figure}

There are many ways we can build a word representing each $\overline{f_k}$ with two specific ways that are of particular use to us. Roughly speaking, they correspond to making $[0, \frac{1}{2}^{k}]$ internal first then making $[\frac{2^k - 1}{2^k}, 1]$ internal, or the reverse. Starting with the empty word, we carry out a series of $x_0^{-1}$ rotations to reach the position where vertex $[0, \frac{1}{2}^{k}]$ can be made internal. Then, a series of $x_0$ rotations brings us to the position where vertex $[\frac{2^k - 1}{2^k}, 1]$ can be made internal, after which we rebalance the tree. The alternative proceeds in an analogous fashion, but we make vertex $[\frac{2^k - 1}{2^k}, 1]$ internal before making node $[0, \frac{1}{2}^{k}]$ internal.

These instructions correspond to words in $X = \{x_0, x_1, x_0^{-1}, x_1^{-1}\}$, so we can write the element $\overline{f_k}$ as either:

\begin{enumerate}
\item $f_k = x_0^{-(k - 1)}x_1^{-1}x_0^{(2k)}x_1^{-1}x_0^{-k}$ or
\item $g_k = x_0^{(k+1)}x_1^{-1}x_0^{-(2k - 1)}x_1^{-1}x_0^{(k - 1)}$
\end{enumerate}

It follows from the relations in $F$ that  $\overline{f_k} = \overline{g_k}$. Using Fordham's method for computing geodesic lengths of elements of $F$, it is relatively straightforward to show that any minimal length representative of $\overline{f_k}$ in the alphabet $\{x_0, x_1,$ $ x_0^{-1}, x_1^{-1}\}$ has length $4k + 1$. That is to say, $|\overline{f_k}| = 4k + 1$. This means that both $f_k$ and $g_k$ are geodesic words for $|\overline{f_k}|$. These geodesics and the ways they make vertices  $[0, \frac{1}{2}^{k}]$ and $[\frac{2^k - 1}{2^k}, 1]$ internal will serve as the foundation for our proof of Theorem $1.1$.


\vspace*{10pt}
\subsection*{4.1 Conventions} 
\leavevmode

Over the course of proving our main result, we will rely on a few conventions to keep track of distances, rotations, and vertices. These will hold for the remainder of this paper.

For brevity of notation, we will refer to the vertex $[0, \frac{1}{2}^{k}]$ as $v_a$ and the vertex $[\frac{2^k-1}{2^k}, 1]$ as $v_b$. Position $[\frac{1}{2}, 1]$ may be referred to as the \textit{pivot}. At times, we will be concerned about the distance between where a word $w$ maps these vertices and the pivot on $\mathcal{T}$. We will say that \textit{$d_a$ in $w$ at time $t$} gives the distance in $\mathcal{T}$ between $\overline{w(t)}(v_a)$ and $[\frac{1}{2}, 1]$. It is a function given by $d_a(w(t)) = d(\overline{w(t)}(v_a), [\frac{1}{2}, 1])$. Similarly, \textit{$d_b$ in $w$ at time $t$} will be used to record the distance in $\mathcal{T}$ between $\overline{w(t)}(v_b)$ and $[\frac{1}{2}, 1]$, given by $d_b(w(t)) = d(\overline{w(t)}(v_b), [\frac{1}{2}, 1])$. When there is no ambiguity about the word under discussion, we may simply use $d_a(t)$ and $d_b(t)$.

Let $A$ be the set of vertices $[0, \frac{1}{2}^{p}]$, $0 \leq p \leq k-1$. Let $B$ be the set of vertices $[\frac{2^q-1}{2^q}, 1]$, $1 \leq q \leq k-1$. We refer to the interior set of $\mathcal{T}$ as $I$ and the exterior set of $\mathcal{T}$ as $E$. Let $L$ be the set of all vertices $[0, \frac{1}{2}^{p}]$, $p \geq 0$: the left spine of the tree. Let $R$ be the set of vertices $[\frac{2^q-1}{2^q}, 1]$, $q \geq 1$: the right spine of the tree.

We define three counters to keep track of the vertices in $A \cup B$. These will be functions $C_L, C_R,$ and $C_I$ that take a word $w$ and time $t$ as input. $C_L(w(t)) = |(\overline{w(t)}(A \cup B)) \cap L|$ tracks the number of vertices in $A \cup B$ mapped by $\overline{w(t)}$ to the left spine of the infinite binary tree. $C_R(w(t)) = |(\overline{w(t)}(A \cup B)) \cap R|$ tracks the number of vertices in $A \cup B$ mapped by $\overline{w(t)}$ to the right spine of the infinite binary tree. $C_I(w(t)) = |(\overline{w(t)}(A \cup B)) \cap I|$ will count the number of vertices in $A \cup B$ mapped to the internal set by $\overline{w(t)}$. When there is no ambiguity about the word under discussion, we may simply use $C_L(t)$, $C_R(t)$, and $C_I(t)$. Note, these functions are strictly non-negative and $C_L(w(t))+C_R(w(t))+C_I(w(t)) = 2k-1 = |A \cup B|$. Also note $C_L(w(0)) = k$, $C_R(w(0)) = k-1$, and $C_I(w(0)) = 0$. 

We will assume that a language $L$ satisfies the hypotheses of Theorem $1.1$, but not the conclusion. Our method will have us prove facts about $L$ by showing that certain behaviors will force $L$ to violate the length restrictions of the hypotheses. This will require us to keep track of how many times certain generators must be used throughout segments of a given word.

To track generator usage in a word, we define a collection of counters: $C_{x_0}$, $C_{x_1}$, $C_{x_0^{-1}}$, $C_{x_1^{-1}}$, and $C_{x_0^{-1} || x_1^{-1}}$. We increment each counter when we know the corresponding generator is used in the word we are examining. At times, a change in a word might have been accomplished using either $x_0^{-1}$ or $x_1^{-1}$, in which case we increment $C_{x_0^{-1} || x_1^{-1}}$. Likewise with $C_{x_0 || x_0^{-1}}$ for rotations $x_0$ and $x_0^{-1}$. We may use similar notation when two different types of rotations may have been used over a given time interval.

As a general note, if $v_a$ is made internal by $w(t)$ for some word $w$, it must be the case that $\overline{w(t-1)}(v_a) = [\frac{1}{2}, 1]$ and thus $d_a(w(t-1)) = 0$. Similarly, if $v_b$ is made internal at time $t$, $d_b(w(t-1)) = 0$.


\vspace*{10pt}
\subsection*{4.2 Proving Theorem 1.1} 
\leavevmode

\begin{thm*}{\textbf{1.1}}
Let L be a language over the alphabet $X = \{x_0, x_1, x_0^{-1}, x_1^{-1}\}$ and assume there is a non-negative integer $c$ so that for every element $g \in F$ there is a single word $w \in L$ such that $w$ represents $g$ and $|w| \leq |g|_X + c$. Then $L$ cannot be a subset of the language associated with an automatic structure for $F$.
\end{thm*}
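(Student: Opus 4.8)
The plan is to argue by contradiction through the geometric characterization of automaticity (Theorem~2.3), applied to the family $\{\overline{f_k}\}$ and its two distinguished geodesic words $f_k$ and $g_k$, which make the vertices $v_a$ and $v_b$ interior in opposite orders. Suppose $L$ is contained in the language $L'(X)$ of some automatic structure $(F,L'(X))$, and let $M$ be a fellow-traveler constant for it. For each $g\in F$ the hypothesis supplies a unique word $w_g\in L\subseteq L'(X)$ with $|w_g|\le |g|_X+c$. A short triangle-inequality computation then gives a length-difference bound: if $g,h\in F$ are at distance $1$ in $\Gamma_X(F)$, then $w_g$ and $w_h$ $M$-fellow-travel (Theorem~2.3) and $\bigl||w_g|-|w_h|\bigr|\le M+c+1$; iterating along a path, the $L$-words of elements at distance $d$ differ in length by at most $(M+c+1)d$. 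Fix $k$ large relative to $M$ and $c$, and set $w:=w_{\overline{f_k}}$, so $|w|\le 4k+1+c$ since $|\overline{f_k}|_X=4k+1$ and both $f_k,g_k$ are geodesic.

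The core of the argument is to show that the tight budget $4k+1+c$ forces $w$ to resemble $f_k$ or $g_k$. Here I would use the mechanics recorded in Chapter~3 — only $x_1^{\pm1}$ moves a vertex between the interior set $I$ and the exterior set $E$, only $x_0^{\pm1}$ moves vertices between the two spines $L$ and $R$, and each generator alters $I$ by at most one vertex — together with Fordham's length formula and the counters $C_L,C_R,C_I$ and $C_{x_0},C_{x_1^{-1}},\dots$ of \S4.1. The target statement is that any word representing $\overline{f_k}$ of length at most $4k+1+c$ must make $v_a$ interior at some time and $v_b$ interior at some time, and that the three forced phases — route the first of $v_a,v_b$ to the pivot, then $x_0^{\pm1}$-reposition so the other can reach the pivot, then rebalance — have sizes roughly $k$, $2k$, $k$ (plus two $x_1^{\pm1}$ moves), so that their total $\approx 4k$ already nearly exhausts the budget and at most $O(c)$ rotations can be wasted. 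Consequently $w$ is within bounded edit distance of $f_k$ or of $g_k$; after a left–right symmetric case split, assume $w$ makes $v_a$ interior first (near time $k$) and $v_b$ interior last (near time $3k$), with $v_a$ staying interior throughout the intervening $x_0^{\pm1}$ phase.

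To obtain the contradiction I would now transport constraints along chains of $L$-words, using the fellow-traveler inequality and the length-difference bound, from the prefixes of $f_k$ and from the prefixes of $g_k$ to $w$ itself. Tracking the functions $d_a(t)$ and $d_b(t)$ (the distances from $\overline{w(t)}(v_a)$ and $\overline{w(t)}(v_b)$ to the pivot), the $f_k$-side of the analysis forces $d_b$ to have vanished — i.e., $v_b$ to have been routed to the pivot — by a time within $O(M+c)$ of the midpoint of $w$, whereas the $g_k$-side, since $w$ makes $v_a$ interior first and moving $v_b$ from its starting spine to the pivot costs about $k$ as-yet-unspent $x_0^{\pm1}$ rotations, forces $d_b$ to still be of order $k$ at that same time. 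For $k$ large compared with $M+c$ these conclusions are incompatible, which is the desired contradiction; taking $c=0$ recovers Belk's result inside ours.

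The step I expect to be the main obstacle is the rigidity claim of the second paragraph: ruling out a "clever" near-geodesic representative of $\overline{f_k}$ that reaches it by a qualitatively different route — never bringing $v_a$ or $v_b$ all the way to the pivot, or amortizing the interior/exterior changes and the spine-crossings in some unforeseen way. This is exactly what the counters are designed to control, but making it airtight requires showing via Fordham's formula that every rotation in a near-geodesic word either increases $C_I$, or decreases $d_a$ or $d_b$, or is one of at most $O(c)$ exceptional moves, and that the phase arithmetic is genuinely forced rather than merely achievable. Once the shape of $w$ is pinned down, the fellow-traveler propagation in the third paragraph is comparatively routine bookkeeping.
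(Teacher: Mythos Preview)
Your high-level plan matches the paper's: contradict by showing that the unique $L$-word $w$ for $\overline{f_k}=\overline{g_k}$ must make $v_b$ interior before $v_a$ (forced by the $f_k$ side) and also $v_a$ before $v_b$ (forced by the $g_k$ side). But the mechanism you sketch in paragraphs~2--3 is not the paper's, and the version you describe has a real gap. You propose to first pin down the rigid shape of $w$ itself from the length budget, and then derive a timing contradiction by ``transporting constraints'' from the $L$-words of intermediate prefixes of $f_k$ and $g_k$ to $w$. The trouble is that those intermediate $L$-words represent \emph{different} group elements than $\overline{f_k}$; fellow-traveling only links each to its neighbor in the chain, not to $w$, so a claim like ``the $f_k$-side forces $d_b$ to have vanished near the midpoint of $w$'' does not follow from what you have set up. You flag exactly the right worry (the rigidity step), but even granting it, your third paragraph does not produce a contradiction.

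What the paper actually does is replace your direct analysis of $w$ by an \emph{induction along the last $\approx C$ prefixes}. On the $f_k$ side it sets $w_i=f_k(3k+1+i)$, lets $w_i'$ be its $L$-word, and proves by induction on $i$ from $k-l$ to $k$ (with $l=C/2+1$) that in $w_i'$ the final time $v_b$ is made interior precedes the first time $v_a$ is made interior, together with an auxiliary invariant about the $B$-vertices. The base case $i=k-l$ is a pure counting argument from $|w_{k-l}'|\le |\overline{w_{k-l}}|+c$ (this is where the rigidity you want lives, but applied to the carefully chosen prefix $w_{k-l}$ rather than to $\overline{f_k}$). The inductive step uses only that $w_i'$ and $w_{i+1}'$ $M$-fellow-travel, plus another length count to rule out the ordering flipping; a key supporting lemma bounds, uniformly in $i$, the number of times $A$-vertices can be made interior in $w_i'$. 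A mirror induction along prefixes of $g_k$ gives the opposite ordering in $u_{k-1}'=w$, and that is the contradiction. So the missing idea in your plan is to track a discrete, propagation-friendly invariant (the $v_a$/$v_b$ ordering plus a side condition) step-by-step along the chain of adjacent $L$-words, rather than trying to read off a quantitative statement about $d_b(t)$ in $w$ from remote prefixes.
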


An automatic structure for $F$ would consist of a language $L(X)$ and a number of FSAs. Associated with these is a fellow traveler constant $M$.  Let $L$ be a language as described in the claim. We will suppose, for contradiction, that all of $L$ is in $L(X)$. By Theorem 2.7, there is an automatic structure $(F, L'(X))$ with $L'(X)$ a subset of $L(X)$ with the uniqueness property. While $L'(X)$ may not be $L$, the single representative word in $L'(X)$ for each element of $F$ is chosen such that its length is shortest of all possible representative words for that element in $L(X)$. Thus, $L'(X)$ inherits the length property of $L$. This means we can assume we are working with an automatic structure $(F, L(X))$ where $L(X)$ has the uniqueness property and $L(X) = L$ which contains a single representative for each element of $F$.

Consider the family of elements $\overline{f_k}$ of $F$ described previously. Define $C = max(c, M)$ if $max(c,M)$ is even or $C = max(c, M)+1$ if it is odd. We will select a value for $k$ significantly larger than $C$, say $k > 1000C$. Take note that we have two separate representative means of writing out $\overline{f_k}$ given by $f_{k}$ and $g_{k}$.

We will approach $\overline{f_k}$ from two separate directions, moving through $\Gamma_X(F)$ along the paths outlined by $f_{k}$ and $g_{k}$. First, we will look at successively longer prefixes of $f_k$ and examine the representative words for these elements in $L$. We will show that the final time vertex $v_b$ is made internal in one of these prefixes must occur before the first time $v_a$ is made internal at all. This will be done by supposing otherwise, counting the number of generators that must be used in these words and showing that the length of the words would have to be longer than the length restrictions on $L$, giving a contradiction. Prefixes of $f_k$ that differ by length one represent adjacent elements in $\Gamma_X(F)$ so their representative words in $L$ satisfy the $M$ fellow traveler property. We will use this and similar word length techniques to show that the $v_b$ before $v_a$ property holds in the $L$ accepted representative of successfully longer prefixes of $f_k$, up to $f_k$ itself.

Then, we will examine the prefixes of $g_k$ and use a similar line of argument to show that in the $L$ accepted representative for $g_k$, $v_a$ must be made internal for the final time before $v_b$ is made internal at all. Because $f_k$ and $g_k$ represent the same element and the $L$ accepted representative of each is the same word, this will give a contradiction, proving Theorem $1.1$.


\vspace*{10pt}
\subsection*{4.3 The Path for $f_k$:} 
\leavevmode

We will look at the prefixes of $f_k = x_0^{-(k - 1)}x_1^{-1}x_0^{(2k)}x_1^{-1}x_0^{-k}$. To do so, we will look at a particular collection of prefixes of $f_k$.

\begin{definition}
$w_i$ =  $x_0^{-(k - 1)}x_1^{-1}x_0^{(2k)}x_1^{-1}x_0^{-i}$ where $0 \leq i \leq k$
\end{definition}

Take note that $w_i = f_k(3k + 1 + i)$ is the $3k + 1 + i$ prefix of $f_k$. Thus, $w_k = f_k$. Note that $\overline{w_i}$ maps both $v_a$ and $v_b$ to the interior of $\mathcal{T}$.

Using Fordham's method, we can compute that any minimal length representative of $\overline{w_i}$ in the letters $\{x_0, x_1,$ $ x_0^{-1}, x_1^{-1}\}$ has length $3k + 1 + i$. That is to say, $|\overline{w_i}| = 3k + 1 + i$. By our assumption before, each element of $F$ has a single representative in $L$ whose length is at most $c$ greater than minimal. We use $w_i'$ to denote the \textit{representative in $L$ of $\overline{w_i}$}. Note that $|w_i'| \leq |\overline{w_i}| + c = 3k + i + 1 + c$.

We will explore the path for $f_{k}$ using six lemmas given below. These lemmas are a collection of useful results as well as the parts of an inductive argument to show $v_b$ must be made internal before $v_a$ in the accepted representative for $\overline{f_k}$. Define $l = \frac{C}{2} + 1$. Word $w_{k - l}'$ will serve as the starting word for our induction.

\begin{lem}
In $w_i'$, $k - l \leq i \leq k$, there can be no more than $2C$ times where any of the vertices in $A$ are made internal. 
\end{lem}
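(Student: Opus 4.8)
The statement to prove is: in the $L$-representative $w_i'$ of $\overline{w_i}$, for $k-l \le i \le k$, the vertices in $A$ (the left-spine vertices $[0,\tfrac{1}{2^p}]$ for $0 \le p \le k-1$) can be made internal at most $2C$ times total across the word $w_i'$. The plan is to argue by contradiction via a length count. Suppose some $w_i'$ with $k-l \le i \le k$ makes vertices of $A$ internal more than $2C$ times. Recall from Section 3.3 that the only generator that can move a vertex from the exterior set $E$ to the interior set $I$ is $x_1^{-1}$, and each use of $x_1^{-1}$ moves at most one vertex across. So every "made internal" event for a vertex in $A$ costs one application of $x_1^{-1}$. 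If there are more than $2C$ such events, then $C_{x_1^{-1}} > 2C$ in the word $w_i'$.

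**The core length estimate.** The key point is that making a vertex $v \in A$ internal requires first routing $v$ to the pivot position $[\tfrac12,1]$, and since each generator moves $v$ at most one position in $\mathcal{T}$, this forces a substantial number of $x_0^{\pm 1}$ rotations as well — but more economically, I would compare against the known geodesic length. We have $|\overline{w_i}| = 3k+1+i$, so $|w_i'| \le 3k+1+i+c \le 3k+1+i+C$. On the other hand, I want to show that a word making $A$-vertices internal more than $2C$ times is forced to be longer than this. The cleanest route: observe that $\overline{w_i}$ itself, as realized by the prefix $w_i$ of $f_k$, makes each vertex of $A$ internal at most once, and in the "efficient" encoding the net cost is tightly controlled. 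Each \emph{extra} time a vertex of $A$ is made internal (beyond what the reduced tree-pair structure of $\overline{w_i}$ demands) requires that vertex to have been previously made exterior again — costing an $x_1$ — and then re-routed and re-made-internal, costing additional $x_0^{\pm1}$ rotations and another $x_1^{-1}$. I would track $C_{x_1^{-1}}$, $C_{x_1}$, and the $C_{x_0^{\parallel} x_0^{-1}}$ counter and show that $>2C$ internal-making events for $A$ forces $|w_i'| \ge 3k+1+i+ (\text{something} > C)$, contradicting $|w_i'| \le 3k+1+i+C$. The factor $2C$ rather than $C$ presumably absorbs the fact that each "wasted" $x_1^{-1}$ is paired with an $x_1$ to undo it, so $2C$ wasted letters correspond to roughly $C$ "round trips"; the constant $l = \tfrac{C}{2}+1$ and the choice $k > 1000C$ guarantee everything stays in range.

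**Carrying out the count.** Concretely, I would partition the timeline of $w_i'$ by the internal-making events for $A$-vertices. Between consecutive such events involving the same vertex $v$, that vertex must leave $I$ (one $x_1$) and return to the pivot. Between events involving different vertices, we still need an $x_1^{-1}$ for each event. Counting conservatively: $N > 2C$ internal-making events for $A$ contribute at least $N$ uses of $x_1^{-1}$; and since $\overline{w_i}$'s reduced form (Figure 5) shows $v_a$ ends interior while the other $A$-vertices end on the left spine $L$, at least $N - 1$ of these must subsequently be undone by an $x_1$ (each $A$-vertex except possibly one is exterior at the end, forcing a re-exteriorization). This already gives $C_{x_1^{-1}} + C_{x_1} \ge 2N - 1 > 4C - 1$. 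Comparing with a baseline budget — the geodesic $w_i$ uses exactly two $x_1^{-1}$'s and no $x_1$'s — the excess over baseline is at least $2N - 3 > 4C - 3$, which for $C \ge 2$ exceeds $C$, contradicting $|w_i'| - |\overline{w_i}| \le C$. (I would need to be careful that these $x_1^{\pm1}$ uses are genuinely "extra" and not reused for making $v_b$ internal or for the final rebalancing; but since $v_b \notin A$ and the rebalancing at the end of $w_i$ touches only $O(1)$ carets near the apex independent of $k$, and we've chosen $k \gg C$, there is ample slack.)

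**Main obstacle.** The delicate part is not the arithmetic but correctly accounting for \emph{shared} work: a single $x_1^{-1}$ might be "charged" both to making an $A$-vertex internal and to making $v_b$ internal, or a generator used while repositioning one $A$-vertex might simultaneously reposition another, so the naive "one $x_1^{-1}$ per event" bound is safe but the "$x_0$-routing cost" bound is not additive and I should avoid leaning on it. I expect the honest proof uses only the $x_1^{\pm 1}$ counts, which \emph{are} additive since each such generator moves exactly one vertex across the $I/E$ boundary, together with the observation that all but one $A$-vertex is exterior in the final word $\overline{w_i}$ (read off from Figure 5), forcing the pairing of $x_1^{-1}$'s with $x_1$'s. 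Pinning down that final-configuration fact and confirming that the reduced tree pair for $\overline{w_i}$ leaves exactly the claimed $A$-vertices on the spine is the step I would treat most carefully.
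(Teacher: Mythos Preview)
Your approach has a genuine gap, and it is precisely the one you flag in your ``Main obstacle'' paragraph but then too quickly set aside. You argue that $N>2C$ internal-making events for $A$-vertices force at least $2N$ uses of $x_1^{\pm1}$, then ``compare with a baseline budget'' by noting the geodesic $w_i$ has only two $x_1^{-1}$'s and concluding the excess exceeds $C$. But comparing per-generator letter counts between two different representatives of the same element says nothing about their relative \emph{total} lengths: $w_i'$ could in principle compensate for extra $x_1^{\pm1}$'s with fewer $x_0^{\pm1}$'s. Concretely, the two abelianization invariants give $\#x_1^{-1}-\#x_1=2$ and $\#x_0-\#x_0^{-1}=k+1-i$, so $|w_i'|=2(\#x_0+\#x_1^{-1})-(k+3-i)$. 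Your bounds $\#x_1^{-1}\ge N+2$ and $\#x_0\ge k+1$ (from routing $v_a$ to the pivot) yield only $|w_i'|\ge k+2N+1+i$, which for $N=2C+1$ is roughly $k+4C+i$---far short of the required $3k+1+i+C$ when $k>1000C$. The $x_1^{\pm1}$ count alone is simply not enough.

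The paper's proof is substantially more involved for exactly this reason: it does \emph{not} avoid the $x_0$-routing cost. It fixes times $t_0<t_a,t_b<t_f$ (the last times $v_a$, $v_b$ are made internal), splits into the two possible orderings $t_a<t_b$ and $t_b<t_a$, and for each ordering tracks, interval by interval, how many $x_0$, $x_0^{-1}$, $x_1$, $x_1^{-1}$ rotations are forced by the need to move $v_a$ and $v_b$ to the pivot and to restore the final configuration $C_L(t_f)=i$, $C_R(t_f)=2k-1-i$, $C_I(t_f)=0$. The $A$-internalization count $T\ge 2C+1$ enters as an \emph{additional} term on top of an already-established $\approx 3k+i$ lower bound coming from the $x_0^{\pm1}$ accounting, and several subcases (depending on how many $A$- and $B$-vertices are internal at $t_a$, $t_b$) must be handled separately. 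Your proposal would need all of this structure to close the gap.
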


The next two lemmas give the base of the inductive argument.

\begin{lem}
In $w_{k - l}'$, the final time where vertex $v_b$ is made internal must occur before the first time the vertex $v_a$ is made internal.
\end{lem}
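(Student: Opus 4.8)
Here is my proposed line of attack.

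The plan is a proof by contradiction built on counting occurrences of generators. Write $w := w_{k-l}'$ and suppose the conclusion fails, so there is a time $t_a$ at which $v_a$ is made internal in $w$ that is \emph{not} preceded by the last time $t_b$ at which $v_b$ is made internal. Since one generator changes the interior/exterior status of at most one vertex we have $t_a \neq t_b$, hence $t_a < t_b$, and both events genuinely occur because $\overline{w_{k-l}}$ carries both $v_a$ and $v_b$ into $I$. The goal is to force $|w| \geq 4k+l+1$, which contradicts $|w| \leq |\overline{w_{k-l}}| + c = 4k+1-l+c$ since $c \leq C = 2l-2$ (so $4k+l+1 - (4k+1-l+c) = 2l - c \geq 2 > 0$).

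First I would analyze the prefix $w(t_a)$. Because $v_a$ starts at distance $k+1$ from the pivot on the left spine, because $x_1$ and $x_1^{-1}$ fix the left spine and all of its descendants, and because a vertex can only pass from the left spine to the right spine by sitting at the root and having $x_0$ applied (which deposits it at the pivot), the prefix $w(t_a)$ must contain at least $k+1$ occurrences of $x_0$. Each such $x_0$ drives $v_b$ one step farther from the pivot while $v_b$ is still on the right spine, and only $x_0^{-1}$ or $x_1^{-1}$ (on a right-spine vertex that is not at the pivot) retracts it by one; absorbing the bookkeeping for any temporary internalizations or spine-crossings of $v_b$, each of which only adds $x_1$'s and $x_1^{-1}$'s to the total, one obtains $t_a + d_b(t_a) \geq 3k+1$.

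Next I would analyze the suffix after $t_a$. To internalize $v_b$ at $t_b$ we must bring it from distance $d_b(t_a)$ back to the pivot. The only retraction that leaves $v_a$ undisturbed uses $x_1^{-1}$'s, which slide right-spine vertices pivot-ward while fixing the left spine; but every $x_1^{-1}$ applied while an $A$-vertex occupies the pivot makes that $A$-vertex internal, and the preceding lemma caps the number of such events at $2C$. Moreover, a direct computation with Fordham's method shows $\overline{w_{k-l}}$ has no vertex of $A\cup B$ in $I$, so $C_I$ begins and ends at $0$ and every $B$-internalization is paid for by a later externalization. Hence all but $O(C)$ of the $\geq d_b(t_a)$ retracting moves are $x_0^{-1}$'s, and each of those drives $v_a$, now interior on the left, one level deeper into the left interior. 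After $v_b$ is internalized, $v_a$ therefore sits at depth at least $d_b(t_a) - O(C)$, while $\overline{w_{k-l}}$ places it at depth $k-l$, so raising it back costs at least $d_b(t_a) - k + l - O(C)$ more letters (and these same $x_0$'s simultaneously carry $v_b$ toward its own final position, so $v_b$ needs no separate charge). Adding the prefix length $t_a$, the $\geq d_b(t_a)+1$ letters spent recovering and internalizing $v_b$, and the $\geq d_b(t_a) - k + l - O(C)$ letters spent repositioning $v_a$, and substituting $t_a + d_b(t_a) \geq 3k+1$ together with $k > 1000C$ to absorb the error terms, yields $|w| \geq 4k+l+1$.

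The main obstacle is organizing this counting so that it is valid for \emph{every} value of $t_a$, not merely near its minimum $k+2$: when $w(t_a)$ is long the adversary has already spent extra letters in the prefix (on $x_0^{-1}$'s, which then force still more $x_0$'s to lift $v_a$, or on $x_1$/$x_1^{-1}$ churn), and one must show that waste alone overshoots the budget. Carrying this out uniformly — covering backtracking of $v_a$, internalization-then-externalization of $v_b$ before $t_b$, and excursions of $v_a$ or $v_b$ off their spines — is where the bulk of the work lies, and it is exactly why $l$ is calibrated as $\frac{C}{2}+1$: the $v_a$-before-$v_b$ ordering is shown to cost at least $2l$ beyond the geodesic length $4k+1-l$, whereas the hypothesis only grants $c \leq C = 2l-2$ extra letters.
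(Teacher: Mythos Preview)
Your contradiction setup and target bound $|w| \geq 4k+l+1$ match the paper's exactly, and your prefix inequality $t_a + d_b(t_a) \geq 3k+1$ is correct. The gap is in the suffix accounting. You argue that all but $O(C)$ of the $d_b(t_a)$ retracting moves in $[t_a,t_b]$ are $x_0^{-1}$'s (invoking the $2C$ cap from Lemma~4.14), so $v_a$ ends at depth $\geq d_b(t_a) - O(C)$ and repositioning costs $\geq d_b(t_a) - (k{-}l) - O(C)$. But the slack between your target $4k+l+1$ and the allowed length $|\overline{w_{k-l}}|+c \leq 4k+l-1$ is exactly~$2$, independent of~$k$; there is no $k$-room for ``$k>1000C$'' to absorb anything. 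If even two retracting moves are $x_1^{-1}$'s on $A$-vertices (allowed by the $2C$ cap), your stated lower bound already falls below the threshold. The $O(C)$ loss \emph{is} compensated --- each such $x_1^{-1}$ is an extra letter and forces a matching $x_1$ later --- but your sketch does not track that compensation, and the appeal to $k>1000C$ is a red herring.

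The paper's proof never invokes Lemma~4.14 and never tracks $v_a$'s depth. Instead it records the counters $C_L,C_R,C_I$ for $A\cup B$ across the three intervals $[t_0,t_a]$, $(t_a,t_b]$, $(t_b,t_f]$, with parameters $m_1,n_1,m_2,n_2,m_{2e},n_{2e}$ for the internal/external status at $t_a$ and $t_b$. The decisive charge comes from $(t_b,t_f]$: one has $C_R(t_b)=0$ but needs $C_R(t_f)=k+l-1$, and only $x_1$'s (externalizations, at most $C_I(t_b)$ of them) or $x_0$'s can raise $C_R$. A two-case split on whether $C_I(t_b)<k+l-1$ then yields $|w_{k-l}'|\geq 4k+l+1$ with no loss. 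Your depth-of-$v_a$ term is a lossy proxy for this $C_R$ deficit; replacing it with the exact $C_R$ bookkeeping is what closes the gap.
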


\begin{lem}
None of the vertices in $B$ are internal at any time when $v_a$ is made internal in $w_{k - l}'$.
\end{lem}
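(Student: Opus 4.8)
The plan is to combine Lemma 4.5 (the $2C$-bound on $A$-internalizations in $w_i'$ for $i$ in the range $k-l\le i\le k$, specialized to $i=k-l$) with Lemma 4.6 (in $w_{k-l}'$ the last time $v_b$ is made internal precedes the first time $v_a$ is made internal) and a length-counting argument to rule out any $B$-vertex being interior at a moment when $v_a$ is made internal. I would argue by contradiction: suppose $v_a$ is made internal at some time $t_0$ in $w_{k-l}'$ and that some vertex $b\in B$, say $b=[\tfrac{2^q-1}{2^q},1]$ with $1\le q\le k-1$, has $\overline{w_{k-l}'(t_0)}(b)\in I$. By the general note at the end of Section 4.1, $\overline{w_{k-l}'(t_0-1)}(v_a)=[\tfrac12,1]$, so $d_a(w_{k-l}'(t_0-1))=0$.

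The heart of the argument is a distance/counter bookkeeping estimate on the prefix $w_{k-l}'(t_0)$. At time $0$ we have $C_I(0)=0$, $C_L(0)=k$, $C_R(0)=k-1$, and every vertex in $A\cup B$ sits on its native spine at its native depth; in particular $v_a=[0,\tfrac1{2^k}]$ is at distance $k+1$ from the pivot and each $b=[\tfrac{2^q-1}{2^q},1]\in B$ is at distance $q$ from the pivot along $R$. To get $v_a$ to the pivot we must, among other things, move it across from the left side to the right side of $\mathcal T$, which by Section 3.3 requires an $x_0^{\pm1}$ rotation each time the left/right split changes, and to bring $b$ from the right spine into the interior $I$ requires an $x_1$ (an $x_1^{-1}$ would make an exterior vertex interior, but only the pivot's left child; a vertex deep on $R$ must first be rotated to the pivot region). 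Counting: each of the $C_R(0)-C_R(t_0)$ net departures from $R$, each of the $C_L(0)-C_L(t_0)$ net departures from $L$, and each unit of the $k+1$ steps $v_a$ must travel, contributes to $|w_{k-l}'(t_0)|$; meanwhile Lemma 4.5 caps the number of $A$-internalizations (hence bounds how much "wasted" motion of $A$-vertices can have occurred). The claim is that if in addition some $b\in B$ has been carried all the way into $I$ by time $t_0$ — which forces $b$ to have traversed its full distance $q$ along $R$ to the pivot and then an extra $x_1$ — the total generator count forces $|w_{k-l}'(t_0)|$, and hence $|w_{k-l}'|\ge |w_{k-l}'(t_0)|$, to exceed $|\overline{w_{k-l}}|+c = 3k+1+(k-l)+c = 4k+1-l+c$. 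Since $l=\tfrac C2+1$ and $C\ge c$, the surplus motion of at least one extra $B$-vertex into $I$ — on top of moving $v_a$ from depth $k+1$ on $L$ to the pivot and back out, and $v_b$ similarly, which already nearly saturates the budget $4k+1-l+c$ — pushes the count over, giving the contradiction. Lemma 4.6 is what guarantees the $v_b$-motion and the $v_a$-motion are essentially disjoint in time (so their costs add rather than overlap), and Lemma 4.5 prevents cheating by shuffling many $A$-vertices around to create shortcuts.

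The main obstacle will be making the generator-count lower bound tight enough: $k$ is large and the slack between $4k+1-l+c$ and the "honest" cost of realizing $\overline{w_{k-l}}$ is only of order $C$, so every rotation has to be accounted for carefully. Concretely, I expect the crux to be showing that bringing a vertex $b\in B$ from its native position on $R$ all the way to $I$ costs at least $q+1$ rotations that cannot be amortized against the rotations already forced by moving $v_a$ to the pivot and by the baseline word $w_{k-l}$ — i.e., that these motions genuinely compete for the same small surplus $C-l+\tfrac{\text{stuff}}{}$, using that the pivot region is a bottleneck through which $v_a$, $v_b$, and any such $b$ must each pass, and the infix order is preserved so they cannot pass simultaneously. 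Once that accounting is set up, the inequality $|w_{k-l}'|>4k+1-l+c$ contradicts the length hypothesis on $L$, completing the proof.
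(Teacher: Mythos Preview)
Your approach has a genuine gap. You try to derive the contradiction from the length of the \emph{prefix} of $w_{k-l}'$ up to the moment (your $t_0$) when $v_a$ is made internal, arguing that carrying some $b\in B$ into the interior costs roughly $q+1$ extra rotations that cannot be amortized. But this cost \emph{can} be amortized: in the phase where $d_b$ is reduced from $k-1$ to $0$ (which by Lemma~4.15 is completed before $v_a$ is ever made internal), one may freely use $x_1^{-1}$ rotations, each of which simultaneously decreases $d_b$ by one and makes a $B$-vertex internal. So having $b$ internal at your $t_0$ need not add anything to the prefix length, which in the efficient case is only about $3k$ --- nowhere near the $4k+1-l+c$ you claim the prefix alone must exceed.

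The paper's proof instead exploits what happens \emph{after} $v_a$ is made internal. Since $C_I(t_f)=0$, the vertex $b$ must eventually be made external again; call the last such time $t_j$. For $b$ to be made external it must sit at the pivot, and since every vertex of $A$ precedes $b$ in the infix order, all of $A$ must at that moment lie to the left of the pivot. But immediately after $v_a$ is made internal ($v_a$ at $[\tfrac12,\tfrac34]$), all of $A$ lies at or to the right of the pivot. Sweeping the $A$-vertices back to the left between these two moments costs roughly $k$ additional rotations --- Lemma~4.14 caps at $2C$ how many $A$-vertices can be internalized instead of moved --- and this extra $\sim k$ is what drives $|w_{k-l}'|$ up to at least $4k+l+1 > |\overline{w_{k-l}}|+c$. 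The structural idea your outline is missing is the introduction of $t_j$ and the interval accounting over $0<t_b<t_a<t_j\le t_f$; the interval $(t_a,t_j)$ is where the decisive extra cost lives, not in the prefix before $t_a$.
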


\begin{lem}
If, in $w_i'$, $k-l \leq i \leq k$, the final time where vertex $v_b$ is made internal must occur before the first time the vertex $v_a$ is made internal and $\overline{w_i(t)}$ makes $v_b$ internal, then $k \leq t \leq k + C + 2$.
\end{lem}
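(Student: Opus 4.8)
The plan is to prove the two inequalities $t\ge k$ and $t\le k+C+2$ by separate arguments. The first is short and follows from the infix order together with the counter $C_R$ of \S4.1; the second is the real content of the lemma and rests on the length restriction on $w_i'$ together with Fordham's length formula.

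For the lower bound, suppose $v_b$ is made internal at time $t$, so $\overline{w_i'(t-1)}(v_b)=[\tfrac12,1]$. The action of $F$ on $\mathcal T$ preserves the infix order, and $v_b$ is the infix-largest vertex of $A\cup B\cup\{v_a,v_b\}$, so every $u\in A\cup B$ satisfies $\overline{w_i'(t-1)}(u)<[\tfrac12,1]$ in the infix order; since $[\tfrac12,1]$ is the infix-least vertex of the right spine $R$, this forces $\overline{w_i'(t-1)}(u)\notin R$, and hence $C_R(w_i'(t-1))=0$. Because $C_R(w_i'(0))=k-1$ and a single generator moves the image of at most one vertex between $L$ and $R$, or the pivot alone between $R$ and the interior, $C_R$ changes by at most $1$ at each step. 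Therefore $t-1\ge k-1$, i.e.\ $t\ge k$.

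For the upper bound I would argue from the length bound $|w_i'|\le|\overline{w_i}|+c=3k+i+1+c$, using two structural observations. First, $v_b$'s image cannot linger at the pivot: whenever $\overline{w_i'(s)}(v_b)=[\tfrac12,1]$, the next generator is either $x_1^{-1}$, which internalizes $v_b$, or one of $x_0,x_0^{-1},x_1$, each of which carries $\overline{w_i'(s)}(v_b)$ off the pivot — and $x_0$, the unique generator that would simultaneously advance $v_a$'s image toward the pivot along $L$, is among the latter. Second, by the hypothesis and the infix order, $v_a$ stays external with its image on $L$ up to time $t$, so the part of $w_i'$ after time $t$ must still move $v_a$'s image to the pivot, internalize it, and (to match the final configuration $\overline{w_i}$, for which one checks that $C_R(\overline{w_i})=2k-1-i$ and $C_L(\overline{w_i})=i$) swing most of $A\cup B$ back onto $R$. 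I would feed these into Fordham's formula: since any prefix of a word within $c$ of a geodesic is itself within $c$ of a geodesic, one has $|\overline{w_i'(t)}|\ge t-c$, while Fordham's formula bounds $|\overline{w_i'(t)}|$ and $|\overline{w_i'(t)}^{-1}\,\overline{w_i}|$ in terms of the two configurations, and their sum is at most $|w_i'|\le 3k+i+1+c$. Combining this with Lemma 4.10 (bounding how many vertices of $A$ are ever internal), with the fact that $k-l\le i\le k$ keeps $i$ within $\tfrac{C}{2}+1$ of $k$, and with the evenness of $C$ and the inequality $C\ge c$, pins $t$ into the window $k\le t\le k+C+2$; any later internalization of $v_b$ would force $w_i'$ to exceed its length budget.

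The main obstacle is precisely this upper bound, and within it the extraction of the sharp constant $C+2$ rather than something of order $k$. Because $w_i'$ is known only to lie within $c$ of a geodesic, its exact form is unavailable, so one must bound the geodesic lengths of $\overline{w_i'(t)}$ and of $\overline{w_i'(t)}^{-1}\,\overline{w_i}$ from the configurations alone, via the caret-type weights of Fordham's method together with the controlled behaviour of $C_L,C_R,C_I$ under the four generators. The delicate point is that the rotations that shepherd $v_b$ down the right spine to the pivot are exactly the ones that displace $v_a$ (and the rest of $A$) the wrong way along the left spine, so any progress made "out of order" is effectively charged twice against the budget $3k+i+1+c$; quantifying this double charge precisely, and showing it overwhelms the allowance $c$ once $t$ exceeds $k+C+2$, is where the bulk of the work lies.
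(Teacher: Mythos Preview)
Your lower bound argument is correct and is essentially the paper's argument rephrased: the paper tracks $d_b$ (which drops from $k-1$ to $0$ one step at a time, then one more step to internalize), while you track $C_R$, but these are equivalent here.

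Your upper bound, however, is only a sketch, and the route you propose is not the one the paper takes. The paper does \emph{not} invoke Fordham's formula inside this proof at all. Instead it runs the same rotation-counting machinery used throughout Chapter~4: assume $t>k+C+2$, so in particular the \emph{final} time $t_b$ at which $v_b$ is made internal satisfies $t_b\ge t>k+C+2$; set $t_a$ to be the final time $v_a$ is made internal (so $t_b<t_a$ by hypothesis), and partition $[0,|w_i'|]$ into $[t_0,t_b]$, $(t_b,t_a]$, $(t_a,t_f]$. The first interval contributes at least $k+C+3$ generators simply because $t_b>k+C+2$. On the second, since $\overline{w_i'(t_b)}(v_b)=[\tfrac12,\tfrac34]$ forces $C_R(t_b)=0$, one has $d_a(t_b)\ge 2k-m_1-n_1$ (with $m_1,n_1$ the numbers of $A$-, $B$-vertices internal at $t_b$), and bringing $d_a$ to $0$ costs at least $2k-m_1-n_1$ $x_0$'s, plus bookkeeping for vertices that change internal/external status. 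On the third, one must restore $C_I$ to $0$ and $C_L$ from $0$ to $i$. Summing gives $|w_i'|\ge 3k+4+i+C+2m_2+2n_2>3k+i+1+c$, contradicting the length bound. No appeal to Fordham, and Lemma~4.14 is not used here.

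The specific difficulty with your proposed route is that Fordham's method computes $|g|$ from the full reduced tree pair of $g$, whereas at time $t$ you only know where $\overline{w_i'(t)}$ sends the finitely many vertices in $A\cup B\cup\{v_a,v_b\}$; that is not enough to determine the caret types. You could in principle extract lower bounds on $|\overline{w_i'(t)}^{-1}\overline{w_i}|$ purely from the $C_L,C_R,C_I$ data, but once you do that you are essentially reproducing the paper's rotation count, and the detour through Fordham and the prefix inequality $|\overline{w_i'(t)}|\ge t-c$ buys nothing. Your reference to ``Lemma~4.10'' should be Lemma~4.14, and in any case that lemma is not needed for this step.
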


\begin{lem}
If, in $w_i'$, $k - l \leq i \leq k$, the following two conditions hold:
\begin{enumerate}
\item The final time where vertex $v_b$ is made internal must occur before the first time the vertex $v_a$ is made internal,
\item None of the vertices in $B$ are internal at any time when $v_a$ is made internal,
\end{enumerate}
then we cannot reduce $d_a$ to $M - 1$ before time $3k - M $ in $w_i'$.
\end{lem}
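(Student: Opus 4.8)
The plan is to argue by contradiction: assume $d_a(w_i'(t_0)) \le M-1$ for some $t_0 < 3k - M$, and show that $w_i'$ is then forced to be longer than $3k+i+1+c$. Take $t_0$ minimal with $d_a(w_i'(t_0)) \le M-1$, and let $t_a$ be the first time $v_a$ is made internal; since $d_a$ must hit $0$ before $v_a$ can be made internal, $t_0 < t_a$, so $v_a$ is exterior on $[0,t_0]$. The kinematics are rigid: $v_a$ starts on the left spine at distance $d_a = k+1$ from the pivot, and a left-spine vertex is moved only by $x_0$ (decreasing $d_a$ by $1$) or $x_0^{-1}$ (increasing it by $1$), with $x_1^{\pm1}$ fixing the left spine; dually $v_b$ starts on the right spine at distance $d_b = k-1$, and on the right spine $x_0^{-1},x_1^{-1}$ decrease $d_b$ by $1$ while $x_0,x_1$ increase it. By condition (1) and the lemma pinning the times at which $v_b$ can be made internal, the final internalization of $v_b$ occurs at a time $t_b$ with $k \le t_b \le k+C+2$, and $v_b$ is internal on $[t_b,|w_i'|]$; in particular $d_b(w_i'(t_b-1))=0$, so before $t_b$ there have occurred at least $k-1$ net ``$v_b$-down'' rotations (each an $x_0^{-1}$ or an $x_1^{-1}$), hence at most $\tfrac{C}{2}+1$ of the pre-$t_b$ rotations are of any other type.

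The crux is to show that almost all of those pre-$t_b$ down-rotations must be $x_0^{-1}$, not $x_1^{-1}$. Each $x_1^{-1}$ that pushes $v_b$ up makes the current pivot occupant interior, and along $v_b$'s ascent from the right spine those occupants run through the vertices of $B$; but $\overline{w_i}$ leaves every element of $A\cup B$ exterior, and condition (2) already forces all of $B$ exterior at $t_a > t_0$, so each internalized vertex of $B$ must later be exteriorized by an $x_1$. Working out the explicit effect of a run of $x_1^{-1}$'s on the subtree at the pivot, one wants to argue that the resulting configuration — $v_b$ at $[\frac{1}{2},\frac{3}{4}]$ with internalized $B$-vertices stacked beneath it — cannot be unwound without either driving $v_b$ back to the pivot (an exteriorization of $v_b$ that, since $v_b$ must be internal in $\overline{w_i}$, demands a later internalization of $v_b$, contradicting the bounding lemma) or forcing more than $2C$ internalizations of $A$-vertices, contradicting the lemma that caps those at $2C$. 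This confines the number of pre-$t_b$ $x_1^{-1}$'s to $O(C)$ (the bookkeeping must in fact pin it down more tightly), so the remaining pre-$t_b$ down-rotations are $x_0^{-1}$, each of which drives $v_a$ one step deeper: $d_a(w_i'(t_b)) \ge 2k - O(C)$.

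From time $t_b$ on, reducing $d_a$ back to $M-1$ costs at least $2k - O(C) - M$ applications of $x_0$, all at times $> t_b \ge k$, so $t_0 \ge 3k - M - O(C)$. The remaining task is to close the $O(C)$ gap: the only losses are the $\le \tfrac{C}{2}+1$ non-down rotations before $t_b$ (which give no net help to $v_a$, since each pre-$t_b$ $x_0$ must be paid back by an $x_0^{-1}$ undoing its effect on $v_a$), the $C+2$ slack in $t_b$, and the $2C$ from the internalization lemma; tracking these exactly — which is exactly why $C$ is taken even and $l=\tfrac{C}{2}+1$ — and using $k>1000C$ so the $\Theta(k)$ terms dominate, one obtains $t_0 \ge 3k-M$, contradicting $t_0 < 3k-M$.

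I expect the second paragraph — that $v_b$'s ascent cannot be done cheaply via many $x_1^{-1}$'s — to be the main obstacle. Making it rigorous requires reasoning about the whole interior configuration rather than about individual vertices, since vertices of $A\cup B$ (and the auxiliary vertices of $\mathcal T$) can drift through the interior as well as along the two spines; I would run the argument through the counters $C_L,C_R,C_I$ on $A\cup B$ together with the generator counters, using condition (2) and the lemma pinning $v_b$'s internalization time, so that the estimate is stable under an arbitrary interleaving of the four rotation types.
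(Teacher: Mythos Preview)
Your plan in the first sentence is right --- derive a length contradiction --- but what you actually execute in paragraphs 2--3 is a different and unworkable program: you try to bound $t_0$ itself from below by $3k-M$. That cannot succeed, because $t_0<3k-M$ is \emph{not} ruled out by the structural hypotheses alone; it is ruled out only via the length bound on $w_i'$.

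Concretely, your paragraph-2 dichotomy is false. If the pre-$t_b$ down-moves interleave $x_1^{-1}$'s with $x_0^{-1}$'s, the internalized $B$-vertices are \emph{not} stacked in the left chain under $v_b$; they end up hanging off the left spine, separated from $v_b$. From such a configuration one can exteriorize a $B$-vertex by applying enough $x_0$'s to carry $v_b$ rightwards (to a left child of a right-spine vertex, still interior) while simultaneously carrying the $B$-vertex to $[\tfrac12,\tfrac34]$, then a single $x_1$. No exteriorization of $v_b$, no $A$-internalization. So nothing confines the pre-$t_b$ $x_1^{-1}$'s to $O(C)$; with $n_1$ such letters one gets $d_a(t_b)\approx 2k-n_1$ and $t_c\approx 3k-M-n_1$, and $n_1$ can be large. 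Your gap-closing in paragraph 3 then has a $\Theta(n_1)$ deficit, not an $O(C)$ one.

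The paper's argument is organized around total length rather than around $t_c$. It shows: if $t_c<3k-M$, then a counting of the letters in $[0,t_b]\cup(t_b,t_c]$ forces some $B$-vertex $v_c$ to be internal at $t_c$ (precisely because the shortfall below $3k-M$ can only come from $x_1^{-1}$'s that internalized $B$-vertices and were not yet undone). Condition (2) then forces $v_c$ to be exteriorized in $(t_c,t_a]$; but to reach $v_c$ one must first move roughly $k$ $A$-vertices off the right spine with $x_0^{-1}$'s, and afterwards move them back with $x_0$'s to reach $v_a$ --- an extra $\sim 2k$ letters. Adding the final stretch $(t_a,t_f]$ gives $|w_i'|>3k+i+1+C$, the desired contradiction. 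The point you were missing is that the ``unwinding'' is not impossible, it is merely expensive, and the contradiction lives in the length inequality, not in a lower bound on $t_c$.
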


The following lemma gives the inductive step.

\begin{lem} 
If, in $w_i'$, $k - l \leq i \leq k - 1$, the following two conditions hold:
\begin{enumerate}
\item The final time vertex $v_b$ is made internal occurs before the first time the vertex $v_a$ is made internal,
\item None of the vertices in $B$ are internal at any time when $v_a$ is made internal, 
\end{enumerate}
then (1) and (2) hold true in $w_{i + 1}'$, as well.
\end{lem}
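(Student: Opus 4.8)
The plan is to exploit the $M$-fellow traveler property linking $w_i'$ and $w_{i+1}'$ together with the structural information about $w_i'$ furnished by the preceding lemmas. Since $w_{i+1}=w_i x_0^{-1}$ as words, $\overline{w_i}$ and $\overline{w_{i+1}}$ are adjacent in $\Gamma_X(F)$, so by the fellow-traveler characterization of automaticity (Theorem 2.3) the representatives $w_i'$ and $w_{i+1}'$ satisfy the $M$-fellow traveler property: for every $t$, $\overline{w_i'(t)}$ and $\overline{w_{i+1}'(t)}$ differ in $F$ by a word of length at most $M$. Applying that word one generator at a time to the image of a fixed vertex, and using that a single generator changes the interior/exterior status of at most one vertex and alters the $\mathcal{T}$-distance of any vertex from the pivot by at most a fixed constant $K$ (both visible from the descriptions of $x_0^{\pm 1}$ and $x_1^{\pm 1}$ in Chapter 3), I get, for all $t$: $|d_a(w_i'(t))-d_a(w_{i+1}'(t))|\le KM$, the same bound for $d_b$, $|C_I(w_i'(t))-C_I(w_{i+1}'(t))|\le M$, and likewise for $C_L$ and $C_R$, with the set of vertices on which the two words disagree about interior/exterior status having size at most $M$ at each time. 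I also record the budget $|w_{i+1}'|\le 3k+i+2+c$ and the observation that a vertex can pass from exterior to interior only at an internal-making event, so that a vertex interior in $w(t)$ has been made internal at some time $\le t$. Finally, since (1) and (2) hold in $w_i'$ and $k-l\le i\le k$: Lemma 4.17 puts every internal-making of $v_b$ in $w_i'$ in $[k,k+C+2]$, and since $v_b$ is interior in $\overline{w_i}$ it remains interior after the last such time; Lemma 4.18 gives $d_a(w_i'(s))\ge M$ for $s<3k-M$, so $v_a$ is exterior in $w_i'$ for $s\le 3k-M$ and is first made internal only at a time $\ge 3k-M+1$; and Lemma 4.14 bounds the number of internal-makings of $A$-vertices in each of $w_i'$ and $w_{i+1}'$ by $2C$.

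To prove (1) for $w_{i+1}'$, suppose it fails and let $t_a$ be the first internal-making of $v_a$ and $t_b$ the last internal-making of $v_b$ in $w_{i+1}'$, so $t_a<t_b$ (they are distinct because just before $t_a$ the pivot-occupant is $v_a$, and just before $t_b$ it is $v_b$). The first step is to show that $v_a$ is made internal in $w_{i+1}'$ only at times of size $3k-O(C)$, so that $t_a$, hence $t_b$, are this large. This repeats the counting behind Lemma 4.18, but arranged to invoke only the fellow-traveler comparison with $w_i'$ — in which, before time $3k-M$, $v_a$ is still exterior and all internal-makings of $v_b$ are already past — and the budget of $w_{i+1}'$, rather than conditions (1),(2) for $w_{i+1}'$ (which would be circular). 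Concretely: bringing $v_a$ up to the pivot while leaving enough of the budget $3k+i+2+c$ to also bring $v_b$ to the pivot and to restore $A\cup B$ to the configuration of $\overline{w_{i+1}}$ (which, up to a power of $x_0$, fixes $A\cup B$, so has $C_I=0$ with all of $B$ on the right spine), and using Lemma 4.14 to bound the wasted rotations, forces the claimed lower bound. Now feed this back: $d_b(w_{i+1}'(t_b-1))=0$ gives $d_b(w_i'(t_b-1))\le KM$, whereas in $w_i'$ the quantity $d_b$ equals its terminal value (of order $k$) at the end and changes by at most $K$ per step, hence exceeds $KM$ on a terminal stretch of $w_i'$ which — since $k>1000C$ — reaches back past the time $t_b-1\ge 3k-O(C)-1$; contradiction. (Should this terminal stretch be too short, the same conclusion follows from the dual count on $C_L$: once $v_a$ is made internal, driving $v_b$ to the pivot and then $C_L$ down to its terminal value $i$ costs more than $3k+i+2+c$ rotations.) In particular, every internal-making of $v_a$ in $w_{i+1}'$ occurs after time $3k-O(C)$.

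To prove (2) for $w_{i+1}'$, suppose that at some internal-making of $v_a$ in $w_{i+1}'$, at a time $t_a\ge 3k-O(C)$, some vertex of $B$ is interior. At time $t_a$ the configurations of $w_i'$ and $w_{i+1}'$ disagree on at most $M$ vertices and by at most $KM$ in $d_a,d_b$; combining this with condition (2) for $w_i'$ and with Lemma 4.18 for $w_i'$ (which pins down when $v_a$ is near the pivot in $w_i'$) shows that at the corresponding time in $w_i'$ at most $O(M)$ vertices of $B$ are interior, hence at most $O(M)$ in $w_{i+1}'$. A final length count improves $O(M)$ to $0$: every vertex of $B$ pulled off the right spine into the interior and then restored by the end of $w_{i+1}'$ (which it must be, since $\overline{w_{i+1}}$ sends all of $B$ to the right spine) costs a fixed positive number of extra rotations, which added to those already forced to move $v_a$, $v_b$ to the pivot and to rebalance overshoots $3k+i+2+c$; contradiction. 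So (2) holds in $w_{i+1}'$, completing the inductive step.

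The crux throughout is that the two error terms are comparable: the excess of $|w_{i+1}'|$ over a geodesic is only $c+O(1)=O(C)$, while the fellow-traveler slack $KM$ is also $O(C)$, and one must in addition steer around the circularity noted above (finding the version of the Lemma 4.18 count that still applies when only conditions (1),(2) for $w_i'$ are in hand). The counting must therefore be tight enough that the real cost of a violation of (1) or (2) — a wasted pair of rotations for $v_a$ against $v_b$, or a wasted in-and-out of a $B$-vertex — strictly beats the slack coming from fellow traveling and from the width $l=\frac{C}{2}+1$ of the induction window; making these inequalities go through is exactly what the calibrations $C=\max(c,M)$ (taken even), $l=\frac{C}{2}+1$, and $k>1000C$ are for.
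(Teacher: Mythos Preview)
Your proposal correctly identifies the main tools (the $M$-fellow traveler link between $w_i'$ and $w_{i+1}'$, Lemmas 4.14, 4.17, 4.18, and the length budget) and the overall architecture matches the paper's. However, what you have written is a plan rather than a proof, and at the crucial junctures the plan does not supply the arguments that actually carry the weight.

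For (1), the paper does \emph{not} try to prove directly that the first $t_a$ in $w_{i+1}'$ is at least $3k-O(C)$. Instead it proves two intermediate claims: first, that $v_a$ cannot be made internal in $w_{i+1}'$ at or before $t_b$ (the last time $v_b$ is made internal in $w_i'$, which lies in $[k,k+C+2]$ by Lemma 4.17); second, and this is the key structural point you are missing, that \emph{whenever $v_a$ is made internal in $w_{i+1}'$, $v_b$ is already internal}. This second claim uses the fellow-traveler fact that for $t\ge t_b$ either $v_b$ is internal in $w_{i+1}'$ or $d_b\le M$, together with Lemma 4.14 to rule out the second alternative at a moment when $d_a=0$. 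Only after this does Lemma 4.18 (applied to $w_i'$, pulled across by fellow-traveling) give $t_a\ge 3k-M$, and the failure of (1) then forces an \emph{extra} external/internal cycle of $v_b$ after $t_a$, whose cost is what blows the budget. Your ``terminal stretch of $d_b$'' argument does not work as stated: you need $t_b-1$ to land in that stretch, but with $|w_i'|\le 3k+i+1+c$ and only $t_b\ge 3k-O(C)$, for $i$ near $k$ the stretch where $d_b>KM$ need not reach back that far; your fallback ``dual count on $C_L$'' is asserted, not carried out.

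For (2), the paper again isolates a concrete intermediate fact you do not mention: at time $3k-M-1$ in $w_i'$, no vertex of $B$ is internal, and moreover no vertex of $B$ can be made internal within $k/2$ steps. These are proved by separate length counts using condition (2) for $w_i'$ and Lemma 4.14. Fellow-traveling then transfers ``no $B$-vertex internal at $3k-M-1$'' to $w_{i+1}'$, after which a final length count gives the contradiction. Your version (``at most $O(M)$ vertices of $B$ interior, then improve $O(M)$ to $0$ by a length count'') skips the $k/2$-buffer claim, which is exactly what prevents a $B$-vertex from flickering in and out near the critical time in $w_{i+1}'$ within the fellow-traveler slack.

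In short: the missing idea is the pair of claims that pin down, via fellow-traveling with $w_i'$, the \emph{relative} state of $v_a$ and $v_b$ (respectively of $B$) in $w_{i+1}'$ at the relevant moments, before any length count is attempted. Without those, your counts are either circular or, where you try to avoid circularity, not tight enough to close.
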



We now prove each of these lemmas and then complete the inductive argument they establish.


\vspace*{10pt}
\begin{lem*} {\textbf{4.14}}
In $w_i'$, $k - l \leq i \leq k$, there can be no more than $2C$ times where any of the vertices in $A$ are made internal. 
\end{lem*}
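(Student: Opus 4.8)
The plan is a proof by contradiction. Suppose that in $w_i'$, for some $i$ with $k-l\le i\le k$, a vertex of $A$ is made internal at more than $2C$ distinct times; I will derive $|w_i'|>3k+1+i+c=|\overline{w_i}|+c$, contradicting the defining property of $L$. Throughout, write $N_A\ge 2C+1$ for the number of times a vertex of $A$ is made internal in $w_i'$.

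First I would record the generator usage forced by this. A vertex is made internal only at a step that reads $x_1^{-1}$ with that vertex then at the pivot, and made exterior only at a step reading $x_1$; moreover a single $x_1^{-1}$ makes exactly one vertex internal (the one at the pivot) and a single $x_1$ exactly one exterior. Tracing the action of $w_i=x_0^{-(k-1)}x_1^{-1}x_0^{2k}x_1^{-1}x_0^{-i}$ on $\mathcal{T}$ as in Section 3.3 shows $\overline{w_i}$ carries every vertex of $A$ onto the left or right spine, hence to an exterior vertex; since every vertex of $A$ starts exterior, each ``made internal'' event for a vertex of $A$ is matched by a strictly later ``made exterior'' event for it, so $w_i'$ reads at least $N_A$ letters $x_1^{-1}$ and at least $N_A$ letters $x_1$ accounted for by $A$ alone. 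Because $v_a$ and $v_b$ start exterior but $\overline{w_i}$ sends them into the interior, each is made internal at least once in $w_i'$, by $x_1^{-1}$'s distinct from each other and from those above. Hence $C_{x_1^{-1}}\ge N_A+2$ and $C_{x_1}\ge N_A$ (counts in $w_i'$), so $C_{x_1}+C_{x_1^{-1}}\ge 2N_A+2$.

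Next I would bound the $x_0$-type letters. Passing to the abelianization of $F$, the total $x_0$- and $x_1$-exponents of a word depend only on the element it represents, so in $w_i'$ we have $C_{x_0}-C_{x_0^{-1}}=k+1-i$ and $C_{x_1}-C_{x_1^{-1}}=-2$ (their values in $w_i$). The crux is a lower bound $C_{x_0}+C_{x_0^{-1}}\ge 3k-1+i-O(C)$. To get it, note that only $x_0^{\pm1}$ move a vertex while it lies on a spine, each such letter shifting it by one position toward or away from the root (except the unique letter carrying the root to the pivot, or the pivot to the root); so $w_i'$ must spend about $k+1$ letters driving $v_a$ from $L$-distance $k$ up to the root and across to the pivot, must spend further letters driving $v_b$ from $R$-distance $k$ in to the pivot, and, after $v_a$ and $v_b$ are made internal, must still drive the configuration onto $\overline{w_i}$, where $v_a$ and $v_b$ sit deep in the interior --- tracking these displacements with the counters of Section 4.1 (using $C_{x_0^{-1}\|x_1^{-1}}$ for moves along the right spine that could be either letter) yields the bound, with the $O(C)$ absorbing the $O(C)$ vertices of $A\cup B$ whose net displacement is small because $k-i\le l=\tfrac{C}{2}+1$. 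Combining, $|w_i'|=(C_{x_0}+C_{x_0^{-1}})+(C_{x_1}+C_{x_1^{-1}})\ge(3k-1+i-O(C))+(2N_A+2)=3k+1+i+2N_A-O(C)$, and since $c\le C$ and $k>1000C$ this forces $N_A\le 2C$.

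The main obstacle is making the lower bound $C_{x_0}+C_{x_0^{-1}}\ge 3k-1+i-O(C)$ precise with a constant small enough to reach $2C$. The delicate point is that $x_1^{-1}$ also does real work on the right spine --- it slides every vertex strictly below the pivot one step closer to the pivot --- so a priori the $x_1^{-1}$'s spent shuttling vertices of $A$ in and out of the interior could simultaneously be paying for the descent of $v_b$ (and of the vertices of $B$) toward the pivot; one must show these two jobs cannot be merged, i.e. that those $x_1^{-1}$'s genuinely lengthen $w_i'$ rather than substituting for $x_0^{\pm1}$'s. That is precisely where a careful, step-by-step assignment of which generator is responsible for which motion --- and the ambiguity counters --- are needed.
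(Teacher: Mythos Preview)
Your proposal is not a complete proof: you explicitly flag the bound $C_{x_0}+C_{x_0^{-1}}\ge 3k-1+i-O(C)$ as ``the main obstacle'' and then do not establish it. The difficulty you identify is real and fatal to the argument as written. An $x_1^{-1}$ applied while a vertex of $A$ sits at the pivot simultaneously (i) internalizes that vertex of $A$, and (ii) pulls every vertex on the right spine below the pivot, including $v_b$, one step closer. Thus the $N_A$ letters $x_1^{-1}$ you are counting in the $x_1$-block may also be doing the work of lowering $d_b$, and you have given no mechanism to separate those contributions; without that, your two lower bounds do not add. The cancellation heuristic (the matching $x_1$'s push $v_b$ back out) does not obviously work either, since $v_b$ need not be on $R$ at the moment a given $x_1$ is applied.

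The paper's proof does not attempt a global separation of $x_0$-type from $x_1$-type letters. Instead it fixes two landmark times --- the final internalizations $t_a$ of $v_a$ and $t_b$ of $v_b$ --- splits into the two possible orderings $t_a<t_b$ and $t_b<t_a$, and on each of the resulting intervals $[t_0,t_a]$, $(t_a,t_b]$, $(t_b,t_f]$ (and symmetrically) counts rotations by tracking $C_L$, $C_R$, $C_I$ and the positions of $A\cup B$ at the interval endpoints. The point is that at each landmark the infix order forces all of $A\cup B$ to one side of the pivot, which pins down $C_L$ or $C_R$ and lets one bound the necessary rotations in that interval without ambiguity about which generator did what. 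The extra $2(T-m_1-m_2)$ internal/external pairs for $A$ are then added on top as genuinely unaccounted rotations. This interval-by-interval bookkeeping is exactly what resolves the double-counting problem you pointed to; your global approach via abelianization plus a single $x_0$-count would need a comparable device to succeed.
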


\begin{proof}
Assume, for contradiction, that in $w_i'$, there are $T \geq 2C + 1$ times where any of the vertices in $A$ are made internal. 

We look at times $t_0$, $t_a$, $t_b$, and $t_f$. Define $t_0 = 0$ and $t_f = |w_i'|$. Define $t_a$ as the final time that vertex $v_a$ is made internal in $w_i'$ and $t_b$ as the final time that vertex $v_b$ is made internal in $w_i'$. Most of these times are clearly distinct because the action of $F$ on the tree is bijective and $\overline{w_i'(t_0)}([\frac{1}{2}, \frac{3}{4}]) =  \overline{w_i'(t_a)}(v_a) =  \overline{w_i'(t_b)}(v_b) =  [\frac{1}{2}, \frac{3}{4}]$. Time $t_f$ must be distinct from the others because it is certainly not $t_0$ as vertices are made internal in $w_i'$ and because $v_a$ and $v_b$ are mapped approximately $k$ from the pivot at $t_f$.

One of the following two orderings must hold: $t_0 < t_a < t_b < t_f$ or $t_0 < t_b < t_a < t_f$. We consider the impact each has on $|w_i'|$.


\vspace*{10pt}
\noindent \textbf{\underline{Ordering $t_0 < t_a < t_b < t_f$}:}

We begin with the time intervals given by $t_0 < t_a < t_b < t_f$ and account for the total rotations needed in each interval.

\vspace*{5pt}
\noindent \textbf{Interval $t_0 \leq t \leq t_a$:}

At $t_0$, all of $A$ is mapped to the left of the pivot and to the right of $v_a$. Thus, $C_L(t_0) = k$ and $d_a(t_0) = k + 1$. Because $v_a$ will be made internal at $t_a$, $d_a(t_a - 1) = 0$. The vertices in $A$ are mapped to the left of the pivot, so the only way to decrease $d_a$ is with $x_0$ rotations. We must increment $C_{x_0}$ by at least $k + 1$.

At $t_a$, a single $x_1^{-1}$ rotation makes $v_a$ internal, so we increment $C_ {x_1^{-1}}$ by $1$.

Additionally, some of the vertices in $A$ and $B$ may be internal at $t_a$, say $m_1$ of the vertices in $A$ and $n_1$ of the vertices in $B$. Because $C_I(t_0) = 0$, it will take a single $x_1^{-1}$ rotation to make each of these internal, requiring us to increment $C_ {x_1^{-1}}$ by another $m_1 + n_1$ this step.

In total, this interval requires us to increment $C_{x_0}$ by $k + 1$ and $C_ {x_1^{-1}}$ by $m_1 + n_1 + 1$ for a total of $k + m_1 + n_1 + 2$ rotations in this interval.

\vspace*{5pt}
\noindent \textbf{Interval $t_a < t \leq t_b$:}

Because $\overline{w_i'(t_a)}(v_a) =[\frac{1}{2},\frac{3}{4}]$ and $F$ preserves the infix ordering of vertices, $\overline{w_i'(t_a)}$ maps $A \cup B$ at or to the right of the pivot. Thus, $C_L(t_a) = 0$. We also know that $C_I(t_a) = m_1 + n_1$, so $C_R(t_a) = 2k - 1 - (m_1 + n_1)$ and $d_b(t_a) \geq 2k - m_1 - n_1 - 1$. These vertices are mapped to the right spine, so reducing $d_b$ can be accomplished either by moving them to the left spine of the tree with $x_0^{-1}$ rotations or making them internal with $x_1^{-1}$ rotations. Because $\overline{w_i'(t_b - 1)}(v_b) = [\frac{1}{2}, 1]$, $d_b(t_b - 1) = 0$ and we must increment $C_{x_0^{-1} || x_1^{-1}}$ by at least $2k - m_1 - n_1 - 1$.

At $t_b$, a single $x_1^{-1}$ rotation makes $v_b$ internal, so we increment $C_ {x_1^{-1}}$ by $1$.

It is possible some of the vertices in $A \cup B$ that were internal at $t_a$ are external at $t_b$. Additionally, some of these vertices which were external at $t_a$ may be internal at $t_b$. We may have already accounted for making these new vertices internal. Say that at $t_b$, $m_2$ vertices in $A$ are internal that were external at $t_a$ and $n_2$ vertices in $B$ are internal that were external at $t_a$. Of the $m_1$ vertices in $A$ that were internal at $t_a$, say $m_{2e}$ are external at $t_b$, and of the $n_1$ vertices in $B$ that were internal at $t_a$, say $n_{2e}$ are external at $t_b$. We have not accounted for making any vertices external this step, so we must increment $C_{x_1}$ by $m_{2e} + n_{2e}$ to accomplish this change. Take note, $C_I(t_b) = (m_1 + m_2 - m_{2e}) + (n_1 + n_2 - n_{2e})$.

In total, this interval requires us to increment $C_{x_0^{-1} || x_1^{-1}}$ by $2k - m_1 - n_1 - 1$, $C_ {x_1^{-1}}$ by $1$, and $C_{x_1}$ by $m_{2e} + n_{2e}$ for a total of $2k - m_1 - n_1 + m_{2e} + n_{2e}$ rotations in this interval.

\vspace*{5pt}
\noindent \textbf{Interval $t_b < t \leq t_f$:}

Note, $\overline{w_i'(t_b)}(v_b) =[\frac{1}{2},\frac{3}{4}]$. Because $F$ preserves the infix ordering of vertices, this means all of $A \cup B$ is mapped to the left of position $[\frac{1}{2}, \frac{3}{4}]$. Thus these vertices must be mapped to the left side of the tree, meaning $C_R(t_b) = 0$. We know that $C_I(t_b) = m_1 + m_2 + n_1 + n_2 - m_{2e} - n_{2e}$, so $C_L(t_b) = 2k - 1 - (m_1 + m_2 + n_1 + n_2 - m_{2e} - n_{2e})$. Note that $C_I(t_f) = 0$ and $\overline{w_i'(t_f)}([0, \frac{1}{2}^{k - i}]) = [0, 1]$, the root of the tree. Thus, we must have $C_L(t_f) = i$ and $C_R(t_f) = 2k - i -1$.

Bringing $C_I(t_f)$ to $0$ requires all of $A \cup B$ to be internal. Because $C_I(t_b) = m_1 + m_2 + n_1 + n_2 - m_{2e} - n_{2e}$, it will take at least that many $x_1$ rotations to make these vertices external, incrementing $C_{x_1}$ by $m_1 + m_2 + n_1 + n_2 - m_{2e} - n_{2e}$.

We require additional rotations related to the vertices we have just made external. Any time a vertex is made external, it is mapped from the interior of the tree to the pivot. This is fine for the vertices $[\frac{2^q - 1}{2^q}, 1]$, $1 \leq q\leq k - 1$, and $[0, \frac{1}{2}^{p}]$, $0 \leq p \leq k - i - 1$, because $\overline{w_i'(t_f)}$ maps them to the right spine of the tree. However, all of the vertices $[0, \frac{1}{2}^{p}]$, $k - i \leq p \leq k - 1$ must be mapped to the left spine in $\overline{w_i'(t_f)}$. Mapping a vertex from the right spine of the tree to the left spine can only be accomplished by an $x_0^{-1}$ rotation and we have accounted for no $x_0^{-1}$ rotations in this step thus far. Note, $\overline{w_i'(t_b)}$ maps exactly $m_1 + m_2 - m_{2e}$ of the vertices in $A$ to the interior. If $m_1 + m_2 - m_{2e} > k - i$, then $m_1 + m_2 - m_{2e} - (k - i)$ of these vertices must come from the set $[0, \frac{1}{2}^{p}]$, $k - i \leq p \leq k - 1$. These vertices will each require an $x_0^{-1}$ rotation to map them to the left side of the tree by time $t_f$, so if  $m_1 + m_2 - m_{2e} > k - i$, then we increment $C_{x_0}$ by $m_1 + m_2 - m_{2e}  - (k - i)$.

Recall, $C_R(t_b) = 0$ and $C_R(t_f) = 2k - i - 1$. Making a vertex from $A \cup B$ external increases $C_R$ by $1$ and we have accounted for making certain all of $A \cup B$ is external with $m_1 + m_2 + n_1 + n_2-m_{2e} - n_{2e}$ rotations. If $m_1 + m_2 + n_1 + n_2-m_{2e} - n_{2e} <  2k - i - 1$, we must increment $C_{x_0}$ by $(2k - i -1) - (m_1 + m_2 + n_1 + n_2 - m_{2e} - n_{2e})$ as the only remaining way to increase $C_R$ is mapping vertices from the left spine of the tree to the right spine.

In total, this interval requires us to increment $C_{x_1}$ by $m_1 + m_2 + n_1 + n_2 - m_{2e} - n_{2e}$. If $m_1 + m_2 - m_{2e} > k - i$, we must increment $C_{x_0}$ by $m_1 + m_2 - m_{2e} - k + i$. If $m_1 + m_2 + n_1 + n_2 - m_{2e} - n_{2e} < 2k - i -1$, we must increment $C_{x_0}$ by $2k - i - 1 - m_1 - m_2 - n_1 - n_2 + m_{2e} + n_{2e}$.

\vspace*{5pt}
\noindent \textbf{Additional Accounting:}

By our assumption, we needed $T \geq 2C + 1$ times in $w_i'$ where any vertices in $A$ were made internal. In the above, we have accounted for $m_1 + m_2$ instances where one of these vertices is made internal. Each of these times occurs when one of the vertices is made internal during one of our time intervals and is not external at the end of it. We have also accounted for all instances where these were eventually made external and no other instances where a vertex in $A$ was made external.

If $m_1 + m_2 \geq T$, we are done. If not, there must be additional times where vertices in $A$ were made internal. These must be times $t$ when a vertex $a \in A$ is made internal where both $t$ and the next time $t'$ when $a$ is made external occur in the same time interval. Each such pair contributes both an $x_1^{-1}$ and an $x_1$ rotation that we have not yet accounted for, requiring us to increment $C_{x_1^{-1}}$ by $T-(m_1+m_2)$ and $C_{x_1}$ by $T - (m_1 + m_2)$ if $m_1 + m_2 < T$.

In total, this step requires us to increment $C_{x_1^{-1}}$ by $T-(m_1 + m_2)$ and $C_{x_1}$ by $T-(m_1 + m_2)$ if $m_1 + m_2 < T$ for a total of $2T - 2m_1 - 2m_2$ if $m_1 + m_2 < T$.

\vspace*{5pt}
\noindent \textbf{Total Rotations:}

In total, we have established that $|w_i'| \geq ( k + m_1 + n_1 + 2) + (2k - m_1 - n_1 + m_{2e} + n_{2e}) + (m_1 + m_2 + n_1 + n_2 - m_{2e} - n_{2e}) = 3k + 2 + m_1 + n_1 + m_2 + n_2$, plus an additional $m_1 + m_2 - m_{2e} - k + i$ if $(a): m_1 + m_2 - m_{2e} \geq k - i$, an additional $2k - i - 1 - m_1 - m_2 - n_1 - n_2 + m_{2e} + n_{2e}$ if $(b): m_1 + m_2 + n_1 + n_2 - m_{2e} - n_{2e} < 2k - i - 1$, and an additional $2T - 2m_1 - 2m_2$ if $(c): m_1 + m_2 < T$.

If $(a)$ does not hold, then $m_1 + m_2 - m_{2e} < k - i$. But this means that  $m_1 + m_2 + n_1 + n_2 - m_{2e} - n_{2e} < 2k - i - 1$ as $n_1 + n_2 \leq k - 1$ and $n_{2e} \geq 0$. So if $(a)$ is false, $(b)$ is true.

If $(c)$ does not hold, then $m_1 + m_2 \geq T \geq 2C + 1$. Recall, $k - l \leq i \leq k - 1$, so $0 \leq k - i \leq l = \frac{C}{2} + 1$. This means $m_1 + m_2 - m_{2e} \geq m_1 + m_2 \geq 2C + 1 > \frac{C}{2} + 1 \geq k - i$. So, if $(c)$ is false, $(a)$ is true.

Also of note, $T = 2C + 1$, $m_{2e} \geq 0$, and $k > i$.

This gives us five cases to consider:

\begin{enumerate}
\item Conditions $(a)$, $(b)$, and $(c)$ hold. Thus, $|w_i'| \geq 3k + 2 + m_1 + n_1 + m_2 + n_2 + (m_1 + m_2 - m_{2e} - k + i) + (2k - i - 1 - m_1 - m_2 - n_1 - n_2 + m_{2e} + n_{2e}) + (2T - 2m_1 - 2m_2) = 4k + 1 - m_1 - m_2 + n_{2e} + 2T$. Note, $m_1 + m_2 < T$ because $(c)$ holds. So, $|w_i'| \geq 4k + 1 - m_1 - m_2 + n_{2e} + 2T > 4k + 1 + n_{2e} + T > 3k + i + 1 + C$.

\item Conditions $(a)$ and $(b)$ hold, $(c)$ is false. Thus, $|w_i'| \geq 3k + 2 + m_1 + n_1 + m_2 + n_2 + (m_1 + m_2 - m_{2e} - k + i) + (2k - i - 1 - m_1 - m_2 - n_1 - n_2 + m_{2e} + n_{2e}) = 4k + 1 + m_1 + m_2 + n_{2e}$. We know $m_1 + m_2 \geq T > C$ because $(c)$ is false. So $|w_i'| \geq 4k + 1 + m_1 + m_2 + n_{2e} > 4k + 1 + C + n_{2e} > 3k + i +1 + C$.

\item Conditions $(a)$ and $(c)$ hold, $(b)$ is false. Thus, $|w_i'| \geq 3k + 2 + m_1 + n_1 + m_2 + n_2 + (m_1+m_2-m_{2e} - k + i) + (2T - 2m_1 - 2m_2) = 2k + 2 + n_1 + n_2 - m_{2e} + i + 2T$. Because $(c)$ holds, $m_1 + m_2 < T$. Also, $(b)$ does not hold. Putting this together, $T + n_1 + n_2 - m_{2e} > m_1 + m_2 + n_1 + n_2 -m_{2e} - n_{2e} \geq 2k - i - 1$. Now, we can see, $|w_i'| \geq 2k + 2 + n_1 + n_2 - m_{2e} + i + 2T > 2k + 1 + 2k - i - 1 +T > 4k + T > 3k + i + 1 + C$.

\item Only $(a)$ is true. Thus, $|w_i'| \geq 3k + 2 + m_1 + n_1 + m_2 + n_2 + (m_1 + m_2 - m_{2e} - k + i) = 2k + 2 + 2m_1 + 2m_2 - m_{2e} + n_1 + n_2 + i$. Because $(b)$ does not hold, we know $m_1 + m_2 + n_1+n_2 - m_{2e} \geq m_1 + m_2 + n_1 + n_2 - m_{2e} - n_{2e} \geq 2k - i -1$. This, along with the fact that $(c)$ is false, means $m_1 + m_2 \geq T$ Thus, $|w_i'| \geq 2k + 2 + 2m_1 + 2m_2 + n_1 + n_2 + i \geq 4k + 1 + T \geq 3k + i +1 + T > 3k + i + 1 + C$.

\item Conditions $(b)$ and $(c)$ hold, $(a)$ is false. Thus, $|w_i'| \geq 3k + 2 + m_1 + n_1 + m_2 + n_2 + (2k - i - 1 - m_1 - m_2 - n_1 - n_2 + m_{2e} + n_{2e}) + (2T - 2m_1 - 2m_2) = 5k + 1 - i +m_{2e} + n_{2e} + 2T - 2m_1 - 2m_2$. Because $(a)$ doesn't hold, $m_1 + m_2 - m_{2e} < k - i$, meaning $k - i - m_1 - m_2 + m_{2e} > 0$. So, $|w_i'| \geq 5k + 1 - i +m_{2e} + n_{2e} + 2T - 2m_1 - 2m_2 > 4k + 1 + n_{2e} + 2T - m_1 - m_2$. Then, because $(c)$ holds, we know $m_1 + m_2 <T$, and we have  $|w_i'| \geq 4k + 1 + n_{2e} + 2T - m_1 - m_2 > 4k + 1 + T > 3k + i + 1 + C$.
\end{enumerate}

In every case,  $|w_i'| > 3k + i +1 + C \geq 3k + i +1 + c = |\overline{w_i'}| + c \geq |w_i'|$, so we have a contradiction.


\vspace*{10pt}
\noindent \textbf{\underline{Ordering $t_0 < t_b < t_a < t_f$}:}

We next examine the time intervals given by $t_0 < t_b < t_a < t_f$ and account for the total rotations needed in each interval.

\vspace*{5pt}
\noindent \textbf{Interval $t_0 \leq t \leq t_b$:}

At $t_0$, all of $B$ is mapped at or to the right of the pivot and to the left of $v_b$. Thus, $C_R(t_0) = k - 1$ and $d_b(t_0) = k - 1$. We must have $d_b(t_b - 1) = 0$ so that $v_b$ can be made internal at $t_b$. Decreasing $d_b$ requires us to either map the intervening vertices to the left of the pivot with $x_0^{-1}$ rotations or make them internal with $x_1^{-1}$ rotations. Thus, we increment $C_{x_0^{-1} || x_1^{-1}}$ by at least $k - 1$. Let $n_1$ be the number of vertices in $B$  that are internal at $t_b$.

At $t_b$, a single $x_1^{-1}$ rotation makes $v_b$ internal, so we increment $C_ {x_1^{-1}}$ by $1$.

Additionally, it is possible that some vertices in $A$ are internal at $t_b$. At $t_0$, $A$ is mapped to the left spine. The only way to bring a vertex from the left spine to the pivot is an $x_0$ rotation. No $x_0$ rotations were yet required in this interval, so each vertex in $A$ that is internal at $t_b$ requires an $x_0$ rotation to reach the pivot and an $x_1^{-1}$ rotation to make it internal. Say $m_1$ vertices in $A$ are internal at $t_b$, requiring us to increment $C_{x_0}$ by $m_1$ and $C_{x_1^{-1}}$ by $m_1$.

In total, this interval requires us to increment $C_{x_0^{-1} || x_1^{-1}}$ by $k - 1$, $C_{x_0}$ by $m_1$, and $C_{x_1^{-1}}$ by $m_1 + 1$, for a total of $k + 2m_1$ rotations in this step.

\vspace*{5pt}
\noindent \textbf{Interval $t_b < t \leq t_a$:}

Because $\overline{w_i'(t_b)}(v_b) =[\frac{1}{2},\frac{3}{4}]$ and $F$ preserves the infix ordering of vertices, $\overline{w_i'(t_b)}$ maps $A \cup B$ to the left of the pivot. This is the final time $v_b$ is made internal, so none of the vertices in $A \cup B$ can be suspended from it. Thus, $\overline{w_i'(t_b)}$ maps $A \cup B$ to the left side of the tree with $v_a$ mapped to the left of where all these vertices are mapped. This means $C_R(t_b) = 0$ and we know $C_I(t_b) = m_1 + n_1$, so $C_L(t_b) = 2k - 1 - (m_1 + n_1)$ and $d_a(t_b) \geq 2k - 1 - (m_1 + n_1) + 1 = 2k - m_1 - n_1$. We must have $d_a(t_a - 1) = 0$, so the vertices in $A \cup B$ mapped to the left of the pivot must be mapped to the right side of the tree with $x_0$ rotations. Thus, $C_{x_0}$ must be incremented by $2k - m_1 - n_1$.

At $t_a$, a single $x_1^{-1}$ rotation makes $v_a$ internal, so we increment $C_ {x_1^{-1}}$ by $1$.

It is possible some of the vertices in $A \cup B$ that were internal at $t_b$ are external at $t_a$. Additionally, some of these vertices which were external at $t_b$ may be internal at $t_a$. Making an external vertex internal takes an $x_1^{-1}$ rotation and making an internal vertex external takes an $x_1$ rotation. Say that at $t_a$, $m_2$ vertices in $A$ and $n_2$ vertices in $B$ are internal that were external at $t_b$. Of the $m_1$ vertices in $A$ that were internal at $t_b$, say $m_{2e}$ were external at $t_a$, and of the $n_1$ vertices in $B$ that were internal at $t_b$, $n_{2e}$ are external at $t_a$. We must increment $C_{x_1^{-1}}$ by $m_2 + n_2$ and $C_{x_1}$ by $m_{2e} + n_{2e}$ to accomplish this change. In total then, $C_I(t_b) = (m_1 + m_2 - m_{2e}) + (n_1 + n_2 - n_{2e})$.

In total, this interval requires us to increment $C_{x_0}$ by $2k - m_1 - n_1$, $C_{x_1^{-1}}$ by $m_2 + n_2 + 1$, and $C_{x_1}$ by $m_{2e} + n_{2e}$, for a total of $2k+ m_2 + m_{2e} + n_2 + n_{2e} + 1 - m_1 - n_1$ rotations in this step.

\vspace*{5pt}
\noindent \textbf{Interval $t_a < t \leq t_f$:}

Note, $\overline{w_i'(t_a)}(v_a) =[\frac{1}{2},\frac{3}{4}]$. Because $F$ preserves the infix ordering of vertices, all of $A \cup B$ is mapped to the right of position $[\frac{1}{2}, \frac{3}{4}]$. Thus $C_L(t_a) = 0$. We know the $C_I(t_a) = m_1 + m_2 + n_1 + n_2 - m_{2e} - n_{2e}$, so $C_R(t_b) = 2k - 1 - (m_1 + m_2 + n_1 + n_2 - m_{2e} - n_{2e})$. Note that $C_I(t_f) = 0$ and $\overline{w_i'(t_f)}([0, \frac{1}{2}^{k - i}]) = [0, 1]$, the root of the tree. Thus, we must have $C_L(t_f) = i$ and $C_R(t_f) = 2k - i -1$.

Bringing $C_I(t_f)$ to $0$ requires all of $A \cup B$ to be internal. Because $C_I(t_a) = m_1 + m_2 + n_1 + n_2 - m_{2e} - n_{2e}$, it will take at least that many $x_1$ rotations to make vertices external, incrementing $C_{x_1}$ by $m_1 + m_2 + n_1 + n_2 - m_{2e} - n_{2e}$.

The only way to increase $C_L$ is using $x_0$ rotations to move vertices from the right side of the tree to the left side. Because $C_L(t_a) = 0$ and $C_L(t_f) = i$, we must increment $C_{x_0}$ by $i$.

In total, this step requires us to increment $C_{x_1}$ by $m_1 + m_2 + n_1 + n_2 - m_{2e} - n_{2e}$ and $C_{x_0}$ by $i$, for a total of $m_1 + m_2 + n_1 + n_2 - m_{2e} - n_{2e} + i$ rotations in this step.

\vspace*{5pt}
\noindent \textbf{Additional Accounting:}

In the above, we have accounted for $m_1 + m_2$ instances where a vertex in $A$ is made internal. Each of these times occurs when one of the vertices is made internal during one of our time intervals and is not external at the end of it. We have also accounted for all instances where these were eventually made external and no other instances where a vertex in $A$ were made external. There may be times $t$ when a vertex $a \in A$ is made internal where both $t$ and the next time $t'$ when $a$ is made external occur in the same time interval. Each such pair contributes both an $x_1^{-1}$ and an $x_1$ rotation that we have not yet accounted for, so say there are $m_3$ pairs, requiring us to increment $C_{x_1^{-1}}$ by $m_3$ and $C_{x_1}$ by $m_3$ for a total of $2m_3$ rotations in this step. Note, $m_1 + m_2 + m_3 = T \geq 2C + 1$.

\vspace*{5pt}
\noindent \textbf{Total rotations:}

Between all these steps, we have established that $|w_i'| \geq (k + 2m_1) + (2k + m_2 + m_{2e} + n_2 + n_{2e} + 1 - m_1 - n_1) + (m_1 + m_2 + n_1 + n_2 - m_{2e} - n_{2e} + i ) + (2m_3) = 3k + i + 2(m_1 + m_2 + m_3) + 2n_2 + 1  \geq  3k + i + 1 + 2(m_1 + m_2 + m_3)$. We know $m_1 + m_2 + m_3 \geq 2C + 1$, so $|w_i'| \geq 3k + i + 1 + 2(m_1 + m_2 + m_3) \geq 3k + i +1 + 4C + 2 = 3k + i +1 + C + (3C + 1) + 2 > 3k + i + 1 + C \geq 3k + i +1 + c > |\overline{w_i'}| + c \geq |w_i'|$. This is a contradiction.


\vspace*{10pt}
\noindent \textbf{Conclusion}

Now, whether $t_b < t_a$ or $t_a < t_b$, we see that the length of $w_i'$ must be longer than the necessary restriction that $|w_i'| \leq |\overline{w_i'}| + c$ demands. Because we have a contradiction either way, we see that in $w_i'$, there can be no more than $2C$ times that any of the vertices in $A$ are made internal. Thus, the lemma holds.
\end{proof}


\vspace*{10pt}
\begin{lem*}{\textbf{4.15}}
In $w_{k - l}'$, the final time where vertex $v_b$ is made internal must occur before the first time the vertex $v_a$ is made internal.
\end{lem*}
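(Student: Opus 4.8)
The plan is to argue by contradiction, in the style of Lemma~4.14. Suppose that in $w_{k-l}'$ the first time $s$ at which $v_a$ is made internal does \emph{not} precede the last time at which $v_b$ is made internal. Since $\overline{w_{k-l}'(t-1)}$ is a bijection, only the vertex it sends to the pivot can be carried to $[\frac12,\frac34]$ at step $t$, so at each step at most one vertex is made internal; hence the two times are distinct and $s$ is strictly before the last time $u$ at which $v_b$ is made internal. As observed in Lemma~4.14, $t_f:=|w_{k-l}'|$ is strictly larger than $u$ (at $t_f$ the vertex $v_b$ sits roughly $k$ from the pivot), so $0<s<u<t_f$, and I would account for the rotations of $w_{k-l}'$ over the three intervals $[0,s]$, $[s,u]$, $[u,t_f]$. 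Write $m$ and $n$ for the numbers of vertices of $A$, respectively $B$, that are interior at time $s$.

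On $[0,s]$: the vertex $v_a=[0,\frac{1}{2^k}]$ starts on the left spine with $d_a=k+1$, and --- from the way the generators act on $\mathcal{T}$ (Chapter~3) --- a left-spine vertex can leave the left spine only by first becoming the root and then moving to the pivot, while only $x_0$ decreases $d_a$ as long as $v_a$ is on the left spine; so at least $k+1$ of the rotations before $v_a$ first reaches the pivot are $x_0$'s. Separately, every vertex of $A\cup B$ that is interior at time $s$ must have entered the interior through the pivot-to-$[\frac12,\frac34]$ transition at some step $\le s-1$, i.e.\ it was made internal there by an $x_1^{-1}$; these are $m+n$ distinct rotations, none of them among the $k+1$ $x_0$'s, and step $s$ itself is one further $x_1^{-1}$ (internalizing $v_a$). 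Hence $s\ge k+m+n+2$.

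For $[s,u]$: since $\overline{w_{k-l}'(s)}(v_a)=[\frac12,\frac34]$ and $F$ preserves the infix order, every vertex of $A\cup B$ together with $v_b$ is carried strictly to the right of $[\frac12,\frac34]$, so $C_L(s)=0$ and $C_R(s)=2k-1-(m+n)$; the $C_R(s)$ vertices of $A\cup B$ on the right spine occupy distinct distances from the pivot, so one of them lies at distance $\ge C_R(s)-1$, and since $v_b$ is infix-largest its image lies still farther out, giving $d_b(s)\ge C_R(s)=2k-1-m-n$. As $v_b$ must be back at the pivot at time $u-1$ and each rotation changes $d_b$ by at most one, $u-s\ge 2k-m-n$. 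For $[u,t_f]$: $\overline{w_{k-l}'(u)}(v_b)=[\frac12,\frac34]$ forces every vertex of $A\cup B$ to the left of $[\frac12,\frac34]$, so $C_R(u)=0$, whereas $\overline{w_{k-l}}$ has $C_R(t_f)=2k-(k-l)-1=k+l-1$; since $C_R$ changes by at most one per step, $t_f-u\ge k+l-1$.

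Adding the three estimates the $m+n$ terms cancel and, using $l=\frac{C}{2}+1$, we get $t_f\ge(k+m+n+2)+(2k-m-n)+(k+l-1)=4k+1+l=4k+2+\frac{C}{2}$, which contradicts $t_f=|w_{k-l}'|\le|\overline{w_{k-l}}|+c=(4k+1-l)+c=4k-\frac{C}{2}+c\le 4k+\frac{C}{2}$ (the last inequality since $c\le C$). So the assumed ordering is impossible, proving the lemma. The delicate point is keeping the three rotation counts genuinely disjoint --- in particular justifying from the generator descriptions that the $x_0$'s walking $v_a$ up the left spine are never the $x_1^{-1}$'s that internalize auxiliary vertices, and pinning down $d_b(s)$ precisely from infix-order preservation; the cancellation of the auxiliary count $m+n$ between $[0,s]$ and $[s,u]$ is exactly what makes the bound strong enough to beat $4k-\frac{C}{2}+c$.
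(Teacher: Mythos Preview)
Your argument follows the paper's own proof closely: assume the bad ordering $t_0<s<u<t_f$, count rotations over the three intervals, and obtain $|w_{k-l}'|\ge 4k+l+1$, contradicting the length bound. In fact your third-interval estimate (using that $C_R(u)=0$, $C_R(t_f)=k+l-1$, and $C_R$ changes by at most one per step) is cleaner than the paper's, which tracks auxiliary counts $m_2,n_2,m_{2e},n_{2e}$ and splits into three cases to reach the same inequality.

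There is one small gap in your second interval. You assert $d_b(s)\ge C_R(s)$ and that ``each rotation changes $d_b$ by at most one.'' Both statements implicitly assume $\overline{w_{k-l}'(s)}(v_b)$ lies on the right spine, but nothing so far rules out $v_b$ being interior at time $s$ (it could have been internalized earlier and not yet re-externalized, since $u$ is only the \emph{last} internalization of $v_b$). When $v_b$ sits in the interior, a single rotation can change $d_b$ by more than one: for instance $x_0$ carries the subtree at $[\tfrac14,\tfrac12]$ rigidly to the subtree at $[\tfrac12,\tfrac34]$, shifting tree-distance to the pivot by $2$ for its root. The fix is immediate and is exactly the device you already use in the third interval: argue with $C_R$ instead of $d_b$. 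At time $s$ you have $C_R(s)=2k-1-m-n$; at time $u-1$, when $v_b$ occupies the pivot, every vertex of $A\cup B$ lies to its left in infix order, so $C_R(u-1)=0$. Since $C_R$ changes by at most one per step, $(u-1)-s\ge 2k-1-m-n$, i.e.\ $u-s\ge 2k-m-n$, and your total stands unchanged.
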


\begin{proof}
Assume, for contradiction, the final time $v_b$ is made internal occurs after the first time $v_a$ is made internal. 

We consider the impact this has on $|w_{k - l}'|$. We look at times $t_0$, $t_a$, $t_b$, and $t_f$. Define $t_0 = 0$ and $t_f = |w_{k - l}'|$. Define $t_a$ as the first time that $v_a$ is made internal in $w_{k - l}'$ and $t_b$ as the final time $v_b$ is made internal in $w_{k - l}'$. These times are clearly distinct because the action of $F$ on the tree is bijective and $\overline{w_{k-l}'(t_0)}([\frac{1}{2}, \frac{3}{4}]) =  \overline{w_{k-l}'(t_a)}(v_a) =  \overline{w_{k-l}'(t_b)}(v_b) = \overline{w_{k-l}'(t_f)}([\frac{1}{2}^l, \frac{1}{2}^{l - 1}]) =  [\frac{1}{2}, \frac{3}{4}]$.

This means $t_0 < t_a < t_b < t_f$. We consider the impact each has on $|w_{k-l}'|$.

\vspace*{5pt}
\noindent \textbf{Interval $t_0 \leq t \leq t_a$:}

At $t_0$, all of $A$ is mapped to the left of the pivot and to the right of $v_a$. Thus, $C_L(t_0) = k$ and $d_a(t_0) = k + 1$. We know that $v_a$ is made internal at time $t_a$, so $d_a(t_a - 1) = 0$. Because the intervening vertices are mapped to the left of the pivot, the only way to decrease $d_a$ is with $x_0$ rotations. Thus, we must increment $C_{x_0}$ by at least $k + 1$.

At $t_a$, a single $x_1^{-1}$ rotation makes $v_a$ internal, so we increment $C_ {x_1^{-1}}$ by $1$.

Additionally, some of the vertices in $A$ and $B$ may be internal at $t_a$, say $m_1$ of the vertices in $A$ and $n_1$ of the vertices in $B$. Because $C_I(t_0) = 0$, it will take a single $x_1^{-1}$ rotation to make each of these internal, requiring us to increment $C_ {x_1^{-1}}$ by another $m_1 + n_1$ this step.

In total, this interval requires us to increment $C_{x_0}$ by $k + 1$ and $C_ {x_1^{-1}}$ by $m_1 + n_1 + 1$ for a total of $k + m_1 + n_1 + 2$ rotations in this interval.

\vspace*{5pt}
\noindent \textbf{Interval $t_a < t \leq t_b$:}

Because $\overline{w_{k - l}'(t_a)}(v_a) =[\frac{1}{2},\frac{3}{4}]$ and $F$ preserves the infix ordering of vertices, $\overline{w_{k - l}'(t_a)}$ maps $A \cup B$ at or to the right of the pivot. Thus, $C_L(t_a) = 0$. We also know that $C_I(t_a) = m_1 + n_1$, so $C_R(t_a) = 2k - 1 - (m_1 + n_1)$ and $d_b(t_a) \geq 2k - m_1 - n_1 - 1$. Because $\overline{w_{k - l}'(t_b - 1)}(v_b) = [\frac{1}{2}, 1]$, $d_b(t_b - 1) = 0$. These intervening vertices are mapped to the right spine, so reducing $d_b$ can be accomplished either by moving them to the left spine of the tree with $x_0^{-1}$ rotations or making them internal with $x_1^{-1}$ rotations. Thus, we must increment $C_{x_0^{-1} || x_1^{-1}}$ by at least $2k - m_1 - n_1 - 1$.

At $t_b$, a single $x_1^{-1}$ rotation makes $v_b$ internal, so we increment $C_ {x_1^{-1}}$ by $1$.

It is possible some of the vertices in $A \cup B$ that were internal at $t_a$ are external at $t_b$. Additionally, some of these vertices which were external at $t_a$ may be internal at $t_b$. We may have already accounted for making any new vertices internal. Say that by time $t_b$, $m_2$ vertices in $A$ are internal that were external at $t_a$ and $n_2$ vertices in $B$ are internal that were external at $t_a$. Of the $m_1$ vertices in $A$ that were internal at $t_a$, say $m_{2e}$ are external at $t_b$, and of the $n_1$ vertices in $B$ that were internal at $t_a$, $n_{2e}$ are external at $t_b$. We must increment  $C_{x_1}$ by $m_{2e} + n_{2e}$ to accomplish this change. In total then, $C_I(t_b) = (m_1 + m_2 - m_{2e}) + (n_1 + n_2 - n_{2e})$.

In total, this interval requires us to increment $C_{x_0^{-1} || x_1^{-1}}$ by $2k - m_1 - n_1 - 1$, $C_ {x_1^{-1}}$ by $1$, and $C_{x_1}$ by $m_{2e} + n_{2e}$ for a total of $2k - m_1 - n_1 + m_{2e} + n_{2e}$ rotations in this interval.

\vspace*{5pt}
\noindent \textbf{Interval $t_b < t \leq t_f$:}

Note, $\overline{w_{k - l}'(t_b)}(v_b) =[\frac{1}{2},\frac{3}{4}]$. This is the final time $v_b$ is made internal, so none of the vertices in $A \cup B$ can be suspended from it. Otherwise, removing them from the interior of the tree would require us to make $v_b$ external again before $t_f$, which cannot occur. Because $F$ preserves the infix ordering of vertices, this means all of $A \cup B$ is mapped to the left of position $[\frac{1}{2}, \frac{3}{4}]$, but not below it. Thus it must be on the left side of the tree, meaning $C_R(t_b) = 0$. We know the $C_I(t_b) = m_1 + m_2 + n_1 + n_2 - m_{2e} - n_{2e}$, so $C_L(t_b) = 2k - 1 - (m_1 + m_2 + n_1 + n_2 - m_{2e} - n_{2e})$. Note that $C_I(t_f) = 0$ and $\overline{w_{k - l}'(t_f)}([0, \frac{1}{2}^l]) = [0, 1]$, the root of the tree. Thus, we must have $C_L(t_f) = k - l$ and $C_R(t_f) = 2k - k + l - 1$.

Bringing $C_I(t_f)$ to $0$ requires all of $A \cup B$ to be internal. Because $C_I(t_b) = m_1 + m_2 + n_1 + n_2 - m_{2e} - n_{2e}$, it will take at least that many $x_1$ rotations to make vertices external, incrementing $C_{x_1}$ by $m_1 + m_2 + n_1 + n_2 - m_{2e} - n_{2e}$.

We require additional rotations related to these vertices we have made external. Any time a vertex is made external, it is mapped from the interior of the tree to the pivot. This is fine for the vertices $[\frac{2^q - 1}{2^q}, 1]$, $1 \leq q\leq k - 1$, and $[0, \frac{1}{2}^{p}]$, $0 \leq p \leq l - 1$, because $\overline{w_{k - l}'(t_f)}$ maps them to the right spine of the tree. However, all of the vertices $[0, \frac{1}{2}^{p}]$, $l \leq p \leq k - 1$ must be mapped to the left spine in $\overline{w_{k - l}'(t_f)}$. Mapping a vertex from the right spine of the tree to the left spine can only be accomplished by an $x_0^{-1}$ rotation and we have accounted for no $x_0^{-1}$ rotations in this step thus far. $\overline{w_{k - l}'(t_b)}$ maps exactly $m_1 + m_2 - m_{2e}$ of the vertices in $A$ to the interior. If $m_1 + m_2 - m_{2e} > l$, then $m_1 + m_2 - m_{2e} - l$ of these vertices must come from the set $[0, \frac{1}{2}^{p}]$, $l \leq p \leq k - 1$. These vertices will each require an $x_0^{-1}$ rotation to map them to the left side of the tree before time $t_f$, so if  $m_1 + m_2 - m_{2e} > l$, then we increment $C_{x_0}$ by $m_1 + m_2 - m_{2e} - l$.

Recall, $C_R(t_b) = 0$ and $C_R(t_f) = 2k - i - 1$. Making a vertex from $A \cup B$ external increases $C_R$ by $1$ and we have accounted for making all of $A \cup B$ external with $m_1 + m_2 + n_1 + n_2-m_{2e} - n_{2e}$ rotations. If $m_1 + m_2 + n_1 + n_2-m_{2e} - n_{2e} <  k + l - 1$, we must increment $C_{x_0}$ by $(k + l - 1) - (m_1 + m_2 + n_1 + n_2 - m_{2e} - n_{2e})$ as the only remaining way to increase $C_R$ is mapping vertices from the left spine of the tree to the right spine.

In total, this interval requires us to increment $C_{x_1}$ by $m_1 + m_2 + n_1 + n_2 - m_{2e} - n_{2e}$. If $m_1 + m_2 - m_{2e} > l$, we must increment $C_{x_0}$ by $m_1 + m_2 - m_{2e} - l$. If $m_1 + m_2 + n_1 + n_2 - m_{2e} - n_{2e} < k + l -1$, we must increment $C_{x_0}$ by $k + l - 1 - m_1 - m_2 - n_1 - n_2 + m_{2e} + n_{2e}$.

\vspace*{5pt}
\noindent \textbf{Total Rotations:}

Between all these intervals, $|w_{k - l}'| \geq (k + m_1 + n_1 + 2) + (2k - m_1 -n_1 + m_{2e} + n_{2e}) + (m_1 + m_2 + n_1 + n_2 - m_{2e} - n_{2e}) = 3k + 2 + m_1 + m_2 + n_1 + n_2$, plus an additional $m_1 + m_2 - m_{2e} - l$ if $(a): m_1 + m_2 - m_{2e} \geq l$, and an additional $k + l - 1 - m_1 - m_2 - n_1 - n_2 + m_{2e} + n_{2e}$ if $(b): m_1 + m_2 + n_1 + n_2 - m_{2e} - n_{2e} < k + l - 1$.

If $(a)$ does not hold, then $m_1 + m_2 - m_{2e} < l$. But this means that  $m_1 + m_2 + n_1 + n_2 - m_{2e} - n_{2e} < k+ l -1$ as $n_1 + n_2 \leq k - 1$ and $n_{2e} \geq 0$. So if $(a)$ is false, $(b)$ is true.

Recall, $l = \frac{C}{2} + 1$, $m_{2e} \geq 0$, and $n_{2e} \geq 0$.

This gives us three cases to consider:

\begin{enumerate}
\item Conditions $(a)$ and $(b)$ hold. Thus, $|w_{k - l}'| \geq 3k + 2 + m_1 + m_2 + n_1 + n_2 + (m_1 + m_2 -  m_{2e} - l) + (k + l - 1 - m_1 - m_2 - n_1 - n_2 + m_{2e} + n_{2e})= 4k + 1 + m_1 + m_2 + n_{2e}$. Note that because $(a)$ holds and $m_{2e} \geq 0$, $m_1 + m_2 \geq m_1 + m_2-m_{2e} \geq l$. This means $|w_{k - l}'| \geq 4k + 1 + m_1 + m_2 + n_{2e} \geq 4k + l + 1 $.

\item Condition $(a)$ holds, $(b)$ is false. Thus, $|w_{k - l}'| \geq 3k + 2 + m_1 + m_2 + n_1 + n_2 + (m_1 + m_2 - m_{2e} - l) = 3k + 2 + (m_1 + m_2) + (m_1 + m_2 + n_1 + n_2 - m_{2e}) - l$. Because $(b)$ does not hold and $n_{2e} \geq 0$, $m_1 + m_2 + n_1 + n_2 - m_{2e} \geq m_1 + m_2 + n_1 + n_2 - m_{2e} - n_{2e} \geq  k + l - 1$. And because $(a)$ does hold and $m_{2e} \geq 0$, $m_1 + m_2 \geq m_1 + m_2 - m_{2e} >  l$. So, $|w_{k - l}'| \geq 3k + 2 + (m_1 + m_2) + (m_1 + m_2 + n_1 + n_2 - m_{2e}) - l > 3k + 2 + l + k + l - 1 - l = 4k + l + 1$.

\item Condition $(b)$ holds, $(a)$ is false. Thus, $|w_{k - l}'| \geq 3k + 2 + m_1 + m_2 + n_1 + n_2 + (k + l - 1 - m_1 - m_2 - n_1 - n_2 + m_{2e} + n_{2e})= 4k + l + 1 + m_{2e} + n_{2e} \geq 4k + l + 1$.
\end{enumerate}

In every case,  $|w_{k - l}'| \geq 4k + l + 1 = 4k - l + 2l + 1 > 4k - l + 1 + C \geq 4k - l +1 + c = |\overline{w_{k-l}'}| + c \geq |w_{k-l}'|$, so we have a contradiction. The length of $w_{k - l}'$ must be longer than the restriction that $|w_{k - l}'| \leq |\overline{w_{k - l}'}| + c$ demands. Thus, in $w_{k - l}'$, the final time where vertex $v_b$ is made internal must occur before the first time the vertex $v_a$ is made internal.
\end{proof}


\vspace*{10pt}
\begin{lem*}{\textbf{4.16}}
None of the vertices in $B$ are internal at any time when $v_a$ is made internal in $w_{k - l}'$.
\end{lem*}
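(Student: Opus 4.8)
The plan is to argue by contradiction, reusing the rotation‑counting machinery of the previous two lemmas and feeding in the extra information that a vertex of $B$ is internal at the critical moment. Suppose there is a time $\tau$ at which $v_a$ is made internal in $w_{k-l}'$ and at which some vertex $b_0 \in B$ is internal. Set $t_0 = 0$, $t_f = |w_{k-l}'|$, and let $t_b$ be the last time $v_b$ is made internal. By Lemma 4.15 the last time $v_b$ is made internal precedes the first time $v_a$ is made internal, so $t_b < \tau$; and since $\overline{w_{k-l}}$ maps $v_b$ into the interior, $v_b$ can be neither made internal nor made external after $t_b$, hence is internal throughout $[t_b, t_f]$, in particular at $\tau$. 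Also $\overline{w_{k-l}'(\tau)}(v_a) = [\frac{1}{2}, \frac{3}{4}]$, and since $F$ preserves the infix order and $v_a$ is the smallest of the special vertices, $\overline{w_{k-l}'(\tau)}$ carries all of $A \cup B$ strictly to the right of $[\frac{1}{2}, \frac{3}{4}]$; as the whole left spine lies to the left of $[\frac{1}{2}, \frac{3}{4}]$, this forces $C_L(\tau) = 0$.

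Next I would split $[0, t_f]$ at $t_b$ and $\tau$ and account for the forced rotations over the intervals $[0, t_b]$, $[t_b, \tau]$, $[\tau, t_f]$ exactly as in the ordering $t_0 < t_b < t_a < t_f$ of the proof of Lemma 4.14, with $\tau$ in the role of $t_a$, with $i = k - l$, and using $C_I(t_f) = 0$, $C_L(t_f) = k - l$, $C_R(t_f) = k + l - 1$. With the usual abbreviations — $m_1, n_1$ for the vertices of $A$, respectively $B$, internal at $t_b$; $m_2, n_2$ for those internal at $\tau$ but external at $t_b$; $m_{2e}, n_{2e}$ for those internal at $t_b$ but external at $\tau$; $m_3$ for the extra internalize‑then‑externalize events inside $A$ — the same bookkeeping gives
\[
|w_{k-l}'| \geq 4k - l + 1 + 2(m_1 + m_2 + m_3) + 2n_2 .
\]
Since $|w_{k-l}'| \leq |\overline{w_{k-l}}| + c = 4k - l + 1 + c$ and $c \leq C = 2l - 2$, this is already a contradiction once $2(m_1 + m_2 + m_3) + 2n_2 \geq c + 1$, and a fortiori once $m_1 + m_2 + m_3 + n_2 \geq l$.

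It remains to extract that many internalizing rotations from the hypothesis that $b_0 \in B$ is internal at $\tau$. Here I would track $b_0$ directly: it begins on the right spine, is external at every time before it is first made internal, is internal at $\tau$, and — by the description of the action of $x_1^{-1}$ on the subtree rooted at the pivot — must already be interior one step before $\tau$; and since $C_I(t_f) = 0$ it is external again at $t_f$. Combining this with the configuration at $t_b$ ($C_R(t_b) = 0$, so every vertex of $A \cup B$ is on the left spine or internal, with $v_a$ at the far left so that $d_a(t_b) \geq 2k - m_1 - n_1$), I expect to show that carrying $b_0$ into the interior while simultaneously pulling $v_a$ up the left spine to the pivot, and then vacating $b_0$ from the interior before $t_f$, is impossible without at least $l$ internalizing rotations on the special vertices beyond those already charged to the counters above. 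The cleanest route is probably to iterate the ``interior one step earlier'' observation together with the infix‑order constraints, tracing $b_0$, $v_a$, and $v_b$ back to the time $b_0$ was first made internal.

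The hard part will be precisely this last step. The bound coming from Lemma 4.14 only controls how often vertices of $A$ are internalized, so the whole difficulty is to convert the single assumption ``some vertex of $B$ is internal when $v_a$ is made internal'' into the roughly $l \approx C/2$ extra internalizing rotations needed to overrun the length budget — and making that count rigorous requires a careful, case‑by‑case audit of $b_0$'s trajectory between the right spine and the pivot region (in particular distinguishing whether $b_0$ is one of the $n_1$ vertices already internal at $t_b$, and how the $x_0$‑rotations that raise $v_a$ interact with the $x_1^{\pm 1}$‑rotations that move $b_0$ in and out of the interior), in the spirit of, but finer than, the bookkeeping already carried out for Lemmas 4.14 and 4.15.
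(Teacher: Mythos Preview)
Your three-interval decomposition and the bound $|w_{k-l}'|\ge 4k-l+1+2(m_1+m_2+m_3)+2n_2$ are correct, but the plan to close the remaining gap will not work. The hypothesis that some $b_0\in B$ is internal at $\tau$ contributes at most $n_1\ge 1$ or $n_2\ge 1$; it does not force $m_1+m_2+m_3+n_2\ge l$, and in the clean scenario $m_1=m_2=m_3=n_2=n_{2e}=0$, $n_1=1$ your bound degenerates to the geodesic length $4k-l+1$. There are no hidden internalizing rotations to find in that scenario: $b_0$ is internalized once (already charged in $[t_0,t_b]$) and externalized once (already charged in $[\tau,t_f]$). Your proposed ``trace $b_0$ back and extract $l$ extra $x_1^{\pm1}$ rotations'' is aimed at the wrong generator.

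What the hypothesis actually forces is extra $x_0^{\pm1}$ rotations, which your $[\tau,t_f]$ accounting misses because it charges only the \emph{net} change $k-l$ in $C_L$. To make $b_0$ external it must be brought to the pivot; at that instant every vertex of $A$ lies strictly left of the pivot, and since at most $2C$ of them can be internal (Lemma~4.14), essentially all of $A$ sits on the left spine. But the $l$ vertices $[0,\tfrac{1}{2^p}]$, $0\le p\le l-1$, must end on the \emph{right} spine at $t_f$, so they must swing back. The round trip costs at least $k+l$ rotations of type $x_0^{\pm1}$, not $k-l$, an undercount of about $2l=C+2$. The paper captures this by inserting a fourth time marker $t_j$, the last time the offending $B$-vertex is made external, and splitting $[\tau,t_f]$ into $[\tau,t_j]$ and $[t_j,t_f]$: the first subinterval is charged $k-(m_1+m_2-m_{2e})$ for clearing $A$ off the right spine, and the second is charged $l-s$ (with $s\le m_1+m_2+m_3$) for moving the top $l$ $A$-vertices back. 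This yields $|w_{k-l}'|\ge 4k+l+1>4k-l+1+C$, which is the contradiction. So the fix is not finer tracking of $x_1^{\pm1}$ events but one additional partition point and a count of spine traversals.
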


\begin{proof}
Assume, for contradiction, that in $w_{k - l}'$, at least one of the vertices in $B$ is internal at a time when $v_a$ is made internal. Call this vertex $v_j$. We look at times $t_0$, $t_b$, $t_a$, $t_j$ and $t_f$. Define $t_0 = 0$ and $t_f = |w_{k - l}'|$. Define $t_a$ to be the last time $v_a$ is made internal in $w_{k - l}'$ while $v_j$ was internal. Let $t_b$ be the final time $v_b$ is made internal in $w_{k - l}'$. Let $t_j$ be the final time $v_j$ is made external. Most of these times are clearly distinct because the action of $F$ on the tree is bijective and $\overline{w_{k-l}'(t_0)}([\frac{1}{2}, \frac{3}{4}]) =  \overline{w_{k-l}'(t_a)}(v_a) =  \overline{w_{k-l}'(t_b)}(v_b) = \overline{w_{k-l}'(t_f)}([\frac{1}{2}^l, \frac{1}{2}^{l - 1}]) =  [\frac{1}{2}, \frac{3}{4}]$. Because $t_j$ is when $v_j$ is made external, it cannot be $t_0$, $t_a$, or $t_b$. It cannot be $t_f$  as $v_j$ is not internal in $\overline{w_{k-l}'}$. By definition, $t_j$ must come after $t_a$. Note that by Lemma $4.15$, $t_b < t_a$. All of this means that we can consider the time intervals given by $t_0 < t_b < t_a < t_j \leq t_f$ and the impact this has on $|w_{k - l}'|$.

\vspace*{5pt}
\noindent \textbf{Interval $t_0 \leq t \leq t_b$:}

At $t_0$, all of $B$ is mapped at or to the right of the pivot and to the left of $v_b$. Thus, $C_R(t_0) = k - 1$ and $d_b(t_0) = k - 1$. We must have $d_b(t_b - 1) = 0$ so that $v_b$ can be made internal at $t_b$. Decreasing $d_b$ requires us to either map the intervening vertices to the left of the pivot with $x_0^{-1}$ rotations or make them internal with $x_1^{-1}$ rotations. Thus, we increment $C_{x_0^{-1} || x_1^{-1}}$ by at least $k - 1$. Let $n_1$ be the number of vertices in B that are internal at $t_b$.

At $t_b$, a single $x_1^{-1}$ rotation makes $v_b$ internal, so we increment $C_ {x_1^{-1}}$ by $1$.

Additionally, it is possible that some subset of $A$ is internal at $t_b$. At $t_0$, $A$ is mapped to the left spine. The only way to bring a vertex from the left spine to the pivot is an $x_0$ rotation. No $x_0$ rotations were yet required in this interval, so each vertex in $A$ that is internal at $t_b$ requires an $x_0$ rotation to reach the pivot and an $x_1^{-1}$ rotation to make it internal. Say $m_1$ vertices in $A$ are internal at $t_b$, requiring us to increment $C_{x_0}$ by $m_1$ and $C_{x_1^{-1}}$ by $m_1$.

In total, this interval requires us to increment $C_{x_0^{-1} || x_1^{-1}}$ by $k - 1$, $C_{x_0}$ by $m_1$, and $C_{x_1^{-1}}$ by $m_1 + 1$, for a total of $k + 2m_1$ rotations in this step.

\vspace*{5pt}
\noindent \textbf{Interval $t_b < t \leq t_a$:}

Because $\overline{w_{k - l}'(t_b)}(v_b) =[\frac{1}{2},\frac{3}{4}]$ and $F$ preserves the infix ordering of vertices, $\overline{w_{k - l}'(t_b)}$ maps $A \cup B$ to the left of the pivot. This is the final time $v_b$ is made internal, so none of the vertices in $A \cup B$ can be suspended from it. Thus, $\overline{w_{k - l}'(t_b)}$ maps $A \cup B$ to the left side of the tree with $v_a$ mapped to the left of where all these vertices are mapped. This means $C_R(t_b) = 0$ and we know $C_I(t_b) = m_1 + n_1$, so $C_L(t_b) = 2k - 1 - (m_1 + n_1)$ and $d_a(t_b) \geq 2k - 1 - (m_1 + n_1) + 1 = 2k - m_1 - n_1$. We must have $d_a(t_a - 1) = 0$ to make $v_a$ internal at $t_a$, so the vertices in $A \cup B$ mapped to the left of the pivot must be mapped to the right side of the tree with $x_0$ rotations. Thus, $C_{x_0}$ must be incremented by $2k - m_1 - n_1$.

At $t_a$, a single $x_1^{-1}$ rotation makes $v_a$ internal, so we increment $C_ {x_1^{-1}}$ by $1$.

It is possible some of the vertices in $A \cup B$ that were internal at $t_b$ are external at $t_a$. Additionally, some of these vertices which were external at $t_b$ may be internal at $t_a$. Making an external vertex internal takes an $x_1^{-1}$ rotation and making an internal vertex external takes an $x_1$ rotation. Say that at $t_a$, $m_2$ vertices in $A$ and $n_2$ vertices in $B$ are internal that were external at $t_b$. Of the $m_1$ vertices in $A$ that were internal at $t_b$, say $m_{2e}$ were external at $t_a$, and of the $n_1$ vertices in $B$ that were internal at $t_b$, $n_{2e}$ are external at $t_a$. We must increment $C_{x_1^{-1}}$ by $m_2 + n_2$ and $C_{x_1}$ by $m_{2e} + n_{2e}$ to accomplish this change. In total then, $C_I(t_b) = (m_1 + m_2 - m_{2e}) + (n_1 + n_2 - n_{2e})$.

In total, this interval requires us to increment $C_{x_0}$ by $2k - m_1 - n_1$, $C_{x_1^{-1}}$ by $m_2 + n_2 + 1$, and $C_{x_1}$ by $m_{2e} + n_{2e}$, for a total of $2k+ m_2 + m_{2e} + n_2 + n_{2e} + 1 - m_1 - n_1$ rotations in this step.

\vspace*{5pt}
\noindent \textbf{Interval $t_a < t < t_j$:}

Note, $\overline{w_{k-l}'(t_a)}(v_a) = [\frac{1}{2},\frac{3}{4}]$. Because $F$ preserves the infix ordering of vertices, all of $A \cup B$ is mapped to the right of position $[\frac{1}{2}, \frac{3}{4}]$. Thus $C_L(t_a) = 0$. Because $\overline{w_{k-l}'(t_j)}(v_j) = [\frac{1}{2}, 1]$ and $F$ again preserves the infix ordering of vertices, all of $A$ must be mapped to the left of $[\frac{1}{2}, 1]$ by $t_j$. Thus, all of the vertices in $A$ must either be internal or mapped to the left spine of the tree. At $t_a$, $k - (m_1 + m_2 - m_{2e})$ of the vertices in $A$ were external, and moving these vertices from the right spine of the tree requires us to increment $C_{x_0^{-1} || x_1^{-1}}$ by $k - (m_1 + m_2 - m_{2e})$. Say that at $t_j$, $m_3$ vertices in $A$ are internal that were external at $t_a$.

In addition to the above it is possible some of the vertices in $A \cup B$ that were internal at $t_a$ are external at $t_j$ and some of these vertices in $B$ which were external at $t_a$ may be internal at $t_j$. Say that at $t_j$, $n_3$ vertices in $B$ are internal that were external at $t_a$. Additionally, say that of the $m_1 + m_2 - m_{2e}$ vertices in $A$ that were internal at $t_a$, $m_{3e}$ were external at $t_j$ and of the $n_1 + n_2 - n_{2e}$ vertices in $B$ that were internal at $t_a$, say $n_{3e}$ were external at $t_j$. We have not accounted for these changes yet, so we must increment $C_{x_1}$ by $m_{3e} + n_{3e}$ to make these vertices external and $C_{x_1^{-1}}$ by $n_3$ to make the required vertices internal.

In total, this interval requires us to increment $C_{x_1}$ by $m_{3e} + n_{3e}$, $C_{x_1^{-1}}$ by $n_3$, and $C_{x_0^{-1} || x_1^{-1}}$ by $k - (m_1 + m_2 - m_{2e})$, for a total of $k - m_1 - m_2 + m_{2e} + n_3 + m_{3e} + n_{3e}$ rotations in this step.

\vspace*{5pt}
\noindent \textbf{Interval $t_j \leq t \leq t_f$:}

Note, $\overline{w_i'(t_j)}(v_j) = [\frac{1}{2}, 1]$. Because $F$ preserves the infix ordering of vertices, all of $A$ is mapped to the left of position $[\frac{1}{2}, 1]$, meaning either internal or suspended from $[\frac{1}{2}, \frac{3}{4}]$. Thus, none of $A$ is mapped to the right spine of the tree. Note, $m_1 + m_2 + m_3 - m_{2e} - m_{3e}$ vertices in $A$ are internal at $t_j$. Because $\overline{w_i'(t_f)}([0, \frac{1}{2}^l]) = [0, 1]$, all of the vertices $[0, \frac{1}{2}^p]$, $0 \leq p \leq l - 1$, in $A$ will eventually need to be mapped to the right spine. Say that $s$ of the vertices $[0, \frac{1}{2}^p]$, $0 \leq p \leq l - 1$ are internal at time $t_j$. Then, $l - s$ of these vertices are external and thus mapped to the left spine at $t_j$. Thus, we must increment $C_{x_0}$ by $l - s$ to move these vertices from the left spine to the right spine. Note, $s \leq m_1 + m_2 + m_3 - m_{2e} - m_{3e} \leq m_1 + m_2 + m_3$.

 We know $C_I(t_j) = m_1 + n_1 + m_2 + n_2 + m_3 + n_3 - m_{2e} - n_{2e} - m_{3e} - n_{3e}$ while $C_I(t_f) = 0$. Making the remainder of $A \cup B$ internal requires incrementing $C_{x_1}$ by $m_1 + n_1 + m_2 + n_2 + m_3 + n_3 - m_{2e} - n_{2e} - m_{3e} - n_{3e}$.

In total, this step requires us to increment $C_{x_1}$ by $m_1 + n_1 + m_2 + n_2 + m_3 + n_3 - m_{2e} - n_{2e} - m_{3e} - n_{3e}$ and $C_{x_0}$ by $l-s$, for a total of $m_1 + n_1 + m_2 + n_2 + m_3 + n_3 - m_{2e} - n_{2e} - m_{3e} - n_{3e} + l - s$ rotations in this step.

\vspace*{5pt}
\noindent \textbf{Total rotations:}

Between all these intervals, $|w_{k - l}'| \geq (k + 2m_1) + (2k + m_2 + m_{2e} + n_2 + n_{2e} + 1 - m_1 - n_1) + (k - m_1 - m_2 + m_{2e} + n_3 + m_{3e} + n_{3e}) + (m_1 + n_1 + m_2 + n_2 + m_3 + n_3 - m_{2e} - n_{2e} - m_{3e} - n_{3e} + l - s) = 4k + 1 + l + m_1 + 2n_2 + m_{2e} + 2n_3 + m_2 + m_3 - s \geq 4k + 1 + l + (m_1 + m_2 + m_3 - s) \geq 4k + l + 1 = 3k + (k - l) +2l + 1 > 3k + (k - l)+1 + C \geq 3k + (k - l)+1 + c = |\overline{w_{k - l}'}| + c \geq |w_{k - l}'|$. This is a contradiction, so $v_j$ could not have been internal at any point where $v_a$ was made internal. Thus, none of the vertices in $B$ are internal at any time when $v_a$ is made internal in $w_{k - l}'$.
\end{proof}


\vspace*{10pt}
\begin{lem*}{\textbf{4.17}}
If, in $w_i'$, $k-l \leq i \leq k$, the final time where vertex $v_b$ is made internal must occur before the first time the vertex $v_a$ is made internal and $\overline{w_i(t)}$ makes $v_b$ internal, then $k \leq t \leq k + C + 2$.
\end{lem*}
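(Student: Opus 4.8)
We prove the two bounds separately; only the upper bound uses the hypothesis that the final $v_b$-internal time precedes the first $v_a$-internal time.

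For the lower bound $t \ge k$, the plan is to track the image of $v_b$ until it first reaches the pivot. Initially $v_b = [\frac{2^k-1}{2^k}, 1]$ lies on the right spine $R$ at distance $k-1$ from $[\frac{1}{2}, 1]$, so $d_b(w_i'(0)) = k-1$. Using the descriptions of the generator actions in Section $3.3$, the image of $v_b$ stays on $R$ until the first time $d_b = 0$: a right-spine vertex other than the pivot is always carried to another right-spine vertex, and the only generators that move the pivot off $R$ are $x_0^{-1}$ (sending it to the root) and $x_1^{-1}$ (sending it to $[\frac{1}{2}, \frac{3}{4}]$), both of which require the image of $v_b$ to already be at the pivot, i.e.\ $d_b = 0$. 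While the image of $v_b$ is on $R$, each generator changes its distance to the pivot by exactly one, so $d_b$ cannot drop from $k-1$ to $0$ in fewer than $k-1$ steps. Since $v_b$ being made internal at time $t$ forces $\overline{w_i'(t-1)}(v_b) = [\frac{1}{2}, 1]$, that is $d_b(w_i'(t-1)) = 0$, we get $t - 1 \ge k - 1$, hence $t \ge k$.

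For the upper bound, suppose for contradiction that $v_b$ is made internal at a time $t \ge k + C + 3$. Let $t_b$ be the final time $v_b$ is made internal and $t_a$ the final time $v_a$ is made internal; by hypothesis $t_b$ precedes the first time $v_a$ is made internal, so $t_0 < t_b < t_a$, and $t_a < t_f := |w_i'|$ because $C_L(t_a) = 0$ while $C_L(t_f) = i > 0$ (exactly as in the proof of Lemma $4.14$, using that $v_a$ is mapped to $[\frac{1}{2}, \frac{3}{4}]$ at $t_a$ and infix order is preserved). Since $t \le t_b$, we have $t_b \ge k + C + 3$. The key point is that the interval tallies established in the proof of Lemma $4.14$ for the ordering $t_0 < t_b < t_a < t_f$, on the intervals $(t_b, t_a]$ and $(t_a, t_f]$, are read off from the configurations at $t_b$, $t_a$, $t_f$ alone and do not depend on the contradiction hypothesis $T \ge 2C+1$ used there; their sum is at least $2k + 1 + i$, so the portion of $w_i'$ after time $t_b$ contains at least $2k + 1 + i$ letters. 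As the prefix of length $t_b$ contributes exactly $t_b$ letters,
\[
|w_i'| \ \ge\ t_b + (2k + 1 + i) \ \ge\ (k + C + 3) + (2k + 1 + i) \ =\ 3k + i + C + 4,
\]
which contradicts $|w_i'| \le |\overline{w_i}| + c = 3k + i + 1 + c \le 3k + i + 1 + C$. Hence $t \le k + C + 2$.

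The routine part is the count of rotations after time $t_b$, which is already carried out inside the proof of Lemma $4.14$. The points that need care are the structural observation in the lower bound — that the image of $v_b$ cannot leave the right spine without first visiting the pivot — and the check that the Lemma $4.14$ interval tallies remain valid when its standing assumption $T \ge 2C+1$ is dropped, so that they may be reused in the present, weaker situation.
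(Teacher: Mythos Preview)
Your proposal is correct and follows essentially the same route as the paper. The lower bound is argued with a bit more care (you explicitly track that the image of $v_b$ stays on $R$ until the pivot is reached, whereas the paper just observes that each step can reduce $d_b$ by at most one), and for the upper bound you invoke the $(t_b,t_a]$ and $(t_a,t_f]$ interval tallies from the proof of Lemma~4.14 rather than reproducing them --- which is legitimate since, as you note, those tallies depend only on the configurations at $t_b$, $t_a$, $t_f$ and not on the hypothesis $T\ge 2C+1$; the paper simply re-derives them verbatim.
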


\begin{proof}
Assume that in $w_i'$, $k - l \leq i \leq k$, the final time where vertex $v_b$ is made internal must occur before the first time the vertex $v_a$ is made internal and $\overline{w_i(t)}$ makes $v_b$ internal. We examine the limitations this imposes on $t$.

\vspace*{5pt}
\noindent \textbf{Time $t$ is greater than $k - 1$:}

Note, $\overline{w_{i}'(0)}(v_b) =[\frac{2^k - 1}{2^k}, 1]$, meaning $d_b(0) = k - 1$. If $v_b$ is made internal at time $t$, $d_b(t - 1) = 0$. As $v_b$ is initially mapped to the right of the pivot, $d_b$ can be reduced with either $x_0^{-1}$ or $x_1^{-1}$ rotations. At least $k - 1$ such rotations are required to reduce $d_b$ to zero. Then, a single $x_1^{-1}$ rotation can be used to make $v_b$ internal. Thus, we can see it takes a minimum of $k$ rotations before $v_b$ can be made internal, meaning $t > k-1$.

\vspace*{5pt}
\noindent \textbf{Time $t$ is less than $k+C+3$:}

Suppose that $t > k+C+2$. Thus, the final time $v_b$ is made internal in $w_i'$ also occurs after time $k + C + 2$. By our assumption, the final time $v_b$ is made internal in $w_i'$ must occur before the first time $v_a$ is made internal. We look at times $t_0 = 0$, $t_b$, $t_a$, and $t_f = |w_i'|$ where $t_b$ is the final time $v_b$ is made internal and $t_a$ is the final time $v_a$ is made internal. By our assumption, $t_b < t_a$. Most of these times are clearly distinct because  the action of $F$ on the tree is bijective and $\overline{w_{i}'(t_0)}([\frac{1}{2}, \frac{3}{4}]) =  \overline{w_{i}'(t_a)}(v_a) =  \overline{w_{i}'(t_b)}(v_b) = [\frac{1}{2}, \frac{3}{4}]$. Time $t_f$ must be distinct from the others because it is certainly not $t_0$ as vertices are made internal in $w_i'$ and because $v_a$ and $v_b$ are mapped approximately $k$ from the pivot at $t_f$. Thus, $t_0 < t_b < t_a < t_f$. We will look at the intervals between these times and see the impact each has on $|w_i'|$.

\vspace*{5pt}
\noindent \textbf{Interval $t_0 \leq t \leq t_b$:}

During this time interval, we must ensure $d_b(t_b - 1) = 0$ so that $v_b$ can be made internal at $t_b$ itself. Vertex $v_a$ is not made internal during this period. Additionally, some of the vertices in $A$ and $B$ may be internal at $t_b$, say $m_1$ of the vertices in $A$ and $n_1$ of the vertices in $B$.

Note that all of this takes at least $k + C + 3$ rotations as the final time that $v_b$ is made internal must occur at some point after time $k + C + 2$.

\vspace*{5pt}
\noindent \textbf{Interval $t_b < t \leq t_a$:}

Because $\overline{w_i'(t_b)}(v_b) = [\frac{1}{2},\frac{3}{4}]$ and $F$ preserves the infix ordering of vertices, $\overline{w_i'(t_b)}$ maps $A \cup B$ to the left of the pivot. Thus, $C_R(t_b) = 0$. We also know that $C_I(t_b) = m_1 + n_1$, so $C_L(t_b) = 2k - 1 - (m_1 + n_1)$ and $d_a(t_b) \geq 2k - m_1 - n_1$. Because $\overline{w_i'(t_a - 1)}(v_a) = [\frac{1}{2}, 1]$, $d_a(t_a - 1) = 0$. The intervening vertices are mapped to the left of the pivot, so reducing $d_a$ can only be accomplished by moving vertices to the right spine of the tree with $x_0$ rotations  and we must increment $C_{x_0}$ by at least $2k - m_1 - n_1$.

At $t_a$, a single $x_1^{-1}$ rotation makes $v_a$ internal, so we increment $C_ {x_1^{-1}}$ by $1$.

It is possible some of the vertices in $A \cup B$ that were internal at $t_b$ are external at $t_a$. Additionally, some of these vertices which were external at $t_b$ may be internal at $t_a$. Say that by time $t_a$, $m_2$ vertices in $A$ are internal that were external at $t_b$ and $n_2$ vertices in $B$ are internal that were external at $t_b$. We must increment  $C_{x_1^{-1}}$ by $m_2 + n_2$ to accomplish this change. Of the $m_1$ vertices in $A$ that were internal at $t_b$, say $m_{2e}$ are external at $t_a$, and of the $n_1$ vertices in $B$ that were internal at $t_b$, $n_{2e}$ are external at $t_a$. We must increment  $C_{x_1}$ by $m_{2e} + n_{2e}$ to accomplish this change. In total then, $C_I(t_b) = (m_1 + m_2 - m_{2e}) + (n_1 + n_2 - n_{2e})$.

In total, this interval requires us to increment $C_{x_0}$ by $2k - m_1 - n_1$, $C_ {x_1^{-1}}$ by $m_2 + n_2 + 1$, and $C_{x_1}$ by $m_{2e} + n_{2e}$ for a total of $2k + 1 - m_1 - n_1 + m_2 + n_2+ m_{2e} + n_{2e}$ rotations in this interval.

\vspace*{5pt}
\noindent \textbf{Interval $t_a < t \leq t_f$:}

Note, $\overline{w_i'(t_a)}(v_a) =[\frac{1}{2},\frac{3}{4}]$. Because $F$ preserves the infix ordering of vertices, this means all of $A \cup B$ is mapped to the right of position $[\frac{1}{2}, \frac{3}{4}]$. Thus they must be on the right side of the tree, meaning $C_L(t_a) = 0$. We know that $C_I(t_a) = m_1 + m_2 + n_1 + n_2 - m_{2e} - n_{2e}$, so $C_R(t_a) = 2k - 1 - (m_1 + m_2 + n_1 + n_2 - m_{2e} - n_{2e})$. Note that $C_I(t_f) = 0$ and $\overline{w_i'(t_f)}([0, \frac{1}{2}^{k - i}]) = [0, 1]$, the root of the tree. Thus, we must have $C_L(t_f) = i$ and $C_R(t_f) = 2k - 1 - i$.

Bringing $C_I(t_f)$ to $0$ requires all of $A \cup B$ to be internal. Because $C_I(t_b) = m_1 + m_2 + n_1 + n_2 - m_{2e} - n_{2e}$, it will take at least that many $x_1$ rotations to make vertices external, incrementing $C_{x_1}$ by $m_1 + m_2 + n_1 + n_2 - m_{2e} - n_{2e}$.

Nothing we have done yet increases $C_L$, which can only be done with $x_0^{-1}$ rotations. Bringing $C_L$ from $0$ to $i$ requires us to increment $C_{x_0^{-1}}$ by $i$.

In total, this interval requires us to increment $C_{x_1}$ by $m_1 + m_2 + n_1 + n_2 - m_{2e} - n_{2e}$ and $C_ {x_0^{-1}}$ by $i$ for a total of $m_1 + m_2 + n_1 + n_2 - m_{2e} - n_{2e} + i$ rotations in this interval.

\vspace*{5pt}
\noindent \textbf{Total Rotations:}

Between all these steps we have established that $|w_i'| \geq (k + C + 3) + (2k + 1 - m_1 - n_1 + m_2 + n_2 + m_{2e} + n_{2e}) + (m_1 + m_2 + n_1 + n_2 - m_{2e} - n_{2e} + i) =3k + 4 + i + C + 2m_2 + 2n_2 > 3k + i + 1 + C \geq 3k + i + 1 + c = |\overline{w_i'}| + c \geq |w_i'|$. This is a contradiction, so $t < k + C + 3$.

\vspace*{5pt}
\noindent \textbf{Conclusion:}

Now, we have shown that $t \geq k$ and $t \leq k + C + 2$, so $k \leq t \leq k+ C + 2$ and the lemma holds.
\end{proof}


\vspace*{10pt}
\begin{lem*}{\textbf{4.18}}
If, in $w_i'$, $k - l \leq i \leq k$, the following two conditions hold:
\begin{enumerate}
\item The final time where vertex $v_b$ is made internal must occur before the first time the vertex $v_a$ is made internal,
\item None of the vertices in $B$ are internal at any time when $v_a$ is made internal,
\end{enumerate}
then we cannot reduce $d_a$ to $M - 1$ before time $3k - M$ in $w_i'$.
\end{lem*}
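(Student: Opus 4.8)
The plan is to argue by contradiction, in the same spirit as Lemmas 4.14--4.17: assume that in $w_i'$ the function $d_a$ attains the value $M-1$ at some time $t^*<3k-M$, and then produce a lower bound for $|w_i'|$ exceeding $3k+i+1+C\ge 3k+i+1+c=|\overline{w_i}|+c$, contradicting the hypothesis on $L$. Fix the times $t_0=0$, $t^*$ (the first time $d_a=M-1$), $t_b$ (the last time $v_b$ is made internal), $t_a$ (the first time $v_a$ is made internal), and $t_f=|w_i'|$. Since $\overline{w_i}$ sends $v_b$ to the interior while $\overline{w_i'(0)}(v_b)=[\tfrac{2^k-1}{2^k},1]$ is exterior, $v_b$ is made internal, so $t_b$ exists and Lemma~4.17 gives $k\le t_b\le k+C+2$; hypothesis (1) gives $t_b<t_a$; and $\overline{w_i'(t_a-1)}(v_a)=[\tfrac12,1]$ forces $d_a(t_a-1)=0\le M-1$, so $t^*\le t_a-1<t_a$. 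Also, $|d_a(t)-d_a(t-1)|\le 1$ with $d_a(0)=k+1$ gives $t^*\ge k-M+2$.

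\textbf{Two structural inputs.} Let $m_1,n_1$ be the numbers of vertices of $A$, resp.\ $B$, internal at $t_b$. Exactly as in the analysis of $[0,t_b]$ in Lemma~4.17, the equality $\overline{w_i'(t_b)}(v_b)=[\tfrac12,\tfrac34]$ and infix--order preservation force $C_R(t_b)=0$, hence $C_L(t_b)=2k-1-(m_1+n_1)$ and $d_a(t_b)\ge 2k-m_1-n_1$. Secondly, each of these $m_1+n_1$ vertices, together with $v_b$, required at least one $x_1^{-1}$ rotation to become internal (since $C_I(0)=0$), so $t_b\ge m_1+n_1+1$; with $t_b\le k+C+2$ this yields $m_1+n_1\le k+C+1$ and therefore $d_a(t_b)\ge k-C-1$ --- that is, $v_a$ is still roughly $k$ from the pivot at $t_b$. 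Finally, hypothesis (2) applied at $t_a$ forces every vertex of $B$ internal at $t_b$ to be made exterior again before $t_a$ and forbids any vertex of $B$ being internal at $t_a$; since $v_b$ has landed at $[\tfrac12,\tfrac34]$ at $t_b$ and (by (1)) must remain interior, routing the trapped interior vertices of $B$ back out before $t_a$ costs at least $n_1$ further rotations in $[t_b,t_a]$, and $C_I(t_a)$ counts only vertices of $A$.

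\textbf{The count.} I would then split on whether $t^*\le t_b$ or $t_b<t^*<t_a$ and run an interval-by-interval rotation count over $[0,t_b]$, the sub-intervals cut at $t^*$, and $[t_a,t_f]$, with the usual auxiliary counts $m_2,n_2,m_{2e},n_{2e}$ as in Lemma~4.14. The decisive point is that $d_a$ must travel from $k+1$ down to $M-1$ (at $t^*$) while also being $\ge k-C-1$ at $t_b$ and $0$ at $t_a-1$. In the case $t^*\le t_b$ this chains a descent $k+1\to M-1$ (cost $\ge k-M+2$) with an ascent $M-1\to d_a(t_b)\ge k-C-1$ (cost $\ge k-C-M$) inside $[0,t_b]$, giving $t_b\ge 2k-C-2M+2$, which contradicts $t_b\le k+C+2$ since $k>1000C$. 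In the case $t_b<t^*$ one combines the $[0,t_b]\ge k+2m_1$ bound, the descent $d_a(t_b)\to M-1$ on $[t_b,t^*]$ (forcing $n_1\ge m_1+2$ once $t^*<3k-M$ is used), the $\ge M-1$ rotations on $[t^*,t_a]$ reducing $d_a$ to $0$ --- these $x_0$ rotations simultaneously shove $\approx 2k$ vertices of $A\cup B$ off the left spine onto the right spine --- and the $[t_a,t_f]$ cost of making all interior vertices of $A\cup B$ exterior ($C_I(t_f)=0$) and rebuilding $C_L$ up to $i$ and $C_R$ down to $2k-1-i$; hypothesis (2) guarantees this last stretch is not cheapened by interior $B$-vertices, and the sum overruns $3k+i+1+C\le 4k+1+C$.

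\textbf{Main obstacle.} The genuinely laborious part is the case $t_b<t^*<t_a$, where $t^*$ lies on the forced descent of $d_a$ \emph{after} $v_b$ has gone internal for good. Here the naive count lands exactly at the budget, and closing it requires the subtle interaction of (1) and (2): because $v_b$ sits at $[\tfrac12,\tfrac34]$ and may never leave the interior, the vertices of $B$ internal at $t_b$ cannot be peeled out ``for free'' (one would have to move $v_b$ aside first, via extra $x_0$ rotations), and simultaneously the $\approx 2k$ right-spine--shoved vertices must be partly pulled back with $x_0^{-1}$ rotations to meet $C_L(t_f)=i$. Bookkeeping the shoved vertices, tracking $m_{2e},n_{2e}$, and verifying that each of the resulting several arithmetic sub-cases still exceeds $3k+i+1+C$ is the bulk of the work, though each sub-case reduces to the same style of estimate already carried out in Lemma~4.14.
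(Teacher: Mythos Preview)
Your overall architecture matches the paper's: contradiction, set up $t_0<t_b<t_a<t_f$, rule out the case $t^*\le t_b$ quickly (the paper does this as a one-paragraph Claim, using exactly your descent/ascent idea against the Lemma~4.17 bound $t_b\le k+C+2$), and then concentrate on $t_b<t^*<t_a$. Where you diverge is in the mechanism that closes the count in this hard case, and that mechanism is the whole point of the lemma.

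The paper does \emph{not} try to squeeze extra rotations out of ``moving $v_b$ aside'' or out of rebuilding $C_L(t_f)=i$. Your first suggestion is misdirected: $v_b$ is the rightmost tracked vertex in the infix order, so it does not block the other $B$-vertices from reaching $[\tfrac12,\tfrac34]$; they are to its left, and can be brought up from the left side without ever making $v_b$ exterior. Your second suggestion (the $C_L(t_f)=i$ pull-back) contributes only $i\le k$ rotations and is already implicit in the standard $[t_a,t_f]$ count used in Lemma~4.17; it cannot supply the missing $\approx k$.

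What the paper actually does is sharper. First, tracking $n_2,n_{2e}$ on $[t_b,t^*]$, the inequality $t^*<3k-M$ forces $n_1-n_{2e}>n_2\ge 0$, so some vertex $v_c\in B$ is still internal \emph{at $t^*$ itself} (not merely at $t_b$, which is all your $n_1\ge m_1+2$ gives). Second --- and this is the step you are missing --- at $t^*$ we have $d_a=M-1$ with $v_a$ still on the left spine, so at least $k-(M-1)-2C$ vertices of $A$ sit on the \emph{right} spine (using Lemma~4.14). Since every $B$-vertex lies to the right of every $A$-vertex in the infix order, bringing $v_c$ anywhere near $[\tfrac12,\tfrac34]$ to make it exterior (as hypothesis~(2) demands before $t_a$) requires first sweeping those $\approx k$ $A$-vertices off the right spine with $x_0^{-1}$'s; and then, to make $v_a$ internal, one must sweep them all back with $x_0$'s. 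This forced ``there and back'' on $[t^*,t_a]$ costs at least $2k+3-M-4C$ rotations, and that is what pushes the total to $6k+3+2n_2-2M-4C-i>3k+i+1+C$. Your proposal never isolates this $A$-blocks-$B$ detour, and without it the count stalls at roughly $3k+i+1+m_1$, which can be exactly the budget when $m_1=0$.
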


\begin{proof}
Assume that in $w_i'$, $k - l \leq i \leq k$, conditions $(1)$ and $(2)$ hold. Suppose, for contradiction, there is a first time $t_c < 3k - M$ where $d_a$ is $M-1$ in $w_i'$. We will consider several additional times in $w_i'$: $t_0 = 0$, $t_f = |w_i'|$, $t_a$ which is the first time $v_a$ is made internal in $w_i'$, and $t_b$ which is the final time $v_b$ is made internal in $w_i'$. By condition $(1)$, $t_b < t_a$. Most of these times are clearly distinct because  the action of $F$ on the tree is bijective and $\overline{w_{i}'(t_0)}([\frac{1}{2}, \frac{3}{4}]) =  \overline{w_{i}'(t_a)}(v_a) =  \overline{w_{i}'(t_b)}(v_b) = [\frac{1}{2}, \frac{3}{4}]$. Time $t_f$ must be distinct from the others because it is certainly not $t_0$ as vertices are made internal in $w_i'$ and because $v_a$ and $v_b$ are mapped approximately $k$ from the pivot at $t_f$. So $t_0 < t_b < t_a < t_f$. 

We can now consider $t_c$ in this ordering. Obviously, $t_c < t_a$ as $d_a$ is initially greater than $M - 1$ and $v_a$ cannot be made internal before the first time $d_a$ is reduced to $M - 1$. We briefly show another restriction on $t_c$.

\begin{claim}
Time $t_c > t_b$
\end{claim}

\begin{proof}
Suppose $t_c < t_b$. Then by time $t_c$ we must reduce $d_a$ to $M - 1$ and by time $t_b - 1$ we must reduce $d_b$ to $0$.  We know that initially, $d_b = k - 1$ and $d_a = k + 1$.  At time $t_0$, the vertices $[0, \frac{1}{2}^p]$, $0 \leq p \leq k - 1$,  are mapped to the left of $[\frac{1}{2}, 1]$ and the only way to move them to reduce $d_a$ is to use $x_0$ rotations to move vertices to the right side of the tree. Note, $k - (M - 1)$ such $x_0$ rotations are required. Reducing $d_b$ requires moving the vertices $[\frac{2^q - 1}{2^q}, 1]$, $1 \leq q \leq k - 1$, which are mapped on the right side of the tree at time $t_0$, off of the right spine. This can be done with either $x_0^{-1}$ or $x_1^{-1}$ rotations and $(k - 1)$ of these are required. Because we use different types of rotations to reduce $d_a$ and $d_b$, reducing both of these will take $(k - (M - 1)) + (k - 1) = 2k - M$ rotations. This means $t_b \geq 2k - M > k + C + 2 \geq t_b$ as $k > 1000C = 1000max(c,M)$ and Condition $2$ of this Lemma requires $t_b \leq k + C + 2$. This is a contradiction, so $t_c > t_b$.
\end{proof}

Thus we can see that $t_0 < t_b < t_c < t_a < t_f$. We will look at each of the intervals given by this partition and consider the number of rotations required during each one to complete $w_i'$.

\vspace*{5pt}
\noindent \textbf{Interval $t_0 \leq t \leq t_b$:}

At time $t_0$, all of the vertices in $B$ are mapped at or to the right of the pivot. We know $C_I(t_0) = 0$ so $C_R(t_0) = k - 1$. Thus, $d_b(t_0) = k - 1$ initially. Because $v_b$ is made internal at $t_b$, $d_b(t_b - 1) = 0$. Because the intervening vertices are mapped to the right spine of the tree, we can reduce $d_b$ using either $x_0^{-1}$ rotations to move them to the left spine or $x_1^{-1}$ rotations to make them internal. It will take a combination of $k - 1$ such rotations, requiring us to increment $C_{x_0^{-1} || x_1^{-1}}$ by $k - 1$. We will say that at time $t_b$, $n_1$ of the vertices in $B$ are internal.

It is possible that at $t_b$, some of the vertices in $A$ are internal, say $m_1$ in total. It will take at least one $x_1^{-1}$ rotation to make each one internal, requiring us to increment $C_{x_1^{-1}}$ by $m_1$.

Additionally, at time $t_b$ itself, it will take a single $x_1^{-1}$ rotation to make $v_b$ internal, requiring us to increment $C_{x_1^{-1}}$ by $1$.

In total, this step requires us to increment $C_{x_0^{-1} || x_1^{-1}}$ by at least $k - 1$ and $C_{x_1^{-1}}$ by $m_1 + 1$, for a total of $k + m_1$ rotations in this interval.

\vspace*{5pt}
\noindent \textbf{Interval $t_b < t \leq t_c$:}

We know $C_I(t_b) = m_1 + n_1$. Because $\overline{w_i'(t_b)}(v_b) = [\frac{1}{2},\frac{3}{4}]$ and $F$ preserves the infix ordering of vertices, $\overline{w_i'(t_b)}$ maps $A \cup B$ to the left of the pivot and to the right of $v_a$. Thus, $C_R(t_b) = 0$ meaning $C_L(t_b) = 2k - 1 - m_1 - n_1$ and $d_a(t_b) = 2k - 1 - m_1 - n_1$. We will need to make $d_a(t_c) = M - 1$ by the definition of $t_c$. Because the intervening vertices are to the left of the pivot on the left spine of the tree, the only way to decrease $d_a$ is to move them from the left side of the tree to the right side using $x_0$ rotations. Thus, we must increment $C_{x_0}$ by $2k - 1- m_1 - n_1 - (M - 1)$.

It is possible that during this interval, we may make some of the vertices in $B$ that were internal at time $t_b$ external by time $t_c$. We may also make some of these vertices that were external at $t_b$ internal by $t_c$. Of the $n_1$ vertices of $B$ that were internal at $t_b$, we will say that $n_{2e}$ of these are external by $t_b$, and $n_2$ of the external vertices are made internal by $t_b$. In order to accomplish this, we must increment $C_{x_1^{-1}}$ by $n_{2}$ and $C_{x_1}$ by $n_{2e}$.

In total, this step requires us to increment $C_{x_0}$ by $2k - 1- m_1 - n_1 - (M - 1)$, $C_{x_1^{-1}}$ by $n_{2}$, and $C_{x_1}$ by $n_{2e}$ for a total of $2k - 1 - m_1 - n_1 - (M - 1) + n_2 + n_{2e}$ rotations in this interval.

\vspace*{5pt}
\noindent \textbf{A Note:}

Between these first two time intervals, we require a total of $(k + m_1) + (2k - 1- m_1 - n_1 - (M - 1) + n_2 - n_{2e}) = 3k - M - n_1 + n_2 + n_{2e}$ rotations. Recall that $t_c < 3k - M$ by our base assumption, so $-n_1 + n_2 + n_{2e} < 0$. This means $n_1 - n_{2e} > n_2 \geq 0$ so $n_1 > n_{2e}$. This means that at time $t_c$, at least one vertex of $B$ will be internal. Call this vertex $v_c$.

\vspace*{5pt}
\noindent \textbf{Interval $t_c < t \leq t_a$:}

At time $t_a$, none of the vertices in $B$ may be internal because condition $(2)$ holds. Because $v_c$ is internal at $t_c$, it will be necessary to make $v_c$ external again before $t_a$. Note, $t_c$ is the first time $d_a$ is $M - 1$, so we know that $\overline{w_i'(t_c)}(v_a)$ is to the left of the pivot as it begins to the left of the pivot and moving it to the right would require us to reduce $d_a$ to $0$. No vertices have changed between the internal and external sets to the left of the pivot at this time as that could only occur if $d_a$ had been less than $M - 1$ prior to this. Thus, less than M of the vertices in $A$ can be to the left of the pivot at $t_c$. Because $F$ preserves the infix ordering of vertices, the remaining vertices in $A$ are mapped at or to the right of this position and all of the vertices in $B$  must be mapped to the right of these on the tree. We must at least map the leftmost vertex in $B$ to the pivot or underneath the pivot if it was internal before we can make $v_c$ external.

Because $d_a(t_c) = M - 1$ and no more than $2C$ of the vertices in $A$ can be internal at $t_c$ by Lemma $4.14$, at least $k - (M - 1) - 2C$ of these vertices are mapped to the right spine of the tree at or to the right of the pivot. Before $v_c$ can be made external, these vertices must be brought to the left of the pivot. This can be accomplished with $x_0^{-1}$ rotations to move them to the left spine. At most $2C$ of these vertices can be internal, so the remaining vertices cannot be dealt with using $x_1^{-1}$ rotations to make them internal. Thus, it will be necessary to increment $C_{x_0^{-1}}$ by at least $k - (M - 1) - 2C$.

Once vertex $[\frac{1}{2}, 1]$ has been mapped to either the pivot or to a position below the pivot if it was internal, it is possible that we are able to make $v_c$ external again. We next concentrate on making $v_a$ internal. Because $[\frac{1}{2}, 1]$ is now either at or to the left of position $[\frac{1}{2}, 1]$ and $F$ preserves the infix ordering of vertices, all of the vertices in $A$ are now mapped to the left of the pivot.  At most $2C$ of them may be internal at this time, so at least $k - 1 - 2C$ of these vertices on the left spine must be moved from it before $d_a$ can be reduced to $0$. This can only be done using $x_0$ rotations. Thus, we must increment $C_{x_0}$ by at least $k + 1 - 2C$.

Then at time $t_a$ itself, it will take a single $x_1^{-1}$ rotation to make $v_a$ internal, making us increment $C_{x_1^{-1}}$ by $1$.

In total, this step requires us to increment $C_{x_0^{-1}}$ by $k - (M - 1) - 2C$, $C_{x_0}$ by $k + 1 - 2C$, and $C_{x_1^{-1}}$ by $1$ for a total of $2k + 3 - M - 4C$ rotations in this step.

\vspace*{5pt}
\noindent \textbf{Interval $t_a < t \leq t_f$:}

Because $\overline{w_i'(t_a)}(v_a) = [\frac{1}{2},\frac{3}{4}]$ and $F$ preserves the infix ordering of vertices, all of the vertices in $A$ of the form $[0, \frac{1}{2}^p]$, $k - i \leq p \leq k - 1$, are mapped at or to the right of the pivot. By $t_f$, vertex $[0, \frac{1}{2}^{k - i}]$ is mapped to the apex of the tree. We must also ensure $C_I(t_f) = 0$.

All of the vertices $[0, \frac{1}{2}^p]$, $k - i \leq p \leq k - 1$, are either internal at time $t_a$ or are external and mapped to the right spine of  the tree and must be mapped to the left of the image of $[0, \frac{1}{2}^{k-i}]$. We do not know how many of these vertices are internal, but any internal ones can be made external with a single $x_1$ rotation. Any external vertices can be moved to the left of the pivot with an $x_0^{-1}$ rotation. Making more of them internal would require additional rotations to make them external before leaving them exactly where they were before, so using $x_1^{-1}$ rotations is not an option. Thus, we would need to increment $C_{x_0^{-1} || x_1}$ by at least $(k - 1) - (i - 1) = k - i$.

Recall that by time $t_c$, we had $n_1 + n_2 + n_{2e}$ vertices of $B$ internal. We have not yet accounted for making these external, but they must be made external by $t_f$. Each one can be made external with a single $x_1$ rotation, requiring us to increment $C_{x_1}$ by $n_1 + n_2 - n_{2e}$.

In total, this step requires us to increment $C_{x_0^{-1} || x_1}$ by $k - i$ and $C_{x_1}$ by $n_1 + n_2 - n_{2e}$, for a total of $k - i + n_1 + n_2 - n_{2e}$ rotations in this step.

\vspace*{5pt}
\noindent \textbf{Total Rotations:}

In total then, this tells us $|w_i'| \geq (k + m_1) + (2k - 1 - m_1 - n_1 - (M - 1) + n_2 + n_{2e}) + (2k + 3 - M - 4C) + (k - i + n_1 + n_2 - n_{2e}) = 6k + 3 + 2n_2 - 2M - 4C - i = 3k + i + 1 + C + (k - i) +  (k + 2n_2) + (k - 2M - 5C - 1) + 1$. Note $(k + n_2) \geq 0$ as $n_2 \geq 0$, $(k - 2M - 5C - 1) \geq 0 $ as $k > 1000max(M,C)$ and $k \geq i$.  So, this means $|w_i'| > 3k + i + 1 + C = |\overline{w_i'}| + C \geq |\overline{w_i'}| + c \geq |w_i'|$ which is a contradiction.
\end{proof}


\vspace*{10pt}
\begin{lem*}{\textbf{4.19}}
If, in $w_i'$, $k - l \leq i \leq k - 1$, the following two conditions hold:
\begin{enumerate}
\item The final time vertex $v_b$ is made internal occurs before the first time the vertex $v_a$ is made internal,
\item None of the vertices in $B$ are internal at any time when $v_a$ is made internal, 
\end{enumerate}
then (1) and (2) hold true in $w_{i + 1}'$, as well.
\end{lem*}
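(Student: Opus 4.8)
The plan is to argue by contradiction on the length of $w_{i+1}'$, running the rotation-counting bookkeeping of Lemmas $4.14$--$4.16$ on $w_{i+1}'$ while using the fellow-traveler property to transport what we already know about $w_i'$. Since $w_i = w_{i+1}x_0$, the elements $\overline{w_i}$ and $\overline{w_{i+1}}$ are distance one apart in $\Gamma_X(F)$, so by the fellow-traveler characterization (Theorem $2.3$) the words $w_i'$ and $w_{i+1}'$ satisfy the $M$-fellow-traveler property; equivalently $\overline{w_i'(t)}^{-1}\overline{w_{i+1}'(t)}$ has word length at most $M$ for every $t$. Each generator of $X$ carries the pivot $[\frac{1}{2},1]$ to an adjacent vertex and, even in the one awkward case (the descendants of $[\frac{1}{4},\frac{1}{2}]$ under $x_0^{\pm 1}$), changes the distance from any vertex of $\mathcal{T}$ to the pivot by at most $2$; hence $|d_a(w_i'(t))-d_a(w_{i+1}'(t))| \le 2M$ and $|d_b(w_i'(t))-d_b(w_{i+1}'(t))| \le 2M$ for all $t$, and whenever $v_a$ or $v_b$ is made internal at time $t$ in $w_{i+1}'$ the vertex in question lies within distance $M$ of the pivot under $\overline{w_i'(t-1)}$. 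Since $(1)$ and $(2)$ hold for $w_i'$, Lemmas $4.17$ and $4.18$ apply to it: every internalization of $v_b$ in $w_i'$ occurs at a time in $[k,k+C+2]$, and $d_a(w_i'(t)) \ge M$ whenever $t < 3k-M$. Also Lemma $4.14$ still applies to $w_{i+1}'$ (its index lies in $[k-l,k]$), so at most $2C$ internalizations of $A$-vertices occur in $w_{i+1}'$, while $|w_{i+1}'| \le |\overline{w_{i+1}}| + c = 3k+i+2+c \le 3k+i+2+C$.

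To get condition $(1)$ for $w_{i+1}'$, suppose not, and let $s_a$ be the first time $v_a$ is made internal in $w_{i+1}'$ and $s_b$ the last time $v_b$ is made internal, so $s_a < s_b$ (they are distinct). At time $s_a-1$ the vertex $v_a$ sits at the pivot in $w_{i+1}'$, hence $d_a(w_i'(s_a-1)) \le M$; together with Lemma $4.18$ for $w_i'$ this pushes $s_a$ close to $3k-M$, after which a count on $w_{i+1}'$ partitioned at $0 < s_a < s_b < t_f = |w_{i+1}'|$, in the style of the first ordering of Lemma $4.14$, gives $t_f \ge (3k-M) + (k-2C) + (i+1) > 3k+i+2+C$ because $k > 1000C$ -- contradicting the length bound. (Here the interval $(s_a,s_b]$ costs at least $2k-1-C_I(s_a) \ge k-2C$ rotations since $C_I(s_a) \le 2C + (k-1)$ by Lemma $4.14$, and $(s_b,t_f]$ at least $i+1$ more to rebuild the left spine.) The one situation in which $s_a$ is not immediately forced close to $3k-M$ is when the fellow-traveler inequality only pins $d_a(w_i'(s_a-1))$ to exactly $M$ rather than strictly below $M$; there one falls back on a more careful length estimate for $w_{i+1}'$, again using Lemma $4.14$'s $2C$-bound and transporting Lemma $4.17$ to $w_{i+1}'$ to confine $s_b$.

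With $(1)$ established for $w_{i+1}'$, condition $(2)$ is obtained the same way: if some $B$-vertex $v_j$ were internal at a time at which $v_a$ is made internal in $w_{i+1}'$, take $s_a$ the last such internalization of $v_a$, $s_b$ the last internalization of $v_b$ (so $s_b < s_a$ by $(1)$), and $s_j$ the last time $v_j$ is made external; then $0 < s_b < s_a < s_j \le t_f$, and the five-interval count of Lemma $4.16$ applied to $w_{i+1}'$ -- using the fellow-traveler comparison with $w_i'$ to place $s_b$ near time $k$, to control $d_a$ on the middle intervals, and to locate $s_a$ near $3k-M$ via Lemma $4.18$ where the crude bound is not decisive -- forces $|w_{i+1}'|$ above $3k+i+2+C \ge |\overline{w_{i+1}}|+c \ge |w_{i+1}'|$, a contradiction. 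The two contradictions together give $(1)$ and $(2)$ for $w_{i+1}'$.

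The crux of the whole argument is the bounded-distortion input: because $F$ does not act on $\mathcal{T}$ by graph automorphisms, one must check that $M$-fellow-traveling in the Cayley graph still controls $d_a$, $d_b$, and the internalization times of $w_{i+1}'$ to within $O(M)$ of those of $w_i'$ -- the point being that distance to the pivot, not position in $\mathcal{T}$, is the quantity with bounded distortion under generators. After that the difficulty is constant-chasing: checking that the thresholds $k+C+2$ and $M-1$ in Lemmas $4.17$ and $4.18$, with $k > 1000C$, leave room for the counts above to clear $3k+i+2+C$ after the $O(M)$ loss, and -- the genuinely delicate point -- disposing of the boundary case in the proof of $(1)$ (and the corresponding range $i$ near $k$ in the proof of $(2)$). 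The interval-by-interval accounting of the possibly-internal $A$- and $B$-vertices is laborious but follows the templates of Lemmas $4.14$--$4.16$ exactly.
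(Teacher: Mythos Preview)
Your plan for part (1) is essentially the paper's argument, only organized a bit differently: you go straight to the count on $0 < s_a < s_b < t_f$, while the paper first proves the intermediate claim that $v_b$ is already internal whenever $v_a$ is made internal in $w_{i+1}'$, and then counts the cost of re-externalizing and re-internalizing $v_b$. Both routes work. One small correction: your bound ``$(s_b,t_f] \geq i+1$ to rebuild the left spine'' is not justified as stated, since $C_L(s_b) = 2k-1-C_I(s_b)$ may already exceed $i+1$. The clean replacement is to track $C_R$ instead: $C_R(s_b)=0$ and $C_R(t_f)=2k-i-2$, and each generator changes $C_R$ by at most one, so $(s_b,t_f] \geq 2k-i-2$; with $s_a \geq 3k-M$ and $(s_a,s_b] \geq k-2C$ this still clears $3k+i+2+C$ comfortably.

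For part (2), however, there is a genuine gap. Running the Lemma~4.16 template on $w_{i+1}'$ directly fails exactly in the range you flag as ``genuinely delicate'': for $i$ close to $k-1$ the crude count gives only about $5k-i$, which does not beat $3k+i+2+C$. Your proposed fix --- using the fellow-traveler property to pin $s_a \geq 3k-M$ --- does not close the gap either. With that input the natural count is $[0,s_a] \geq 3k-M$ and $(s_a,s_j] \geq k-2C$ (to move the $\geq k-2C$ right-spine $A$-vertices out of the way of $v_j$), but at $s_j$ all of $A$ already sits on the left, so for $i=k-1$ the interval $(s_j,t_f]$ contributes essentially nothing and you are left with $\approx 4k-M-2C$, short of the needed $4k+1+c$. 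The paper's proof of (2) supplies the missing idea: it first shows (via two short claims about $w_i'$ and condition~(2) there) that no $B$-vertex is internal in $w_i'$ at time $3k-M-1$, nor can one be internalized within $k/2$ further steps; transporting this by fellow traveler forces the offending $B$-vertex in $w_{i+1}'$ to be internalized \emph{after} time $3k-M$. Now the count has the right shape: $B$-internalization at $\geq 3k-M$, then $v_a$-internalization (costing $\geq k-2C$ since all of $A$ is on the left at the $B$-step), then rebalancing $C_L$ from $0$ to $i+1$ (costing $\geq i+1$). The extra $i$ you are missing comes precisely from knowing the $B$-step, not just the $v_a$-step, happens late.
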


\begin{proof}
Assume that in $w_i'$, conditions $(1)$ and $(2)$ hold as above.

\vspace*{10pt}
\noindent \textbf{(1) The final time vertex $v_b$ is made internal in $w_{i + 1}'$ occurs before the first time the vertex $v_a$ is made internal in $w_{i + 1}'$.} 

Suppose, for contradiction, that condition $(1)$ does not hold in $w_{i + 1}'$. That is, the first time $v_a$ is made internal occurs before the final time $v_b$ is made internal. Let $t_b$ denote the final time, in $w_i'$, where $v_b$ has been made internal. By condition $(1)$ of our assumption, at this time in $w_i'$, $v_b$ has just been made internal while $v_a$ has not been made internal at all. Because $w_i'$ and $w_{i + 1}'$ are $M$ fellow travelers and for all $t \geq t_b$, $\overline{w_i'(t)}(v_b)$ is internal, either $\overline{w_{i + 1}'(t)}(v_b)$ is internal or $d_b(w_{i+1}'(t)) \leq M$ so that $v_b$ could be made internal within M rotations.

\begin{claim}
It is not possible that $v_a$ was made internal in $w_{i + 1}'$ at or before $t_b$.
\end{claim}

\begin{proof}
Immediately before $v_a$ is made internal in $w_{i + 1}'$, $d_a$ is $0$. We know, $d_b(w_{i+1}'(t_b)) \leq M$ or $v_b$ is already internal, meaning at some point prior to $t_b$, $d_b$ was $0$. Either way, $d_b$ must have been reduced to at least $M$ prior to $t_b$ in $w_{i + 1}$. At time $0$ in $w_{i + 1}'$, $d_b = k - 1$ and $d_a = k + 1$. Reducing $d_a$ from $k+1$ to $0$ requires a total of at least $k + 1$ $x_0$ rotations to map the intermediate vertices from the left spine to the right spine. Reducing $d_b$ from $k - 1$ to $M$ requires a combination of $k - 1 - M$ $x_1^{-1}$ and $x_0^{-1}$ rotations. In total, this requires $2 k - M$ rotations to occur before $t_b$. However, $2k - M > k + C + 2$ as $k \geq 1000C$ where $C = max(c, M)$, meaning this violates condition $(2)$ of our assumption. Thus, it is impossible to bring $v_a$ into the interior set in $w_{i + 1}$ at or before time $t_b$.
\end{proof}

We can now show that in $w_{i + 1}'$, it is not possible to make $v_a$ internal unless $v_b$ is already internal. 

\begin{claim}
In $w_{i + 1}'$, it is not possible to make $v_a$ internal unless $v_b$ is already internal.
\end{claim}

\begin{proof}
By the previous claim, we cannot make $v_a$ internal in $w_{i + 1}'$ until after time $t_b$. At any time $t > t_b$, we know that either $v_b$ is internal in $w_{i + 1}'$ or $d_b(w_{i + 1}'(t)) \leq M$. So, at any time $t'$ that $v_a$ might be made internal, at least one of these two conditions hold. If $v_b$ is not already internal, then $d_b(w_{i + 1}'(t')) \leq M$ and $d_b(w_{i + 1}'(t')) \leq M + 1$. Because $v_a$ will be made internal at $t'$, $d_a(w_{i + 1}'(t' - 1)) = 0$. Given that $F$ preserves the infix ordering of vertices, we know that at $t'$, no more than $M$ of the vertices in $A \cup B$ can remain external. This holds because all of these vertices must be mapped between the positions where $v_a$ and $v_b$ are mapped, either on the spines of the tree or internal. But then, less than $2C$ of the vertices in $A$ are external, violating Lemma $4.14$.
\end{proof}

Thus, we have established that whenever $v_a$ is made internal in $w_{i + 1}'$, $v_b$ is already internal. There must be at least one time when $v_a$ is made internal in $w_{i + 1}'$. Our supposition means this could not be the final time $v_b$ will be made internal, so at some point after this, $v_b$ must be made external and then internal again.

Because conditions $(1)$ and $(2)$ hold in $w_i'$, Lemma $4.18$ holds as well and we cannot reduce $d_a$ to $M - 1$ before time $3k - M$ in $w_i'$. We also know that $w_i'$ and $w_{i + 1}'$ are $M$ fellow travelers and $d_a$ is initially greater than $M - 1$ in both words. This means it is not possible to make $v_a$ internal in $w_{i + 1}'$ before time $3k - M$. Otherwise, if $v_a$ was internal in $w_{i + 1}'$ before time $3k - M$, then at that time in $w_i$, it must have been possible to make $v_a$ internal within $M$ steps. It would take a single $x_1^{-1}$ rotation to actually make $v_a$ internal, so reducing $d_a$ to zero would have taken at most $M - 1$ steps, meaning that at this time which was less than $3k - M$, $d_a \leq M - 1$ in $w_i'$, which is a contradiction.

Thus, the first time it is possible to make $v_a$ internal in $w_{i + 1}'$ occurs at or after time $3k - M$. Call this time $t_a$. At time $t_a$, $v_b$ is internal and will be made external and then internal again before the end of $w_{i + 1}'$. Because $v_a$ has just been made internal, it is mapped to position $[\frac{1}{2}, \frac{3}{4}]$. Because $F$ preserves the infix ordering of vertices, all of the vertices in $A \cup B$ are mapped at or to the right of the pivot. Each of these vertices is either internal or external, though we know at most $2C$ of the vertices in $A$ may be internal at any time by Lemma $4.14$. We will say that $n$ of the vertices in $B$ are internal at time $t_a$. In order to make $v_b$ external while it is already internal, we must bring it to position $[\frac{1}{2}, \frac{3}{4}]$, bringing $d_b$ to $1$. Again, because $F$ preserves the infix ordering of vertices, we know that at this time, $v_b$ is mapped to the right of all the vertices in $A \cup B$. Thus, $d_a \geq (k - 1  + k) - 2C - n$. Because all of these intervening vertices are mapped to the right of the pivot, reducing $d_a$ to 1 can be accomplished with a combination of $x_0^{-1}$ and $x_1^{-1}$ rotations. We will need to increment $C_{x_0^{-1} || x_1^{-1}}$ by at least $2k - 1 - 2C - n$.

Additionally, any vertices that were internal at time $t_a$ that were not initially internal must be made external by the end of $w_{i + 1}'$. This includes the $n$ vertices in $B$, which were noted to be internal above. Making each of these external requires a single $x_1$ rotation, requiring us to increment $C_{x_1}$ by at least $n$.

This is sufficient to give us a contradiction. We know that at time $t_a$, we have used at least $3k - M$ rotations. Adding these to the minimum number of rotations that must follow, we can see that: $|w_{i + 1}'| \geq (3k - M) + (2k - 1 - 2C - n) + n = 5k - M - 1 - 2C$. Note, $k > 1000C$ where $C = max(c, M)$ and $M \geq 1$, so $k > M - 3 - 3C$. Also, by our choice of $i$, $k \geq i$. So, $|w_{i + 1}'| \geq 5k - M - 1 - 2C = 3k + 2 + C + (k - M - 3 - 3C) + (k) > 3k + 2 + C + i \geq 3k + 2 + c + i = |\overline{w_{i + 1}'}| + c \geq |w_{i + 1}'|$. This is a contradiction, so $v_a$ cannot be made internal at all until after the final time $v_b$ is made internal in $w_{i + 1}'$ and condition $(1)$ holds in $w_{i + 1}'$.


\vspace*{10pt}
\noindent \textbf{(2) None of the vertices in $B$ are internal at any time when $v_a$ is made internal, in $w_{i + 1}'$.}

Suppose not, for contradiction. Note, because our assumed conditions hold, Lemma $4.18$ holds for $w_i$ and $d_a(w_i'(t)) > M - 1$ for all $t < 3k - M$. Clearly, $v_a$ could not be made internal in $w_i'$ before time $3k - M$ then. Recall that $w_i'$ and $w_{i+1}'$ are $M$ fellow travelers. This also means that it is not possible for $v_a$ to have been made internal in $w_{i+1}'$ prior to time $3k - M$. Otherwise, at the time it was made internal, it would have been possible within $M$ steps in $w_i'$ to make $v_a$ internal as well. However, this would have meant $d_a$ was less than $M - 1$ at that time in $w_i'$, a contradiction. 

\begin{claim}
At time $3k - M - 1$, none of the vertices in $B$ were internal in $w_i'$.
\end{claim}

\begin{proof}
Suppose not, for contradiction. Then at least one vertex in $B$ is internal in $w_i'$ at $3k - M - 1$. By condition $(2)$ of our assumption, we also know that none of the vertices in $B$ are internal in $w_i'$ at any time when $v_a$ is made internal. Because $v_a$ is internal at the end of $w_i'$, there must be some time at or after $3k - M$ where $v_a$ was made internal and at this time, no vertices in $B$ would be internal. Thus, there must be a time between $3k - M$ and the time where $v_a$ is finally made internal in $w_i'$ where a vertex of $B$ could be made external. At this time, all of the vertices of $A$ are either external on the spine of the tree or internal. By Lemma $4.14$, no more than $2C$ of the vertices in $A$ may be internal, so there would be at least $k - 2C$ external vertices at this time. Moving these vertices to the right side of the tree would require $k - 2C$ $x_0$ rotations. Then, after $v_a$ was made internal, all of the vertices of $A$ would need to be made external and $C_L$ would need to be brought from $0$ to $i$. This would require at least $i$ total rotations. Thus, this would require $|w_i'| \geq 3k - M + k - 2C + i = 3k + 1 + C + i + (k - 3C - 1) > 3k + 1 + C + i \geq 3k + 1 + c + i = |\overline{w_{i}'}| + c \geq |w_{i}'|$, which is impossible. Thus, no vertices in $B$ could have been internal at time $3k - M - 1$. 
\end{proof}

What's more, a modification to the above argument shows that at time $3k - M - 1$, $w_i'$ could not have been within $\frac{k}{2}$ steps of making a vertex of $B$ internal.

\begin{claim}
At time $3k - M - 1$, in $w_i'$, it is not possible to make a vertex of $B$ internal in less than $\frac{k}{2}$ rotations.
\end{claim}

\begin{proof}
Suppose not. Then at $3k - M - 1$, at least one vertex of $B$ is mapped within $\frac{k}{2} - 1$ rotations of the pivot. By the same reasoning as in the previous claim, there is a time after $3k - M - 1$ when $v_a$ is made internal. At $3k - M - 1$, all of the vertices of $A$ are either external on the spine of the tree or internal. By Lemma $4.14$, no more than $2C$ of the vertices in $A$ may be internal, so there would be at least $k - 2C$ external vertices at this time. Because at most $\frac{k}{2} - 1$ vertices of $A$ can be mapped between the vertices of $B$ and the pivot and $2C$ may be internal, at least $k - 2C - \frac{k}{2} + 1$ vertices of $A$ are mapped to the left of the pivot. Because $F$ preserves the infix ordering of vertices, to make $v_a$ internal, we must move these vertices to the right side of the tree using $k - 2C - \frac{k}{2} + 1$ $x_0$ rotations. Then, after $v_a$ was made internal, all of the vertices of $A$ would need to be made external and $C_L$ would need to be brought from $0$ to $i$. This would require at least $i$ total rotations. Thus, this would require $|w_i'| \geq 3k - M + k - 2C - \frac{k}{2} + 1 + i = 3k + 1 + C + i + (\frac{k}{2} - 3C - 1) > 3k + 1 + C + i \geq 3k + 1 + c + i = |\overline{w_{i}'}| + c \geq |w_{i}'|$, which is impossible. Thus the lemma holds.
\end{proof}

Likewise, no vertices in $B$ in $w_{i+1}'$ could have been internal at time $3k - M - 1$. Because these words are $M$ fellow travelers, if a vertex in $B$ was internal in $w_{i + 1}'(3k - M - 1)$,  then that vertex would either need to be internal in $w_i'(3k - M - 1)$ or it would need to be possible to make it internal within $M$ rotations in $w_i'$ at this time. However, neither option is possible.

This leads directly to a contradiction. This would require us to make a vertex in $B$ internal at or after time $3k - M$, then make $v_a$ internal, and then rebalance the tree. As in our first claim above, this would require $|w_{i+1}'| \geq 3k - M + k - 2C + i = 3k + 2 + C + i + (k - 3C - 2) > 3k + 2 + C + i \geq 3k + 2 + c + i = |\overline{w_{i+1}'}| + c \geq |w_{i+1}'|$ and the lemma holds.
\end{proof}



\vspace*{0.25in}
\subsection*{4.4 The Claim for $f_k$:} 
\leavevmode

\begin{claim}
In the sole accepted representative word for $\overline{f_k}$, the final time vertex $v_b$ is made internal occurs before the final time the vertex $v_a$ is made internal.
\end{claim}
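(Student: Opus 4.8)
The plan is to assemble the Claim directly from the base case and inductive step that the preceding lemmas have already supplied; no new computation is needed. First I would observe that $w_k = f_k$, so $\overline{w_k} = \overline{f_k}$, and hence the word $w_k'$ — the representative in $L$ of $\overline{w_k}$ — is exactly the unique accepted representative for $\overline{f_k}$. Thus it suffices to show that conditions $(1)$ and $(2)$ of the inductive hypothesis hold in $w_k'$, namely that in $w_k'$ the final time $v_b$ is made internal occurs before the \emph{first} time $v_a$ is made internal, and that none of the vertices in $B$ are internal at any time when $v_a$ is made internal.

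Next I would run the induction on $i$ over the range $k - l \le i \le k$. This is a nonempty range of integers: by the parity choice of $C$, $l = \tfrac{C}{2}+1$ is an integer, and $k > 1000C > l$, so $0 < k - l \le k$. The inductive hypothesis at stage $i$ is that both $(1)$ and $(2)$ hold in $w_i'$. The base case $i = k - l$ is Lemma $4.15$ (which gives $(1)$) together with Lemma $4.16$ (which gives $(2)$). The inductive step, passing from $w_i'$ to $w_{i+1}'$ for $k - l \le i \le k - 1$, is precisely Lemma $4.19$. Applying the step successively for $i = k-l, k-l+1, \ldots, k-1$ shows that $(1)$ and $(2)$ hold in $w_i'$ for every $i$ in the range; in particular they hold in $w_k'$.

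Finally I would pass from "first" to "final". Since $\overline{w_k} = \overline{f_k}$ maps $v_a$ into the interior set, while $\overline{w_k(0)}(v_a) = v_a = [0, \tfrac{1}{2}^{k}]$ lies on the left spine and is therefore exterior, the vertex $v_a$ must be made internal at least once during $w_k'$; hence the first time $v_a$ is made internal is well defined and is at most the final time $v_a$ is made internal. Combined with condition $(1)$ in $w_k'$, this gives that the final time $v_b$ is made internal occurs before the first time $v_a$ is made internal, and a fortiori before the final time $v_a$ is made internal, which is exactly the assertion of the Claim. I do not expect any genuine obstacle here: all of the substance lives in Lemmas $4.14$–$4.19$, and this Claim is only the bookkeeping that glues them together; the single point requiring care is the cosmetic mismatch between the "first $v_a$" phrasing in the lemmas and the "final $v_a$" phrasing in the Claim, which is resolved precisely by the remark that $v_a$ does become internal somewhere along $w_k'$.
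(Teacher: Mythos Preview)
Your proposal is correct and follows essentially the same approach as the paper: establish the base case $i=k-l$ via Lemmas~4.15 and~4.16, propagate conditions $(1)$ and $(2)$ up to $i=k$ via Lemma~4.19, and then identify $w_k'$ with the accepted representative of $\overline{f_k}$. You are in fact slightly more careful than the paper, which cites the lemmas with off-by-one numbering and does not explicitly spell out the ``first $v_a$'' versus ``final $v_a$'' passage that you handle in your last paragraph.
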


\begin{proof}
We begin with $w_{k - l}'$, the accepted representative for $w_{k - l}$. By Lemma $4.14$, we know that in $w_{k - l}'$, the final time where vertex $v_b$ is made internal must occur before the first time the vertex $v_a$ is made internal. Lemma $4.15$ says that none of the vertices in $B$ are internal at any time when $v_a$ is made internal in $w_{k - l}'$. Note, this means that $w_{k - l}'$ fulfills conditions $(1)$ and $(2)$ of Lemma $4.18$.

Suppose that in $w_i'$, the accepted representative for $w_i$ $k - l \leq i \leq k - 1$, the following two conditions hold:

\begin{enumerate}
\item The final time vertex $v_b$ is made internal occurs before the first time the vertex $v_a$ is made internal,
\item None of the vertices $[\frac{2^q - 1}{2^q}, 1]$, $1 \leq q \leq k - 1$ are internal at any time when $v_a$ is made internal, 
\end{enumerate}

Then by Lemma $4.18$, in $w_{i + 1}'$, the accepted representative for $w_{i + 1}$, these two conditions hold as well and our induction is complete. Thus, in $w_k'$, condition $(1)$ will hold as well, so the final time vertex $v_b$ is made internal occurs before the first time the vertex $v_a$ is made internal. But $w_k'$ is the accepted representative for $\overline{w_k}$ and $\overline{w_k} = \overline{f_k}$ so our claim is complete.
\end{proof}


\vspace*{10pt}
\subsection*{4.5 The Path for $g_k$:} 
\leavevmode

We now look at the prefixes of $g_k = x_0^{(k+1)}x_1^{-1}x_0^{-(2k - 1)}x_1^{-1}x_0^{(k - 1)}$. 

\begin{definition}
$u_i$ =  $x_0^{(k+1)}x_1^{-1}x_0^{-(2k - 1)}x_1^{-1}x_0^{-i}$ where $0 \leq i \leq k - 1$
\end{definition}

Take note that $u_i = g_k(3k + 2 + i)$ is the $3k + 2 + i$ prefix of $g_k$. Thus, $u_{k - 1} = g_k$. In each $u_i$, vertices $[0, \frac{1}{2}^{k}]$ and $[\frac{2^k - 1}{2^k}, 1]$ are in the interior set at the end of the word. In particular, vertex $[\frac{2^k - 1}{2^k}, 1]$ is made interior at time $k + 2$ by prefix $u_i(k + 2) = x_0^{(k+1)}x_1^{-1}$ and vertex $[0, \frac{1}{2}^{k}]$ is made interior at time $3k + 2$ with prefix $u_i(3k + 2) = x_0^{(k+1)}x_1^{-1}x_0^{-(2k - 1)}x_1^{-1}$.

Again, using Fordham's method, we can compute that any minimal length representative of $\overline{u_i}$ in the letters $\{x_0, x_1,$ $ x_0^{-1}, x_1^{-1}\}$ has length $3k + 2 + i$. That is to say, $|\overline{u_i}| = 3k + 2 + i$. By our assumption before, each element of $F$ has a single representative in $L$ whose length is at most $c$ greater than minimal. We use $u_i'$ to denote the \textit{representative in $L$ for $\overline{u_i}$}. Note that $|u_i'| \leq |\overline{u_i}| + c$.

We will explore the path for $g_k$ using six lemmas as the basis for an inductive argument to show $v_a$ must be made internal before $v_b$ in the accepted representative for $\overline{g_k}$.

\begin{lem}
In $u_i'$, $(k - 1) - (C + 1) \leq i \leq k - 1$, there can be no more than $2C$ times where any of the vertices in $A$ are made internal. 
\end{lem}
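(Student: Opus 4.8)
The plan is to reproduce the argument of Lemma 4.14 essentially line for line, with $u_i'$ in place of $w_i'$, adjusting every length estimate by the two unit shifts that separate the two situations: here $|\overline{u_i}| = 3k+2+i$, so the hypothesis on $L$ gives $|u_i'| \le 3k+2+i+c \le 3k+2+i+C$, and the admissible range is $k-C-2 \le i \le k-1$ rather than $k-l \le i \le k$. Two endpoint configurations serve as input. At $t_0 = 0$ the word $u_i'$ is empty, so $\overline{u_i'(0)}$ is the identity and the data are exactly those used in Lemma 4.14: $C_L(0) = k$, $C_R(0) = k-1$, $C_I(0) = 0$, $d_a(0) = k+1$, $d_b(0) = k-1$. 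At $t_f = |u_i'|$ we have $\overline{u_i'} = \overline{u_i}$, and I would read off the terminal configuration from the reduced tree pair of $\overline{u_i}$ --- equivalently, by tracing the three blocks of $u_i = x_0^{k+1}x_1^{-1}x_0^{-(2k-1)}x_1^{-1}x_0^{-i}$ and noting that the trailing $x_0$-block carries a single spine vertex of $A$ to the apex $[0,1]$. This yields $C_I(t_f) = 0$ together with fixed values of $C_L(t_f)$ and $C_R(t_f)$ summing to $2k-1$, which play the role that $C_L(t_f) = i$, $C_R(t_f) = 2k-i-1$ played in Lemma 4.14, with the indices shifted by one.

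With these in hand, I would assume for contradiction that some vertex of $A$ is made internal $T \ge 2C+1$ times in $u_i'$. Let $t_a$ and $t_b$ be the last times $v_a$ and $v_b$ are made internal; as in Lemma 4.14, the times $t_0$, $t_a$, $t_b$, $t_f$ are distinct because $F$ acts bijectively on $\mathcal{T}$ and each relevant prefix carries $v_a$ or $v_b$ to $[\frac{1}{2},\frac{3}{4}]$, while $t_f$ differs from the others since $v_a, v_b$ lie roughly $k$ from the pivot there. Then one splits into the orderings $t_0 < t_a < t_b < t_f$ and $t_0 < t_b < t_a < t_f$ (the latter being the one consistent with the structure of $u_i$ itself, though both must be treated). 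In each sub-interval I would carry out the same bookkeeping: use preservation of the infix order to pin down $C_L, C_R, C_I$ at its endpoints, introduce the same auxiliary counts $m_1, m_2, m_{2e}, n_1, n_2, n_{2e}$ (and, in the additional-accounting step, a term $T-(m_1+m_2)$ or $m_3$) recording how many vertices of $A$ and $B$ are internal at $t_a$ and $t_b$, and tally the forced $x_0^{\pm 1}, x_1^{\pm 1}$ rotations. Summing, and then running the same five-case (resp. three-case) split according to which of the inequalities $m_1+m_2-m_{2e} \ge k-i$, $m_1+m_2+n_1+n_2-m_{2e}-n_{2e} < 2k-i-1$, and $m_1+m_2 < T$ hold --- using $k > 1000C$, $T = 2C+1$, and $C \ge 2$ --- should force $|u_i'| > 3k+2+i+C \ge |\overline{u_i}| + c \ge |u_i'|$, the desired contradiction.

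The main obstacle is not conceptual but the sheer volume of the case analysis, together with checking that the uniform shift (the extra $+1$ in $|\overline{u_i}|$ and the one-step change in the terminal counts) does not swallow the slack. In Lemma 4.14 every case closed with room to spare --- an extra $(3C+1)+2$, or a spare $T$, and so on --- and since $k > 1000C$ dominates any $O(1)$ perturbation those margins should persist, so the plan is simply to re-verify that each closes. The one point that requires genuine care, and the only substantive difference from the $f_k$-path, is computing $C_L(t_f)$ and $C_R(t_f)$ for $u_i$ correctly from the word itself, since the arithmetic of the final time interval rests entirely on these two values; a secondary check is confirming that $C_I(t_f)=0$ as in the $f_k$ case, which the parallel structure of the two words makes expected.
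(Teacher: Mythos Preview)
Your overall strategy matches the paper's: assume $T\ge 2C+1$, split on the order of $t_a,t_b$, bookkeep with the $m_j,n_j,m_{je},n_{je}$ counters, and force $|u_i'|>3k+2+i+C$. But you have misjudged the terminal configuration, and this is not a cosmetic shift.

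You assert that the trailing block carries a vertex of $A$ to the apex and that $C_L(t_f),C_R(t_f)$ differ from the $w_i$ case only by ``indices shifted by one.'' In fact $\overline{u_i}$ carries the right-spine vertex $[\tfrac{2^{k-1-i}-1}{2^{k-1-i}},1]$ to the root, so $C_L(t_f)=2k-1-i$ and $C_R(t_f)=i$: the roles of $C_L$ and $C_R$ at $t_f$ are essentially \emph{swapped} relative to $w_i$. This changes the last interval qualitatively. In the ordering $t_0<t_a<t_b<t_f$, every $A$-vertex that was internal at $t_b$ must, after being made external, be carried back to the \emph{left} spine; this is now an unconditional $C_{x_0^{-1}}$-cost of $m_1+m_2-m_{2e}$, not the conditional term governed by your inequality $m_1+m_2-m_{2e}\ge k-i$. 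Likewise the condition for needing extra $x_0$-rotations becomes $n_1+n_2-n_{2e}<i$, not $m_1+m_2+n_1+n_2-m_{2e}-n_{2e}<2k-i-1$. The paper's proof of this ordering therefore closes in two cases, not five, driven by a different pair of inequalities; your proposed five-case split with the Lemma~4.14 inequalities simply does not arise. A symmetric adjustment is needed in the ordering $t_0<t_b<t_a<t_f$, where raising $C_L$ from $0$ to $2k-1-i$ (rather than to $i$) is what costs the decisive $\sim 2k$ rotations.

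To be clear, once you compute $C_L(t_f)=2k-1-i$, $C_R(t_f)=i$ correctly, the remaining bookkeeping is of the same flavor and the margins (all of order $k$ against $O(C)$) still close; so the gap is recoverable, but it is a genuine gap in the plan as written, not a one-line index correction.
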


The following pair of lemmas give the base for an inductive argument.

\begin{lem} 
In $u_{(k - 1) - (C + 1)}'$, no more than $C$ of the vertices in $B$ are internal at the final time $v_a$ is made internal.
\end{lem}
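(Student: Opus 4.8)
The plan is to mirror the contradiction arguments of Lemmas 4.14--4.16, transported to the $g_k$ side. Write $i_0=(k-1)-(C+1)$, so that $|\overline{u_{i_0}}|=3k+2+i_0=4k-C$ and hence $|u_{i_0}'|\le 4k-C+c\le 4k$, using $c\le C$. Assume, for contradiction, that at $t_a$ -- the \emph{final} time $v_a$ is made internal in $u_{i_0}'$ -- at least $C+1$ of the vertices of $B$ are internal. I would set up the time markers $t_0=0$, the time $t_a$, a time $t_b$ recording $v_b$'s final internalization, and $t_f=|u_{i_0}'|$; these are pairwise distinct because the $F$-action on $\mathcal T$ is a bijection preserving the infix order and $\overline{u_{i_0}'(t_0)}([\tfrac12,\tfrac34])=\overline{u_{i_0}'(t_a)}(v_a)=\overline{u_{i_0}'(t_b)}(v_b)=[\tfrac12,\tfrac34]$, with $t_f$ separated from the rest because $v_a,v_b$ sit roughly $k$ from the pivot in $\overline{u_{i_0}}$. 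As in the proof of Lemma 4.14, I would treat the two possible orderings of $t_a$ and $t_b$ (or invoke the companion base lemma, once available, to pin the order down), carrying out in each the interval accounting below.

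The interval accounting uses the counters $C_L,C_R,C_I$ and $C_{x_0},C_{x_0^{-1}},C_{x_1^{-1}},C_{x_1},C_{x_0^{-1}\|x_1^{-1}}$ exactly as in Lemmas 4.14--4.16. On $[t_0,t_a]$: $v_a$ starts at $[0,\tfrac1{2^k}]$ with $d_a=k+1$, all of $A$ (including the root) lying between it and the pivot on the left spine; since by Lemma 4.21 at most $2C$ vertices of $A$ are ever internal, reducing $d_a$ to $0$ by $t_a-1$ costs a block of $x_0$-rotations, plus the single $x_1^{-1}$ internalizing $v_a$ at $t_a$, plus whatever $x_0$-, $x_0^{-1}$- and $x_1^{-1}$-rotations are needed to place the vertices of $A\cup B$ that are internal at $t_a$ (recall $C_I(t_0)=0$). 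On $(t_a,t_f]$: since $\overline{u_{i_0}'(t_a)}(v_a)=[\tfrac12,\tfrac34]$, infix-order preservation forces all of $A\cup B\cup\{v_b\}$ to sit at or to the right of $[\tfrac12,\tfrac34]$; and because $C_I(t_f)=0$ -- none of $A\cup B$ is internal in $\overline{u_{i_0}}$, as in the $t_f$-analysis of Lemmas 4.14--4.16 -- each of the $\ge C+1$ vertices of $B$ internal at $t_a$ must be externalized by $t_f$, costing at least $C+1$ distinct $x_1$-rotations, while $v_b$ must still be brought to the pivot and internalized and the tree rebalanced so that $C_I(t_f)=0$ with the appropriate split of $A\cup B$ between the two spines, costing the usual further block of $x_0^{\pm1}$- and $x_1$-rotations.

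Summing the interval bounds then yields $|u_{i_0}'|\ge 4k+1$: the ``geodesic'' part of the work already accounts for $4k-C$, and the $\ge C+1$ externalizations forced after $t_a$ -- which a geodesic word for $\overline{u_{i_0}}$ does not pay for, since no vertex of $B$ is internal in $\overline{u_{i_0}}$ -- push the total past $4k-C+c$. Since $|u_{i_0}'|\le 4k-C+c\le 4k<4k+1$, this is the desired contradiction, so at most $C$ vertices of $B$ can be internal at the final time $v_a$ is made internal, proving the lemma.

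I expect the main obstacle to be the bookkeeping that dominates the proofs of Lemmas 4.14--4.16: vertices of $A\cup B$ can be internalized and re-externalized several times and can travel between the two spines, so the lower bounds on the various counters are unconditional only in some regimes, and one must split into cases (according to which internal vertices persist at $t_a$, how many of $A$ end up internal there versus how many must lie left of the pivot at $t_f$, and whether the conditional $x_0^{\pm1}$ ``detour'' terms switch on) and check in each that the slack $c\le C$ is overrun. A secondary point requiring care is the distinction between the \emph{final} time $v_a$ is made internal and its earlier internalizations: since $v_a$ can be externalized and re-internalized, the rotation counts on $[t_0,t_a]$ and $(t_a,t_f]$ must not double-count, which is handled, as in the ``Additional Accounting'' step of Lemma 4.14's proof, by crediting an internalization only when its matching externalization lies in the same interval.
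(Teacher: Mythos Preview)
Your outline has the right shape---set up $t_0,t_a,t_b,t_f$, do interval accounting, and aim for $|u_{i_0}'|\ge 4k+1>4k-C+c$---but the heuristic ``geodesic work $\approx 4k-C$ plus $C+1$ forced externalizations'' does not close the argument. The $C+1$ internalizations of $B$-vertices you posit before $t_a$ are \emph{not} pure extra cost: each $x_1^{-1}$ that internalizes a $B$-vertex also removes one vertex from the right spine, and this is credited back dollar-for-dollar in the $(t_a,t_b]$ interval when you reduce $d_b$ (the bound there is $2k-1-m_1-n_1$, not $2k-1$). If you run your accounting carefully with only the terms you list, the sum collapses to roughly $4k-C$ in the tight case, which is exactly the allowed length and gives no contradiction.

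The paper's proof supplies the missing mechanism. Because $i_0=(k-1)-(C+1)$, the element $\overline{u_{i_0}}$ sends the vertex $[\tfrac{2^{C+1}-1}{2^{C+1}},1]$ to the root, so the first $C+1$ vertices of $B$ (those with $1\le q\le C+1$) must lie on the \emph{left} spine at $t_f$. Now: to internalize at least $C+1$ vertices of $B$ before $t_a$, each of these first $C+1$ vertices must either be among the internalized ones or be detoured to the left spine and back (cost $2$ each, recorded as $2o$). Any of them that is internal at some stage is eventually externalized to the pivot---on the \emph{right} spine---and then needs an extra $x_0^{-1}$ to reach its final left-spine home (recorded as $p$, or absorbed into $n_{2e}$ if the externalization happens in the middle interval). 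The resulting inequality $o+p+n_{2e}\ge C+1$ is exactly the surplus that pushes the total to $4k+1$. Your proposal never isolates this set of vertices or their left-spine destination, and without that the case analysis you anticipate cannot produce the needed overrun. (Separately, you may safely invoke the companion base lemma to fix the order $t_a<t_b$; the paper does this, so only one ordering needs to be analyzed.)
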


\begin{lem}
In $u_{(k - 1) - (C + 1)}'$, the final time $v_a$ is made internal occurs before the first time $v_b$ is made internal.
\end{lem}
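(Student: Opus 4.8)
The plan is to argue by contradiction, following the template of the proof of Lemma 4.15. Write $i = (k-1)-(C+1)$, so that $i = k - C - 2$ and $|\overline{u_i}| = 3k+2+i = 4k - C$, whence $|u_i'| \leq |\overline{u_i}| + c = 4k - C + c$. Assume, for contradiction, that in $u_i'$ the first time $v_b$ is made internal occurs before the final time $v_a$ is made internal. Put $t_0 = 0$, let $t_b$ be the first time $v_b$ is made internal, let $t_a$ be the final time $v_a$ is made internal, and let $t_f = |u_i'|$. Since $\overline{u_i'(t_b)}(v_b) = \overline{u_i'(t_a)}(v_a) = [\frac{1}{2}, \frac{3}{4}]$ and the action of $F$ on $\mathcal{T}$ is injective at each time, $t_a$ and $t_b$ are distinct from one another and from $t_0$; and $t_f$ differs from both because at $t_f$ the vertices $v_a$ and $v_b$ are mapped roughly distance $k$ from the pivot, while at $t_b$ (resp.\ $t_a$) the vertex $v_b$ (resp.\ $v_a$) sits at $[\frac{1}{2}, \frac{3}{4}]$. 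Hence the contradiction hypothesis forces $t_0 < t_b < t_a < t_f$, and the goal is to show that completing $u_i'$ consistently with this ordering costs strictly more than $4k - C + c$ rotations.

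I would then run a checkpoint-by-checkpoint tally over the three intervals $[t_0, t_b]$, $(t_b, t_a]$, and $(t_a, t_f]$, using the counters $C_L, C_R, C_I$ on $A \cup B$, the distance functions $d_a, d_b$, and the generator counters, just as in the proofs of Lemmas 4.14--4.16. The two structural facts driving everything are that at time $0$ the vertex $v_b$ is the rightmost and $v_a$ the leftmost among $A \cup B \cup \{v_a, v_b\}$, and that $F$ preserves the infix order, so this remains so at all times. On $[t_0, t_b]$: $d_b(0) = k-1$ with $B$ strung along the right spine between the pivot and $v_b$, so reducing $d_b$ to $0$ forces at least $k-1$ rotations of type $x_0^{-1}$ or $x_1^{-1}$; one further $x_1^{-1}$ internalizes $v_b$; and if $m_1$ vertices of $A$ are internal at $t_b$ then, since $A$ starts on the left spine and no $x_0$ has yet been forced, each costs an extra $x_0$ and an extra $x_1^{-1}$, giving a subtotal of at least $k + 2m_1$. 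On $(t_b, t_a]$: $\overline{u_i'(t_b)}(v_b) = [\frac{1}{2}, \frac{3}{4}]$ forces all of $A \cup B$ off the right spine, so $C_R(t_b) = 0$ and $d_a(t_b) \geq 2k - 1 - C_I(t_b)$; since $v_a$ lies to the left of all of these vertices and of the pivot, reducing $d_a$ to $0$ needs that many $x_0$ rotations, after which one $x_1^{-1}$ internalizes $v_a$, with the vertices of $A \cup B$ that enter or leave the interior in this window charged exactly as in Lemma 4.15. On $(t_a, t_f]$: because $t_a$ is the final internalization of $v_a$ and $v_a$ is internal in $\overline{u_i}$, no vertex of $A \cup B$ can sit below $\overline{u_i'(t_a)}(v_a) = [\frac{1}{2}, \frac{3}{4}]$ (removing it would require externalizing $v_a$ again), so $C_L(t_a) = 0$, while $C_L(t_f) = i$ forces at least $i$ further $x_0^{-1}$ rotations and $C_I(t_f) = 0$ forces all still-internal vertices of $A \cup B$ to be externalized by $x_1$ rotations; a final additional-accounting step charges any remaining in-and-out internalizations of $A$-vertices not yet counted.

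Summing the four contributions, the cross terms in the $m$'s and $n$'s telescope with the correct signs and one is left with a lower bound on $|u_i'|$ that, as in the proof of Lemma 4.14, requires a split into cases according to whether the number of $A$-vertices that were driven onto the pivot and internalized exceeds the size $i$ of the eventual left-spine block, and whether the total number of internalized $A \cup B$ vertices falls short of $C_R(t_f) = 2k - 1 - i$; in every case the bound comes out at least $4k - C + c + 1$ (the cleanest case gives roughly $4k + C$), contradicting $|u_i'| \leq 4k - C + c$. This case analysis, together with the care needed to avoid double-charging rotations --- in particular the ``overshoot'' bookkeeping, whereby $A$-vertices pushed onto or past the pivot and internalized must later be walked back to the left spine by previously uncounted $x_0^{-1}$ rotations --- is the main obstacle. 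A secondary subtlety, absent from Lemma 4.15, is that $t_b$ is only the \emph{first} internalization of $v_b$, so one may not assume $v_b$ stays at $[\frac{1}{2}, \frac{3}{4}]$ after $t_b$ and must run the $(t_b, t_a]$ count using only that $v_b$ is there at the single instant $t_b$; it is precisely the infix rigidity of the leftmost vertex $v_a$ and the rightmost vertex $v_b$ that makes this suffice. Where convenient, the bookkeeping in these steps can be shortened by appealing to the bounds of Lemmas 4.20 and 4.22 on how many $A$- and $B$-vertices can be internal at once.
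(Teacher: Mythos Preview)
Your overall framework --- assume for contradiction that $t_0 < t_b < t_a < t_f$, then run a checkpoint tally over the three intervals and show the total exceeds $|\overline{u_i}|+c$ --- is exactly what the paper does. But you have the terminal configuration backwards, and this error is what drives you toward an unnecessary and (as written) non-working case analysis.

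Concretely: for the family $u_i$ we have $\overline{u_i}([\frac{2^{k-1-i}-1}{2^{k-1-i}},1]) = [0,1]$, so at $t_f$ the root is the image of a \emph{right}-spine vertex. Hence $C_L(t_f) = 2k-1-i$ and $C_R(t_f) = i$, not the other way around. You have written $C_L(t_f)=i$ and $C_R(t_f)=2k-1-i$, which are the values for the $w_i$ family from Section~4.3. With your swapped values the third interval contributes only about $i = k-C-2$ forced $x_0^{-1}$ rotations, the grand total collapses to roughly $4k-C-1 + (\text{nonnegative terms that can all vanish})$, and you cannot close the contradiction against $|u_i'|\le 4k-C+c$; the case split you import from Lemma~4.14 does not rescue this, because that split is calibrated to the $w_i$ terminal configuration, not the $u_i$ one.

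Once you plug in the correct value $C_L(t_f)=2k-1-i=k+C+1$, the argument is actually simpler than you anticipate: since $C_L(t_a)=0$ (all of $A\cup B$ lies to the right of $[\frac12,\frac34]$ at $t_a$, hence off the left spine), the third interval already forces at least $k+C+1$ rotations of type $x_0^{-1}$, plus $C_I(t_a)$ rotations of type $x_1$. Summing the three intervals then telescopes directly to $|u_i'|\ge 4k+2+C + m_1 + 2m_2 + 2n_2 > 4k-C+c$, with no case analysis, no additional ``in-and-out'' accounting, and no appeal to other lemmas. Your worry about $t_b$ being only the \emph{first} internalization of $v_b$ is well-placed but, as you note, harmless: the tally only uses the snapshot $C_R(t_b)=0$ at the instant $t_b$.
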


\begin{lem}
If in $u_i'$, $(k - 1) - (C + 1) \leq i \leq k - 2$, the following two conditions hold:
\begin{enumerate}
\item The final time $v_a$ is made internal occurs before the first time $v_b$ is made internal,
\item No more than $C$ of the vertices in $B$ are internal at the final time $v_a$ is made internal,
\end{enumerate}
then, $d_a(u_i'(t)) > M - 1$ if $t > k +  M + 2C + 2$.
\end{lem}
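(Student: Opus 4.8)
The plan is to argue by contradiction, following the template of the proof of Lemma 4.18. Assume conditions $(1)$ and $(2)$ hold, and suppose there were a time $t_c > k + M + 2C + 2$ with $d_a(u_i'(t_c)) \le M - 1$; take $t_c$ to be the largest such time, so that $d_a(u_i'(t)) > M - 1$ for every $t$ in $(t_c, t_f]$, where $t_f = |u_i'|$. I would then select a short list of times $0 = t_0 < \dots < t_f$, fix their order, and on each resulting interval bound below the number of uses of $x_0, x_0^{-1}, x_1, x_1^{-1}$ that the motion of $v_a$, of $v_b$, and of the counters $C_L, C_R, C_I$ forces; summing these should give $|u_i'| > 3k + 2 + i + C \ge |\overline{u_i}| + c$, contradicting $|u_i'| \le |\overline{u_i}| + c$.

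To set up, recall that $v_a$ is interior in $\overline{u_i}$, so — a vertex changing between the interior and exterior sets only through the pivot — $v_a$ is made internal at least once in $u_i'$; let $t_a$ be the last such time and $t_b$ the first time $v_b$ is made internal, so $t_a < t_b$ by $(1)$ and $v_a$ stays interior on $[t_a, t_f]$. Since $d_a(u_i'(t_a)) = 1 \le M - 1$ (we may assume $M \ge 2$, enlarging $M$ if necessary), maximality of $t_c$ forces $t_a \le t_c$; and a direct computation with the geodesic $u_i$ shows $\overline{u_i}$ carries $v_a$ to a deep interior vertex, at distance of order $2k$ from the pivot, so $d_a(u_i'(t_f)) > M - 1$ and $t_c < t_f$. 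The two structural facts I would use are: $(a)$ bringing $d_a$ down from its initial value $k+1$ to $0$ costs at least $k+1$ rotations of type $x_0$, because $v_a$ sits on the left spine with the root and all of $A$ between it and the pivot and only $x_0$ moves left-spine vertices toward the pivot, so in particular $v_a$ cannot be made internal before time $k+1$; and $(b)$ condition $(2)$ together with the lemma bounding by $2C$ the number of times any vertex of $A$ is made internal in $u_i'$ shows at most $3C$ of the $2k-1$ vertices of $A \cup B$ are interior at $t_a$, hence (since $v_a$ is at $[\frac12,\frac34]$ at $t_a$ and the infix order is preserved) at least $2k - 1 - 3C$ of them lie on the right spine between the image of $v_b$ and the pivot, so that $d_b(u_i'(t_a)) \ge 2k - 2 - 3C$ and moving $v_b$ to the pivot later costs at least that many rotations of type $x_0^{-1}$ or $x_1^{-1}$.

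With these in hand I would partition $[0, t_f]$ at $t_a$, $t_b$, $t_c$ — handling the cases $t_b \le t_c$ and $t_c < t_b$ separately — and count interval by interval exactly as in the proofs of Lemmas 4.15 and 4.18: the block up to $t_a$ using $(a)$, plus one $x_1^{-1}$ at $t_a$ and one per vertex of $A \cup B$ interior at $t_a$; the block bringing $v_b$ to the pivot using $(b)$, plus one $x_1^{-1}$ to make $v_b$ internal; and the tail block, where $C_I(t_f) = 0$ must be restored, the relevant vertex of $A$ must reach the apex, and $v_a$ must be driven from near the pivot out to depth of order $2k$ without $d_a$ dipping back to $M-1$ after $t_c$, which as in the last interval of Lemma 4.15's proof contributes both the $x_0^{-1}$ rotations that return vertices from the right spine to the left spine and a further amount of order $2k - i$ from the rise of $d_a$; matched $x_1/x_1^{-1}$ pairs are added for each vertex interior at $t_a$ but exterior at $t_f$ and for each vertex made interior then exterior within one interval. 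Combining the block estimates with $(k-1)-(C+1) \le i \le k-2$, $k > 1000C$, and $M \le C$, and using that $t_c > k + M + 2C + 2$ to convert the lateness of the $d_a$-dip into extra length, the lower bound on $|u_i'|$ exceeds $3k + 2 + i + C$, the desired contradiction; tracking the constants, the threshold $k + M + 2C + 2$ is exactly the value that makes this arithmetic close.

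I expect the main obstacle to be the interior bookkeeping in the tail. In contrast with the $f_k$ analysis, where after $v_a$ is made internal it essentially rests at $[\frac12,\frac34]$, here $v_a$ is made internal before $v_b$, so every subsequent time $v_b$ is made internal pushes $v_a$ one level deeper, and the tail of $u_i$ drives $v_a$ out to a vertex of depth of order $2k$: maintaining an honest lower bound on $d_a$ — and hence on the rotation count — through these moves, while assuming only that at most $C$ (not zero) vertices of $B$ are interior at $t_a$, is the delicate point and the source of the extra slack in the threshold compared with Lemma 4.18. A secondary issue, handled as in Lemmas 4.15 and 4.18, is making sure the count is insensitive to whether $t_b$ precedes or follows $t_c$ and that the $x_0^{-1}$ rotations used near $t_f$ to refill the left spine are disjoint from those used earlier to bring $v_b$ to the pivot.
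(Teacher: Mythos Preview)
Your plan is the paper's plan: both argue by contradiction, take $t_c$ to be the last time after the threshold with $d_a \le M-1$, establish $t_0 < t_a \le t_c$ and $t_a < t_b$, split into the two orderings $t_a < t_c < t_b$ and $t_a < t_b < t_c$, and then do interval-by-interval rotation counting to force $|u_i'| > 3k + 2 + i + C$. Your structural facts $(a)$ and $(b)$, your observation that $v_a$ ends up far from the pivot hanging off the left spine, and your tail accounting via $C_L(t_f) = 2k-1-i$ all match the paper.

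There is one mechanism you have not identified, and it is exactly what makes the threshold $k + M + 2C + 2$ the right number in the $t_a < t_c < t_b$ case. The bare accounting on $[t_0,t_a]$, $(t_a,t_c]$, $(t_c,t_b]$, $(t_b,t_f]$ as in Lemmas 4.15 and 4.18 only produces a lower bound of the form $3k + 1 + (\text{bounded terms})$, which need not exceed $3k+2+i+C$. The paper closes this by a dichotomy on the interval $(t_a,t_c]$: either some vertex of $B$ changes interior/exterior status there, which is expensive because at $t_a$ at least $k - 3C$ vertices of $A$ sit on the right spine between the pivot and every vertex of $B$, so reaching any $B$-vertex and returning costs an extra $2(k-3C)$ rotations; or no vertex of $B$ changes status, in which case the minimum work on $[t_0,t_c]$ is only about $k + M + (m_1+n_1+m_2+m_{2e}-1)$ with each of $m_1,m_2,m_{2e} \le 2C$ and $n_1 \le C$, so the hypothesis $t_c > k + M + 2C + 2$ forces at least $L = t_c - (k+M+m_1+n_1+m_2+m_{2e}-1)$ additional, otherwise-unaccounted rotations, with $L + m_1 + n_1 + m_2 + m_{2e} - 1 \ge 2C+2$. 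It is this extra $L$ (or the $2k-6C$) that pushes the total past $3k+2+i+C$. Your sentence ``using that $t_c > k + M + 2C + 2$ to convert the lateness of the $d_a$-dip into extra length'' is pointing in the right direction, but the conversion is not automatic from the template of 4.15/4.18; you need this dichotomy explicitly.
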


The following two lemmas give the inductive step for our argument.

\begin{lem}
If in $u_i'$, $(k - 1) - (C + 1) \leq i \leq k - 2$, the following two conditions hold:
\begin{enumerate}
\item The final time $v_a$ is made internal occurs before the first time $v_b$ is made internal,
\item No more than $C$ of the vertices in $B$ are internal at the final time $v_a$ is made internal,
\end{enumerate}
then, in $u_{i+1}'$, $(1)$ holds as well.
\end{lem}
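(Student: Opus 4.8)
The plan is to mirror the proof of part $(1)$ of Lemma 4.19, with $u_i'$, $u_{i+1}'$ replacing $w_i'$, $w_{i+1}'$, with Lemma 4.24 playing the role that Lemma 4.18 played there, and with the roles of $v_a$ and $v_b$ (and of the sets $A$ and $B$) interchanged in the steps controlled by condition $(1)$. So I would assume conditions $(1)$ and $(2)$ hold in $u_i'$ and suppose, for contradiction, that $(1)$ fails in $u_{i+1}'$; that is, $v_a$ is made internal in $u_{i+1}'$ at some time at or after the first time $v_b$ is made internal in $u_{i+1}'$. Let $t_a$ be the final time $v_a$ is made internal in $u_i'$. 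Since making $v_a$ internal forces $d_a = 0$ one step earlier, Lemma 4.24 (which applies because $(1)$ and $(2)$ hold in $u_i'$) gives $t_a \leq k + M + 2C + 3$; moreover $v_a$ is internal in $u_i'$ at every time $t \geq t_a$, since being made external after $t_a$ would force it to be made internal again, contradicting the choice of $t_a$.

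The first claim, the analogue of the first claim of Lemma 4.19, is that $v_b$ is not made internal in $u_{i+1}'$ at or before $t_a$: if it were, then by time $t_a$ in $u_{i+1}'$ we would have reduced $d_b$ from $k-1$ to $0$, which (since $v_b$ begins on the right spine) requires at least $k-1$ rotations from $\{x_0^{-1}, x_1^{-1}\}$, while by the $M$-fellow-traveler property at time $t_a$, where $v_a$ is internal in $u_i'$, either $v_a$ is internal or $d_a \leq M$ in $\overline{u_{i+1}'(t_a)}$, so $d_a$ has been reduced from $k+1$ to at most $M$, requiring at least $k - M + 1$ rotations of type $x_0$; these two blocks of rotations are disjoint, so $t_a \geq 2k - M$, contradicting $t_a \leq k + M + 2C + 3$ since $k > 1000C \geq 1000\max(c,M)$. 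Hence $v_b$ is first made internal in $u_{i+1}'$ at some time $t_b' > t_a$, and by the failure of $(1)$ the vertex $v_a$ is made internal in $u_{i+1}'$ at some time $\geq t_b'$; let $\tau$ be the largest such time. The second claim, the analogue of the second claim of Lemma 4.19, is that whenever $v_b$ is made internal in $u_{i+1}'$ at a time $t' > t_a$, the vertex $v_a$ is internal at $t'$: at such a $t'$, $\overline{u_{i+1}'(t')}$ maps $v_b$ to $[\frac{1}{2},\frac{3}{4}]$, and since $v_a$ is internal in $u_i'(t')$ (as $t' > t_a$), the fellow-traveler property forces $v_a$ internal in $\overline{u_{i+1}'(t')}$ or $d_a(u_{i+1}'(t')) \leq M$; in the second case, because $v_a$ precedes $v_b$ in the infix order and $v_b$ maps to $[\frac{1}{2},\frac{3}{4}]$, $v_a$ maps onto the left spine at depth at most $M-1$, and then the $2k-1$ vertices of $A \cup B$, which all map strictly between $v_a$'s image and $[\frac{1}{2},\frac{3}{4}]$ in the infix order, have at most $M-1$ exterior positions (the shallow left-spine vertices) available, so at least $k - M + 1 > 2C$ of the $k$ vertices of $A$ are internal, contradicting Lemma 4.21. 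Thus $v_a$ is internal at $t'$, and in particular at $t_b'$.

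Consequently $v_a$ is internal at $t_b'$ but external at $\tau - 1$ (since $v_a$ is made internal at $\tau > t_b'$), so $v_a$ is made external at some time in $(t_b', \tau)$: in $u_{i+1}'$ the vertex $v_a$ goes internal $\to$ external $\to$ internal between $t_b'$ and $\tau$. The contradiction would then be extracted by a rotation count over $u_{i+1}'$ broken at $0$, the first time $v_a$ is made internal in $u_{i+1}'$, $t_b'$, $\tau$, and $|u_{i+1}'|$: bringing $v_a$ to the pivot for its first internalization costs at least $k+1$ rotations of type $x_0$; bringing $v_b$ to $[\frac{1}{2},\frac{3}{4}]$ by $t_b'$ costs at least $k-1$ rotations from $\{x_0^{-1}, x_1^{-1}\}$ (disjoint from the previous block); and the wasteful external/internal toggle of $v_a$ between $t_b'$ and $\tau$, together with the rebalancing still required afterwards — moving the $\sim k$ vertices of $A$ that sit on the left spine over to the right spine, with at most $2C$ of them internal at any moment by Lemma 4.21, and re-burying $v_a$ to its final position, which by Lemma 4.24 transferred through the fellow-traveler property is far from the pivot — costs the further rotations needed to force $|u_{i+1}'| > 3k + i + 3 + c = |\overline{u_{i+1}}| + c \geq |u_{i+1}'|$, a contradiction, so $(1)$ holds in $u_{i+1}'$. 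This last count is the hard part: unlike in Lemma 4.19, where $d_a$ stays large in $w_i'$ until time $\sim 3k$ so that $v_a$ can be forced to be internalized only very late in $w_{i+1}'$, here $v_a$ becomes internal early and stays internal in $u_i'$, so the clean "cannot be internalized before time $3k - M$" shortcut is unavailable and the length overrun must be assembled from the lateness of $t_b'$ (forced by the first claim), the forced re-toggle of $v_a$ (forced by the failure of $(1)$ together with the second claim), and the rebalancing cost, all within the slack of only $c+1$ that the hypothesis on $L$ permits.
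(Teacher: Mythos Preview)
Your overall architecture matches the paper's: both arguments first show that $v_a$ must already be made internal in $u_{i+1}'$ at some time $t_c$ strictly before the first time $t_b'$ that $v_b$ is made internal, and then count rotations over the four intervals determined by $t_0 < t_c < t_b' < \tau < t_f$ (where $\tau$ is the final time $v_a$ is made internal, $\tau > t_b'$ by the failure of $(1)$). Your two claims are close analogues of the paper's two claims, and the fellow-traveler reductions via Lemma 4.24 and Lemma 4.21 are the same.

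The gap is in the final count, which you flag as ``the hard part'' but do not complete, and the one concrete estimate you do give is too small. You write that bringing $v_b$ to $[\tfrac12,\tfrac34]$ by $t_b'$ costs at least $k-1$ rotations. That is the bound from the \emph{initial} position of $v_b$, but the count is being done on the interval $(t_c,t_b']$, and at time $t_c$ you have $\overline{u_{i+1}'(t_c)}(v_a)=[\tfrac12,\tfrac34]$, so by infix order all $2k-1$ vertices of $A\cup B$ lie at or to the right of the pivot; hence $C_L(t_c)=0$, $C_R(t_c)=2k-1-C_I(t_c)$, and $d_b(t_c)\ge 2k-1-C_I(t_c)$. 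Reducing $d_b$ to $0$ therefore costs roughly $2k$, not $k-1$. Symmetrically, at $t_b'$ one has $C_R(t_b')=0$, so bringing $v_a$ back to the pivot at $\tau$ costs roughly another $2k$ in $x_0$ rotations; and at $\tau$ one has $C_L(\tau)=0$ while $C_L(t_f)=2k-2-i$, forcing about $2k-i$ further $x_0^{-1}$ rotations. With the opening $k+2$ from $[t_0,t_c]$, the total is on the order of $7k-i$, which is what the paper obtains and what comfortably beats $|\overline{u_{i+1}}|+c = 3k+i+3+c$. Your sketch (``$k+1$'' plus ``$k-1$'' plus an unspecified toggle/rebalance cost) does not reach this; the missing ingredient is precisely that each internalization of $v_a$ or $v_b$ resets $C_L$ or $C_R$ to $0$, doubling the cost of the next traversal.
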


\begin{lem}
If in $u_i'$, $(k - 1) - (C + 1) \leq i \leq k - 2$, the following two conditions hold:
\begin{enumerate}
\item The final time $v_a$ is made internal occurs before the first time $v_b$ is made internal,
\item No more than $C$ of the vertices in $B$ are internal at the final time $v_a$ is made internal,
\end{enumerate}
then, in $u_{i+1}'$, $(2)$ holds as well.
\end{lem}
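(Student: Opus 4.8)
The argument should run in close parallel to the second half of the proof of Lemma~4.19 (the inductive step for condition~$(2)$ along the $f_k$ path) and to the counting in Lemma~4.14, but in the ``mirror'' regime appropriate to $g_k$, where $v_a$ is forced to be made internal near time $k$ rather than near time $3k$. I argue by contradiction: assume conditions~$(1)$ and~$(2)$ hold in $u_i'$ but that in $u_{i+1}'$ strictly more than $C$ of the vertices of $B$ are internal at the final time $v_a$ is made internal, and I aim to contradict $|u_{i+1}'| \le |\overline{u_{i+1}}| + c = 3k + i + 3 + c$.

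The first step is to locate that time. Since $(1)$ and $(2)$ hold in $u_i'$, Lemma~4.24 applies and gives $d_a(u_i'(t)) > M - 1$ for all $t > k + M + 2C + 2$, so the final internalization of $v_a$ in $u_i'$ occurs at a time at most $k + M + 2C + 3$. Because $u_i$ and $u_{i+1}$ differ by a single generator, $\overline{u_i}$ and $\overline{u_{i+1}}$ are adjacent in $\Gamma_X(F)$, hence $u_i'$ and $u_{i+1}'$ satisfy the $M$-fellow-traveler property; since $v_a$ being internal in $u_{i+1}'$ at a time $t$ puts $v_a$ within $M$ of internal in $u_i'$ at that time (so a single further rotation would internalize it there), the style of argument in the claims inside Lemma~4.19 shows the final internalization of $v_a$ in $u_{i+1}'$ occurs at a time $t_a$ that is still of size $k + O(M + C)$, in particular well below $2k$. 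By Lemma~4.25, condition~$(1)$ holds in $u_{i+1}'$, so $v_b$ has not yet been made internal at $t_a$; let $t_b$ be the last time $v_b$ is made internal and $t_f = |u_{i+1}'|$, so $0 < t_a < t_b \le t_f$.

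The second step counts forced rotations in $u_{i+1}'$ over the intervals $[0,t_a]$, $(t_a,t_b]$, $(t_b,t_f]$, in the style of Lemma~4.14 for the ordering $t_0 < t_a < t_b < t_f$, with the role there of ``$T$ = number of times an $A$-vertex is made internal'' played here by $n$ = the number of $B$-vertices internal at $t_a$, for which the contradiction hypothesis gives $n \ge C + 1$. On $[0,t_a]$: since $v_a$ maps to $[\frac{1}{2},\frac{3}{4}]$ at $t_a$, all of $A \cup B$ is carried at or to the right of the pivot, $C_L$ must fall from $k$ to $0$ and $d_a$ from $k+1$ to $0$, and each of the $n$ $B$-vertices and at most $2C$ $A$-vertices (Lemma~4.21) that are internal at $t_a$ forces an internalizing rotation. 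On $(t_a,t_b]$: $v_b$ sits on the right spine below all of $A \cup B$, so internalizing $v_b$ by $t_b$ forces the right spine above it to be evacuated. On $(t_b,t_f]$: reaching $C_I(t_f) = 0$ forces an externalizing rotation for each of those same $n$ $B$-vertices, and $C_R$ must be rebuilt to its terminal value of order $2k - i$. Summing exactly as in Lemma~4.14 produces a lower bound of the shape $3k + O(1) + m_1 + n_1 + m_2 + n_2$ together with the conditional $x_0$-balancing terms and the ``additional accounting'' for vertices internalized and then externalized inside a single interval; feeding in $n_1 = n \ge C + 1$, the bounds $m_j \le 2C$ from Lemma~4.21, and $k - i \le C + 2$, and running the same case split as cases $(a)$--$(c)$ of Lemma~4.14, each case yields $|u_{i+1}'| > 3k + i + 3 + c$. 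This contradiction forces $n \le C$, which is condition~$(2)$ for $u_{i+1}'$.

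The delicate point, exactly as in Lemmas~4.14 and~4.19, is the bookkeeping: one must carve the rotations so that the roughly $2(C+1)$ rotations charged to the internal $B$-vertices (one internalizing each in $[0,t_a]$, one externalizing each afterward) are genuinely new and are not already absorbed into the $\approx 2k$ rotations that any word realizing $\overline{u_{i+1}}$ must spend bringing $v_a$ and then $v_b$ to the pivot, and one must verify all cases of the split, using crucially the narrow window $k - i \le C + 2$ to keep the rebalancing cost small and the size gap $k > 1000\,C$ to dominate the $O(M+C)$ error terms. A secondary technical point is making the fellow-traveler transfer in the first step quantitatively precise, namely pinning down how much the threshold $k + M + 2C + 2$ of Lemma~4.24 degrades on passing from $u_i'$ to $u_{i+1}'$ and ruling out $v_a$ being merely ``within $M$ of internal'' at the wrong times.
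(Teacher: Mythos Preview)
Your proposal has a genuine gap, and the paper's proof follows a quite different route.

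\textbf{The fellow-traveler transfer does not bound $t_a$ from above.} You assert that because the last internalization of $v_a$ in $u_i'$ happens by time $k+M+2C+3$, fellow traveling forces the last internalization of $v_a$ in $u_{i+1}'$ to occur near time $k$ as well. This does not follow. Fellow traveling only says that whenever $v_a$ is internal in $u_i'$, it is within $M$ of internal in $u_{i+1}'$; nothing prevents $u_{i+1}'$ from repeatedly bringing $v_a$ out to $d_a\le M-1$ and back in at much later times. Indeed, the paper proves the \emph{opposite} direction: it shows (in its internal Claim) that $t_a > t_c = k+M+2C+2$ in $u_{i+1}'$, and never attempts an upper bound.

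\textbf{The three-interval count alone is not enough.} Even waiving the $t_a$ issue, the Lemma~4.14/4.21 count over $[0,t_a]$, $(t_a,t_b]$, $(t_b,t_f]$ with only the input $n_1\ge C+1$ does not yield a contradiction for the full range of $i$. In that count the $n_1$ contribution cancels between the first two intervals, and the surviving excess is governed by the $m$-terms (this is exactly why Lemma~4.21 needs $T\ge 2C+1$ \emph{for $A$-vertices}). The additional mechanism in Lemma~4.23 that squeezes a contradiction out of $n_1\ge C+1$ relies on the first $C+1$ vertices of $B$ terminating on the \emph{left} spine, which holds only for the specific base value $i=(k-1)-(C+1)$; for $i+1$ near $k-1$ all of $B$ ends on the right spine, and that extra cost disappears.

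\textbf{What the paper actually does.} The paper exploits a forced \emph{detour}. Using fellow traveling with $u_i'$ it produces a time $t_{a'}\le t_c$ at which $d_a(u_{i+1}'(t_{a'}))\le M$ with at most $C$ vertices of $B$ internal. Since at $t_a$ there are at least $C+1$ such vertices, some $B$-vertex must be internalized at an intermediate time $t_d\in(t_{a'},t_a)$. At $t_d$ all of $A$ lies left of the pivot, so going from $t_{a'}$ to $t_d$ costs at least $k-3C$ rotations, and returning from $t_d$ to $d_a=0$ at $t_a$ costs at least $k-2C$ more. These two extra blocks of $\approx k$ rotations, on top of the usual $\approx 3k$, push $|u_{i+1}'|$ up to roughly $5k$, which is the contradiction. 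Your sketch is missing precisely this detour-through-$t_d$ idea.
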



We now prove each of these lemmas and then complete the inductive argument they establish.


\begin{lem*}{\textbf{4.21}}
In $u_i'$, $(k - 1) - (C + 1) \leq i \leq k - 1$, there can be no more than $2C$ times where any of the vertices in $A$ are made internal. 
\end{lem*}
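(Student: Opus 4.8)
The plan is to carry out the mirror image of the proof of Lemma 4.14, now along the $g_k$-path rather than the $f_k$-path. Suppose, for contradiction, that some vertex of $A$ is made internal $T \geq 2C+1$ times in $u_i'$. As in Lemma 4.14, I would fix four bookkeeping times: $t_0 = 0$, $t_f = |u_i'|$, $t_a$ (the final time $v_a$ is made internal), and $t_b$ (the final time $v_b$ is made internal), and argue these are distinct by the usual reasoning --- $\overline{u_i'(t_0)}$, $\overline{u_i'(t_a)}$, $\overline{u_i'(t_b)}$ each carry one of $[\frac{1}{2},\frac{3}{4}]$, $v_a$, $v_b$ to $[\frac{1}{2},\frac{3}{4}]$, the $F$-action on $\mathcal{T}$ is bijective, and at $t_f$ the vertices $v_a,v_b$ lie roughly $k$ from the pivot. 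Exactly one of $t_0 < t_a < t_b < t_f$ and $t_0 < t_b < t_a < t_f$ holds, and I would run the accounting for each.

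Before the split I would record the boundary data. At $t_0$: $C_L = k$, $C_R = k-1$, $C_I = 0$, $d_a = k+1$, $d_b = k-1$. At $t_f$: pushing the identity configuration forward through $u_i$ gives $C_I(t_f) = 0$ together with $C_L(t_f) = 2k - 1 - i$ and $C_R(t_f) = i$ --- the mirror of the terminal data $C_L = i$, $C_R = 2k-1-i$ used for $\overline{w_i}$ in Lemma 4.14 (and in agreement with it at $i = k-1$, where $\overline{u_{k-1}} = \overline{g_k} = \overline{f_k}$). This terminal configuration, together with the hypothesis-given bound $|u_i'| \leq |\overline{u_i}| + c = 3k + 2 + i + c$, are the only inputs not already supplied by the $f_k$-path computations. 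The generators behave exactly as in Lemma 4.14: reducing $d_a$ (whose vertices begin to the left of the pivot) requires $x_0$, reducing $d_b$ (to the right) requires $x_0^{-1}$ or $x_1^{-1}$, internalizing a vertex costs an $x_1^{-1}$ and externalizing one costs an $x_1$, and $F$ preserves the infix order of $\mathcal{T}$.

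The body of the proof is then interval-by-interval rotation accounting, identical in form to Lemma 4.14. For each of the two orderings I would split $[t_0,t_f]$ at $t_b$ and $t_a$, introduce the auxiliary counts $m_1,n_1$ (vertices of $A$, of $B$ internal at the first interior bookkeeping time), $m_2,n_2$ (those newly internal at the next one), and $m_{2e},n_{2e}$ (those internalized and then re-externalized), lower-bound the number of rotations in each interval by how far $d_a,d_b$ must be driven to zero and by how much $C_L,C_R,C_I$ must change, and then add the extra $x_1^{-1}$'s and $x_1$'s forced by the $T - (m_1+m_2)$ internalizations of $A$-vertices that are also externalized within a single interval. Summing the interval bounds and separating into the several cases according to which of the possible extra $x_0$/$x_0^{-1}$ rotations are actually forced --- and using the implications among these cases (if one does not apply, another must), the bound $k - i \leq C + 2$ coming from the range $k - C - 2 \leq i \leq k - 1$, the chain $C + 2 < 2C + 1 = T \leq m_1 + m_2$ valid in the relevant cases, $k > 1000C$, $C$ even, $k \geq i$, and nonnegativity of all the auxiliary counts --- one finds in every case that $|u_i'| > 3k + 2 + i + C \geq 3k + 2 + i + c = |\overline{u_i}| + c \geq |u_i'|$, a contradiction.

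The principal obstacle is exactly the one in Lemma 4.14: the delicate summation step. One must charge each forced $x_0^{\pm 1}$ and $x_1^{\pm 1}$ exactly once across the intervals with no overlap, arrange the logical dependencies among the case hypotheses so that the cases are exhaustive, and then verify the final inequality in each surviving case; the constants $T = 2C+1$, $k > 1000C$, $C$ even, and $k - C - 2 \leq i \leq k - 1$ are chosen precisely so that this arithmetic closes with room to spare. The only genuinely new piece of work is pinning down the terminal configuration $C_L(t_f) = 2k-1-i$, $C_R(t_f) = i$ of $\overline{u_i}$, which is routine but must be computed from the $g_k$-word rather than copied from the $f_k$ case.
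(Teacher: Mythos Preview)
Your proposal is correct and matches the paper's proof essentially point for point: the same contradiction setup, the same four bookkeeping times, the same two orderings, the same auxiliary counts $m_1,n_1,m_2,n_2,m_{2e},n_{2e}$ plus an $m_3$ for the residual internalizations, and the same terminal data $C_L(t_f)=2k-1-i$, $C_R(t_f)=i$. The only mild divergence is that the paper's case analysis for the ordering $t_0<t_a<t_b<t_f$ is simpler than in Lemma~4.14 (two cases rather than five), since with the mirrored terminal data all $A$-vertices must end on the left spine, which forces an extra $x_0^{-1}$ for each one externalized after $t_b$ unconditionally; the remaining split is just whether $n_1+n_2-n_{2e}<i$.
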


\begin{proof}
Assume, for contradiction, that in $u_i'$, there are $T \geq 2C + 1$ times where any of the vertices in $A$ are made internal. 

We look at times $t_0$, $t_a$, $t_b$, and $t_f$. Define $t_0 = 0$ and $t_f = |u_i'|$. Define $t_a$ as the final time that vertex $v_a$ is made internal in $u_i'$ and $t_b$ as the final time that vertex $v_b$ is made internal in $u_i'$. Most of these times are clearly distinct because the action of $F$ on the tree is bijective and $(u_i'(t_0))([\frac{1}{2}, \frac{3}{4}]) =  (u_i'(t_a))(v_a) =  (u_i'(t_b))(v_b) = (u_i'(t_f))([\frac{1}{2}^{k - i}, \frac{1}{2}^{k - i - 1}]) =  [\frac{1}{2}, \frac{3}{4}]$. Time $t_f$ must be distinct from the others because it is certainly not $t_0$ as vertices are made internal in $w_i'$ and because $v_a$ and $v_b$ are mapped approximately $k$ from the pivot at $t_f$.

One of the following two orderings are true: $t_0 < t_a < t_b < t_f$ or $t_0 < t_b < t_a < t_f$. We consider the impact each has on $|u_i'|$.


\vspace*{10pt}
\noindent \textbf{\underline{Ordering $t_0 < t_a < t_b < t_f$}}

We begin with the time intervals given by $t_0 < t_a < t_b < t_f$ and account for the total rotations needed in each interval.

\vspace*{5pt}
\noindent \textbf{Interval $t_0 \leq t \leq t_a$:}

At $t_0$, all of $A$ is mapped to the left of the pivot and to the right of $v_a$. Thus, $C_L(t_0) = k$ and $d_a(t_0) = k + 1$. Note, $d_a(t_a - 1) = 0$. Because these vertices are mapped to the left of the pivot, the only way to decrease $d_a$ is with $x_0$ rotations so we must increment $C_{x_0}$ by at least $k + 1$.

At $t_a$, a single $x_1^{-1}$ rotation makes $v_a$ internal, so we increment $C_ {x_1^{-1}}$ by $1$.

Additionally, some of the vertices in $A$ and $B$ may be internal at $t_a$, say $m_1$ of the vertices in $A$ and $n_1$ of the vertices in $B$. Because $C_I(t_0) = 0$, it will take a single $x_1^{-1}$ rotation to make each of these internal, requiring us to increment $C_ {x_1^{-1}}$ by another $m_1 + n_1$ this step.

In total, this interval requires us to increment $C_{x_0}$ by $k + 1$ and $C_ {x_1^{-1}}$ by $m_1 + n_1 + 1$ for a total of $k + m_1 + n_1 + 2$ rotations in this interval.

\vspace*{5pt}
\noindent \textbf{Interval $t_a < t \leq t_b$:}

Because $\overline{u_i'(t_a)}(v_a) = [\frac{1}{2},\frac{3}{4}]$ and $F$ preserves the infix ordering of vertices, $\overline{u_i'(t_a)}$ maps $A \cup B$ at or to the right of the pivot. Thus, $C_L(t_a) = 0$. We also know that $C_I(t_a) = m_1 + n_1$, so $C_R(t_a) = 2k - 1 - (m_1 + n_1)$ and $d_b(t_a) \geq 2k - m_1 - n_1 - 1$. These vertices are mapped to the right spine, so reducing $d_b$ can be accomplished either by moving them to the left spine of the tree with $x_0^{-1}$ rotations or making them internal with $x_1^{-1}$ rotations. Because $\overline{u_i'(t_b - 1)}(v_b) = [\frac{1}{2}, 1]$, $d_b(t_b - 1) = 0$ and we must increment $C_{x_0^{-1} || x_1^{-1}}$ by at least $2k - m_1 - n_1 - 1$.

At $t_b$, a single $x_1^{-1}$ rotation makes $v_b$ internal, so we increment $C_ {x_1^{-1}}$ by $1$.

It is possible some of the vertices in $A \cup B$ that were internal at $t_a$ are external at $t_b$. Additionally, some of these vertices which were external at $t_a$ may be internal at $t_b$. We may have already accounted for making the new vertices internal. Say that by time $t_b$, $m_2$ vertices in $A$ are internal that were external at $t_a$ and $n_2$ vertices in $B$ are internal that were external at $t_a$. Of the $m_1$ vertices in $A$ that were internal at $t_a$, say $m_{2e}$ are external at $t_b$,  and of the $n_1$ vertices in $B$ that were internal at $t_a$, $n_{2e}$ are external at $t_b$. We must increment  $C_{x_1}$ by $m_{2e} + n_{2e}$ to accomplish this change. In total then, $C_I(t_b) = (m_1 + m_2 - m_{2e}) + (n_1 + n_2 - n_{2e})$.

In total, this interval requires us to increment $C_{x_0^{-1} || x_1^{-1}}$ by $2k - m_1 - n_1 - 1$, $C_ {x_1^{-1}}$ by $1$, and $C_{x_1}$ by $m_{2e} + n_{2e}$ for a total of $2k - m_1 - n_1 + m_{2e} + n_{2e}$ rotations in this interval.

\vspace*{5pt}
\noindent \textbf{Interval $t_b < t \leq t_f$:}

Note, $\overline{u_i'(t_b)}(v_b) =[\frac{1}{2},\frac{3}{4}]$. Because $F$ preserves the infix ordering of vertices, all of $A \cup B$ is mapped to the left of position $[\frac{1}{2}, \frac{3}{4}]$. Thus $C_R(t_b) = 0$. We know that $C_I(t_b) = m_1 + m_2 + n_1 + n_2 - m_{2e} - n_{2e}$, so $C_L(t_b) = 2k - 1 - (m_1 + m_2 + n_1 + n_2 - m_{2e} - n_{2e})$. Note that $C_I(t_f) = 0$ and $\overline{u_{i}'(t_f)}([\frac{2^{k - 1 - i} - 1}{2^{k - 1 - i}}, 1]) = [0, 1]$, the root of the tree. Thus, we must have $C_L(t_f) = 2k - 1 - i$ and $C_R(t_f) = i$.

Bringing $C_I(t_f)$ to $0$ requires all of $A \cup B$ to be internal. Because $C_I(t_b) = m_1 + m_2 + n_1 + n_2 - m_{2e} - n_{2e}$, it will take at least that many $x_1$ rotations to make vertices external, incrementing $C_{x_1}$ by $m_1 + m_2 + n_1 + n_2 - m_{2e} - n_{2e}$.

We require additional rotations related to these vertices we have made external. Any time a vertex is made external, it is mapped from the interior of the tree to the pivot. This is fine for the vertices $[\frac{2^q - 1}{2^q}, 1]$, $k - 2 - i \leq q \leq k - 1$, because $|\overline{u_i'(t_f)}|$ maps them to the right spine of the tree. However, all of the vertices in $A $and $[\frac{2^q - 1}{2^q}, 1]$, $1 \leq q \leq k - 1 - i$ must be mapped to the left spine in $\overline{u_i'(t_f)}$. Mapping a vertex from the right spine of the tree to the left spine can only be accomplished by an $x_0^{-1}$ rotation and we have accounted for no $x_0^{-1}$ rotations in this step thus far. All of the $m_1 + m_2 - m_{2e}$ vertices in $A$ that are internal at $t_b$ will need to be brought to the left side of the tree. This requires us to increment $C_{x_0^{-1}}$ by $m_1 + m_2 - m_{2e}$.

Recall, $C_R(t_b) = 0$ and $C_R(t_f) = i$. Making vertices from $A \cup B$ external increases $C_R$ by $1$ and we have accounted for making all of $A \cup B$ external with $m_1 + m_2 + n_1 + n_2-m_{2e} - n_{2e}$ rotations. However, any vertices in $A$ must be moved to the left side of the tree after they have been made external as we noted above, and moving a vertex in $A \cup B$ from the right spine to the left decreases $C_R$ by $1$ and increases $C_L$ by $1$. Thus, we have accounted for a net increase in $C_R$ of at most $n_1 + n_2 - n_{2e}$ in this step. Increasing $C_R$ further would require taking vertices in $B$ from the left spine of the tree and bringing them to the right spine of the tree using $C_{x_0}$ rotations. If $n_1 + n_2 - n_{2e} <  i$, we must increment $C_{x_0}$ by at least $i - (n_1 + n_2 - n_{2e})$ as this is the only way to ensure that $C_R(t_f) = i$.

In total, this interval requires us to increment $C_{x_1}$ by $m_1 + m_2 + n_1 + n_2 - m_{2e} - n_{2e}$ and $C_{x_0^{-1}}$ by $m_1 + m_2 - m_{2e}$ for a total of $2m_1 + 2m_2 - 2m_{2e} + n_1 + n_2 - n_{2e}$ rotations in this interval. If $n_1 + n_2 - n_{2e} < i$, we must increment $C_{x_0}$ by an additional $i - (n_1 + n_2 - n_{2e})$.

\vspace*{5pt}
\noindent \textbf{Additional Accounting:}

In the above, we have accounted for $m_1 + m_2$ instances where a vertex in $A$ is made internal. Each of these times occurs when one of the vertices is made internal during one of our time intervals and is not external at the end of it. We have also accounted for all instances where these were eventually made external and no other instances where a vertex in $A$ was made external. There may be times $t$ when a vertex $a \in A$ is made internal where both $t$ and the next time $t'$ when $a$ is made external occur in the same time interval. Each such pair contributes both an $x_1^{-1}$ and an $x_1$ rotation that we have not yet accounted for, so say there are $m_3$ pairs, requiring us to increment $C_{x_1^{-1}}$ by $m_3$ and $C_{x_1}$ by $m_3$ for a total of $2m_3$ rotations in this step. Note, $m_1 + m_2 + m_3 = T \geq 2C + 1$.

\vspace*{5pt}
\noindent \textbf{Total Rotations:}

In total, we have established that $|u_i'| \geq (k + m_1 + n_1 + 2) + (2k - m_1 - n_1 + m_{2e} + n_{2e}) + (2m_1 + 2m_2 - 2m_{2e} + n_1 + n_2 - n_{2e}) + (2m_3) = 3k+ 2m_1 + 2m_2 + 2m_3 - m_{2e} + n_1  + n_2 + 2$ plus an additional $i - (n_1 + n_2 - n_{2e})$ if $n_1 + n_2 - n_{2e} < i$. This gives us two cases to consider.

First of all, if $n_1 + n_2 - n_{2e} < i$, then $|u_i'| \geq 3k+ 2m_1 + 2m_2 + 2m_3 - m_{2e} + n_1  + n_2 + 2 + (i - (n_1 + n_2 - n_{2e})) = 3k + i + 2 + 2m_1 + 2m_2 + 2m_3 - m_{2e} + n_{2e})$. Note that $m_1 + m_2 + m_3 = T \geq 2C + 1$, $m_1 + m_2 + m_3 \geq m_{2e}$, and $n_{2e} \geq 0$. Thus $|u_i'| \geq 3k + i + 2 + (m_1 + m_2 + m_3) + ((m_1 + m_2 + m_3)- m_{2e}) + n_{2e}) \geq 3k + i + 2 + 2C + 1 > 3k + i + 2 + C$.

Otherwise, if $n_1 + n_2 - n_{2e} \geq i$, then $|u_i'| \geq 3k+ 2m_1 + 2m_2 + 2m_3 - m_{2e} + n_1  + n_2 + 2$. In addition to what was noted above, note that $n_1 + n_2 \geq n_1 + n_2 - n_{2e} \geq i$. Thus, $|u_i'| \geq 3k+ (m_1 + m_2 + m_3) + ((m_1 + m_2 + m_3) - m_{2e}) + (n_1  + n_2) + 2 \geq 3k + i + 2 + (2C + 1) > 3k + i + 2 + C$.

In either case, we see that $|u_i'| > 3k + 2 + i + C \geq 3k + 2 + i + c = |\overline{u_i'}| + c \geq |u_i'|$. This is a contradiction.


\vspace*{10pt}
\noindent \textbf{\underline{Ordering $t_0 < t_b < t_a < t_f$}}

We next examine the time intervals given by $t_0 < t_b < t_a < t_f$ and account for the total rotations needed in each interval.

\vspace*{5pt}
\noindent \textbf{Interval $t_0 \leq t \leq t_b$:}

At $t_0$, all of $B$ is mapped at or to the right of the pivot and to the left of $v_b$. Thus, $C_R(t_0) = k - 1$ and $d_b(t_0) = k - 1$. We must have $d_b(t_b - 1) = 0$. Decreasing $d_b$ requires us to either map the intervening vertices to the left of the pivot with $x_0^{-1}$ rotations or to make them internal with $x_1^{-1}$ rotations. Thus, we increment $C_{x_0^{-1} || x_1^{-1}}$ by at least $k - 1$. Let $n_1$ be the number of vertices in B that are internal at $t_b$.

At $t_b$, a single $x_1^{-1}$ rotation makes $v_b$ internal, so we increment $C_ {x_1^{-1}}$ by $1$.

Additionally, it is possible that some subset of $A$ is internal at $t_b$. At $t_0$, $A$ is mapped to the left spine. The only way to bring a vertex from the left spine to the pivot is an $x_0$ rotation. No $x_0$ rotations were yet required in this interval, so each vertex in $A$ that is internal at $t_b$ requires an $x_0$ rotation to reach the pivot and an $x_1^{-1}$ rotation to make it internal. Say $m_1$ vertices in $A$ are internal at $t_b$, requiring us to increment $C_{x_0}$ by $m_1$ and $C_{x_1^{-1}}$ by $m_1$.

In total, this interval requires us to increment $C_{x_0^{-1} || x_1^{-1}}$ by $k - 1$, $C_{x_0}$ by $m_1$, and $C_{x_1^{-1}}$ by $m_1 + 1$, for a total of $k + 2m_1$ rotations in this step.

\vspace*{5pt}
\noindent \textbf{Interval $t_b < t \leq t_a$:}

Because $\overline{u_i'(t_b)}(v_b) =[\frac{1}{2},\frac{3}{4}]$ and $F$ preserves the infix ordering of vertices, $\overline{u_i'(t_b)}$ maps $A \cup B$ to the left of the pivot. This is the final time $v_b$ is made internal, so none of the vertices in $A \cup B$ can be suspended from it. Thus, $\overline{u_i'(t_b)}$ maps $A \cup B$ to the left side of the tree with $v_a$ mapped to the left of where all these vertices are mapped. This means $C_R(t_b) = 0$ and we know $C_I(t_b) = m_1 + n_1$, so $C_L(t_b) = 2k - 1 - (m_1 + n_1)$ and $d_a(t_b) \geq 2k - 1 - (m_1 + n_1) + 1 = 2k - m_1 - n_1$. We must have $d_a(t_a - 1) = 0$, so the vertices in $A \cup B$ mapped to the left of the pivot must be mapped to the right side of the tree with $x_0$ rotations. Thus, $C_{x_0}$ must be incremented by $2k - m_1 - n_1$.

At $t_a$, a single $x_1^{-1}$ rotation makes $v_a$ internal, so we increment $C_ {x_1^{-1}}$ by $1$.

It is possible some of the vertices in $A \cup B$ that were internal at $t_b$ are external at $t_a$. Additionally, some of these vertices which were external at $t_b$ may be internal at $t_a$. Making an external vertex internal takes an $x_1^{-1}$ rotation and making an internal vertex external takes an $x_1$ rotation. Say that at $t_a$, $m_2$ vertices in $A$ and $n_2$ vertices in $B$ are internal that were external at $t_b$. Of the $m_1$ vertices in $A$ that were internal at $t_b$, say $m_{2e}$ were external at $t_a$, and of the $n_1$ vertices in $B$ that were internal at $t_b$, $n_{2e}$ are external at $t_a$. We must increment $C_{x_1^{-1}}$ by $m_2 + n_2$ and $C_{x_1}$ by $m_{2e} + n_{2e}$ to accomplish this change. In total then, $C_I(t_a) = (m_1 + m_2 - m_{2e}) + (n_1 + n_2 - n_{2e})$.

In total, this interval requires us to increment $C_{x_0}$ by $2k - m_1 - n_1$, $C_{x_1^{-1}}$ by $m_2 + n_2 + 1$, and $C_{x_1}$ by $m_{2e} + n_{2e}$, for a total of $2k+ m_2 + m_{2e} + n_2 + n_{2e} + 1 - m_1 - n_1$ rotations in this step.

\vspace*{5pt}
\noindent \textbf{Interval $t_a < t \leq t_f$:}

Note, $\overline{u_i'(t_a)}(v_a) =[\frac{1}{2},\frac{3}{4}]$. Because $F$ preserves the infix ordering of vertices, all of $A \cup B$ is mapped to the right of position $[\frac{1}{2}, \frac{3}{4}]$. Thus $C_L(t_a) = 0$. We know the $C_I(t_a) = m_1 + m_2 + n_1 + n_2 - m_{2e} - n_{2e}$, so $C_R(t_b) = 2k - 1 - (m_1 + m_2 + n_1 + n_2 - m_{2e} - n_{2e})$. Note that $C_I(t_f) = 0$ and $\overline{u_{i}'(t_f)}([\frac{2^{k - 1 - i} - 1}{2^{k - 1 - i}}, 1]) = [0, 1]$, the root of the tree. Thus, we must have $C_L(t_f) = 2k - 1 - i$ and $C_R(t_f) = i$.

Bringing $C_I(t_f)$ to $0$ requires all of $A \cup B$ to be internal. Because $C_I(t_a) = m_1 + m_2 + n_1 + n_2 - m_{2e} - n_{2e}$, it will take at least that many $x_1$ rotations to make vertices external, incrementing $C_{x_1}$ by $m_1 + m_2 + n_1 + n_2 - m_{2e} - n_{2e}$.

The only way to increase $C_L$ is using $x_0$ rotations to move vertices from the right side of the tree to the left side. Because $C_L(t_a) = 0$ and $C_L(t_f) = i$, we must increment $C_{x_0^{-1}}$ by $2k - 1 - i$.

In total, this step requires us to increment $C_{x_1}$ by $m_1 + m_2 + n_1 + n_2 - m_{2e} - n_{2e}$ and $C_{x_0}$ by $2k - 1 - i$, for a total of $m_1 + m_2 + n_1 + n_2 - m_{2e} - n_{2e} + 2k - 1 - i$ rotations in this step.

\vspace*{5pt}
\noindent \textbf{Additional Accounting:}

In the above, we have accounted for $m_1 + m_2$ instances where a vertex in $A$ is made internal. Each of these times occurs when one of the vertices is made internal during one of our time intervals and is not external at the end of it. We have also accounted for all instances where these were eventually made external and no other instances where a vertex in $A$ were made external. There may be times $t$ when a vertex $a \in A$ is made internal where both $t$ and the next time $t'$ when $a$ is made external occur in the same time interval. Each such pair contributes both an $x_1^{-1}$ and an $x_1$ rotation that we have not yet accounted for, so say there are $m_3$ pairs, requiring us to increment $C_{x_1^{-1}}$ by $m_3$ and $C_{x_1}$ by $m_3$ for a total of $2m_3$ rotations in this step. Note, $m_1 + m_2 + m_3 = T \geq 2C + 1$.

\vspace*{5pt}
\noindent \textbf{Total rotations:}

Between all these steps, we have established that $|u_i'| \geq (k + 2m_1) + (2k + m_2 + m_{2e} + n_2 + n_{2e} + 1 - m_1 - n_1) + (m_1 + m_2 + n_1 + n_2 - m_{2e} - n_{2e} + 2k - 1 - i) + (2m_3) = 5k - i + 2(m_1 + m_2 + m_3) + 2n_2  \geq  5k - i + 2(m_1 + m_2 + m_3)$. We know $i \leq k$ and $m_1 + m_2 + m_3 \geq 2C + 1$, so $|u_i'| \geq 5k - i + 2(m_1 + m_2 +  m_3) \geq 3k + k + (k - i) + 4C + 2 = 3k + 2 + k + C + ((k - i) + 3C) > 3k + 2 + i + C \geq 3k + 2 + i + c = |\overline{u_i'}| + c \geq |u_i'|$. This is a contradiction.


\vspace*{10pt}
\noindent \textbf{Conclusion}

Now, whether $t_b < t_a$ or $t_a < t_b$, we see that the length of $u_i'$ must be longer than the necessary restriction that $|u_i'| \leq |\overline{u_i'}| + c$ demands. Because we have a contradiction either way, we see that in $u_i'$, there can be no more than $2C$ times that any of the vertices in $A$ are made internal. Thus, the lemma holds.
\end{proof}


\begin{lem*}{\textbf{4.22}}
In $u_{(k - 1) - (C + 1)}'$, the final time $v_a$ is made internal occurs before the first time $v_b$ is made internal.
\end{lem*}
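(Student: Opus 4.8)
The plan is to mirror the proof of Lemma~4.15 with the roles of $v_a$ and $v_b$ interchanged and the index specialized to $i = (k-1)-(C+1)$, so that $|\overline{u_i}| = 3k+2+i$ and hence $|u_i'| \leq 3k+2+i+c = 4k-C+c$. Suppose for contradiction that the conclusion fails. Since the action of $F$ on $\mathcal{T}$ is a bijection, the first time $v_b$ is made internal (call it $t_b$) and the final time $v_a$ is made internal (call it $t_a$) are distinct, and the failure of the conclusion forces $t_b < t_a$; setting $t_0 = 0$ and $t_f = |u_i'|$, the usual bijectivity argument together with the fact that $\overline{u_i'(t_f)}$ sends $v_a$ and $v_b$ roughly distance $k$ from the pivot gives $t_0 < t_b < t_a < t_f$. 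I would then split $u_i'$ into $[t_0,t_b]$, $(t_b,t_a]$, $(t_a,t_f]$ and lower-bound the rotations on each interval, exactly as in the case $t_0 < t_b < t_a < t_f$ of Lemma~4.21.

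On $[t_0,t_b]$: $d_b$ starts at $k-1$ with $B$ on the right spine, so driving it to $0$ costs at least $k-1$ rotations that are $x_0^{-1}$ or $x_1^{-1}$, making $v_b$ internal costs one more $x_1^{-1}$, and each of the $m_1$ vertices of $A$ internal at $t_b$ costs an $x_0$ and an $x_1^{-1}$; total at least $k+2m_1$. On $(t_b,t_a]$: because $v_b$ sits at $[\frac{1}{2},\frac{3}{4}]$ at $t_b$ and $v_b$ is the infix-maximum of $A\cup B\cup\{v_a\}$, all of $A\cup B$ lies strictly left of $[\frac{1}{2},\frac{3}{4}]$, hence off the right spine, so $C_R(t_b)=0$, $C_I(t_b)=m_1+n_1$, $C_L(t_b)=2k-1-m_1-n_1$, and (since $v_a$ is the infix-minimum) $d_a(t_b)\geq 2k-m_1-n_1$; driving $d_a$ to $0$ costs at least $2k-m_1-n_1$ rotations of type $x_0$, plus one $x_1^{-1}$ to make $v_a$ internal, plus $m_2+n_2$ further $x_1^{-1}$'s and $m_{2e}+n_{2e}$ $x_1$'s for the vertices whose interior/exterior status changes, leaving $C_I(t_a)=m_1+m_2-m_{2e}+n_1+n_2-n_{2e}$. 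On $(t_a,t_f]$: $C_L(t_a)=0$ (again $v_a$ is the infix-minimum), while $\overline{u_i'(t_f)}$ sends $[\frac{2^{k-1-i}-1}{2^{k-1-i}},1]$ to the root, so $C_L(t_f)=2k-1-i$ and $C_I(t_f)=0$; emptying the interior costs at least $C_I(t_a)$ rotations of type $x_1$ and building up the left spine costs at least $2k-1-i$ rotations of type $x_0^{-1}$, and since these are of different generator types the interval costs at least $C_I(t_a)+(2k-1-i)$.

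Summing the three bounds, the $n_1$, $m_{2e}$, $n_{2e}$ contributions cancel and $|u_i'| \geq 5k+2m_1+2m_2+2n_2-i \geq 5k-i$. With $i=(k-1)-(C+1)$ this is $5k-i = 4k+C+2$, whereas $|u_i'| \leq 3k+2+i+c = 4k-C+c \leq 4k$ since $c\leq C$. So $4k+C+2 \leq |u_i'| \leq 4k$, which is impossible for $C \geq 0$; this contradiction proves the lemma. I note that here the available slack is a full $2C+2$, so no delicate choice of base index (such as the $l=\frac{C}{2}+1$ used in Lemma~4.15) is needed, only that $(k-1)-(C+1)$ is small enough.

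The main obstacle, as in Lemma~4.15 and Lemma~4.21, is the bookkeeping on $(t_a,t_f]$: one must check that the $x_1$-rotations which pull the $C_I(t_a)$ interior vertices out to the pivot and the $x_0^{-1}$-rotations which deposit $2k-1-i$ vertices onto the left spine are counted without overlap (they are, being of different generator types), and one must read $C_L(t_f)=2k-1-i$ correctly off the tree pair of $\overline{u_i}$, which is the $g_k$-side analogue of the computation already carried out inside the proof of Lemma~4.21. Everything else is a faithful transcription of the Lemma~4.15 argument under $a\leftrightarrow b$; the only geometric facts used are that $F$ preserves the infix order and that $v_a$ and $v_b$ remain the infix-extreme elements of $A\cup B\cup\{v_a,v_b\}$.
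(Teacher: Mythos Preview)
Your proposal is correct and follows essentially the same approach as the paper: both assume $t_0 < t_b < t_a < t_f$, partition into the three intervals, count rotations on each, and sum to a lower bound exceeding $|\overline{u_i}|+c = 4k-C+c$. The only cosmetic difference is that on $[t_0,t_b]$ you count $k+2m_1$ (as in the $t_b<t_a$ case of Lemma~4.21, charging each internal $A$-vertex both an $x_0$ and an $x_1^{-1}$) whereas the paper here only claims $k+m_1$; your sharper bound is valid and in any case the extra $m_1$ is immediately discarded when you pass to $5k-i$, so the two arguments coincide.
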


\begin{proof}
Assume, for contradiction, the final time $v_a$ is made internal occurs after the first time $v_b$ is made internal. 

We consider the impact this has on $|u_{(k - 1) - (C + 1)}'|$. We look at times $t_0$, $t_a$, $t_b$, and $t_f$. Define $t_0 = 0$ and $t_f = |u_{(k - 1) - (C + 1)}'|$. Define $t_a$ as the final time that $v_a$ is made internal in $u_{(k - 1) - (C + 1)}'$ and $t_b$ as the first time $v_b$ is made internal in $u_{(k - 1) - (C + 1)}'$. These times are clearly distinct because the action of $F$ on the tree is bijective and $(u_{(k - 1) - (C + 1)}'(t_0))([\frac{1}{2}, \frac{3}{4}]) =  (u_{(k - 1) - (C + 1)}'(t_a))(v_a) =  (u_{(k - 1) - (C + 1)}'(t_b))(v_b) = (u_{(k - 1) - (C + 1)}'(t_f))([\frac{1}{2}^l, \frac{1}{2}^{l - 1}]) =  [\frac{1}{2}, \frac{3}{4}]$.

This means $t_0 < t_b < t_a < t_f$. We consider the impact the intervals of time defined by this have on $|u_{(k - 1) - (C + 1)}'|$.

\vspace*{5pt}
\noindent \textbf{Interval $t_0 \leq t \leq t_b$:}

At $t_0$, all of $B$ is mapped to the right of the pivot and to the left of $v_b$. Thus, $C_R(t_0) = k-1$ and $d_b(t_0) = k - 1$. Because these vertices are mapped to the right of the pivot, we can decrease $d_b$ with either $x_0^{-1}$ or $x_1^{-1}$ rotations. We know $d_b(t_b - 1) = 0$, so we must increment $C_{x_0^{-1} || x_1^{-1}}$ by at least $k - 1$.

At $t_b$, a single $x_1^{-1}$ rotation makes $v_b$ internal, so we increment $C_ {x_1^{-1}}$ by $1$.

Additionally, some of the vertices in $A$ and $B$ may be internal at $t_b$, say $m_1$ of the vertices in $A$ and $n_1$ of the vertices in $B$. Because $C_I(t_0) = 0$, it will take a single $x_1^{-1}$ rotation to make each of these internal. We have potentially already accounted for making vertices in $B$ internal earlier in this interval, however none of our rotations could have made a vertex in $A$ internal, requiring us to increment $C_ {x_1^{-1}}$ by another $m_1$ in this step.

In total, this interval requires us to increment $C_{x_0^{-1} || x_1^{-1}}$ by $k - 1$ and $C_ {x_1^{-1}}$ by $m_1 + 1$ for a total of $k + m_1$ rotations in this interval.

\vspace*{5pt}
\noindent \textbf{Interval $t_b < t \leq t_a$:}

Because $\overline{u_{(k - 1) - (C + 1)}'(t_b)}(v_b) =[\frac{1}{2},\frac{3}{4}]$ and $F$ preserves the infix ordering of vertices, $\overline{u_{(k - 1) - (C + 1)}'(t_b)}$ maps $A \cup B$ to the left of the pivot. Thus, $C_R(t_b) = 0$. We also know that $C_I(t_b) = m_1 + n_1$, so $C_L(t_b) = 2k - 1 - (m_1 + n_1)$ and $d_a(t_b) \geq 2k - m_1 - n_1$. These vertices are mapped to the left spine, so reducing $d_a$ can only be accomplished by moving vertices to the right spine of the tree with $x_0$ rotations. Because $\overline{u_{(k - 1) - (C + 1)}'(t_a - 1)}(v_a) = [\frac{1}{2}, 1]$, $d_a(t_a - 1) = 0$ and we must increment $C_{x_0}$ by at least $2k - m_1 - n_1$.

At $t_a$, a single $x_1^{-1}$ rotation makes $v_a$ internal, so we increment $C_ {x_1^{-1}}$ by $1$.

It is possible some of the vertices in $A \cup B$ that were internal at $t_b$ are external at $t_a$. Additionally, some of these vertices which were external at $t_b$ may be internal at $t_a$. Say that at time $t_a$, $m_2$ vertices in $A$ are internal that were external at $t_b$ and $n_2$ vertices in $B$ are internal that were external at $t_b$. We must increment  $C_{x_1^{-1}}$ by $m_2 + n_2$ to accomplish this change. Of the $m_1$ vertices in $A$ that were internal at $t_b$, say $m_{2e}$ are external at $t_a$, and of the $n_1$ vertices in $B$ that were internal at $t_b$, $n_{2e}$ are external at $t_a$. We must increment  $C_{x_1}$ by $m_{2e} + n_{2e}$ to accomplish this change. In total then, $C_I(t_a) = (m_1 + m_2 - m_{2e}) + (n_1 + n_2 - n_{2e})$.

In total, this interval requires us to increment $C_{x_0^{-1} || x_1^{-1}}$ by $2k - m_1 - n_1$, $C_ {x_1^{-1}}$ by $m_2 + n_2 + 1$, and $C_{x_1}$ by $m_{2e} + n_{2e}$ for a total of $2k + 1 - m_1 - n_1 + m_2 + n_2+ m_{2e} + n_{2e}$ rotations in this interval.

\vspace*{5pt}
\noindent \textbf{Interval $t_a < t \leq t_f$:}

Note, $\overline{u_{(k - 1) - (C + 1)}'(t_a)}(v_a) =[\frac{1}{2},\frac{3}{4}]$. Because $F$ preserves the infix ordering of vertices, this means all of $A \cup B$ is mapped to the right of position $[\frac{1}{2}, \frac{3}{4}]$. Thus it must be mapped to the right side of the tree, meaning $C_L(t_a) = 0$. We know that $C_I(t_a) = m_1 + m_2 + n_1 + n_2 - m_{2e} - n_{2e}$, so $C_R(t_a) = 2k - 1 - (m_1 + m_2 + n_1 + n_2 - m_{2e} - n_{2e})$. Note that $C_I(t_f) = 0$ and $\overline{u_{(k - 1) - (C + 1)}'(t_f)}([\frac{2^{C + 1} - 1}{2^{C+1}}, 1]) = [0, 1]$, the root of the tree. Thus, we must have $C_L(t_f) = k + C + 1$ and $C_R(t_f) = k - 2 - C$.

Bringing $C_I(t_f)$ to $0$ requires all of $A \cup B$ to be internal. Because $C_I(t_b) = m_1 + m_2 + n_1 + n_2 - m_{2e} - n_{2e}$, it will take at least that many $x_1$ rotations to make vertices external, incrementing $C_{x_1}$ by $m_1 + m_2 + n_1 + n_2 - m_{2e} - n_{2e}$.

Nothing we have done yet increases $C_L$, which can only be done with $x_0^{-1}$ rotations. Bringing $C_L$ from $0$ to $k + C + 1$ requires us to increment $C_{x_0^{-1}}$ by $k + C + 1$.

In total, this interval requires us to increment $C_{x_1}$ by $m_1 + m_2 + n_1 + n_2 - m_{2e} - n_{2e}$ and $C_ {x_0^{-1}}$ by $k + C + 1$ for a total of $k + C + 1 + m_1 + m_2 + n_1 + n_2 - m_{2e} - n_{2e}$ rotations in this interval.

\vspace*{5pt}
\noindent \textbf{Total Rotations:}

Between all these intervals, $|u_{(k - 1) - (C + 1)}'| \geq (k + m_1) + (2k + 1 - m_1 - n_1 + m_2 + n_2+ m_{2e} + n_{2e}) + (k + C + 1 + m_1 + m_2 + n_1 + n_2 - m_{2e} - n_{2e}) = 4k + 2 + m_1 + 2m_2 + 2n_2 + C > 4k + 1 + C > 4k - C + C = |\overline{u_{(k - 1) - (C + 1)}'}| + C \geq |\overline{u_{(k - 1) - (C + 1)}'}| + C \geq |u_{(k - 1) - (C + 1)}'|$. However, this is a contradiction. Thus, in $u_{(k - 1) - (C + 1)}'$, the final time where vertex $v_a$ is made internal must occur before the first time the vertex $v_b$ is made internal.
\end{proof}


\vspace*{10pt}
\begin{lem*} {\textbf{4.23}}
In $u_{(k - 1) - (C + 1)}'$, no more than $C$ of the vertices in $B$ are internal at the final time $v_a$ is made internal.
\end{lem*}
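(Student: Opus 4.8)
The plan is to argue by contradiction, in the manner of Lemmas $4.21$ and $4.22$ and of the preceding length-counting lemmas. Write $i = (k-1) - (C+1)$ and $w = u_i'$; recall $|\overline{u_i}| = 3k + 2 + i = 4k - C$, so $|w| \le 4k - C + c$. Suppose that at least $C + 1$ of the vertices of $B$ are internal at $t_a$, the final time $v_a$ is made internal in $w$. By Lemma $4.22$ this time precedes the first time $v_b$ is made internal; call that time $t_b$. Among the vertices of $B$ internal at $t_a$, pick one, $v_j$, whose last make-exterior time $t_j$ is as large as possible (such a time exists since $C_I(t_f) = 0$). Put $t_0 = 0$ and $t_f = |w|$. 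As in the earlier lemmas, bijectivity of the action on $\mathcal{T}$, together with $\overline{w(t_a)}(v_a) = [\frac{1}{2}, \frac{3}{4}]$ and the fact that $v_a, v_b$ are mapped roughly $k$ from the pivot at $t_f$, forces $t_0 < t_a < t_b < t_f$ and $t_a < t_j \le t_f$; a short sub-argument in the style of the Claim inside Lemma $4.16$ fixes the order of $t_b$ and $t_j$, and one counts rotations on the resulting intervals.

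On $[t_0, t_a]$: bringing $v_a$ from $d_a = k + 1$ to the pivot costs at least $k + 1$ rotations of type $x_0$, and making $v_a$ internal together with the $m_1$ vertices of $A$ and the $n_1 \ge C + 1$ vertices of $B$ that are internal at $t_a$ costs at least $1 + m_1 + n_1$ rotations of type $x_1^{-1}$, since $C_I(t_0) = 0$; by Lemma $4.21$ we also have $m_1 \le 2C$. On the stretch from $t_a$ to $t_f$ two reconfigurations are unavoidable, paid by disjoint pools of rotations: $C_L$ must rise from $C_L(t_a) = 0$ to $C_L(t_f) = 2k - 1 - i = k + C + 1$, which only $x_0^{-1}$-rotations can do, costing at least $k + C + 1$; and $C_I$ must fall from $C_I(t_a) = m_1 + n_1$ to $0$, which only $x_1$-rotations can do, costing at least $m_1 + n_1$, of which at least $n_1 \ge C + 1$ are attributable to the internal vertices of $B$. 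Finally, on $[t_a, t_b]$ the vertex $v_b$, which by Lemma $4.22$ is still exterior — hence on the right spine — at $t_a$, must be brought to the pivot to be made internal by $t_b$: since $C_R(t_a) = 2k - 1 - m_1 - n_1$ this charges roughly $2k - m_1 - n_1$ rotations of type $x_0^{-1}$ or $x_1^{-1}$, only the $x_0^{-1}$ part of which can double up with the $C_L$ climb, plus further $x_1$-rotations for any vertex made interior here and exterior again before $t_f$. Summing the interval bounds, with the $m$'s and $n$'s cancelling in the same pattern as in Lemmas $4.16$ and $4.21$ and using $k > 1000C$, the total should collapse to $|w| \ge 4k - C + (\text{a quantity that is} \ge n_1 - C \ge 1)$, whence $|w| > 4k - C + c \ge |w|$, a contradiction; since the estimate only fails when $n_1 \le C$, this is exactly the claimed bound.

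I expect the main obstacle to be the overlap in the accounting over the stretch from $t_a$ to $t_f$: the $x_0^{-1}$-rotations that raise $C_L$ to $k + C + 1$ are the same kind of rotation that clears the right spine in front of $v_b$, so one must argue that the $d_b$-reduction cannot be entirely absorbed into the $C_L$ climb, while simultaneously verifying that the $\ge C + 1$ make-exterior events for the internal vertices of $B$ are honest $x_1$-rotations, disjoint from every $x_0^{\pm 1}$ already charged. Arranging for the surplus over $4k - C$ to be at least $1$ exactly when $n_1 \ge C + 1$ — rather than off by a bounded additive amount — is what pins the bound at $C$; this is where the choice $i = (k-1) - (C+1)$ and the size and parity of $C$ are used, and, as in Lemma $4.16$, the auxiliary time $t_j$ may be needed to localize the make-exterior work in the case that many vertices of $B$ are suspended from $v_b$ once it becomes internal.
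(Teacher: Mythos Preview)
Your overall setup matches the paper's: assume $n_1 \ge C+1$, invoke Lemma~4.22 to get $t_0 < t_a < t_b < t_f$, and count rotations on the resulting intervals. The gap is exactly the one you flag as the ``main obstacle,'' and your proposed fix does not close it.

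Concretely: your two ``disjoint pools'' on $[t_a,t_f]$ give only $(k+2+m_1+n_1)+(k+C+1)+(m_1+n_1)=2k+C+3+2m_1+2n_1$, which with $m_1\le 2C$ and $n_1$ as small as $C+1$ is far below $4k-C$. You therefore need the $d_b$-reduction term, but as you note, those $x_0^{-1}$-rotations \emph{are} the $C_L$-climb; they cannot be added separately. Your suggestion to salvage this with a $t_j$-argument in the style of Lemma~4.16 does not transfer: in that lemma the leverage came from the fact that, at $t_j$, all of $A$ is forced left of the pivot and some of $A$ must later return right. Here $\overline{u_i}$ is left-heavy at $t_f$ (all of $A$ ends on the left spine), so there is no ``return-right'' cost for $A$ to harvest.

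The paper closes the gap by a different mechanism that you do not mention: it tracks the \emph{specific} $C+1$ vertices $[\tfrac{2^q-1}{2^q},1]$, $1\le q\le C+1$, of $B$. Because $i=(k-1)-(C+1)$, these are precisely the vertices of $B$ that must end on the \emph{left} spine at $t_f$. Since $n_1\ge C+1$, each of these $C+1$ vertices is, at some stage, either internalized or shunted left-and-back (the quantities $o$, $p$, $n_{2e}$ in the paper, with $o+p+n_{2e}\ge C+1$); moreover, every vertex externalized after $t_a$ lands on the right spine and must be pushed left by a further $x_0^{-1}$ not already charged to the $d_b$-reduction. These extra left-moves, together with a $C_R$-based term on $(t_b,t_f]$, supply the surplus that collapses to $|w|\ge 4k+1>4k-C+c$. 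The choice $i=(k-1)-(C+1)$ is used precisely here, not merely to tune an additive constant.
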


\begin{proof}
Assume, for contradiction, that at least $C + 1$ of the vertices in $B$ are internal at the final time $v_a$ is made internal. 

We consider the impact this has on $|u_{(k - 1) - (C + 1)}'|$. We look at times $t_0$, $t_a$, $t_b$, and $t_f$. Define $t_0 = 0$ and $t_f = |u_{(k - 1) - (C + 1)}'|$. Define $t_a$ as the final time that $v_a$ is made internal in $u_{(k - 1) - (C + 1)}'$ and $t_b$ as the first time $v_b$ is made internal in $u_{(k - 1) - (C + 1)}'$. These times are clearly distinct because the action of $F$ on the tree is bijective and $(u_{(k - 1) - (C + 1)}'(t_0))([\frac{1}{2}, \frac{3}{4}]) =  (u_{(k - 1) - (C + 1)}'(t_a))(v_a) =  (u_{(k - 1) - (C + 1)}'(t_b))(v_b) = (u_{(k - 1) - (C + 1)}'(t_f))([\frac{1}{2}^l, \frac{1}{2}^{l - 1}]) =  [\frac{1}{2}, \frac{3}{4}]$. Because Lemma $4.23$ holds, $t_a < t_b$.

This means $t_0 < t_a < t_b < t_f$. We consider the impact the intervals of time defined by this has on $|u_{(k - 1) - (C + 1)}'|$.

\vspace*{5pt}
\noindent \textbf{Interval $t_0 \leq t \leq t_a$:}

At $t_0$, all of $A$ is mapped to the left of the pivot and to the right of $v_a$. Thus, $C_L(t_0) = k$ and $d_a(t_0) = k + 1$. Because these vertices are mapped to the left of the pivot, we can only decrease $d_a$ using $x_0$ rotations. We know $d_a(t_a - 1) = 0$, so we must increment $C_{x_0}$ by at least $k + 1$.

At $t_a$, a single $x_1^{-1}$ rotation makes $v_a$ internal, so we increment $C_ {x_1^{-1}}$ by $1$.

Additionally, some of the vertices in $A$ and $B$ must be internal at $t_a$, say $m_1$ of the vertices in $A$ and $n_1$ of the vertices in $B$. Because $C_I(t_0) = 0$, it will take a single $x_1^{-1}$ rotation to make each of these internal. Also note, by our assumption, $n_1 \geq C + 1$. To actually make these vertices internal, we increment $C_ {x_1^{-1}}$ by another $m_1 + n_1$ in this step. 

We also require additional accounting related to each of these vertices we make internal, so we consider the vertices in $B$ of form $[\frac{2^q -1}{2^q}, 1]$, $1 \leq q \leq C + 1$. We know that $n_1 \geq C + 1$ and there are exactly $C + 1$ such vertices in $B$. If any of these vertices are not made internal, they must be mapped to the left side of the tree so that the remainder of the above $n_1$ vertices in $B$ can be made internal by $t_a$. All of the vertices $[\frac{2^q -1}{2^q}, 1]$, $1 \leq q \leq C + 1$ will either be made internal or moved to the left spine of the tree. If these vertices are internal at $t_a$, we may have already accounted for this in our $n_1$ above. Say that $o$ of the vertices are not internal at $t_a$. We must increment $C_{x_0^{-1} || x_1 ^{-1}}$ by at least $o$ to account for moving these vertices to the left of the pivot. Note, these rotations are moving vertices in $B$ which are not internal at $t_a$ and are thus clearly distinct from any other rotations in this step. Additionally, these vertices are eventually mapped back to the right spine of the tree before time $t_a$, so we must likewise increment $C_{x_0 || x_1}$ by $o$.

In total, this interval requires us to increment $C_{x_0}$ by $k + 1$, $C_{x_0^{-1} || x_1^{-1}}$ by $o$, $C_{x_0 || x_1}$ by $o$,  and $C_ {x_1^{-1}}$ by $m_1 + n_1 + 1$ for a total of $k + 2 + m_1 + n_1 + 2o$ rotations in this interval.

\vspace*{5pt}
\noindent \textbf{Interval $t_a < t \leq t_b$:}

Because $\overline{u_{(k - 1) - (C + 1)}'(t_a)}(v_a) =[\frac{1}{2},\frac{3}{4}]$ and $F$ preserves the infix ordering of vertices, $\overline{u_{(k - 1) - (C + 1)}'(t_a)}$ maps $A \cup B$ to the right of the pivot. Thus, $C_L(t_a) = 0$. We also know that $C_I(t_a) = m_1 + n_1$, so $C_R(t_a) = 2k - 1 - m_1 - n_1)$ and $d_b(t_a) \geq 2k - 1 - m_1 - n_1$. These vertices are mapped to the right spine, so reducing $d_b$ can be accomplished using either $x_0^{-1}$ or $x_1^{-1}$ rotations. Because $\overline{u_{(k - 1) - (C + 1)}'(t_b - 1)}(v_b) = [\frac{1}{2}, 1]$, $d_b(t_b - 1) = 0$ and we must increment $C_{x_0^{-1} || x_1^{-1}}$ by at least $2k - 1 - m_1 - n_1$.

At $t_b$, a single $x_1^{-1}$ rotation makes $v_b$ internal, so we increment $C_ {x_1^{-1}}$ by $1$.

It is possible some of the vertices in $A \cup B$ that were internal at $t_a$ are external at $t_b$. Additionally, some of these vertices which were external at $t_a$ may be internal at $t_b$. Say that at time $t_b$, $m_2$ vertices in $A$ are internal that were external at $t_a$ and $n_2$ vertices in $B$ are internal that were external at $t_a$. We may have already accounted for making these vertices internal above. Of the $m_1$ vertices in $A$ that were internal at $t_a$, say $m_{2e}$ are external at $t_b$, and of the $n_1$ vertices in $B$ that were internal at $t_a$, $n_{2e}$ are external at $t_b$. We must increment  $C_{x_1}$ by $m_{2e} + n_{2e}$ to accomplish this change. Not only must these vertices be brought to the external set, they must also be moved to the left side of the tree. These are distinct rotations from the ones above and require us to increment $C_{x_0^{-1}}$ by $m_{2e} + n_{2e}$. In total then, $C_I(t_b) = (m_1 + m_2 - m_{2e}) + (n_1 + n_2 - n_{2e})$.

In total, this interval requires us to increment $C_{x_0^{-1} || x_1^{-1}}$ by $2k - 1 - m_1 - n_1$, $C_ {x_1^{-1}}$ by $1$, $C_{x_1}$ by $m_{2e} + n_{2e}$, and $C_{x_0^{-1}}$ by $m_{2e} + n_{2e}$ for a total of $2k - m_1 - n_1 + 2m_{2e} + 2n_{2e}$ rotations in this interval.

\vspace*{5pt}
\noindent \textbf{Interval $t_b < t \leq t_f$:}

Note, $\overline{u_{(k - 1) - (C + 1)}'(t_b)}(v_b) =[\frac{1}{2},\frac{3}{4}]$. Because $F$ preserves the infix ordering of vertices, this means all of $A \cup B$ is mapped to the pivot.  Thus, $C_R(t_b) = 0$. We know that $C_I(t_b) = m_1 + m_2 + n_1 + n_2 - m_{2e} - n_{2e}$, so $C_L(t_b) = 2k - 1 - (m_1 + m_2 + n_1 + n_2 - m_{2e} - n_{2e})$. Note that $C_I(t_f) = 0$ and $\overline{u_{(k - 1) - (C + 1)}'(t_f)}([\frac{2^{C + 1} - 1}{2^{C + 1}}, 1]) = [0, 1]$, the root of the tree. Thus, we must have $C_L(t_f) = k + C + 1$ and $C_R(t_f) = k - 2 - C$.

Bringing $C_I(t_f)$ to $0$ requires all of $A \cup B$ to be internal. Because $C_I(t_b) = m_1 + m_2 + n_1 + n_2 - m_{2e} - n_{2e}$, it will take at least that many $x_1$ rotations to make these vertices external, incrementing $C_{x_1}$ by $m_1 + m_2 + n_1 + n_2 - m_{2e} - n_{2e}$.

Decreasing $C_I$ with an $x_1$ rotation increases $C_R$ because a vertex is brought from the interior to the right spine of the tree. This provides an increase in $C_R$ of $m_1 + m_2 + n_1 + n_2 - m_{2e} - n_{2e}$. We require $C_R(t_f) = k - 2 - C$ and it is possible $k - 2 - C >  m_1 + m_2 + n_1 + n_2 - m_{2e} - n_{2e}$. If so, the only way to further increase $C_R$ is using $x_0$ rotations to move vertices from the left side of the tree to the right side. Thus, if $k - 2 - C >  m_1 + m_2 + n_1 + n_2 - m_{2e} - n_{2e}$, then we must increase $C_{x_0}$ by at least $k - 2 - C - (m_1 + m_2 + n_1 + n_2 - m_{2e} - n_{2e})$.

If any of the vertices in $A$ or some of the vertices in $B$ of the form $[\frac{2^q -1}{2^q}, 1]$, $1 \leq q \leq C + 1$ were internal at $t_b$, they will be brought to the external set at $t_f$. When this occurs, these vertices are mapped to position $[\frac{1}{2}, 1]$ on the right spine of the tree. Because $\overline{u_{(k - 1) - (C + 1)}'(t_f)}([\frac{2^{(C + 1)} - 1}{2^{(C + 1)}}, 1]) = [0, 1]$, all of these vertices must eventually be mapped to the left spine of the tree. There are $m_1 + m_2 - m_{2e}$ such vertices in $A$ that were internal at $t_b$, requiring us to increment $C_{x_0^{-1}}$ by $m_1 + m_2 - m_{2e}$. Of the vertices in $B$ of the form $[\frac{2^q -1}{2^q}, 1]$, $1 \leq q \leq C + 1$, we know $C + 1 - o$ were internal at time $t_a$. Some of these may have been made external among the $n_{2e}$ vertices in $B$ that were internal at time $t_a$ and external at time $t_b$. The remainder will have been made external in this interval, say $p$ of the vertices $[\frac{2^q -1}{2^q}, 1]$, $1 \leq q \leq C + 1$, requiring us to increment $C_{x_0^{-1}}$ by an additional $p$ this interval. Note, $p + n_{2e} \geq C + 1 - o$ and therefore $p + o + n_{2e} \geq C + 1$.

In total, this interval requires us to increment $C_{x_1}$ by $m_1 + m_2 + n_1 + n_2 - m_{2e} - n_{2e}$ and $C_{x_0^{-1}}$ by $m_1 + m_2 - m_{2e} + p$, for a base total of $2m_1 + 2m_2 + n_1 + n_2 - 2m_{2e} - n_{2e} + p$ rotations in this interval. If $k - 2 - C >  m_1 + m_2 + n_1 + n_2 - m_{2e} - n_{2e}$, then we also increase $C_{x_0}$ by $k - 2 - C - (m_1 + m_2 + n_1 + n_2 - m_{2e} - n_{2e})$.

\vspace*{5pt}
\noindent \textbf{Total Rotations:}

Between all these intervals, $|u_{(k - 1) - (C + 1)}'| \geq (k + 2 + m_1 + n_1 + 2o) + (2k - m_1 - n_1 + 2m_{2e} + 2n_{2e}) + (2m_1 + 2m_2 + n_1 + n_2 - 2m_{2e} - n_{2e} + p) = 3k + 2 +2o + 2m_1 + 2m_2 + n_1 + n_2 + n_{2e} + p = 3k + 2 + 2o + p + 2m_1 + n_1 + 2m_2 + n_2 + n_{2e}$ plus an additional $k - 2 - C - (m_1 + m_2 + n_1 + n_2 - m_{2e} - n_{2e})$ if $k - 2 - C >  m_1 + m_2 + n_1 + n_2 - m_{2e} - n_{2e}$.

If $k - 2 - C \leq  m_1 + m_2 + n_1 + n_2 - m_{2e} - n_{2e}$, then $|u_{(k - 1) - (C + 1)}'| \geq 3k + 2 + 2o + p + 2m_1 + n_1 + 2m_2 + n_2 + n_{2e} \geq 3k + 2 + 2o + p+ m_1 + m_2 + n_{2e} + (m_1 + n_1 + m_2 + n_2)$. Because $m_1 + n_1 + m_2 + n_2 \geq  m_1 + m_2 + n_1 + n_2 - m_{2e} - n_{2e} \geq k - 2 - C$, we know $|u_{(k - 1) - (C + 1)}'| \geq 3k + 2 + 2o + p + m_1 + m_2 + n_{2e} + (k - 2 - C) = 4k + (p + o + n_{2e}) - C + (o + m_1 + m_2) \geq 4k + (p + o + n_{2e}) - C \geq 4k + 1 - C + C$.

If $k - 2 - C >  m_1 + m_2 + n_1 + n_2 - m_{2e} - n_{2e}$, then $|u_{(k - 1) - (C + 1)}'| \geq 3k + 2 + 2o + p + 2m_1 + n_1 + 2m_2 + n_2 + n_{2e} + (k - 2 - C - (m_1 + m_2 + n_1 + n_2 - m_{2e} - n_{2e})) = 4k + 2o + p - C + m_1 + m_2 + 2n_{2e} + m_{2e} = 4k + (o + p + n_{2e}) - C + o + m_1 + m_2 + n_{2e} + m_{2e} \geq 4k + (p + o + n_{2e}) - C \geq 4k + 1 - C + C$.

In either case we see that $|u_{(k - 1) - (C + 1)}'| \geq 4k + 1 - C + C > 4k - C + C = |\overline{u_{(k - 1) - (C + 1)}'}| + C \geq |\overline{u_{(k - 1) - (C + 1)}'}| + C \geq |u_{(k - 1) - (C + 1)}'|$. However, this is a contradiction. Thus, in $u_{(k - 1) - (C + 1)}'$, no more than $C$ of the vertices in $B$ are internal at the final time $v_a$ is made internal.
\end{proof}


\begin{lem*}{\textbf{4.24}}
If in $u_i'$, $(k - 1) - (C + 1) \leq i \leq k - 2$, the following two conditions hold:
\begin{enumerate}
\item The final time $v_a$ is made internal occurs before the first time $v_b$ is made internal,
\item No more than $C$ of the vertices in $B$ are internal at the final time $v_a$ is made internal,
\end{enumerate}
then, $d_a(u_i'(t)) > M - 1$ if $t > k + M + 2C + 2$.
\end{lem*}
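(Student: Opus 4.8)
The plan is to argue by contradiction, in the interval‑counting style of Lemmas 4.17 and 4.21--4.23. Assume conditions (1) and (2) hold and suppose for contradiction that $d_a(u_i'(t_c))\le M-1$ for some $t_c>k+M+2C+2$. I will isolate a few landmark times, bound below the number of generators $u_i'$ must spend on each interval between consecutive landmarks, and derive that $|u_i'|>3k+2+i+C\ge|\overline{u_i'}|+c\ge|u_i'|$.

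Take $t_0=0$, $t_f=|u_i'|$, let $t_a$ be the \emph{final} time $v_a$ is made internal in $u_i'$, and $t_b$ the \emph{first} time $v_b$ is made internal. Condition (1) gives $t_a<t_b$, and the usual argument (bijectivity of the $F$‑action, and the fact that $\overline{u_i'}$ carries $v_a,v_b$ far from the pivot) shows $t_0<t_a<t_b<t_f$ are distinct. Since a vertex can enter the interior set only by being made internal, and $\overline{u_i'}(v_a)$ is interior, $v_a$ stays interior on all of $[t_a,t_f]$; as $d_a(t_a)=1$, I may take $t_c$ to be the \emph{last} time $d_a\le M-1$, so that $t_a\le t_c$, $t_c>k+M+2C+2$, and $d_a(u_i'(t))>M-1$ for every $t>t_c$ (and $t_c<t_f$, since $d_a$ is large at $t_f$). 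Locating $t_c$ relative to $t_b$ I expect to do with a short embedded claim like the one in the proof of Lemma 4.18, or else by splitting into the cases $t_c\ge t_b$ and $t_c<t_b$.

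I then run the count using the counters $C_{x_0},C_{x_1^{-1}},C_{x_0^{-1}||x_1^{-1}},C_{x_1}$ as in Lemmas 4.21--4.23, and the preservation of the infix order to place $A$, $B$, $v_a$, $v_b$ at each landmark. The forced contributions are: on $[t_0,t_a]$, at least $k+1$ rotations to carry $v_a$ from its initial position $d_a=k+1$ on the left spine to the pivot, plus an $x_1^{-1}$ to make it internal; on $[t_a,t_b]$, at least $k-C$ rotations to reduce $d_b$ to $0$ --- by condition (2) at most $C$ vertices of $B$ are internal at $t_a$, so the infix order places at least $k-1-C$ of them on the right spine between the pivot and $v_b$, whence $d_b(t_a)\ge k-C$ --- plus an $x_1^{-1}$ at $t_b$; and on $[t_b,t_f]$ a rebalancing cost, coming from the fact that once both $v_a$ and $v_b$ are interior we have $C_L=0$ whereas $C_L(t_f)=2k-1-i$ is large, from $d_a$ having to climb from $\le M-1$ at $t_c$ to its large final value, from $v_b$ having to reach its final position, and from $C_I(t_f)=0$. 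The whole force of the hypothesis $t_c>k+M+2C+2$ is that these rebalancing rotations are genuinely new: carrying a vertex deep into the interior and back costs $x_1/x_1^{-1}$ pairs, of which Lemma 4.21 (for $A$) and condition (2) (for $B$) allow only boundedly many ($2C$ and $C$ respectively), so the rotations that must still occur after $t_c$ cannot be shared with those already charged for bringing $v_a$ and $v_b$ inward. Summing the contributions over the three (or four) intervals gives a total strictly greater than $3k+2+i+C$, the contradiction.

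The main obstacle is precisely this last accounting step. In the earlier lemmas the costs of routing $v_a$ inward, routing $v_b$ inward, and building up $C_L$ can partly coincide, and making Lemma 4.24 work requires showing that $d_a$ remaining small as late as $k+M+2C+2$ breaks that coincidence --- intuitively, either $v_a$ was never carried deep while $v_b$ was being pulled in (so that pulling $v_b$ in from $d_b\ge k-C$ must now push $v_a$ away, after which $v_a$ must be brought back), or $v_a$ was carried deep and has been wastefully returned toward the pivot. Getting the exact constant $M+2C+2$ (rather than something larger) will also require care with the jumps of up to $2$ in $d_a$ produced by the ``teleporting'' images of $[\frac14,\frac12]$ and $[\frac12,\frac34]$ under $x_0^{\pm1}$, and with the $M$ of fellow‑traveler slack that the downstream lemmas will need this statement to absorb.
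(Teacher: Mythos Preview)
Your framework matches the paper's: take $t_c$ to be the last time $d_a\le M-1$, show $t_c>t_a$, and split on whether $t_c<t_b$ or $t_c>t_b$. But you have not identified the mechanism by which the hypothesis $t_c>k+M+2C+2$ actually forces extra rotations, and without it the count does not close.

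For $t_b<t_c$ the paper's argument is a detour count on $C_L$ that you do not have: at $t_b$ all of $A\cup B$ lies left of the pivot so $C_L(t_b)$ is large, at $t_c$ one has $C_L(t_c)=M-2$ (since $v_a$ is interior with $d_a=M-1$), and $C_L(t_f)=2k-1-i$. The drop $t_b\to t_c$ costs $x_0$'s and the climb $t_c\to t_f$ costs $x_0^{-1}$'s, so they cannot overlap; this alone drives the total past $5k$. For $t_c<t_b$ the paper uses a structural fact you are missing: since $t_c$ is the \emph{last} time $d_a\le M-1$ and $v_a$ ends up hanging exactly distance one from the left spine in $\overline{u_i'}$, $v_a$ must already be in that configuration at $t_c$, with exactly $M-3$ vertices of $A$ on the spine between it and the pivot. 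The interval $[t_a,t_c]$ then forces only about $M-3$ rotations plus whatever $x_1^{\pm1}$'s change the internal status of $A\cup B$ vertices. The hypothesis $t_c>k+M+2C+2$ is now used as a \emph{lower} bound on elapsed time: either some $B$-vertex changed state on $[t_a,t_c]$ --- which, since all external $A$-vertices lie between the pivot and $B$, costs an extra $2(k-3C)$ rotations --- or none did, in which case the forced rotations on $[t_0,t_c]$ fall short of $t_c$ by an explicit slack $L$ with $L+m_1+n_1+m_2+m_{2e}-1\ge 2C+2$, and this $L$ must be carried as a separate term through the final inequality. Your sentence about ``wastefully returned'' gestures at this, but the proof does not go through unless $L$ is made explicit. (Separately, your bound $d_b(t_a)\ge k-C$ forgets the external $A$-vertices on the right spine; the correct bound is $d_b(t_a)\ge 2k-1-3C$.)
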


\begin{proof}
Assume that in $u_i'$, conditions $(1)$ and $(2)$ hold as above. Suppose not, for contradiction. Then in $u_i'$, there will be at least one time, and thus a final time $t_c$, where $d_a(t_c) \leq M - 1$ with $t_c > k + M + 2C + 2$. Because $d_a$ ends up much larger than $M - 1$ in $u_i'$, it must be the case that $d_a(t_c) = M - 1$.

We look at the following times in $u_i$: $t_0$, $t_a$, $t_c$, $t_b$, and $t_f$. Define $t_0 = 0$ and $t_f = |u_i'|$. Define $t_a$ as the final time that vertex $v_a$ is made internal in $u_i'$ and $t_b$ as the first time that vertex $v_b$ is made internal in $u_i'$. By part one of our assumption, $t_a < t_b$. We can also easily show that $t_c > t_a$. Because $d_a(t_a) = -1 < M - 1$ and $d_a(t_f) > M - 1$, there must be a time after $t_a$ where $d_a$ is exactly $M - 1$. However, if $t_c  < t_a$, there would be a time after $t_c$ where $d_a$ was $M - 1$ while $t_c$ was the final such time. 

Further, we can understand a good deal about the structure of $u_i'(t_c)$. First of all, because $t_c > t_a$, we know that at $t_c$, $v_a$ is internal as $v_a$ must be internal at the end of $u_i'$. Additionally, $v_a$ must be suspended from the left spine of the tree, exactly distance one away from the spine. This again holds because in $u_i'$, $v_a$ will ultimately end up hanging distance one from the left spine and changing this can only be done by bringing $v_a$ closer to the pivot, which cannot occur after $t_c$. Thus, we know $v_a$ is hanging from the left spine of the tree some distance from the pivot. There are vertices on the left spine of the tree at $u_i'(t_c)$ on the path between $v_a$ and the pivot. Exactly $M - 3$ vertices of $A$ are on the spine between $v_a$ and the pivot, the vertices $[0, \frac{1}{2}^{p}]$, $(k-1) - (M-3) \leq p \leq k-1$. After time $t_c$, there are no times where $d_a$ is less than or equal to $M -1$. If any of these vertices are not on the spine at this point, there is no way they can be moved to the spine after this.

Most of these times are clearly distinct because the action of $F$ on the tree is bijective and $(u_i'(t_0))([\frac{1}{2}, \frac{3}{4}]) =  (u_i'(t_a))(v_a) =  (u_i'(t_b))(v_b) = (u_i'(t_f))([\frac{1}{2}^{k - i}, \frac{1}{2}^{k - i - 1}]) =  [\frac{1}{2}, \frac{3}{4}]$. We know that $t_c$ is clearly distinct from $t_a$ because $d_a$ is not one. It cannot be $t_0$ or $t_f$ because $d_a$ is much larger than $M - 1$. It cannot be $t_b$ because at $t_b$, $d_b$ is one and for $d_a$ to be $M-1$ while $d_b$ is one, we would need more than $2C$ vertices of $A$ internal. Because $F$ preserves the infix ordering of vertices, all of $A \cup B$ is mapped between $v_a$ and $v_b$ on the tree and for these two distance to be less than $M$ at the same time, all but $2M$ of the $k$ vertices in $A$ would need to be mapped internal, a contradiction to Lemma $4.21$. Thus, we know that $t_0 < t_a < t_b < t_f$ and $t_c > t_a$.

This means that one of the following two orderings are true: $t_0 < t_a < t_c < t_b < t_f$ or $t_0 < t_a < t_b < t_c < t_f$. We consider the impact each has on $|u_i'|$.


\vspace*{10pt}
\noindent \textbf{\underline{Ordering $t_0 < t_a < t_c < t_b < t_f$}}

We begin with the time intervals given by $t_0 < t_a < t_c < t_b < t_f$ and account for the total rotations needed in each interval.

\vspace*{5pt}
\noindent \textbf{Interval $t_0 \leq t \leq t_a$:}

At $t_0$, all of $A$ is mapped to the left of the pivot and to the right of $v_a$. Thus, $C_L(t_0) = k$ and $d_a(t_0) = k + 1$. Because these vertices are mapped to the left of the pivot, the only way to decrease $d_a$ is with $x_0$ rotations. We know $d_a(t_a - 1) = 0$, so we must increment $C_{x_0}$ by at least $k + 1$.

At $t_a$, a single $x_1^{-1}$ rotation makes $v_a$ internal, so we increment $C_ {x_1^{-1}}$ by $1$.

Additionally, some of the vertices in $A$ and $B$ may be internal at $t_a$, say $m_1$ of the vertices in $A$ and $n_1$ of the vertices in $B$. Because $C_I(t_0) = 0$, it will take a single $x_1^{-1}$ rotation to make each of these internal, requiring us to increment $C_ {x_1^{-1}}$ by another $m_1 + n_1$ this step. Note that by part two of our assumption, $n_1 \leq C$.

In total, this interval requires us to increment $C_{x_0}$ by $k + 1$ and $C_ {x_1^{-1}}$ by $m_1 + n_1 + 1$ for a total of $k + m_1 + n_1 + 2$ rotations in this interval.

\vspace*{5pt}
\noindent \textbf{Interval $t_a < t \leq t_c$:}

Because $\overline{u_i'(t_a)}(v_a) =[\frac{1}{2},\frac{3}{4}]$ and $F$ preserves the infix ordering of vertices, $\overline{u_i'(t_a)}$ maps $A \cup B$ at or to the right of the pivot. Thus, $C_L(t_a) = 0$. We also know that $C_I(t_a) = m_1 + n_1$, so $C_R(t_a) = 2k - 1 - (m_1 + n_1)$ and $d_a(t_a) \geq 1$. We require that $d_a(t_c) = M -1$ and this is the final $t$ where $d_a(t) = M - 1$ in $u_i$. Bringing $d_a$ from $1$ at $t_a$ to $M-1$ will require $x_0^{-1}$ rotations. Technically, because all of these vertices are on the right spine of the tree we could also use $x_1^{-1}$ rotations to make them internal. If any $x_1^{-1}$ rotations were used to increase $d_a$, this would move $v_a$ more than distance one from the spine. In $u_i$, $v_a$ is mapped exactly distance one from the left spine. These extra internal vertices cannot be made external without first making $d_a$ equal $M - 1$ which cannot occur after time $t_c$. Thus $x_1^{-1}$ rotations cannot be used to accomplish this, so we must increment $C_{x_0^{-1}}$ by at least $M - 3$.

It is possible some of the vertices in $A \cup B$ that were internal at $t_a$ are external at $t_c$. Additionally, some of these vertices which were external at $t_a$ may be internal at $t_b$. Say that at time $t_c$, $m_2$ vertices in $A$ are internal that were external at $t_a$ and $n_2$ vertices in $B$ are internal that were external at $t_a$. Of the $m_1$ vertices in $A$ that were internal at $t_a$, say $m_{2e}$ are external at $t_c$, and of the $n_1$ vertices in $B$ that were internal at $t_a$, $n_{2e}$ are external at $t_c$. We must increment $C_{x_1^{-1}}$ by $m_2 + n_2$ and $C_{x_1}$ by $m_{2e} + n_{2e}$ to accomplish these changes. 

Note that if either $n_2 > 0$ or $n_{2e} > 0$, we will require additional rotations. At most $2C$ vertices in $A$ may be internal at any time in $u_i'$ by Lemma $4.21$. Note $M-2 \leq C$. At $t_a$, all of the vertices in $A$ are either internal or mapped to the right side of the tree and to the left of all of the vertices in $B$. We have accounted for moving less than $C$ of these vertices to the left side of the tree during this interval. Because at most $2C$ of these can be made internal, in order to make a vertex in $B$ either internal or external, at least $k - 3C$ vertices would first need to be moved to the left side of the tree to accomplish this, and then moved back to the right side of the tree afterwards. Thus, we would need to increment $C_{x_0}$ and $C_{x_0^{-1}}$ by at least $k - 3C$ each to accomplish this change, requiring an additional $2k - 6C$ rotations if $n_2 > 0$ or $n_{2e} > 0$.

Before moving forward, note that by our assumption, $t_c  \geq k + M + 2C + 2$. Between the first interval and the above portion of our second interval, we have $t_c \leq (k + m_1 + n_1 + 2) + (m_2 + n_2 + m_{2e} + n_{2e} + M - 3) = k + M + (m_1 + n_1 + m_2 + n_2 + m_{2e} + n_{2e} - 1)$ if $n_2 = 0$ and $n_{2e} = 0$ or alternatively $t_c \leq (k + m_1 + n_1 + 2) + (m_2 + n_2 + m_{2e} + n_{2e} + M - 3) + (2k - 6C) = k + M + (2k - 6C + m_1 + n_1 + m_2 + n_2 + m_{2e} + n_{2e}) - 1$ if $n_2 > 0$ or $n_{2e} > 0$. Note that $2k - 6C + m_1 + n_1 + m_2 + n_2 + m_{2e} + n_{2e} -1 > 2C + 2$. However, in the other case, it is possible that $m_1 + n_1 + m_2 + n_2 + m_{2e} + n_{2e} - 1 < 2C+ 2$. If this is so, additional rotations must have occurred to ensure that $t_c \geq k + 2C + M + 2$. Thus, we can say that an additional $L \geq 0$ rotations occurred in this case and that $L + m_1 + n_1 + m_2 + n_2 + m_{2e} + n_{2e} - 1 \geq 2C+ 2$.

In total then, this interval requires us to increment $C_{x_0^{-1}}$ by $M -3$,  $C_ {x_1^{-1}}$ by $m_2 + n_2$, and $C_{x_1}$ by $m_{2e} + n_{2e}$ for a total of $m_2 + n_2 + m_{2e} + n_{2e} + M - 3$ rotations in this interval with an additional $2k - 6C$ rotations if $n_2 > 0$ or $n_{2e} > 0$ and an additional $L$ rotations if $n_2 = n_{2e} = 0$.

\vspace*{5pt}
\noindent \textbf{Interval $t_c < t \leq t_b$:}

We know that $d_a(t_c) = M - 1$ and $C_I(t_c) = m_1 + n_1 + m_2 + n_2 - m_{2e} - n_{2e}$. None of the internal vertices in $A$ are to the left of the pivot, so $C_L(t_c) = M - 3$ and thus $C_R(t_c) = 2k - 1 - (m_1 + n_1 + m_2 + n_2 - m_{2e} - n_{2e}) - (M - 3) = 2k + 2 - M - m_1 - n_1 - m_2 - n_2 + m_{2e} + n_{2e}$ and $d_b(t_a) \geq 2k + 2 - M - m_1 - n_1 - m_2 - n_2 + m_{2e} + n_{2e}$. These vertices are mapped to the right spine, so reducing $d_b$ can be accomplished either by moving them to the left spine of the tree with $x_0^{-1}$ rotations or making them internal with $x_1^{-1}$ rotations. Because $\overline{u_i'(t_b - 1)}(v_b) = [\frac{1}{2}, 1]$, $d_b(t_b - 1) = 0$ and we must increment $C_{x_0^{-1} || x_1^{-1}}$ by at least $2k + 2 - M - m_1 - n_1 - m_2 - n_2 + m_{2e} + n_{2e}$.

At $t_b$, a single $x_1^{-1}$ rotation makes $v_b$ internal, so we increment $C_ {x_1^{-1}}$ by $1$.

It is possible some of the vertices in $A \cup B$ that were internal at $t_a$ are external at $t_b$. Additionally, some of these vertices which were external at $t_a$ may be internal at $t_b$. We may have already accounted for making these new vertices internal. Say that by time $t_b$, $m_3$ vertices in $A$ are internal that were external at $t_a$ and $n_3$ vertices in $B$ are internal that were external at $t_a$. Of the $m_1 + m_2 - m_{2e}$ vertices in $A$ that were internal at $t_a$, say $m_{3e}$ are external at $t_b$, and of the $n_1 + n_2 + n_{2e}$ vertices in $B$ that were internal at $t_a$, $n_{3e}$ are external at $t_b$. We must increment  $C_{x_1}$ by $m_{3e} + n_{3e}$ to accomplish this change.

In total, this interval requires us to increment $C_{x_0^{-1} || x_1^{-1}}$ by $2k + 2 - M - m_1 - n_1 - m_2 - n_2 + m_{2e} + n_{2e}$, $C_ {x_1^{-1}}$ by $1$, and $C_{x_1}$ by $m_{3e} + n_{3e}$ for a total of $2k + 3 - M - m_1 - n_1 - m_2 - n_2 + m_{2e} + n_{2e} + m_{3e} + n_{3e}$ rotations in this interval.

\vspace*{5pt}
\noindent \textbf{Interval $t_b < t \leq t_f$:}

Note, $\overline{u_i'(t_b)}(v_b) =[\frac{1}{2},\frac{3}{4}]$. Because $F$ preserves the infix ordering of vertices, all of $A \cup B$ is mapped to the left of position $[\frac{1}{2}, \frac{3}{4}]$. Thus $C_R(t_b) = 0$. We know that $C_I(t_b) = m_1 + m_2 + m_3 + n_1 + n_2 + n_3 - m_{2e} - m_{3e} - n_{2e} - n_{3e}$, so $C_L(t_b) = 2k - 1 - (m_1 + m_2 + m_3 + n_1 + n_2 + n_3 - m_{2e} - m_{3e} - n_{2e} - n_{3e})$. Note that $C_I(t_f) = 0$ and $\overline{u_{i}'(t_f)}([\frac{2^{k - 1 - i} - 1}{2^{k - 1 - i}}, 1]) = [0, 1]$, the root of the tree. Thus, we must have $C_L(t_f) = 2k - 1 - i$ and $C_R(t_f) = i$.

Bringing $C_I(t_f)$ to $0$ requires all of $A \cup B$ to be internal. Because $C_I(t_b) = m_1 + m_2 + m_3 + n_1 + n_2 + n_3 - m_{2e} - m_{3e} - n_{2e} - n_{3e}$, it will take at least that many $x_1$ rotations to make vertices external, incrementing $C_{x_1}$ by $m_1 + m_2 + m_3 + n_1 + n_2 + n_3 - m_{2e} - m_{3e} - n_{2e} - n_{3e}$.

We require additional rotations related to these vertices we have made external. Any time a vertex is made external, it is mapped from the interior of the tree to the pivot. This is fine for the vertices $[\frac{2^q - 1}{2^q}, 1]$, $k - 2 - i \leq q \leq k - 1$, because $|\overline{u_i'(t_f)}|$ maps them to the right spine of the tree. However, all of the vertices in $A$ and $[\frac{2^q - 1}{2^q}, 1]$, $1 \leq q \leq k - 1 - i$ must be mapped to the left spine in $\overline{u_i'(t_f)}$. Mapping a vertex from the right spine of the tree to the left spine can only be accomplished by an $x_0^{-1}$ rotation and we have accounted for no $x_0^{-1}$ rotations in this step thus far. All of the $m_1 + m_2 + m_3 - m_{2e} - m_{3e}$ vertices in $A$ that are internal at $t_b$ will need to be brought to the left side of the tree. This requires us to increment $C_{x_0^{-1}}$ by $m_1 + m_2 + m_3 - m_{2e} - m_{3e}$.

Recall, $C_R(t_b) = 0$ and $C_R(t_f) = i$. Making vertices from $A \cup B$ external increases $C_R$ by $1$ and we have accounted for making all of $A \cup B$ external with $m_1 + m_2 + m_3 + n_1 + n_2 + n_3 - m_{2e} - m_{3e} - n_{2e} - n_{3e}$ rotations. At the same time, moving the vertices in $A$ that we have made external decreases $C_R$ by $1$ for each of the $m_1 + m_2 + m_3 - m_{2e} - m_{3e}$ such vertices. This is a net increase in $C_R$ of $n_1 + n_2 + n_3 - n_{2e} - n_{3e}$ that we have accounted for in this step. It is possible that $n_1 + n_2 + n_3 - n_{2e} - n_{3e} < i$. If so, we will need to increment $C_R$ by the difference, bringing vertices from the left side of the tree to the right side. This means using $C_{x_0}$ by $i - (n_1 + n_2 + n_3 - n_{2e} - n_{3e})$.

In total, this interval requires us to increment $C_{x_1}$ by $m_1 + m_2 + m_3 + n_1 + n_2 + n_3 - m_{2e} - m_{3e} - n_{2e} - n_{3e}$ and $C_{x_0^{-1}}$ by $m_1 + m_2 + m_3 - m_{2e} - m_{3e}$ for a total of $2m_1 + 2m_2 + 2m_3 - 2m_{2e} - 2m_{3e} + n_1 + n_2 + n_3 - n_{2e} - n_{3e}$. If $n_1 + n_2 + n_3 - n_{2e} - n_{3e} < i$, we must increment $C_{x_0}$ by an additional $i - (n_1 + n_2 + n_3 - n_{2e} - n_{3e})$.

\vspace*{5pt}
\noindent \textbf{Total Rotations:}

In total, we have established that $|u_i'| \geq (k + m_1 + n_1 + 2) + (m_2 + n_2 + m_{2e} + n_{2e} + M - 3) + (2k + 2 - M - m_1 - n_1 - m_2 - n_2 + m_{2e} + n_{2e} + m_{3e} + n_{3e}) + (2m_1 + 2m_2 + 2m_3 - 2m_{2e} - 2m_{3e} + n_1 + n_2 + n_3 - n_{2e} - n_{3e}) = 3k + 1 + 2m_1 + 2m_2 + 2m_3 - m_{3e} + n_1 + n_2 + n_3 + n_{2e}$ plus an additional $i - (n_1 + n_2 + n_3 - n_{2e} - n_{3e})$ if $n_1 + n_2 + n_3 - n_{2e} - n_{3e} < i$, $2k - 6C$ if $n_2 > 0$ or $n_{2e} > 0$, and an additional $L$ rotations if $n_2 = n_{2e} = 0$. This gives us four cases to consider.

First of all, if $n_1 + n_2 + n_3 - n_{2e} - n_{3e} < i$ and $n_2 = n_{2e} = 0$, then $|u_i'| \geq 3k + 1 + 2m_1 + 2m_2 + 2m_3 - m_{3e} + n_1 + n_2 + n_3 + n_{2e} + (i - (n_1 + n_2 + n_3 - n_{2e} - n_{3e})) + L = 3k + 1 + i + L + 2m_1 + 2m_2 + 2m_3 - m_{3e} + n_{2e} + n_{3e} = 3k + 1 + i + L + 2m_1 + 2m_2 + 2m_3 - m_{3e} + n_{3e}$. Note, $m_1 + m_2 + m_3 \geq m_{2e} + m_{3e}$, $n_{3e} \geq 0$, and $n_1 \leq C$ so $|u_i'| = 3k + 1 + i + L + 2m_1 + 2m_2 + 2m_3 + m_{2e} - m_{2e} - m_{3e} + n_{3e} \geq 3k + 1 + i + (L + m_1 + m_2 + m_3 + m_{2e}) = 3k + 1 + i + (L + m_1 + m_2 + m_3 + m_{2e} + C) - C \geq 3k + 2 + i + (L + m_1 + m_2 + m_3 + m_{2e} + n_1 - 1) - C$. We know, $L + m_1 + m_2 + m_{2e} + n_1 - 1 = L + m_1 + m_2 + m_{2e} + n_1 + n_2 + n_{2e} - 1 \geq 2C+ 2$. So now $|u_i'| \geq 3k + 2 + i + (2C + 2) - C > 3k + 2 + i + C$.

Second, if $n_1 + n_2 + n_3 - n_{2e} - n_{3e} < i$ and $n_2 > 0$ or $n_{2e} > 0$, then $|u_i'| \geq 3k + 1 + 2m_1 + 2m_2 + 2m_3 - m_{3e} + n_1 + n_2 + n_3 + n_{2e} + (i - (n_1 + n_2 + n_3 - n_{2e} - n_{3e})) + 2k - 6C =5k + i - 6C + 1 + 2m_1 + 2m_2 + 2m_3 - m_{3e} + 2n_{2e} + n_{3e} = 3k + 2 + i + C + (2k - 7C - 1)+ (m_1 + m_2 + m_3) + (m_1 + m_2 + m_3 - m_{3e} + 2n_{2e} + n_{3e})$. Note, $m_1 + m_2 + m_3 \geq m_{3e}$, $2k > 7C$, and $m_1 + m_2 + m_3 - m_{3e} + 2n_{2e} + n_{3e} \geq 0$  so $|u_i'| > 3k + 2 + i + C$.

Third, if $n_1 + n_2 + n_3 - n_{2e} - n_{3e} \geq i$ and $n_2 = n_{2e} = 0$, then $|u_i'| \geq 3k + 1 + 2m_1 + 2m_2 + 2m_3 - m_{3e} + n_1 + n_2 + n_3 + n_{2e} + L = 3k + 2 + (n_1 + n_2 + n_3) + (L + m_1 + m_2 + m_3 + n_1 - 1) + (m_1 + m_2 + m_3 - m_{3e}) - n_1$. Note, $n_1 + n_2 + n_3 \geq n_1 + n_2 + n_3 - n_{2e} - n_{3e} \geq i$, $m_1 + m_2 + m_3 - m_{3e} \geq 0$, and $L + m_1 + m_2 + m_{2e} + n_1  - 1= L + m_1 + m_2 + m_{2e} + n_1 + n_2 + n_{2e} - 1 \geq 2C+ 2.$ Thus, $|u_i'| \geq 3k + 2 + i + (2C + 2) - n_1 > 3k + 2 + i + C$ as $n_1 \leq C$.

Finally, if $n_1 + n_2 + n_3 - n_{2e} - n_{3e} \geq i$ and $n_2 > 0$ or $n_{2e} > 0$, then $|u_i'| \geq 3k + 1 + 2m_1 + 2m_2 + 2m_3 - m_{3e} + n_1 + n_2 + n_3 + n_{2e} + 2k - 6C = 3k + 2 + (n_1 + n_2 + n_3) + C + (2k - C - 1) + (2m_1 + 2m_2 + 2m_3 - m_{3e}) + n_{2e}$. Note, $n_1 + n_2 + n_3 \geq n_1 + n_2 + n_3 - n_{2e} - n_{3e} \geq i$, $m_1 + m_2 + m_3 - m_{3e} \geq 0$, and $2k - C - 1 > 0$ meaning $|u_i'| > 3k + 2 + i + C$.

In every case, we see that $|u_i'| > 3k + 2 + i + C \geq 3k + 2 + i + c = |\overline{u_i'}| + c \geq |u_i'|$. This is a contradiction.


\vspace*{10pt}
\noindent \textbf{\underline{Ordering $t_0 < t_a < t_b < t_c < t_f$}}

We now handle the time intervals given by $t_0 < t_a < t_b < t_c < t_f$ and account for the total rotations needed in each interval.

\vspace*{5pt}
\noindent \textbf{Interval $t_0 \leq t \leq t_a$:}

At $t_0$, all of $A$ is mapped to the left of the pivot and to the right of $v_a$. Thus, $C_L(t_0) = k$ and $d_a(t_0) = k + 1$. Because these vertices are mapped to the left of the pivot, the only way to decrease $d_a$ is with $x_0$ rotations. We know $d_a(t_a - 1) = 0$, so we must increment $C_{x_0}$ by at least $k + 1$.

At $t_a$, a single $x_1^{-1}$ rotation makes $v_a$ internal, so we increment $C_ {x_1^{-1}}$ by $1$.

Additionally, some of the vertices in $A$ and $B$ may be internal at $t_a$, say $m_1$ of the vertices in $A$ and $n_1$ of the vertices in $B$. Because $C_I(t_0) = 0$, it will take a single $x_1^{-1}$ rotation to make each of these internal, requiring us to increment $C_ {x_1^{-1}}$ by another $m_1 + n_1$ this step.

In total, this interval requires us to increment $C_{x_0}$ by $k + 1$ and $C_ {x_1^{-1}}$ by $m_1 + n_1 + 1$ for a total of $k + m_1 + n_1 + 2$ rotations in this interval.

\vspace*{5pt}
\noindent \textbf{Interval $t_a < t \leq t_b$:}

Because $\overline{u_i'(t_a)}(v_a) =[\frac{1}{2},\frac{3}{4}]$ and $F$ preserves the infix ordering of vertices, $\overline{u_i'(t_a)}$ maps $A \cup B$ at or to the right of the pivot. Thus, $C_L(t_a) = 0$. We also know that $C_I(t_a) = m_1 + n_1$, so $C_R(t_a) = 2k - 1 - (m_1 + n_1)$ and $d_b(t_a) \geq 2k - m_1 - n_1 - 1$. These vertices are mapped to the right spine, so reducing $d_b$ can be accomplished either by moving them to the left spine of the tree with $x_0^{-1}$ rotations or making them internal with $x_1^{-1}$ rotations. Because $\overline{u_i'(t_b - 1)}(v_b) = [\frac{1}{2}, 1]$, $d_b(t_b - 1) = 0$ and we must increment $C_{x_0^{-1} || x_1^{-1}}$ by at least $2k - m_1 - n_1 - 1$.

At $t_b$, a single $x_1^{-1}$ rotation makes $v_b$ internal, so we increment $C_ {x_1^{-1}}$ by $1$.

It is possible some of the vertices in $A \cup B$ that were internal at $t_a$ are external at $t_b$. Additionally, some of these vertices which were external at $t_a$ may be internal at $t_b$. We may have already accounted for making these new vertices internal. Say that by time $t_b$, $m_2$ vertices in $A$ are internal that were external at $t_a$ and $n_2$ vertices in $B$ are internal that were external at $t_a$. Of the $m_1$ vertices in $A$ that were internal at $t_a$, say $m_{2e}$ are external at $t_b$, and of the $n_1$ vertices in $B$ that were internal at $t_a$, $n_{2e}$ are external at $t_b$. We must increment  $C_{x_1}$ by $m_{2e} + n_{2e}$ to accomplish this change. In total then, $C_I(t_b) = (m_1 + m_2 - m_{2e}) + (n_1 + n_2 - n_{2e})$.

In total, this interval requires us to increment $C_{x_0^{-1} || x_1^{-1}}$ by $2k - m_1 - n_1 - 1$, $C_ {x_1^{-1}}$ by $1$, and $C_{x_1}$ by $m_{2e} + n_{2e}$ for a total of $2k - m_1 - n_1 + m_{2e} + n_{2e}$ rotations in this interval.

\vspace*{5pt}
\noindent \textbf{Interval $t_b < t \leq t_c$:}

Note, $\overline{u_i'(t_b)}(v_b) =[\frac{1}{2},\frac{3}{4}]$. Because $F$ preserves the infix ordering of vertices, all of $A \cup B$ is mapped to the left of position $[\frac{1}{2}, \frac{3}{4}]$. Thus $C_R(t_b) = 0$. We know that $C_I(t_b) = m_1 + m_2 + n_1 + n_2 - m_{2e} - n_{2e}$, so $C_L(t_b) = 2k - 1 - (m_1 + m_2 + n_1 + n_2 - m_{2e} - n_{2e})$.
$d_a(t_c) = M - 1$ and $v_a$ is already internal and will remain internal through this interval. Thus, $C_L(t_c) = M - 2 < C$. Note, $m_1 + m_2 - m_2e < 2C$ by Lemma $4.21$ and $n_1 + n_2 - n_{2e} \leq k$. This means $C_L(t_b) \geq k - 1 - C$ which is significantly larger than $C$. Thus, we can note with certainty that $C_L(t_b) > C_L(t_c)$. We must decrease $C_L$ using $x_0$ rotations to move vertices from the left side of the tree to the right side. Thus, we must increment $C_{x_0}$ by at least $2k - 1 - (m_1 + m_2 + n_1 + n_2 - m_{2e} - n_{2e}) - (M - 2) = 2k + 1 - M - m_1 - m_2 - n_1 - n_2 + m_{2e} + n_{2e}$. This is sufficient accounting for this interval.

\vspace*{5pt}
\noindent \textbf{Interval $t_c < t \leq t_f$:}

Above, we noted that $C_L(t_c) = M - 2$. We know that $\overline{u_{i}'(t_f)}([\frac{2^{k - 1 - i} - 1}{2^{k - 1 - i}}, 1]) = [0, 1]$, the root of the tree. Thus, we must have $C_L(t_f) = 2k - 1 - i$. Increasing $C_L$ from $M - 2$ to $2k - 1 - i$ would require $x_0^{-1}$ rotations to bring vertices from the right spine of the tree to the left spine. Thus, we must increment $C_{x_0^{-1}}$ by $2k - 1 - i - M + 2$.

\vspace*{5pt}
\noindent \textbf{Total Rotations:}

In total, we have established that $|u_i'| \geq (k + m_1 + n_1 + 2) + (2k - m_1 - n_1 + m_{2e} + n_{2e}) + (2k + 1 - M - m_1 - m_2 - n_1 - n_2 + m_{2e} + n_{2e}) + (2k - 1 - i - M + 2) = 7k + 4 - 2M - i - m_1 - m_2 + 2m_{2e} - n_1 - n_2 + 2n_{2e} = 3k + i + 2 + C + (k - C - 2M - m_1 - m_2) + (k - i) + (k - n_1 - n_2) + (k - i) + 2m_{2e} + 2n_{2e} + 2 > 3k + i + 2 + C \geq 3k + 2 + i + c = |\overline{u_i'}| + c \geq |u_i'|$. This is a contradiction.


\vspace*{10pt}
\noindent \textbf{Conclusion:}

Now, with either ordering we see that the length of $u_i'$ must be longer than the necessary restriction that $|u_i'| \leq |\overline{u_i'}| + c$ demands. Because we have a contradiction either way, we see that in $u_i'$, if the final time $v_a$ is made internal occurs before the first time $v_b$ is made internal, then, $d_a(u_i'(t)) > M - 1$ if $t > k + 1 + 2C + M$.
\end{proof}


\begin{lem*}{\textbf{4.25}}
If in $u_i'$, $(k - 1) - (C + 1) \leq i \leq k - 2$, the following two conditions hold:
\begin{enumerate}
\item The final time $v_a$ is made internal occurs before the first time $v_b$ is made internal,
\item No more than $C$ of the vertices in $B$ are internal at the final time $v_a$ is made internal,
\end{enumerate}
then, in $u_{i+1}'$, $(1)$ holds as well.
\end{lem*}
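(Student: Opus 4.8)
The plan is to argue by contradiction, following the skeleton of Lemma 4.19(1) but with the roles of $v_a$ and $v_b$ interchanged, since on the $g_k$ path it is $v_a$ that must be made internal first. So suppose that in $u_{i+1}'$ the first time $v_b$ is made internal occurs strictly before the final time $v_a$ is made internal.

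The first task is to transfer structural information from $u_i'$ to $u_{i+1}'$ using the $M$-fellow-traveler property together with Lemmas 4.21 and 4.24. The two facts to extract are: (a) $v_b$ cannot be made internal in $u_{i+1}'$ at or before the final time $v_a$ is made internal in $u_i'$ (which by Lemma 4.24 is at most $k+M+2C+3$); and (b) in $u_{i+1}'$, $v_b$ cannot be made internal unless $v_a$ is already internal. For (a): at the time in $u_i'$ just named, $v_a$ sits at $[\frac{1}{2},\frac{3}{4}]$, so fellow-traveling forces $d_a\le M+1$ in $u_{i+1}'$ there; an early internalization of $v_b$ would also force $d_b$ down to $0$; but reducing $d_a$ from $k+1$ to $M+1$ uses only $x_0$-rotations while reducing $d_b$ from $k-1$ to $0$ uses only $x_0^{-1}/x_1^{-1}$-rotations, and these disjoint families already total more than $k+M+2C+3$ since $k>1000\,C\ge 1000\max(c,M)$. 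Fact (b) then follows exactly as in Lemma 4.19(1): at a would-be internalization of $v_b$ with $v_a$ not yet internal, $d_a$ is small, so since $F$ preserves the infix order, $v_a$ is infix-before every other vertex of $A\cup B$ and $v_b$ is infix-after every other vertex of $A\cup B$, forcing all but at most $M+1$ vertices of $A\cup B$ to be internal, hence more than $2C$ vertices of $A$, contradicting Lemma 4.21.

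With (a)--(b) in hand, let $t_1$ be the first time $v_b$ is made internal in $u_{i+1}'$, so $v_a$ is internal at $t_1$; by the supposition $v_a$ is made external and then re-internalized afterwards, so let $t_e>t_1$ be the first time $v_a$ is made external after $t_1$, and $t_f=|u_{i+1}'|$. I would then record the geometry and count rotations over $[0,t_1]$, $(t_1,t_e]$, $(t_e,t_f]$, using that $d_a$ decreases only under $x_0$, that $C_L$ increases only under $x_0^{-1}$, that $d_b$ decreases only under $x_0^{-1}/x_1^{-1}$, and that $C_I$ decreases only under $x_1$. At $t_1$, $v_b$ sits at $[\frac{1}{2},\frac{3}{4}]$, so every other vertex of $A\cup B$ is infix-before it, whence $C_R=0$; writing $n_1$ for the number of vertices of $B$ internal at $t_1$ (so $n_1\ge1$), Lemma 4.21 gives $C_I\le 2C+n_1$, hence $C_L\ge 2k-1-2C-n_1$, and since $v_a$ is infix-before all those left-spine vertices, $d_a(u_{i+1}'(t_1))\ge 2k+1-2C-n_1$. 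At $t_e-1$, $v_a$ sits at $[\frac{1}{2},\frac{3}{4}]$, so every other vertex of $A\cup B$ is infix-after it and $C_L(u_{i+1}'(t_e))=0$; and the $u_{i+1}$-analogue of the computation in Lemma 4.21 gives $C_L(u_{i+1}'(t_f))=2k-2-i\ge k$. Counting: on $[0,t_1]$, bringing $v_a$ to the pivot forces $\ge k+1$ $x_0$-rotations, bringing $v_b$ to the pivot forces $\ge k-1$ disjoint $x_0^{-1}/x_1^{-1}$-rotations, and the internalization of $v_b$ at $t_1$ is one more $x_1^{-1}$; on $(t_1,t_e]$, reducing $d_a$ to $1$ forces $\ge 2k-2C-n_1$ further $x_0$-rotations, and the externalization at $t_e$ is one $x_1$-rotation; the $n_1$ vertices of $B$ internal at $t_1$ must each be externalized by $t_f$, contributing $\ge n_1$ further $x_1$-rotations; and $C_L$ growing from $0$ at $t_e$ to $2k-2-i$ forces $\ge 2k-2-i$ $x_0^{-1}$-rotations after $t_e$. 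These families are disjoint by letter and/or interval, and the undetermined $n_1$ cancels between the $d_a$-reduction bound and the $B$-externalizations, giving
\[
|u_{i+1}'|\;\ge\;(k+1)+(k-1)+1+(2k-2C-n_1)+1+n_1+(2k-2-i)\;=\;6k-2C-i.
\]
Since $i\le k-2$ and $k>1000\,C$, this exceeds $3k+3+i+c=|\overline{u_{i+1}}|+c\ge|u_{i+1}'|$, a contradiction, which proves condition (1) for $u_{i+1}'$.

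The main obstacle is the rotation-counting in this last step: one must verify carefully that each forced family of rotations -- $x_0$ for the two journeys of $v_a$, $x_0^{-1}/x_1^{-1}$ for the journey of $v_b$, $x_1$ for the externalizations, and the late block of $x_0^{-1}$'s from the growth of $C_L$ -- is genuinely disjoint from the others, and that the bound on $(t_1,t_e]$ combines with the $B$-externalizations so that the unknown $n_1$ drops out, exactly the cancellation that makes the analogous count in Lemma 4.19(1) work. The feature peculiar to this lemma is that $i$ is close to $k$, so the block of $\approx k$ extra $x_0^{-1}$-rotations coming from the late growth of $C_L$ is essential; without it the total would be only about $4k$ and would fail to beat the length budget $3k+3+i+c$.
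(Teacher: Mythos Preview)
Your argument is correct and follows essentially the same architecture as the paper's proof: contradiction, transfer of timing constraints from $u_i'$ to $u_{i+1}'$ via Lemma~4.24 and fellow-travelling, the key fact that $v_a$ must already be internal whenever $v_b$ is first internalized in $u_{i+1}'$, and then a rotation count that overruns the length budget.

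The main difference is in the decomposition for the final count. The paper partitions at $t_0<t_c<t_b<t_a<t_f$, where $t_c$ is an internalization of $v_a$ before $t_b$, $t_b$ is the first internalization of $v_b$, and $t_a$ is the \emph{final} internalization of $v_a$; the paper then counts the extra round trip $t_b\to t_a$ (re-internalizing $v_a$) and the $C_L$-growth after $t_a$, arriving at $|u_{i+1}'|\ge 7k+1-i+(\text{nonneg.})$. You partition instead at $t_0<t_1<t_e<t_f$, where $t_1$ is the first internalization of $v_b$ and $t_e$ is the first \emph{externalization} of $v_a$ after $t_1$; you then count the $C_L$-collapse on $(t_1,t_e]$ (via $x_0$) and the $C_L$-growth on $(t_e,t_f]$ (via $x_0^{-1}$), arriving at $|u_{i+1}'|\ge 6k-2C-i$ (your $2k-2C-n_1$ on $(t_1,t_e]$ should strictly be $2k-1-2C-n_1$, but the slack is enormous). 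Your version skips the explicit accounting of the second internalization of $v_a$ and of the auxiliary parameters $m_j,n_j,m_{je},n_{je}$ that the paper tracks, at the cost of a slightly weaker (but still ample) lower bound. Both counts exploit the same underlying geometry; your decomposition is just a bit leaner.
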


\begin{proof}
Assume that in $u_i'$, conditions $(1)$ and $(2)$ hold as above. Suppose not for contradiction. That is, in $u_{i+1}'$, the first time $v_b$ is made internal occurs before the final time $v_a$ is made internal.

Because conditions $(1)$ and $(2)$ hold in $u_i'$, Lemma $4.23$ tells us $d_a(u_i'(t)) > M - 1$ if $t > k + 1 + 2C + M$. In order for $v_a$ to be made internal in a word, $d_a$ must be $0$ at the moment before it is made internal. In $u_i'$, $v_a$ must be internal at the end of the word. Thus, if $t > k + 1 + 2C + M$, $v_a$ must be internal at time $u_i'(t)$. Note that $u_i'$ and $u_{i+1}'$ are $M$ fellow travelers. Because $v_a$ is internal in $u_i'$ from time $t$ onward, it must be the case that either $v_a$ is internal in $u_{i+1}'$ or can be made internal in $u_{i+1}'$ within $M$ rotations.

We will begin by considering times in $u_{i+1}'$: $t_0 = 0$, $t_f = |u_{i + 1}'|$, $t_a$ which is the final time $v_a$ is made internal in $u_{i+1}'$, and $t_b$ which is the first time $v_b$ is made internal in $u_{i+1}$. By our supposition $ t_b < t_a$. Most of these times are clearly distinct because the action of $F$ on the tree is bijective and $(u_{i + 1}'(t_0))([\frac{1}{2}, \frac{3}{4}]) =  (u_{i + 1}'(t_a))(v_a) =  (u_{i + 1}'(t_b))(v_b) =  [\frac{1}{2}, \frac{3}{4}]$. Time $t_f$ must be distinct from the others because it is certainly not $t_0$ as vertices are made internal in $w_i'$ and because $v_a$ and $v_b$ are mapped approximately $k$ from the pivot at $t_f$. This means $t_0 < t_b < t_a < t_f$.

\begin{claim}
$t_b > k + 2 + 2C + M$. 
\end{claim}

\begin{proof}
Suppose not for contradiction. That is $t_b \leq k + 2 + 2C + M$. Then at some time $t_1 \leq k + 2 + 2C + M$, $d_b(u_{i+1}(t_1)) = 0$. Additionally, we know by Lemma $4.24$, that at some time $t_2 \leq k + 2+ 2C + M$, $d_a(u_{i}(t_2)) \leq M - 1$. This holds because $d_a$ is initially greater than $M - 1$ in $u_i'$, it must be reduced to zero before $v_a$ can be made internal in $u_i'$, $v_a$ is ultimately internal in $u_i'$, and the only times $d_a(u_i)$ can be reduced below $M - 1$ are obviously less than $k + 2 + 2C + M$. Because $u_i'$ and $u_{i+1}'$ are $M$ fellow travelers, we know that at $t_2$ it must be possible, within $M$ steps, to reach a point where $d_a$ is $M - 1$ in $u_{i+1}'$. Thus, $0 \leq d_a(u_{i+1}(t_2)) \leq 2M -1$.

Note, $d_b(u_{i+1}(t_0)) = k - 1$ and $d_a(u_{i+1}(t_0)) = k + 1$. Reducing $d_a(u_{i+1})$ from $k+1$ to $2M - 1$ requires a total of at least $(k + 1) - (2M - 1) = k + 2 - 2M$ $x_0$ rotations to map the intermediate vertices from the left spine to the right spine. Reducing $d_b$ from $k - 1$ to $0$ requires a combination of $k - 1$ $x_1^{-1}$ and $x_0^{-1}$ rotations. In total, this requires $2k + 1 - 2M$ rotations to occur before $t_b$. However, $2k + 1 - 2M > k + 2 + 2C + M$ as $k \geq 1000C$ where $C = max(c, M)$ and we have a contradiction. Thus, $t_b > k + 1 + C + M$.
\end{proof}

Now, we wish to consider a final time, $t_c$. We claim that at $t_c$, $v_a$ is made internal in $u_{i+1}'$ before $t_b$. 

\begin{claim}
There exists $t_c \leq t_b$ such that $v_a$ is made internal in $u_{i+1}'$ at $t_c$.
\end{claim}

\begin{proof}
Suppose not for contradiction. That is, there are no times before $t_b$ where $v_a$ is made internal in $u_{i+1}'$. In our previous claim, we have established that $t_b > k + 2 + C + M$. We also know that after $k + 2 + 2C + M$, $v_a$ must be internal in $u_i'$ and cannot be made external again. Because $u_i'$ and $u_{i+1}'$ are $M$ fellow travelers, we know that while $v_a$ is internal in $u_i'$, it is either internal in $u_{i+1}'$ or can be made internal within $M$ rotations. Thus, from $k + 2 + 2C + M$ onward in $u_{i+1}'$, one of these two states hold. By our supposition, $v_a$ is not made internal in $u_{i+1}'$ before $t_b$. So for $k + 1 + 2C + M < t < t_b$, it must be the case that $d_a(u_{i+1}'(t)) \leq M - 1$. However, we also require that $d_b(u_{i+1}'(t_b - 1)) = 0$. This would require that $C_R(u_{i+1}(t_b - 1)) = 0$, $C_L(u_{i+1}(t_b - 1)) \leq M - 1$, and therefore $C_I(u_{i+1}(t_b - 1)) \geq 2k - 1 - (M - 1)$. In order for us to accomplish this, all but $M - 1$ of the vertices in $A \cup B$ must be internal. But then more than $2C$ of the vertices in $A$ would need to be internal, a clear contradiction to Lemma $4.21$. Thus, at some point $t_c < t_b$, $v_a$ is made internal in $u_{i+1}'$.
\end{proof}

Now, we have established that $t_0 < t_c < t_b < t_a < t_f$. We can consider the impact this has on $|u_{i+1}'|$.

\vspace*{5pt}
\noindent \textbf{Interval $t_0 \leq t \leq t_c$:}

At $t_0$, all of $A$ is mapped to the left of the pivot and to the right of $v_a$. Thus, $C_L(t_0) = k$ and $d_a(t_0) = k + 1$. Because these vertices are mapped to the left of the pivot, the only way to decrease $d_a$ is with $x_0$ rotations. To make $v_a$ internal at $t_c$, $d_a(t_c - 1) = 0$, so we must increment $C_{x_0}$ by at least $k + 1$.

At $t_c$, a single $x_1^{-1}$ rotation makes $v_a$ internal, so we increment $C_ {x_1^{-1}}$ by $1$.

Additionally, some of the vertices in $A$ and $B$ may be internal at $t_c$, say $m_1$ of the vertices in $A$ and $n_1$ of the vertices in $B$. Because $C_I(t_0) = 0$, it will take a single $x_1^{-1}$ rotation to make each of these internal, requiring us to increment $C_ {x_1^{-1}}$ by another $m_1 + n_1$ this step.

In total, this interval requires us to increment $C_{x_0}$ by $k + 1$ and $C_ {x_1^{-1}}$ by $m_1 + n_1 + 1$ for a total of $k + m_1 + n_1 + 2$ rotations in this interval.

\vspace*{5pt}
\noindent \textbf{Interval $t_c < t \leq t_b$:}

Because $\overline{u_{i+1}'(t_c)}(v_a) =[\frac{1}{2},\frac{3}{4}]$ and $F$ preserves the infix ordering of vertices, $\overline{u_{i+1}'(t_c)}$ maps $A \cup B$ at or to the right of the pivot. Thus, $C_L(t_c) = 0$. We also know that $C_I(t_c) = m_1 + n_1$, so $C_R(t_c) = 2k - 1 - (m_1 + n_1)$ and $d_b(t_c) \geq 2k - m_1 - n_1 - 1$. These vertices are mapped to the right spine, so reducing $d_b$ can be accomplished either by moving them to the left spine of the tree with $x_0^{-1}$ rotations or making them internal with $x_1^{-1}$ rotations. Because $\overline{u_{i+1}'(t_b - 1)}(v_b) = [\frac{1}{2}, 1]$, $d_b(t_b - 1) = 0$ and we must increment $C_{x_0^{-1} || x_1^{-1}}$ by at least $2k - m_1 - n_1 - 1$.

At $t_b$, a single $x_1^{-1}$ rotation makes $v_b$ internal, so we increment $C_{x_1^{-1}}$ by $1$.

It is possible some of the vertices in $A \cup B$ that were internal at $t_c$ are external at $t_b$. Additionally, some of these vertices which were external at $t_c$ may be internal at $t_b$. We may have already accounted for making these new vertices internal. Say that by time $t_b$, $m_2$ vertices in $A$ are internal that were external at $t_c$ and $n_2$ vertices in $B$ are internal that were external at $t_c$. Of the $m_1$ vertices in $A$ that were internal at $t_c$, say $m_{2e}$ are external at $t_b$, and of the $n_1$ vertices in $B$ that were internal at $t_c$, $n_{2e}$ are external at $t_b$. We must increment  $C_{x_1}$ by $m_{2e} + n_{2e}$ to accomplish this change. In total then, $C_I(t_b) = (m_1 + m_2 - m_{2e}) + (n_1 + n_2 - n_{2e})$.

In total, this interval requires us to increment $C_{x_0^{-1} || x_1^{-1}}$ by $2k - m_1 - n_1 - 1$, $C_ {x_1^{-1}}$ by $1$, and $C_{x_1}$ by $m_{2e} + n_{2e}$ for a total of $2k - m_1 - n_1 + m_{2e} + n_{2e}$ rotations in this interval.

\vspace*{5pt}
\noindent \textbf{Interval $t_b < t \leq t_a$:}

Because $\overline{u_{i+1}'(v_b)} =[\frac{1}{2},\frac{3}{4}]$ and $F$ preserves the infix ordering of vertices, $\overline{u_{i+1}'(t_b)}$ maps $A \cup B$ to the left of the pivot. Thus, $C_R(t_b) = 0$. We also know that $C_I(t_b) = m_1 + m_2 - m_{2e} + n_1 + n_2 - n_{2e}$, so $C_L(t_b) = 2k - 1 - (m_1 + m_2 - m_{2e} + n_1 + n_2 - n_{2e})$ and $d_a(t_b) \geq 2k - m_1 - m_2 + m_{2e} - n_1 - n_2 + n_{2e}$. These vertices are mapped to the left spine, so reducing $d_a$ can only be accomplished by moving vertices to the right spine of the tree with $x_0$ rotations. Because $\overline{u_{i+1}'(t_a - 1)}(v_a) = [\frac{1}{2}, 1]$, $d_a(t_a - 1) = 0$ and we must increment $C_{x_0}$ by at least $2k - m_1 - m_2 + m_{2e} - n_1 - n_2 + n_{2e}$.

At $t_a$, a single $x_1^{-1}$ rotation makes $v_a$ internal, so we increment $C_ {x_1^{-1}}$ by $1$.

It is possible some of the vertices in $A \cup B$ that were internal at $t_b$ are external at $t_a$. Additionally, some of these vertices which were external at $t_b$ may be internal at $t_a$. Say that by time $t_a$, $m_3$ vertices in $A$ are internal that were external at $t_b$ and $n_3$ vertices in $B$ are internal that were external at $t_b$. We must increment  $C_{x_1^{-1}}$ by $m_3 + n_3$ to accomplish this change. Of the vertices in $A$ that were internal at $t_b$, say $m_{3e}$ are external at $t_a$, and of the vertices in $B$ that were internal at $t_b$, $n_{3e}$ are external at $t_a$. We must increment  $C_{x_1}$ by $m_{3e} + n_{3e}$ to accomplish this change. In total then, $C_I(t_a) = (m_1 + m_2 + m_3 - m_{2e} - m_{3e}) + (n_1 + n_2 + n_3 - n_{2e} - n_{3e})$.

In total, this interval requires us to increment $C_{x_0^{-1} || x_1^{-1}}$ by $2k - m_1 - m_2 + m_{2e} - n_1 - n_2 + n_{2e}$ by $m_3 + n_3 + 1$, and $C_{x_1}$ by $m_{3e} + n_{3e}$ for a total of $2k + 1 - m_1 - m_2 + m_3 + m_{2e} + m_{3e} - n_1 - n_2 + n_3 + n_{2e} + n_{3e}$ rotations in this interval.

\vspace*{5pt}
\noindent \textbf{Interval $t_a < t \leq t_f$:}

Note, $\overline{u_{i+1}'(v_a)} =[\frac{1}{2},\frac{3}{4}]$. Because $F$ preserves the infix ordering of vertices, this means all of $A \cup B$ is mapped to the right of position $[\frac{1}{2}, \frac{3}{4}]$. Thus it must be on the right side of the tree, meaning $C_L(t_a) = 0$. We know that $C_I(t_a) = m_1 + m_2 + m_3 - m_{2e} - m_{3e}) + (n_1 + n_2 + n_3 - n_{2e} - n_{3e}$, so $C_R(t_a) = 2k - 1 - (m_1 + m_2 + m_3 - m_{2e} - m_{3e}) + (n_1 + n_2 + n_3 - n_{2e} - n_{3e})$. Note that $C_I(t_f) = 0$ and $\overline{u_{i+1}'(t_f)}([\frac{2^{k - 2 - i} - 1}{2^{k - 2 - i}}, 1]) = [0, 1]$, the root of the tree. Thus, we must have $C_L(t_f) = 2k - 2 - i$.

Bringing $C_I(t_f)$ to $0$ requires all of $A \cup B$ to be internal. Because $C_I(t_b) = m_1 + m_2 + m_3 - m_{2e} - m_{3e} + n_1 + n_2 + n_3 - n_{2e} - n_{3e}$, it will take at least that many $x_1$ rotations to make vertices external, incrementing $C_{x_1}$ by $m_1 + m_2 + m_3 - m_{2e} - m_{3e} + n_1 + n_2 + n_3 - n_{2e} - n_{3e}$.

Nothing we have done yet increases $C_L$, which can only be done with $x_0^{-1}$ rotations. Bringing $C_L$ from $0$ to $2k - 2 - i$ requires us to increment $C_{x_0^{-1}}$ by $2k - 2 - i$.

In total, this interval requires us to increment $C_{x_1}$ by $m_1 + m_2 + m_3 - m_{2e} - m_{3e} + n_1 + n_2 + n_3 - n_{2e} - n_{3e}$ and $C_ {x_0^{-1}}$ by $2k - 2 - i$ for a total of $2k - 2 - i + m_1 + m_2 + m_3 - m_{2e} - m_{3e} + n_1 + n_2 + n_3 - n_{2e} - n_{3e}$ rotations in this interval.

\vspace*{5pt}
\noindent \textbf{Total Rotations:}

In total, we have established that $|u_{i+1}'| \geq (k + m_1 + n_1 + 2) + (2k - m_1 - n_1 + m_{2e} + n_{2e}) + (2k + 1 - m_1 - m_2 + m_3 + m_{2e} + m_{3e} - n_1 - n_2 + n_3 + n_{2e} + n_{3e}) + (2k - 2 - i + m_1 + m_2 + m_3 - m_{2e} - m_{3e} + n_1 + n_2 + n_3 - n_{2e} - n_{3e}) = 7k + 1 - i + m_{2e} + n_{2e} + 2m_3 + 2n_3 > 7k - i \geq 6k > 3k + 2 + (i + 1) + C \geq 3k + 2 + (i + 1) + c = |\overline{u_{i+1}'}| + c \geq |u_{i+1}'|$. This is a contradiction.

\end{proof}


\begin{lem*}{\textbf{4.26}}
If in $u_i'$, $(k - 1) - (C + 1) \leq i \leq k - 2$, the following two conditions hold:
\begin{enumerate}
\item The final time $v_a$ is made internal occurs before the first time $v_b$ is made internal,
\item No more than $C$ of the vertices in $B$ are internal at the final time $v_a$ is made internal,
\end{enumerate}
then, in $u_{i+1}'$, $(2)$ holds as well.
\end{lem*}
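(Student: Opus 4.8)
The plan is to propagate condition (2) by a direct length count on $u_{i+1}'$, modeled on the proof of Lemma 4.23 (which established (2) for the base word $u_{(k-1)-(C+1)}'$), with Lemma 4.25 supplying the ordering of $v_a$ and $v_b$ in $u_{i+1}'$ and Lemma 4.21 bounding how many vertices of $A$ can be internal at once.

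First I would record what the hypotheses give. Since conditions (1) and (2) hold in $u_i'$, Lemma 4.25 applies and condition (1) holds in $u_{i+1}'$: the final time $v_a$ is made internal in $u_{i+1}'$ precedes the first time $v_b$ is made internal there. (Should the argument need to pin down where $v_a$ sits during $u_{i+1}'$, I would also invoke Lemma 4.24 for $u_i'$ together with the $M$-fellow-traveler property, exactly as in the proof of Lemma 4.19.) Now suppose, for contradiction, that (2) fails in $u_{i+1}'$: at the final time $t_a$ that $v_a$ is made internal in $u_{i+1}'$, at least $C+1$ of the vertices of $B$ are internal. Put $t_0 = 0$, let $t_b$ be the first time $v_b$ is made internal in $u_{i+1}'$, and $t_f = |u_{i+1}'|$. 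By condition (1) in $u_{i+1}'$, bijectivity of the action, and the standard observation that $\overline{u_{i+1}'(t_0)}([\tfrac12,\tfrac34]) = \overline{u_{i+1}'(t_a)}(v_a) = \overline{u_{i+1}'(t_b)}(v_b) = [\tfrac12,\tfrac34]$ while $v_a$ and $v_b$ are carried roughly $k$ from the pivot at $t_f$, these four times are distinct with $t_0 < t_a < t_b < t_f$.

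Next I would run the interval count exactly as in Lemma 4.23, but with the index $i+1$ in place of $(k-1)-(C+1)$. On $[t_0,t_a]$: reducing $d_a$ from $k+1$ to $0$ costs at least $k+1$ $x_0$-rotations, internalizing $v_a$ one $x_1^{-1}$, and internalizing the $m_1 \le 2C$ vertices of $A$ and $n_1 \ge C+1$ vertices of $B$ internal at $t_a$ another $m_1 + n_1$ $x_1^{-1}$-rotations; moreover, since only the $C+1$ distinguished vertices $[\tfrac{2^q-1}{2^q},1]$ with $1 \le q \le C+1$ lie to the left of the rest of $B$ in the infix order, having $n_1 \ge C+1$ of $B$ internal at $t_a$ forces those distinguished vertices that are not internal at $t_a$ to be shuttled onto the left spine and back, a further $\ge 2o$ rotations. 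On $(t_a,t_b]$: with $v_a$ at $[\tfrac12,\tfrac34]$ we get $C_L(t_a) = 0$ and $d_b(t_a) \ge 2k - 1 - m_1 - n_1$, so reducing $d_b$ to $0$ and internalizing $v_b$ costs $\ge 2k - m_1 - n_1$ rotations, plus $\ge 2(m_{2e} + n_{2e})$ to externalize and relocate to the left spine the vertices that drop out of the interior. On $(t_b,t_f]$: $C_I(t_f) = 0$ forces $\ge C_I(t_b)$ $x_1$-rotations emptying the interior, $\ge m_1 + m_2 - m_{2e} + p$ $x_0^{-1}$-rotations pushing the once-internal $A$-vertices and distinguished $B$-vertices onto the left spine, and the further $x_0$-rotations needed to bring $C_R$ up to $C_R(t_f) = i+1$; the only change from Lemma 4.23 is that here $C_L(t_f) = 2k - 2 - i$ and $C_R(t_f) = i+1$, which slightly shifts how many externalized vertices must finish on each spine.

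Summing the three intervals and using $n_1 \ge C+1$ together with the distinguished-vertex bookkeeping (which leaves $o + p + n_{2e}$ bounded below appropriately), the contributions combine, as in Lemma 4.23, to force $|u_{i+1}'|$ strictly above $3k + 2 + (i+1) + C$; since $C \ge c$ and $i+1 \le k-1$ this contradicts $|u_{i+1}'| \le |\overline{u_{i+1}'}| + c = 3k + 2 + (i+1) + c$, so (2) must hold in $u_{i+1}'$. The main obstacle is exactly the delicate part of Lemma 4.23: squeezing genuinely new rotations ($2o$ on $[t_0,t_a]$, the $2(m_{2e}+n_{2e})$ relocations on $(t_a,t_b]$, and the $+p$ relocations on $(t_b,t_f]$) purely out of the infix-ordering constraint that at least $C+1$ of $B$ are internal at $t_a$ while only $C+1$ of those vertices sit to the left in infix order, and then checking that the closing inequality still survives over the entire range $k - 1 - C \le i+1 \le k-1$ — the tightest case being $i+1 = k-1$ (so $u_{i+1}' = u_{k-1}'$, the $L$-representative of $\overline{g_k}$), where $|u_{i+1}'| \le 4k+1+c$ and the count must beat that bound.
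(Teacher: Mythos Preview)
Your direct rerun of the Lemma~4.23 count has a genuine gap at the top of the range, and it is exactly the case you flag as tightest. The extra rotations in Lemma~4.23 come from two sources tied to the $C+1$ leftmost vertices of $B$: the $2o$ shuttle on $[t_0,t_a]$, and the $+p$ relocations on $(t_b,t_f]$ needed because those vertices must finish on the \emph{left} spine. But for $u_{j}'$ with $j=i+1$, the number of $B$--vertices that finish on the left spine is $k-1-j$, which drops to $0$ when $j=k-1$. So for $j$ near $k-1$ the $p$--term disappears, and the key inequality $o+p+n_{2e}\ge C+1$ is no longer available. Concretely, for $j=k-1$ take $o=m_1=m_2=m_{2e}=n_{2e}=0$, $n_1=C+1$, $n_2=k-2-C$ (so all of $B$ is internal at $t_b$); your three-interval sum is then $(k+C+3)+(2k-C-1)+(k-1)=4k+1$, which does \emph{not} exceed $|\overline{u_{k-1}}|+c=4k+1+c$. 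Thus the count yields no contradiction.

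The paper's proof is genuinely different and avoids this by actually using the inductive hypothesis~(2) in $u_i'$ (your sketch never invokes it). Via Lemma~4.24 and the $M$--fellow-traveler property, the paper pins down a time $t_{a'}\le k+M+2C+2$ in $u_{i+1}'$ at which $d_a$ is small \emph{and} at most $C$ vertices of $B$ are internal (inherited from $u_i'$), and then shows the final $t_a$ in $u_{i+1}'$ must occur strictly later. Between $t_{a'}$ and $t_a$ an additional $B$--vertex must be internalized, which forces all of $A$ to be swept left of the pivot and then back right again --- an extra cost of order $2k$. This back-and-forth, not the distinguished-vertex bookkeeping of Lemma~4.23, is what produces the contradiction uniformly across the range. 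Your parenthetical about ``invoking Lemma~4.24 together with fellow-traveler'' is exactly the missing ingredient, but it is not optional: without it the count simply does not close.
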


\begin{proof}
Suppose that conditions $(1)$ and $(2)$ hold as above in $u_i'$. Assume, for contradiction, that in $u_{i+1}'$, at the final time $v_a$ is made internal at least $C + 1$ vertices in $B$ are internal. We begin by noting that by Lemma $4.25$, condition $(1)$ holds in $u_{i+1}'$ as well. We also note that $u_i'$ and $u_{i+1}'$ are $M$ fellow travelers.

We start by considering times $t_0$, $t_a$, $t_b$, and $t_f$. Define $t_0 = 0$ and $t_f = |u_{i+1}'|$. Define $t_a$ as the final time that $v_a$ is made internal in $u_{i+1}'$ and $t_b$ as the first time $v_b$ is made internal in $u_{i+1}'$. Most of these times are clearly distinct because the action of $F$ on the tree is bijective and $(u_{i + 1}'(t_0))([\frac{1}{2}, \frac{3}{4}]) =  (u_{i + 1}'(t_a))(v_a) =  (u_{i + 1}'(t_b))(v_b) =  [\frac{1}{2}, \frac{3}{4}]$. Time $t_f$ must be distinct from the others because it is certainly not $t_0$ as vertices are made internal in $w_i'$ and because $v_a$ and $v_b$ are mapped approximately $k$ from the pivot at $t_f$.

By Lemma $4.24$, we know that from time $t_c = k + M + 2C + 2$ onward, $d_a(u_i'(t)) > M - 1$. Thus $v_a$ must be internal in $u_i'(t_c)$ and onwards as it is ultimately internal in $u_i$ and there is no way to make it internal without reducing $d_a$ to 0. Because $u_i'$ and $u_{i+1}'$ are $M$ fellow travelers, this means that from $t_c$ onward in $u_{i+1}'$, either $v_a$ is internal or can be made internal within $M$ rotations. 

\begin{claim}
$t_a > t_c$.
\end{claim}

\begin{proof}
Suppose not for contradiction. Then $t_a \leq t_c$ so $v_a$ is made internal for the last time in $u_{i+1}'$ before $t_c$. By our assumption, at least $C+1$ vertices in $B$ are internal at this time. However, $v_a$ is also made internal before $t_c$ in $u_i'$ with at most $C$ vertices in $B$ internal. Before $v_a$ can be made internal in either word, $d_a$ must be reduced from $k+1$ to $0$. This requires $k+1$ $x_0$ rotations and $v_a$ cannot be made internal prior to this. If $t_a < t_c$, this means $v_a$ can only be made internal between times $k+2$ and $k+2+2C+M$, leaving $2C + M \leq 3C$ rotations of this in either word. We have specifically accounted for $k+1$ $x_0$ rotations, meaning there can be less than $3C$ $x_1^{-1}$ rotations in this time before time $t_c$ in either word. Thus, at $t_c$, the distance between the pivot and the nearest vertex in $B$ is at least $k - 1 - 3C$. However, this means that at $t_c$, $v_a$ will be internal in both words with different numbers of vertices from $B$ internal. Because it would take at least $k - 1 - 3C > M$ rotations to make a vertex in $B$ internal or external in either word, this violates the $M$ fellow traveler property, a contradiction.
\end{proof}

Additionally, we note that there must be a time $t_{a'} \leq t_c$ where $d_a(t_{a'}) = M$ with at most $C$ vertices in $B$ internal. When $v_a$ is made internal for the final time in $u_i'$, it must be possible to make $v_a$ internal in $u_{i+1}'$ within $M$ steps. So $d_a$ would need to be at most $M$. Further, because at most $2C$ vertices in $A$ can be internal by Lemma $4.21$, the number of external vertices of $A$ is at least $k - 2C - M$ at this time. Because $F$ preserves the infix order, these vertices in $A$ are mapped between the pivot and all the vertices in $B$. Thus, making a vertex in $B$ external would first require moving all of these remaining vertices in $A$ off the right spine, requiring $k - 2C - M > M$ rotations. Thus, it would be impossible, within M steps, to make $v_a$ internal in $u_{i+1}'$ and make any vertex in $B$ external.

Clearly, $t_{a'} < t_a$. We can also note that at some point between these times, at least one vertex of $B$ must be made internal. Call the time when this occurs $t_d$. Now we have $t_0 < t_{a'} < t_d < t_a < t_b < t_f$ and we can consider the implications this has on $|u_{i+1}'|$.

\vspace*{5pt}
\noindent \textbf{Interval $t_0 \leq t \leq t_{a'}$:}

At $t_0$, all of $A$ is mapped to the left of the pivot and to the right of $v_a$. Thus, $C_L(t_0) = k$ and $d_a(t_0) = k + 1$. Because these vertices are mapped to the left of the pivot, the only way to decrease $d_a$ is with $x_0$ rotations. We know $d_a(t_{a'}) = M$, so we must increment $C_{x_0}$ by at least $k + 1 - M$.

Additionally, some of the vertices in $B$ may be internal at $t_{a'}$, say $n_1$ of the vertices in $B$. Because $C_I(t_0) = 0$, it will take a single $x_1^{-1}$ rotation to make each of these internal, requiring us to increment $C_ {x_1^{-1}}$ by another $n_1$ this step. Note, $n_1 \leq C$.

In total, this interval requires us to increment $C_{x_0}$ by $k + 1 - M$ and $C_{x_1^{-1}}$ by $n_1$ for a total of $k + n_1 + 1 - M$ rotations in this interval.

\vspace*{5pt}
\noindent \textbf{Interval $t_{a'} < t \leq t_d$:}

At $t_{a'}$, at most $2C$ of the vertices in $A$ may be internal. When $d_a(t_{a'}) = M$, at most $M$ of the vertices in $A$ may be external and to the left of the pivot. Because $F$ preserves the infix ordering of vertices, the vertices in $A$ must always be to the right of $v_a$ in the tree. If more than $M$ of the vertices in $A$ are external and to the left of the pivot, these all increase the distance between $v_a$ and the pivot, making $d_a$ greater than $M$. Thus, at $t_{a'}$, at least $k - 3C$ vertices in A are external, to the left of $v_a$, and to the right of the pivot.

At $t_d$, it must be possible to make a vertex in $B$ internal, so we must move all of the vertices of $A$ to the left of the pivot by this time. Moving vertices from the right side of the tree to the left side can be accomplished using either $x_0^{-1}$ or $x_1^{-1}$ rotations and we will require at least $k-3C$ such rotations.

In this interval, it is not possible for any of the vertices in $B$ to be made external or internal. The first time this is possible is after $t_d$ itself once all of the vertices in $A$ are left of the pivot.

In total, this interval requires us to increment $C_{x_0^{-1} || x_1^{-1}}$ by $k - 3C$ rotations in this interval.

\vspace*{5pt}
\noindent \textbf{Interval $t_d < t \leq t_a$:}

At $t_d$, all of the vertices in $A$ are mapped to the left of the pivot and by Lemma $4.21$, at least $k - 2C$ of them are external. Thus, $d_a(t_d) \geq k - 2C$. Because these vertices are to the left of the pivot on the spine of the tree, $d_a$ can only be reduced using $x_0$ rotations. We require $d_a(t_a - 1) = 0$. Thus we must increment $C_{x_0}$ by $k - 2C$ in this interval.

It is possible some of the vertices in $B$ that were internal at $t_d$ are external at $t_a$. Additionally, some of these vertices which were external at $t_d$ may be internal at $t_a$. Say that by time $t_a$, $n_2$ vertices in $B$ are internal that were external at $t_d$. Of the $n_1$ vertices in $B$ that were internal at $t_d$, $n_{2e}$ are external at $t_a$. We must increment $C_{x_1^{-1}}$ by $n_2$ and $C_{x_1}$ by $n_{2e}$ to accomplish this change.

In total, this interval requires us to increment $C_{x_0}$ by $k - 2C$, $C_{x_1^{-1}}$ by $n_2$, and $C_{x_1}$ by $n_{2e}$ for a total of $k - 2C + n_2 + n_{2e}$ rotations in this interval.

\vspace*{5pt}
\noindent \textbf{Interval $t_a < t \leq t_b$:}

Because $\overline{u_{i+1}'(t_a)}(v_a) =[\frac{1}{2},\frac{3}{4}]$ and $F$ preserves the infix ordering of vertices, $\overline{u_{i+1}'(t_a)}$ maps $A \cup B$ to the right of the pivot. Thus, $C_L(t_a) = 0$. We also know that $C_I(t_a) \leq 2C + n_1 + n_2 - n_{2e}$, so $C_R(t_a) = 2k - 1 - (2C + n_1 + n_2 - n_{2e})$ and $d_b(t_a) \geq 2k - 1 - (2C + n_1 + n_2 - n_{2e})$. These vertices are mapped to the right spine, so reducing $d_b$ can be accomplished using either $x_0^{-1}$ or $x_1^{-1}$ rotations. Note, $d_b(t_b - 1) = 0$ and we must increment $C_{x_0^{-1} || x_1^{-1}}$ by at least $2k - 1 - 2C - n_1 - n_2 + n_{2e}$.

It is possible some of the vertices in $B$ that were internal at $t_a$ are external at $t_b$. Additionally, some of these vertices which were external at $t_a$ may be internal at $t_b$. Say that by time $t_b$, $n_3$ vertices in $B$ are internal that were external at $t_a$. Of the $n_1 + n_2 - n_{2e}$ vertices in $B$ that were internal at $t_a$, $n_{3e}$ are external at $t_b$. We have already potentially accounted for making vertices in $B$ internal in the previous paragraph. Thus, we only need to increment $C_{x_1}$ by $n_{3e}$ to accomplish this change.

In total, this interval requires us to increment $C_{x_0^{-1} || x_1^{-1}}$ by $2k - 1 - 2C - n_1 - n_2 + n_{2e}$ and $C_{x_1}$ by $n_{3e}$ for a total of $2k - 1 -2C - n_1 - n_2 + n_{2e} + n_{3e}$ rotations in this interval.

\vspace*{5pt}
\noindent \textbf{Interval $t_b < t \leq t_f$:}

We know that $C_I(t_b) \geq n_1 + n_2 + n_3 - n_{2e} - n_{3e}$. Also, $C_I(t_f) = 0$. Bringing $C_I(t_f)$ to $0$ requires all of $A \cup B$ to be internal. Because $C_I(t_b) \geq n_1 + n_2 + n_3 - n_{2e} - n_{3e}$, it will take at least that many $x_1$ rotations to make vertices external, incrementing $C_{x_1}$ by $n_1 + n_2 + n_3 - n_{2e} - n_{3e}$. This should be sufficient accounting for this interval.

\vspace*{5pt}
\noindent \textbf{Total Rotations:}

In total, we have established that $|u_{i+1}'| \geq (k + n_1 + 1 - M) + (k - 3C) + (k - 2C + n_2 + n_{2e}) + (2k - 1 -2C - n_1 - n_2 + n_{2e} + n_{3e}) + (n_1 + n_2 + n_3 - n_{2e} - n_{3e}) = 5k - 7C - M + n_1 + n_2 + n_3 + n_{2e} = 3k + 2 + (k + 1) + C + (k - 3 - 8C - M) + n_1 + n_2 + n_3 + n_{2e}$. Note, $k \geq i$, $k - 3 - 8C -M > 0$ and $n_1 + n_2 + n_3 + n_{2e} \geq 0$. Thus, $|u_{i+1}'| > 3k + 2 + (i + 1) + C \geq 3k + 2 + (i + 1) + c = |\overline{u_{i+1}'}| + c \geq |u_{i+1}'|$. This is a contradiction.
\end{proof}



\vspace*{0.25in}
\subsection*{4.6 The Claim for $g_k$:} 
\leavevmode

\begin{claim}
In the sole accepted representative word for $\overline{g_k}$, the final time vertex $v_a$ is made internal occurs before the final time the vertex $v_b$ is made internal.
\end{claim}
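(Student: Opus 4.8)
The plan is to mirror the argument of the Claim for $f_k$ in Subsection 4.4, running an induction on the index $i$ that climbs the family $\{u_i'\}$ from $i = (k-1)-(C+1)$ up to $i = k-1$, now using Lemmas 4.21 through 4.26 in place of Lemmas 4.14 through 4.19. The base case is the word $u_{(k-1)-(C+1)}'$: Lemma 4.22 gives that in this word the final time $v_a$ is made internal occurs before the first time $v_b$ is made internal (this is condition $(1)$ of Lemmas 4.25 and 4.26), and Lemma 4.23 gives that no more than $C$ of the vertices in $B$ are internal at the final time $v_a$ is made internal (condition $(2)$). Hence $u_{(k-1)-(C+1)}'$ satisfies both hypotheses of the inductive-step lemmas.

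For the inductive step I would assume that for some $i$ with $(k-1)-(C+1) \leq i \leq k-2$ the word $u_i'$ satisfies conditions $(1)$ and $(2)$. Lemma 4.25 then yields condition $(1)$ in $u_{i+1}'$ and Lemma 4.26 yields condition $(2)$ in $u_{i+1}'$, so $u_{i+1}'$ again satisfies $(1)$ and $(2)$; note that the index range $(k-1)-(C+1) \leq i \leq k-2$ of those two lemmas is exactly what the induction needs. Iterating, condition $(1)$ holds in $u_i'$ for every $i$ with $(k-1)-(C+1) \leq i \leq k-1$, and in particular it holds in $u_{k-1}'$. Since $u_{k-1} = g_k$, the word $u_{k-1}'$ is the (unique) accepted representative for $\overline{g_k}$.

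Finally, I would note that the first time $v_b$ is made internal is at most the final time $v_b$ is made internal, so condition $(1)$ holding in $u_{k-1}'$ immediately implies the (weaker) assertion of the Claim: in the accepted representative for $\overline{g_k}$, the final time $v_a$ is made internal occurs before the final time $v_b$ is made internal. There is no real obstacle remaining at this stage — all of the delicate rotation-counting has already been carried out in Lemmas 4.21 through 4.26 — so the only thing to be careful about is the bookkeeping: confirming that the base index and the step range line up, that the induction closes, and that condition $(1)$ is genuinely stronger than the conclusion being claimed.
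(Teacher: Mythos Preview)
Your proposal is correct and matches the paper's own proof essentially line for line: establish conditions $(1)$ and $(2)$ at the base index $(k-1)-(C+1)$ via the two base-case lemmas, propagate them with Lemmas 4.25 and 4.26 up to $i=k-1$, and then observe that condition $(1)$ in $u_{k-1}'$ implies the claim since $u_{k-1}=g_k$. Your final remark that ``first time $v_b$ is made internal $\le$ final time $v_b$ is made internal'' is exactly the passage from condition $(1)$ to the claim's wording; the paper leaves this implicit.
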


\begin{proof}
We begin with $u_{(k - l) - (C + 1)}'$, the accepted representative for $u_{(k - l) - (C + 1)}$. By Lemma $4.23$, we know that in $u_{(k - l) - (C + 1)}'$, the final time where vertex $v_a$ is made internal must occur before the first time the vertex $v_b$ is made internal. Lemma $4.22$ says that no more than $C$ of the vertices in $B$ are internal at the final time time when $v_a$ is made internal in $u_{(k - l) - (C + 1)}'$. Note, this means that $u_{(k - l) - (C + 1)}'$ fulfills conditions $(1)$ and $(2)$ of Lemmas $4.25$ and $4.26$.

Suppose that in $u_i'$, the accepted representative for $u_i$ $(k - 1) - (C + 1) \leq i \leq k - 2$, the following two conditions hold:

\begin{enumerate}
\item The final time where vertex $v_b$ is made internal must occur before the first time the vertex $v_a$ is made internal,
\item None of the vertices in $B$ are internal at any time when $v_a$ is made internal. 
\end{enumerate}

Then, by Lemma $4.25$, in $u_{i + 1}'$, condition $(1)$ holds in $u_{i+1}'$. By Lemma $4.26$, condition $(2)$ holds in $u_{i + 1}'$. As both conditions hold, our induction holds. Thus, in $u_{k - 1}'$, condition $(1)$ is true, so the final time vertex $v_a$ is made internal occurs before the first time the vertex $v_b$ is made internal. But $u_{k - 1}'$ is the accepted representative for $\overline{u_{k - 1}}$ and $\overline{u_{k - 1}} = \overline{g_k}$ so our claim is complete.
\end{proof}


\vspace*{0.25in}
\subsection*{4.7 Conclusion of Theorem 1.1} 
\leavevmode

Completing the claims for $f_k$ and $g_k$ concludes our proof of Theorem $1.1$. The claim for $f_k$ showed that in the single accepted representative word for $\overline{f_k}$, the final time vertex $v_b$ is made internal occurs before the final time the vertex $v_a$ is made internal. The claim for $g_k$ showed that in the single accepted representative word for $\overline{g_k}$, the final time vertex $v_a$ is made internal occurs before the final time the vertex $v_b$ is made internal. Because $\overline{f_k} = \overline{g_k}$, this is the same representative word. $v_a$ and $v_b$ cannot be made internal at the same time, so this is a clear contradiction, proving Theorem $1.1$.

\newpage


\addcontentsline{toc}{section}{Bibliography}
\begin{center}
\textbf{Bibliography}
\end{center}
\singlespacing

\begingroup
\renewcommand{\addcontentsline}[3]{}
\renewcommand{\section}[2]{}

\endgroup

\newpage


\begin{thebibliography}{ECHLPT}

\bibitem[Belk]{Belk}
James Belk.
"Thompson's Group F".
Preprint (2007), arXiv:math.GR/0708.3609.
\newline

\bibitem[Cannon]{Cannon}
J. W. Cannon.
``The Combinatorial Structure of Cocompact Discrete Hyperbolic Groups''.
\textit{Geom. Dedicata} \textbf{16} (1984), 123-148.
\newline

\bibitem[CFP]{CFP}
J. W. Cannon, W. J. Floyd, and W. R. Parry. 
``Introductory Notes to Richard Thompson’s Groups''.
\textit{L’Enseignement Mathmatique} \textbf{42} (1996), 215-256.
\newline

\bibitem[ClTa]{ClTa}
Sean Cleary and Jennifer Taback.
``Thompson’s Group F is not Almost Convex''.
\textit{J. Algebra} \textbf{270} (2003), no. 1, 133-149.
\newline

\bibitem[EFZ]{EFZ}
D B A Epstein, A R Fletcher, U Zwick,
``Growth functions and automatic groups''.
\textit{Experimental Mathematics} \textbf{5} (1996), 297-315
\newline

\bibitem[ECHLPT]{ECHLPT}
D. B. A. Epstein, J. W. Cannon, D. F. Holt, S. V. F. Levy, M. S. Paterson, W. P. Thurston. 
\textit{Word Processing in Groups}. 
Jones and Bartlett Publishers, Boston (1992).
\newline

\bibitem[Ford]{Ford}
S. Blake Fordham.
``Minimal Length Elements of Thompson’s Group F''.
\textit{Geom. Dedicata} \textbf{99} (2003), 179-220.
\newline

\bibitem[Guba]{Guba}
Guba, V.S.
``Polynomial Isoperimetric Inequalities for Richard Thompson’s Groups F, T, and V''. 
\textit{Algorithmic Problems in Groups and Semigroups}. pp. 91-120. Boston, Basel, Berlin: Birkhäuser 2000
\newline
 
\bibitem[Guba2]{Guba2}
V. S. Guba. 
``The Dehn Function of Richard Thompson’s Group F is Quadratic''. 
Preprint (2002), arXiv:math.GR/0211395. 
\newline

\bibitem[Papa]{Papa}
P. Papasoglu
``Strongly Geodesically Automatic Groups are Hyperbolic''.
\textit{Inventiones mathematicae} \textbf{121} (1995), 323-334.
\newline
 
\end{thebibliography}
\end{document}